\numberwithin{equation}{subsection}
\newtheorem{theorem}[equation]{Theorem}
\newtheorem{lemma}[equation]{Lemma}
\newtheorem{proposition}[equation]{Proposition}
\newtheorem{corollary}[equation]{Corollary}
\newtheorem{theoremintro}{Theorem}
\newtheorem{corollaryintro}[theoremintro]{Corollary}
\theoremstyle{definition}
\theoremstyle{remark}
\newtheorem{example}[equation]{Example}
\newtheorem{remark}[equation]{Remark}
\newtheorem{criteria}[equation]{Criteria}
\newtheorem*{acknowledgments}{Acknowledgments}
\newenvironment{subsubsec}[1]{\vspace{\topsep}
\noindent\refstepcounter{equation}\theequation.\ \textit{#1.}}{\vspace{\topsep}}
\newenvironment{alphenumerate}{\begin{enumerate}[label=\textup{(\alph*)}]}{\end{enumerate}}
\newcommand{\AAA}{\mathbb{A}}
\newcommand{\BB}{\mathbb{B}}
\newcommand{\FF}{\mathbb{F}}
\newcommand{\ZZ}{\mathbb{Z}}
\newcommand{\QQ}{\mathbb{Q}}
\newcommand{\CC}{\mathbb{C}}
\newcommand{\PP}{\mathbb{P}}
\newcommand{\TT}{\mathbb{T}}
\newcommand{\GG}{\mathbb{G}}
\newcommand{\KK}{\mathbb{K}}
\newcommand{\ba}{\mathbf{a}}
\newcommand{\bA}{\mathbf{A}}
\newcommand{\bB}{\mathbf{B}}
\newcommand{\be}{\mathbf{e}}
\newcommand{\bg}{\mathbf{g}}
\newcommand{\bh}{\mathbf{h}}
\newcommand{\bk}{\mathbf{k}}
\newcommand{\bK}{\mathbf{K}}
\newcommand{\bn}{\mathbf{n}}
\newcommand{\br}{\mathbf{r}}
\newcommand{\bs}{\mathbf{s}}
\newcommand{\bU}{\mathbf{U}}
\newcommand{\bx}{\mathbf{x}}
\newcommand{\bX}{\mathbf{X}}
\newcommand{\bz}{\mathbf{z}}
\newcommand{\cA}{\mathcal{A}}
\newcommand{\cC}{\mathcal{C}}
\newcommand{\cD}{\mathcal{D}}
\newcommand{\cE}{\mathcal{E}}
\newcommand{\cG}{\mathcal{G}}
\newcommand{\cI}{\mathcal{I}}
\newcommand{\cL}{\mathcal{L}}
\newcommand{\cM}{\mathcal{M}}
\newcommand{\cN}{\mathcal{N}}
\newcommand{\cO}{\mathcal{O}}
\newcommand{\cP}{\mathcal{P}}
\newcommand{\cS}{\mathcal{S}}
\newcommand{\cU}{\mathcal{U}}
\newcommand{\cV}{\mathcal{V}}
\newcommand{\cX}{\mathcal{X}}
\newcommand{\fb}{\mathfrak{b}}
\newcommand{\ff}{\mathfrak{f}}
\newcommand{\fp}{\mathfrak{p}}
\newcommand{\fq}{\mathfrak{q}}
\newcommand{\rd}{\mathrm{d}}
\newcommand{\rD}{\mathrm{D}}
\newcommand{\rE}{\mathrm{E}}
\newcommand{\rh}{\mathrm{h}}
\newcommand{\rH}{\mathrm{H}}
\newcommand{\rI}{\mathrm{I}}
\newcommand{\rR}{\mathrm{R}}
\newcommand{\rU}{\mathrm{U}}
\newcommand{\bsalpha}{\boldsymbol{\alpha}}
\newcommand{\bspi}{\boldsymbol{\pi}}
\DeclareMathOperator{\diag}{diag}
\DeclareMathOperator{\divf}{div}
\DeclareMathOperator{\Div}{Div}
\DeclareMathOperator{\Emb}{Emb}
\DeclareMathOperator{\Exp}{Exp}
\DeclareMathOperator{\GL}{GL}
\DeclareMathOperator{\Lie}{Lie}
\DeclareMathOperator{\Mat}{Mat}
\DeclareMathOperator{\mult}{mult}
\DeclareMathOperator{\Char}{Char}
\DeclareMathOperator{\End}{End}
\DeclareMathOperator{\res}{res}
\DeclareMathOperator{\Span}{Span}
\DeclareMathOperator{\Spec}{Spec}
\DeclareMathOperator{\Gal}{Gal}
\DeclareMathOperator{\rank}{rank}
\DeclareMathOperator{\Res}{Res}
\DeclareMathOperator{\sgn}{sgn}
\DeclareMathOperator{\ord}{ord}
\DeclareMathOperator{\im}{im}
\DeclareMathOperator{\wt}{wt}
\newcommand{\oD}{\mkern2.5mu\overline{\mkern-2.5mu D}}
\newcommand{\oE}{\mkern2.5mu\overline{\mkern-2.5mu E}}
\newcommand{\oF}{\mkern2.5mu\overline{\mkern-2.5mu F}}
\newcommand{\og}{\overline{g}}
\newcommand{\oh}{\overline{h}}
\newcommand{\ok}{\overline{k}}
\newcommand{\oK}{\mkern2.5mu\overline{\mkern-2.5mu K}}
\newcommand{\oM}{\mkern2.5mu\overline{\mkern-2.5mu M}}
\newcommand{\on}{\overline{n}}
\newcommand{\oalpha}{\overline{\alpha}}
\newcommand{\oGamma}{\overline{\Gamma}}
\newcommand{\otheta}{\mkern2.5mu\overline{\mkern-2.5mu \theta}}
\newcommand{\oPhi}{\overline{\Phi}}
\newcommand{\oP}{\mkern2.5mu\overline{\mkern-2.5mu P}}
\newcommand{\oQ}{\overline{Q}}
\newcommand{\oR}{\mkern2.5mu\overline{\mkern-2.5mu R}}
\newcommand{\oS}{\mkern2.5mu\overline{\mkern-2.5mu S}}
\newcommand{\oT}{\overline{T}}
\newcommand{\oW}{\overline{W}}
\newcommand{\ox}{\overline{x}}
\newcommand{\oZ}{\mkern2.5mu\overline{\mkern-2.5mu Z}}
\newcommand{\oxi}{\overline{\xi}}
\newcommand{\oXi}{\overline{\Xi}}
\newcommand{\obn}{\overline{\bn}}
\newcommand{\perf}{\mathrm{perf}}
\newcommand{\Reg}{\mathrm{Reg}}
\newcommand{\sep}{\mathrm{sep}}
\newcommand{\teps}{\widetilde{\varepsilon}}
\newcommand{\tcE}{\widetilde{\cE}}
\newcommand{\tbe}{\widetilde{\be}}
\newcommand{\tg}{\widetilde{g}}
\newcommand{\tilh}{\widetilde{h}}
\newcommand{\tiota}{\widetilde{\iota}}
\newcommand{\tpi}{\widetilde{\pi}}
\newcommand{\trho}{\widetilde{\rho}}
\newcommand{\ttheta}{\widetilde{\theta}}
\newcommand{\tx}{\widetilde{x}}
\newcommand{\C}{\CC_{\infty}}
\newcommand{\oFF}{\overline{\FF}}
\newcommand{\Qbar}{\overline{\QQ}}
\newcommand{\iso}{\stackrel{\sim}{\longrightarrow}}
\newcommand{\mayeq}{\stackrel{?}{=}}
\newcommand{\power}[2]{{#1 [[ #2 ]]}}
\newcommand{\laurent}[2]{{#1 (( #2 ))}}
\newcommand{\dnorm}[1]{\lVert #1 \rVert}
\newcommand{\inorm}[1]{{\lvert #1 \rvert}_{\infty}}
\newcommand{\diam}[1]{\langle #1 \rangle}
\newcommand{\pd}{\partial}
\newcommand{\tr}{{\mathsf{T}}}
\newcommand{\assign}{\mathrel{\vcenter{\baselineskip0.5ex \lineskiplimit0pt
                     \hbox{\scriptsize.}\hbox{\scriptsize.}}}%
                     =}
\newcommand{\rassign}{=%
                     \mathrel{\vcenter{\baselineskip0.5ex \lineskiplimit0pt
                     \hbox{\scriptsize.}\hbox{\scriptsize.}}}%
                     }
\begin{document}

\title{Hecke $L$-series for Sinha modules}

\author{Erik Davis}
\address{Department of Mathematics, Texas A{\&}M University, College Station, TX 77843, U.S.A.}
\email[E. Davis]{davis7e@tamu.edu}

\author{Matthew Papanikolas}
\email[M. Papanikolas]{papanikolas@tamu.edu}

\subjclass{Primary 11M38; Secondary 11G09, 11G15, 11R60}

\date{July 5, 2025}

\begin{abstract}
We investigate Goss $L$-functions associated to Anderson $t$-modules defined by Sinha having complex multiplication by Carlitz cyclotomic fields. We show that these $t$-modules are defined over the cyclotomic field and that their $L$-functions are products of Hecke $L$-series for Anderson's Hecke character defined via Coleman functions. Applying identities of Fang and Taelman, we prove that special values of these $L$-functions are expressible in terms of products of values of Thakur's geometric $\Gamma$-function.
\end{abstract}

\keywords{Goss $L$-series, Hecke $L$-series, Coleman functions, Anderson Hecke characters, Sinha modules, Taelman class formulas}

\maketitle

\tableofcontents

\section{Introduction} \label{S:Intro}

\subsection{Classical Hecke \texorpdfstring{$L$}{L}-series}
We investigate special values of Hecke $L$-series attached to Carlitz cyclotomic fields. The motivating problem comes from $L$-functions of Jacobi sum Hecke characters~\cites{And86Taniyama, Lichtenbaum82, Schappacher}, where one defines a Hecke character $J_{K,\ba}$ on an abelian number field $K/\QQ$ depending on a parameter~$\ba$ in the free abelian group on $\QQ/\ZZ$.

Lichtenbaum~\cite{Lichtenbaum82}*{Thm.~3.1} proved that critical values $L(J_{K,\ba},n)$ are $\Qbar$-multiples of prescribed powers of $\pi$ and powers of special $\Gamma$-values $\Gamma(a/N)$, where $a/N \in \QQ$ depend on~$\ba$. He conjectured that these multiples were in fact in $\QQ$ up to a square root of the discriminant of $K$ or its maximal real subfield. A few years later Anderson~\cite{And86Taniyama}*{Thm.~2} proved Lichtenbaum's $\Gamma$-hypothesis using his theory of ulterior motives. For detailed accounts of $L$-functions of Jacobi sum Hecke characters, the reader is directed to~\cites{And86Taniyama, Lichtenbaum82, Schappacher}.

For example, if $\ba = 2[1/3] - [2/3]$ and $K = \QQ(\sqrt{-3})$, then the character $J_{K,\ba}$ is related to the elliptic curve $y^2 +y= x^3$ with complex multiplication by $O_K$. One finds
\[
L(J_{K,\ba},1) = \frac{\Gamma(1/3)^2}{9 \Gamma(2/3)}.
\]

\subsection{Anderson Hecke characters}
Turning to the setting of function fields, we let $A = \FF_q[\theta]$ be the polynomial ring in a variable~$\theta$ over the finite field $\FF_q$, and let $A_+$ denote its monic elements. We take $k = \FF_q(\theta)$ for the fraction field of $A$, $k_{\infty} = \laurent{\FF_q}{1/\theta}$ for the completion of $k$ at its infinite place, and $\C$ for the completion of an algebraic closure of~$k_{\infty}$. We let $\ok$ denote the algebraic closure of $k$ in $\C$. For $f \in A_+$ with $\deg f \geqslant 1$, we let $K = k(\zeta)$ denote the $f$-th Carlitz cyclotomic field (see \S\ref{SS:cyclodefs}) and $B \subseteq K$ the integral closure of~$A$ in~$K$. We let $b \mapsto \rho_b$ be the isomorphism $(A/fA)^{\times} \iso \Gal(K/k)$ such that $\rho_b(\zeta) = C_b(\zeta)$, where $C$ is the Carlitz module.

\begin{subsubsec}{Coleman functions}
Inspired by work of Coleman~\cite{Coleman88}, Anderson~\cite{And92} introduced soliton functions, which are functions on the product $X \times X$, where $X/\FF_q$ is a smooth projective curve for the function field $K$. Later, researchers replaced solitons in applications with more direct specializations called Coleman functions $g_{\ba} \in \ok(\bX)$ ($\bX = \ok \times_{\FF_q} X$) in~\cites{ABP04, SinhaPhD, Sinha97a, Sinha97b}. These functions possess explicit divisors
\[
\divf(g_{\ba}) = W_{\ba}^{(1)} - W_{\ba} + \Xi_{\ba} - I_{\ba}
\]
(see \S\ref{SS:Coleman}), which we use to define both Anderson's Hecke characters and Sinha's $t$-modules. Coleman functions play a similar role to shtuka functions for Drinfeld-Hayes modules as in~\cite{Thakur93}, and they underpin many results (e.g., see \cites{BP02, CPY10, ChangTW25, Thakur99, Wei22}).
\end{subsubsec}

\begin{subsubsec}{Hecke characters}
Using Jacobi sum Hecke characters as a model, Anderson~\cite{And92} defined a class of Hecke characters, later expanded by Sinha~\cite{Sinha97a},
\begin{equation}
\chi_{\ba} : \cI_{K,\ff} \to K^{\times}
\end{equation}
where $\ba$ is an element of the free abelian group $\cA_f$ on $f^{-1}A/A$, $\ff \subseteq B$ is the radical of~$fB$, and $\cI_{K,\ff}$ is the group of fractional ideals of $B$ prime to $\ff$. For $\fp$ a maximal ideal of $B$ relatively prime to~$f$, Anderson defines $\chi_{\ba}(\fp)$ by
\[
\chi_{\ba}(\fp) = \prod_{i=0}^{\ell - 1} g_{\ba}^{(i)}(\xi) \quad  \in K^{\times},
\]
where $\ell = [\FF_{\fp}:\FF_q]$ and $\xi \in \bX(K)$ is the generic point corresponding to the inclusion $K \hookrightarrow \C$. See \S\ref{SS:AndHecke}. Anderson proved that $\chi_{\ba}$ is an algebraic Hecke character for $K$ of conductor dividing~$\ff$ and designated infinity type $\Theta_{\ba} \in \ZZ[\Gal(K/k)]$ (see Theorem~\ref{T:AndHecke}). It aligns with work of Goss~\cite{Goss92a} and Hayes~\cite{Hayes93} on Hecke characters in function fields.
\end{subsubsec}

\begin{subsubsec}{Goss $L$-series}
In Theorem~\ref{T:GossLMainintro} and Corollary~\ref{C:HeckeLMainintro} below we prove special $L$-value formulas for Goss $L$-series attached to $\chi_{\ba}$,
\[
L(\chi_{\ba},s) = \prod_{\fp \nmid \ff} \biggl( 1 - \frac{\chi_{\ba}(\fp)}{\cN(\fp)^s} \biggr)^{-1}, \quad s \in \ZZ,
\]
which is a product over primes of $B$ not dividing $\ff$ and which takes values in $k_{\infty}(\zeta) \subseteq \C$ (see \S\ref{SSS:HeckeL}, \S\ref{E:AndHeckeLdef}). Previously, Sinha~\cite{Sinha97a} had proved a function field version of Deligne's conjecture on special $\Gamma$-values that showed that, when $\Gamma(\ba) \in \oK$, the Galois action on $\Gamma(\ba)$ transforms via $\chi_{\ba}$, namely that $\rho_{\fp}(\Gamma(\ba)) = \chi_{\ba}(\fp) \Gamma(\ba)$ for a Frobenius element $\rho_{\fp} \in \Gal(K^{\sep}/K)$. To our knowledge there has been little work on special values of $L(\chi_{\ba},s)$ to date.
\end{subsubsec}

\begin{subsubsec}{$\Pi$-factorial and special $\Pi$-values} \label{SSS:GammaPi}
In \cite{Thakur91}, Thakur defined the geometric factorial function $\Pi : \C \setminus(-A_+) \rightarrow \C$ via
\begin{equation} \label{E:Pi(x)def}
\Pi(x) \assign \prod_{a \in A_+} \biggl( 1 + \frac{x}{a} \biggr)^{-1}.
\end{equation}
It is related to the geometric $\Gamma$-function by $\Pi(x) = x\Gamma(x)$, and as such it is an analogue of the classical factorial. Thakur~\cite{Thakur91} proved several classes of functional equations for $\Gamma(x)$, which established it as an analogue of the classical $\Gamma$-function.

If $x \in k \setminus (-A_+)$, then we refer to $\Pi(x)$ as a \emph{special $\Pi$-value}. We note that special $\Pi$-values fall in~$k_{\infty}$, which will play an important role in calculating Taelman regulators (see Proposition~\ref{P:RegCalc}). Sinha~\cite{Sinha97b} demonstrated that certain special $\Pi$-values occur as periods of $t$-modules $E_{\ba}$ (see below, and  see Theorem~\ref{T:periodsintro}). Algebraic relations over $\ok$ on special $\Pi$-values have been studied extensively \cites{ABP04,BP02,CPY10,Sinha97b,Wei22}, by applying transcendence techniques in~\cites{ABP04, Yu97}, and in the present paper we show they arise naturally in values of $L$-functions (see Theorem~\ref{T:GossLMainintro} and Corollary~\ref{C:HeckeLMainintro}).
\end{subsubsec}

\subsection{Sinha modules}
Just as $L$-functions of Jacobi sum Hecke characters can be interpreted as factors in $L$-functions of abelian varieties, or more generally of motives (see \cites{And86Taniyama, Lichtenbaum82, Schappacher}), we begin our study of $L(\chi_{\ba},s)$ by analyzing more closely a family of Anderson $t$-modules with complex multiplication that were originally defined by Sinha~\cites{SinhaPhD, Sinha97b}. For an effective parameter $\ba \in \cA_f$ (all coefficients are non-negative) and $\deg \ba > 0$, Sinha used Anderson's theory of $t$-motives~\cite{And86} to define an Anderson $t$-module
\begin{equation}
E_{\ba} : \bA \to \Mat_d(\ok[\tau]),
\end{equation}
where $\bA = \FF_q[t]$ and $\tau$ is the $q$-th power Frobenius endomorphism. See \S\ref{SS:tmoddualtmot} for more information on $t$-modules.

The $t$-module $E_{\ba}$ possesses complex multiplication by $\bB \cong B$ with CM-type $\Xi_{\ba}$, as defined by Brownawell, Chang, Wei, and the second author in~\cites{BP02,BCPW22} (see Remark~\ref{R:EaCM}). It is coabelian ($\bA$-finite), has rank $r = [K:k] = \#(A/fA)^{\times}$, and has dimension $d = \deg(\ba)\cdot r/(q-1)$ (see Lemma~\ref{L:Haprops}). Sinha's constructions yield a $t$-module defined over~$\ok$, but he did not produce a field of definition. In order to analyze Goss $L$-functions for~$E_{\ba}$ our first main result is to clarify this situation (stated later as Theorem~\ref{T:EaKdef}).

\begin{theoremintro} \label{T:EaKdefintro}
Let $\ba \in \cA_f$ be effective with $\deg \ba > 0$. Then $E_{\ba}$ has a model defined over~$K$.
\end{theoremintro}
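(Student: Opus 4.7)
The plan is to descend Sinha's construction of $E_{\ba}$ from $\ok$ to $K$ by Galois descent applied to its dual $t$-motive $\cM_{\ba}$. Recall that $\cM_{\ba}$ is free of rank one over $\bB \otimes_{\FF_q} \ok$ (with $\bB \cong B$), and the semilinear $\tau$-action on a natural generator is encoded by the Coleman function $g_{\ba}$ together with its divisor $W_{\ba}^{(1)} - W_{\ba} + \Xi_{\ba} - I_{\ba}$. Since $\bA$, $\bB$, and $\tau$ are already rational over $\FF_q \subseteq K$, the only datum potentially obstructing descent is the a priori $\ok$-rationality of $g_{\ba}$.

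The heart of the argument is therefore to show that $g_{\ba}$ may be rescaled to a Coleman function defined over $K$. The divisors $W_{\ba}$, $\Xi_{\ba}$, $I_{\ba}$ are supported on Carlitz torsion points of exponent dividing $f$, which by the very definition of $K = k(\zeta)$ are $K$-rational, and the Frobenius twist in $W_{\ba}^{(1)}$ commutes with $\Gal(\ok/K)$ since it acts only on the $\ok$-factor of $\bX = \ok \times_{\FF_q} X$. Hence $\divf(g_{\ba})$ is $\Gal(\ok/K)$-stable as a formal sum, so for each $\sigma \in \Gal(\ok/K)$ the ratio $c_{\sigma} \assign \sigma(g_{\ba})/g_{\ba}$ is a scalar in $\ok^{\times}$, and $\sigma \mapsto c_{\sigma}$ is a continuous $1$-cocycle. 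By Hilbert~90 it is a coboundary, $c_{\sigma} = \sigma(\lambda)/\lambda$ for some $\lambda \in \ok^{\times}$, and replacing $g_{\ba}$ with $\lambda^{-1} g_{\ba}$ produces a Galois-invariant Coleman function with the same divisor. Absorbing $\lambda$ into the $\bB \otimes_{\FF_q} \ok$-generator of $\cM_{\ba}$ allows the $\tau$- and $\bB$-actions to descend simultaneously to a $K$-rational dual $t$-motive, and Anderson's equivalence yields the desired $K$-model of $E_{\ba}$.

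The main obstacle is the explicit verification that each of $W_{\ba}$, $W_{\ba}^{(1)}$, $\Xi_{\ba}$, and $I_{\ba}$ is $\Gal(\ok/K)$-stable as a formal sum; this reduces to tracking how the action $\rho_b$ from the isomorphism $(A/fA)^{\times} \iso \Gal(K/k)$ permutes the Carlitz torsion points appearing in their supports and checking that the multiplicities are preserved orbit-by-orbit. A secondary point is ensuring that any normalization conventions Sinha imposes on $g_{\ba}$ remain satisfied after the Hilbert~90 rescaling, which can be arranged by absorbing the compensating scalar into the choice of generator for $\cM_{\ba}$ rather than into the structural maps.
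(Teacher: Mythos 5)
Your proposal misidentifies both where the difficulty lies and what the structure of $H_{\ba}$ actually is, so the argument has genuine gaps. First, the Hilbert~90 step is unnecessary: the Coleman function $g_{\ba}$ is \emph{already} defined over $K$ (indeed over $B[f^{-1}]$), and this is read off directly from the explicit formula \eqref{E:gdef} together with Proposition~\ref{P:estarval} --- see Proposition~\ref{P:coldef}. There is no cocycle to trivialize. Second, $H_{\ba}(\ok) = H^0(\bU,\cO_{\bX}(-W_{\ba}^{(1)}))$ is a (generally non-principal) ideal of $\ok[t,z]$; it has rank $r = [K:k]$ over $\ok[t]$ and dimension $d = \deg(\Xi_{\ba})$ over $\ok[\sigma]$ (Lemma~\ref{L:Haprops}), so the claim that it is ``free of rank one over $\bB \otimes_{\FF_q} \ok$'' with a single generator to be rescaled is false in general, and the plan of absorbing a scalar into such a generator does not get off the ground.

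The real obstruction, which your proposal does not engage with, is that $K$ is not perfect while the $\sigma$-action on $H_{\ba}$ involves Frobenius twisting: $\sigma h = g_{\ba} h^{(-1)}$. Even knowing $g_{\ba} \in K[t,z]$ and knowing that $H_{\ba}(\ok)$ has a $K$-rational $\ok[t]$-basis (Riemann--Roch), the $t$-comotive-to-$t$-module dictionary requires a $\ok[\sigma]$-basis, and the recipe $E_t = Q^*$ in \eqref{E:Etdef} involves taking $q^i$-th roots of the coefficients of $Q$. ``Anderson's equivalence yields the $K$-model'' is precisely the content of the theorem, not a consequence of rationality of $g_{\ba}$; indeed the formalism of $t$-comotives is set up over \emph{perfect} fields, so producing a model over the non-perfect field $K$ cannot follow from general descent. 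The paper's proof has two substantive steps that your sketch would need to replace: Proposition~\ref{P:HaKdef}, which constructs (via the recursive bootstrapping of Lemmas~\ref{L:Miprops} and~\ref{L:Silinind}) a $\ok[\sigma]$-basis of $H_{\ba}(\ok)$ lying in $K[t,z]$; and the inductive computation at the end of the proof of Theorem~\ref{T:EaKdef}, using hyperderivatives evaluated at the points $\xi_b$, which shows the twisted coefficients $\alpha_{i,(a,j)}^{(i)}$ lie in $K$ so that $Q^* \in \Mat_d(K[\tau])$. Neither step is addressed by, or reducible to, the descent outline you give.
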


\begin{subsubsec}{The $t$-comotive for $E_{\ba}$}
The arguments to prove this theorem focus first on the construction of the $t$-comotive (``dual $t$-motive,'' see Remark~\ref{R:comotive})
\[
H_{\ba}(\ok) = H^0 \bigl(\bU,\cO_{\bX}(-W_{\ba}^{(1)}) \bigr), \quad
\bU = \Spec \ok \otimes_{\FF_q} \bB,
\]
from which we define $E_{\ba}$. See \S\ref{SS:Sinhadefs} for precise definitions. We recall that the operation of $\sigma = \tau^{-1}$ on $H_{\ba}(\ok)$ is provided through multiplication by the Coleman function $g_{\ba}$, which then plays the role of shtuka functions for Drinfeld-Hayes modules as in Thakur~\cite{Thakur93}.  These $t$-comotives were originally defined by Anderson, Brownawell, and the second author~\cite{ABP04}, and they are parallel to the $t$-motives defined by Sinha~\cite{Sinha97b}.

As constructed, $H_{\ba}(\ok)$ is a left $\ok[t,\sigma]$-module (see~\S\ref{SS:tmoddualtmot}), and it is an ideal of the coordinate ring $\ok[\bU]$ of the affine part $\bU$ of $\bX$. A first step toward Theorem~\ref{T:EaKdefintro} is showing~$g_{\ba}$ is defined over $B[f^{-1}]$ in Proposition~\ref{P:coldef}. But then a key difficulty in proving Theorem~\ref{T:EaKdefintro} is determining that $H_{\ba}(\ok)$ has a $\ok[\sigma]$-basis consisting of functions defined over $K$, which we prove in Proposition~\ref{P:HaKdef}. The methods revolve around dissecting the arguments in~\citelist{\cite{ABP04}*{\S 6.4} \cite{BCPW22}*{Thm.~4.2.2} \cite{Sinha97b}*{Thm.~3.2.5}} to devise an algorithm for producing $\ok[\sigma]$-bases of $H_{\ba}(\ok)$ explicitly in \S\ref{SS:Fodef}, leading to explicit computation of $E_{\ba}$ (see~\S\ref{S:examples}).
\end{subsubsec}

\begin{subsubsec}{Good reduction}
We address reduction modulo primes of $B$ and primes of good reduction for $E_{\ba}$ in~\S\ref{SS:reduction}. To define good reduction we adapt the point of view of Gardeyn~\cite{Gardeyn02}, requiring that the rank of a $t$-comotive and its reduction be the same. In Corollary~\ref{C:EaSigma} we confirm that~$E_{\ba}$ has a model with good reduction outside of a finite set of primes $\Sigma_{\ba}$ of $B$ containing the primes dividing~$f$. These conditions are sufficient to prove Theorem~\ref{T:GossLMainintro} and Corollary~\ref{C:HeckeLMainintro} below, but we prove in the appendix that for every prime $\fp$ of $B$ not dividing~$f$, there is a model for $E_{\ba}$ for which it has good reduction at $\fp$ (see Corollary~\ref{C:Eagoodprimes}).
\end{subsubsec}

\begin{corollaryintro} \label{C:Eagoodprimesintro}
Let $\ba \in \cA_f$ be effective with $\deg \ba > 0$, and let $\fp$ be a maximal ideal of $B$ relatively prime to $f$. Then $E_{\ba}$ has good reduction at~$\fp$.
\end{corollaryintro}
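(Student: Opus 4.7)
Let $\fp$ be a maximal ideal of $B$ with $\fp \nmid f$. The plan is to construct, by a $\GL_d(K)$-change of basis applied to the model furnished by Theorem~\ref{T:EaKdefintro}, a new model of $E_{\ba}$ whose structure matrices lie in $\Mat_d(B_{\fp}[\tau])$ and whose reduction modulo $\fp$ is again a $t$-module of rank $r$. This is carried out on the $t$-comotive side, appealing to the rank-preservation criterion of Gardeyn underlying the notion of good reduction in~\S\ref{SS:reduction}.

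First I would produce an integral model of $H_{\ba}$ at~$\fp$. By Proposition~\ref{P:coldef}, the Coleman function $g_{\ba}$ is defined over $B[f^{-1}]$, so because $\fp \nmid f$ it is regular on the affine scheme $\bU_{B_{\fp}} \assign \Spec B_{\fp}\otimes_{\FF_q}\bB$; moreover every divisor appearing in $\divf(g_{\ba}) = W_{\ba}^{(1)} - W_{\ba} + \Xi_{\ba} - I_{\ba}$ is supported over primes of $B$ dividing $f$ or at $\infty$, hence misses the closed fiber $\bU_{B_{\fp}}\otimes_{B_{\fp}}\FF_{\fp}$. Setting
\[
H_{\ba}(B_{\fp}) \assign H^{0}\bigl(\bU_{B_{\fp}},\, \cO(-W_{\ba}^{(1)})\bigr) \subseteq H_{\ba}(K),
\]
multiplication by $g_{\ba}$ preserves $H_{\ba}(B_{\fp})$ and gives the $\sigma$-action, endowing it with the structure of a left $B_{\fp}[t,\sigma]$-module that is flat over $B_{\fp}$.

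Second, I would exhibit a $B_{\fp}[\sigma]$-basis of $H_{\ba}(B_{\fp})$. Starting from the $\ok[\sigma]$-basis produced by the algorithm of~\S\ref{SS:Fodef}, which by Proposition~\ref{P:HaKdef} can be taken to consist of functions defined over $K$, I would clear $\fp$-denominators element-by-element using that $B_{\fp}$ is a DVR, and then apply a Smith-normal-form reduction to extract a basis $\tilde{f}_{1},\ldots,\tilde{f}_{d} \in H_{\ba}(B_{\fp})$ whose reductions modulo $\fp$ are $\FF_{\fp}$-linearly independent. This basis presents $E_{\ba}$ as a $t$-module $E'_{\ba}: \bA \to \Mat_{d}(B_{\fp}[\tau])$. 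Since $g_{\ba}$ reduces to a nowhere-vanishing function on the closed fiber and the divisor $W_{\ba}^{(1)}$ does not move, the reduced $t$-comotive $\overline{H}_{\ba}$ has the same Riemann--Roch dimension $d$ as the generic fiber $H_{\ba}(B_{\fp})\otimes_{B_{\fp}}K$, confirming good reduction of $E'_{\ba}$ at~$\fp$.

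The main obstacle is the integrality step. The algorithm of~\S\ref{SS:Fodef} is rational in nature, and extracting from its output a $B_{\fp}[\sigma]$-basis whose reduction modulo $\fp$ remains a basis of $\overline{H}_{\ba}$ requires a careful local analysis at the primes of $\bX$ above~$\fp$. The crucial input is that $g_{\ba}$ is a $\fp$-adic unit on $\bU_{B_{\fp}}$, which guarantees that the $\sigma$-action remains non-degenerate after reduction; but verifying that $H_{\ba}(B_{\fp})$ is actually finitely generated and free of rank~$d$ over $B_{\fp}[\sigma]$---rather than merely $B_{\fp}$-free---is where the bulk of the technical work lies.
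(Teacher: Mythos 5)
Your overall strategy---produce a $B_{\fp}[\sigma]$-basis of $H_{\ba}$ that stays a basis after reduction, then invoke the rank-preservation criterion---is indeed the shape of the paper's argument (Proposition~\ref{P:Hagoodred} feeding into Proposition~\ref{P:Eagoodred}). But your writeup contains one outright error and one gap exactly where the real work lives.

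The error is the claim that the support of $\divf(g_{\ba}) = W_{\ba}^{(1)} - W_{\ba} + \Xi_{\ba} - I_{\ba}$ ``misses the closed fiber'' and that $g_{\ba}$ ``reduces to a nowhere-vanishing function.'' These points $\xi_b$ and their Frobenius twists are horizontal divisors on $\cX/B_{\fp}$, and every horizontal divisor meets the closed fiber. What Proposition~\ref{P:ogaf} actually gives is $\og_{\ba}\neq 0$ and
\[
\divf\bigl(\og_{\ba}\bigr) = \oW_{\ba}^{(1)} - \oW_{\ba} + \oXi_{\ba} - I_{\ba},
\]
so $\og_{\ba}$ has genuine zeros (at the reductions $\oxi_b^{(j)}$). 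The correct usable fact is that $g_{\ba}$ does not vanish identically modulo $\fp$, so $\sigma$ stays injective on the reduced comotive $H_{\ba,\fp}(\FF_{\fp})$---a much weaker statement than unit-hood, and one that by itself does not control ranks.

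The gap is the passage from ``clear $\fp$-denominators and apply Smith normal form'' to a $B_{\fp}[\sigma]$-\emph{basis} whose reduction is an $\FF_{\fp}[\sigma]$-\emph{basis}. Smith normal form lives in the commutative world of $B_{\fp}$-modules; what you need is a $\sigma$-compatible choice. Concretely, the danger is that a $\ok[\sigma]$-basis of $H_{\ba}(\ok)$ defined over $K$ may, after clearing denominators, have reductions that are $\FF_{\fp}$-linearly independent yet fail to generate $H_{\ba,\fp}(\FF_{\fp})$ over $\FF_{\fp}[\sigma]$, because linear independence modulo $\sigma H_{\ba,\fp}$ can be lost. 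The paper's Proposition~\ref{P:Hagoodred} handles exactly this by running the algorithm of \S\ref{SS:Fodef} on a \emph{two-dimensional} filtration, simultaneously by the degree filtration $\{\oM_i(\FF_{\fp})\}$ and by the image filtration $\{\oP_j\}$ of the reduction maps $\pi_j: M_j(B_{\fp})\to\oM_j(\FF_{\fp})$, matched via Nakayama's lemma through the triangle Lemma~\ref{L:triangle-v2}. That bookkeeping is what forces $\pi(S_N)=\oS_N$, i.e., your integral basis to reduce onto a legitimate $\FF_{\fp}[\sigma]$-basis. Without some argument of this kind, the proposal does not close.
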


Another component in our special $L$-value formulas is the determination of the period lattice of $E_{\ba}$. These periods had already been computed by Sinha~\cite{Sinha97b}*{\S 5}, but as his $t$-modules were constructed using $t$-motives instead of $t$-comotives, they are not technically the same. However, in the setting of $t$-comotives, one can apply Anderson's exponentiation theorem (Theorem~\ref{T:Anderson}, see also \citelist{\cite{HartlJuschka20}*{\S 2.5} \cite{NamoijamP24}*{\S 3.4}}), which permits us to bypass some of the complications of~\cite{Sinha97b} stemming from rigid analysis and homological algebra.

\begin{subsubsec}{Rigid analytic trivializations of $H_{\ba}$}
A major input into Anderson's exponentiation theorem is a rigid analytic trivialization of the $t$-comotive (see \S\ref{SSS:AndExp}). Adapting arguments from~\cite{ABP04}*{\S 6.4}, we show in Proposition~\ref{P:PhiaPsiadiag} that a rigid analytic trivialization for~$H_{\ba}$ can be expressed in terms of the infinite product
\[
\cG_{\ba} = \prod_{N=1}^{\infty} g_{\ba}^{(N)},
\]
which is a product of Frobenius twists of $g_{\ba}$ taken in an extension of the Tate algebra $\TT_{\theta}$ (see Proposition~\ref{P:cGaf}, and also~\cite{Sinha97b}*{\S 5.1}, where similar products were investigated).
\end{subsubsec}

\begin{subsubsec}{Interpolation formulas}
For $\ba = \sum_a m_a [a/f] \in \cA_f$ we take $\Pi(\ba) = \prod_a \Pi(a/f)^{m_a}$. Anderson~\cite{And92}*{\S 5.3} (see also~\citelist{\cite{ABP04}*{\S 6.3} \cite{Sinha97b}*{\S 5.3}}) proved interpolation formulas for Coleman functions yielding for $b \in (A/fA)^{\times}$,
\[
\Pi(b \star\ba) = \cG_{\ba}(\xi_b)^{-1},
\]
where $\xi_b \in \bX(K)$ are generic points arising from the embeddings $K \hookrightarrow \C$ and $b \star \ba$ represents the star action. See \S\ref{SS:Coleman} for more information. Sinha~\cite{Sinha97b}*{\S 5.2.6, \S 5.3.9} showed that certain coordinates of periods of $E_{\ba}$ are expressible in terms of special $\Pi$-values. We extend Sinha's results to arbitrary coordinates using hyperderivatives on $\ok(\bX)$ (stated as Theorem~\ref{T:periods}). We say $\ba \in \cA_f$ is \emph{basic} if it is of the form $\ba = [a/f]$, and note that Sinha's evaluations in~\cite{Sinha97b}*{\S 5.3} revolve around these basic cases where $(a,f)=1$.
\end{subsubsec}

\begin{theoremintro} \label{T:periodsintro}
Let $\ba \in \cA_f$ be effective with $\deg \ba > 0$. Let $E_{\ba}$ be constructed with respect to a normalized $\ok[\sigma]$-basis on $H_{\ba}(\ok)$. The period lattice $\Lambda_{\ba}$ of $E_{\ba}$ has an $\bA$-basis $\pi_1, \dots, \pi_r$ so that for index sets $\cP_{\ba}$, $\cS_{\ba}$ defined in \S\ref{SSS:normalized}, the following hold.
\begin{alphenumerate}
\item For $1 \leqslant i \leqslant r$, with $\ell_a = \mult_{\xi_b}(\Xi_{\ba}) - 1$,
\[
\pi_i = \begin{pmatrix}
\vdots \\ \pd_t^{\ell_b - j} \bigl( z^{i-1} \cG_{\ba}^{-1} \bigr)\big|_{\xi_b} \\ \vdots
\end{pmatrix}_{(b,j) \in \cS_{\ba}}.
\]
The coordinate corresponding to $(b,\ell_b) \in \cS_{\ba}$ is $\rho_b(\zeta)^{i-1} \Pi (b \star \ba)$ \textup{(cf.~\cite{Sinha97b}*{\S 5.3})}.
\item When $\ba = [a/f]$ is basic, for $1 \leqslant i \leqslant r$,
\[
\pi_i = \begin{pmatrix}
\vdots \\ \rho_b(\zeta)^{i-1} \Pi\bigl( [ba/f] \bigr) \\ \vdots
\end{pmatrix}_{b \in \cP_{\ba}}.
\]
\end{alphenumerate}
\end{theoremintro}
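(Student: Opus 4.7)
My plan is to apply Anderson's exponentiation theorem (Theorem~\ref{T:Anderson}) to the $t$-comotive $H_{\ba}(\ok)$, using as the crucial analytic input the rigid analytic trivialization established in Proposition~\ref{P:PhiaPsiadiag}. That proposition produces a matrix $\Psi_{\ba}$ whose essentially diagonal structure is built from the infinite product $\cG_{\ba} = \prod_{N \geqslant 1} g_{\ba}^{(N)}$ and its Frobenius twists. Anderson's theorem then identifies the period lattice $\Lambda_{\ba} \subset \Lie(E_{\ba})(\C) = \C^d$ with the image, under the coordinate system furnished by the chosen $\ok[\sigma]$-basis of $H_{\ba}(\ok)$, of a canonical $\bA$-basis extracted from $\Psi_{\ba}^{-1}$. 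Since $H_{\ba}$ has $\bA$-rank $r$ by Lemma~\ref{L:Haprops}, this yields precisely the $r$ periods $\pi_1, \dots, \pi_r$ generating $\Lambda_{\ba}$ over~$\bA$.

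To obtain the explicit coordinate formulas in part~(a), I would unpack how the normalized $\ok[\sigma]$-basis constructed in \S\ref{SSS:normalized}--\S\ref{SS:Fodef} implements the identification $\Lie(E_{\ba}) \cong \prod_{(b,j) \in \cS_{\ba}} \C$. The normalization is engineered precisely so that this identification is realized by hyperderivative-evaluation functionals at the generic points $\xi_b$ lying in the support of $W_{\ba}^{(1)}$: namely $h \mapsto \pd_t^{\ell_b - j} h\big|_{\xi_b}$ for $(b,j) \in \cS_{\ba}$, with hyperderivatives up to order $\ell_b = \mult_{\xi_b}(\Xi_{\ba}) - 1$ appearing precisely to account for the multiplicity of $\xi_b$ in the divisor. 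Under this identification, the $r$ canonical generators of the period lattice arise from the $k$-basis $1, z, z^2, \dots, z^{r-1}$ of $K$ coming from the cyclotomic coordinate $z$ on $\bX$, translated into the test functions $z^{i-1} \cG_{\ba}^{-1}$ for $1 \leqslant i \leqslant r$. Applying the hyperderivative-evaluation functionals to these test functions yields the formula of part~(a). The interpolation identity $\cG_{\ba}(\xi_b)^{-1} = \Pi(b \star \ba)$ recalled from~\cite{And92}*{\S 5.3}, combined with $z(\xi_b) = \rho_b(\zeta)$, identifies the coordinate indexed by $(b,\ell_b)$ as $\rho_b(\zeta)^{i-1} \Pi(b \star \ba)$.

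Part~(b) is a direct specialization of part~(a). When $\ba = [a/f]$ is basic, $\Xi_{\ba}$ has simple support, so $\ell_b = 0$ for every $b$ in the index set, the hyperderivatives $\pd_t^{\ell_b - j}$ collapse to plain evaluation, and $\cS_{\ba}$ reduces to $\cP_{\ba}$. The formula in~(a) becomes $\pi_i = \bigl( \rho_b(\zeta)^{i-1} \cG_{\ba}(\xi_b)^{-1} \bigr)_{b \in \cP_{\ba}}$, and applying the interpolation identity together with the identity $b \star [a/f] = [ba/f]$ coming from the star action yields the stated product $\rho_b(\zeta)^{i-1} \Pi([ba/f])$.

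The main obstacle is the bookkeeping around the normalized basis: verifying that the basis produced by the algorithm of \S\ref{SS:Fodef} interacts with the diagonal trivialization $\Psi_{\ba}$ in such a way that hyperderivative evaluation at the high-multiplicity points $\xi_b$ correctly reads off the $\Lie(E_{\ba})$-coordinates of the periods. This is the $t$-comotive analog of the delicate coordinate computations of Sinha~\cite{Sinha97b}*{\S 5.2--5.3} in the $t$-motive framework; Anderson's exponentiation theorem streamlines the analytic input, but one must still carefully track how the diagonal entries of $\Psi_{\ba}$ interact with the multiplicities encoded in $\Xi_{\ba}$ through Taylor expansion at each $\xi_b$.
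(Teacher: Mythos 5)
Your plan follows the paper's proof of Theorem~\ref{T:periods} essentially step for step: apply Anderson's exponentiation theorem via the rigid analytic trivialization coming from Proposition~\ref{P:PhiaPsiadiag}, use the normalized $\ok[\sigma]$-basis of \S\ref{SSS:normalized} to realize $\cE_0$ as the hyperderivative-evaluation functionals of~\eqref{E:E0}, reduce the generators of $\Lambda_{\ba}$ to $\tcE_0(z^{i-1}\cG_{\ba}^{-1})$, and conclude with the interpolation formula~\eqref{E:Pibstara}. The one imprecision worth fixing in the bookkeeping you flag: $\Psi_{\ba}$ trivializes $\Phi_{\ba}$, the $\sigma$-matrix relative to $\{1,z,\dots,z^{r-1}\}$, which is a $\KK[t]$-basis of $\KK[t,z]$ but not of the ideal $H_{\ba}(\KK)$; you must first pass to $\Psi = Q\Psi_{\ba}$ for a genuine basis $\bn = Q\bz$ of $H_{\ba}(\KK)$ as in \S\ref{SSS:HaRAT}, and the paper's clean formula then drops out because $\tcE_0(\bs_i\Psi^{-1}Q\bz)$ collapses the $Q$'s and the diagonalization $\Psi_{\ba}=\Delta\Gamma_{\ba}\Delta^{-1}$ with $\Delta^{-1}\bz=(1,0,\dots,0)^{\tr}$ picks out exactly your test functions $z^{i-1}\cG_{\ba}^{-1}$.
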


\subsection{Special \texorpdfstring{$L$}{L}-values}
Our main results relate Goss $L$-functions for $E_{\ba}$ with $L$-functions for $\chi_{\ba}$, and then we prove special value formulas for them in terms of special $\Pi$-values.

\begin{subsubsec}{Goss $L$-functions for $E_{\ba}$}
For $\ba \in \cA_f$ effective with $\deg \ba > 0$, we pick a model $E_{\ba} : \bA \to \Mat_d(B[\tau])$ together with a finite set of primes $\Sigma_{\ba}$ of $B$, including all primes dividing $f$, outside of which $E_{\ba}$ has good reduction (see \S\ref{SSS:EaBmodel}). The Goss $L$-function is
\[
L( E_{\ba}^{\vee}/K,\Sigma_{\ba},s) = \prod_{\fp \notin \Sigma_{\ba}} Q_{\ba,\fp}^{\vee} \bigl( \cN(\fp)^{s} \bigr)^{-1}, \quad s \in \ZZ,
\]
where $Q_{\ba,\fp}^{\vee}(X)$ is the reciprocal polynomial of the characteristic polynomial of the Frobenius at $\fp$ acting on the dual of the Tate module $T_{\lambda}(E_{\ba})$ (see \S\ref{SSS:GossLtmod}).
\end{subsubsec}

\begin{subsubsec}{Identities with Hecke $L$-series}
Letting $\psi_{\ba} = \chi_{-\ba}$, we prove in Proposition~\ref{P:Lidentities},
\[
L(E_{\ba}^{\vee}/K,\Sigma_{\ba},s) = \prod_{b \in (A/fA)^{\times}} L(\psi_{\ba}^{\rho_b}, \Sigma_{\ba},s) = L(\psi_{\ba},\Sigma_{\ba},s)^r.
\]
We obtain the first side of this identity by computing the characteristic polynomials of Frobenius of $E_{\ba}$ using techniques of~\cite{HuangP22} and relating them to $\psi_{\ba}$ (cf.~\cite{DebryPhD}*{\S 3.3.5}). The second part utilizes Galois symmetry (see Lemma~\ref{L:chiaGalsymm}).
\end{subsubsec}

\begin{subsubsec}{$L$-value identities}
The main tools for evaluating Goss $L$-functions are rooted in the work of Taelman~\cites{Taelman09, Taelman10, Taelman12}. For the $t$-modules $E_{\ba}$ we apply $L$-value formulas of Fang~\cite{Fang15}, which as in Taelman's identities equate $L(E_{\ba}^{\vee}/K,\Sigma_{\ba},0)$ with the product of a regulator term $\Reg(E_{\ba}/B) \in k_{\infty}^{\times}$ and a class module term $\rh(E_{\ba}/B) \in A_+$ (see \S\ref{SS:Taelman}). We appeal also to constructions of Angl\`es, Ngo Dac, Pellarin, and Tavares Ribeiro~\cites{ANT17b, ANT20, ANT22, APT18, AT17}. Although the model for $E_{\ba}$ used to construct $L(E_{\ba}^{\vee}/K,\Sigma_{\ba},s)$ may have bad reduction at some primes not dividing~$f$, Corollary~\ref{C:Eagoodprimesintro} permits defining $L(E^{\vee}/K,s)$ that avoids only Euler factors for $\fp \mid f$ (see Remark~\ref{R:LnoSigmas}). The following theorem and corollary (stated as Theorem~\ref{T:GossLMain} and Corollary~\ref{C:HeckeLMain}) assume that $(a,f)=1$, but the full results for $(a,f)\neq 1$ are in \S\ref{SS:specialLvalues}.
\end{subsubsec}

\begin{theoremintro} \label{T:GossLMainintro}
Let $\ba = [a/f] \in \cA_f$ be basic with $(a,f)=1$, and fix $E_{\ba}: \bA \to \Mat_d(B[\tau])$ as in \S\ref{SSS:EaBmodel}. There is $C_{\rR} \in \ok^{\times}$ with $\sgn(C_{\rR})=1$ and
\[
C_{\rR}^{r} \in \begin{cases}
k^{\times} &\text{if $r$ is even,} \\ 
K^{\times}\ \text{and}\ C_{\rR}^{2r} \in k^{\times} &\text{if $r$ is odd;}
\end{cases}
\]
$C_{\rH} \in \ok^{\times}$ with $\sgn(C_{\rH}) = 1$ so that $\rh(E_{\ba}/B) = C_{\rH}^r$; and $C_{\rE} \in K^{\times}$ with $C_{\rE}^{r} \in k^{\times}$ and $\sgn(C_{\rE}) = 1$, so that
\[
L(E_{\ba}^{\vee}/K,0) = C_{\rR}^r \cdot C_{\rH}^r \cdot C_{\rE}^r  \cdot \prod_{\substack{b \in A_+ \\ \deg b < \deg f \\ (b,f)=1}} \Pi(b/f)^r.
\]
\end{theoremintro}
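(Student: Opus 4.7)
The plan is to combine three main inputs: the $L$-function identity $L(E_{\ba}^{\vee}/K,\Sigma_{\ba},s) = L(\psi_{\ba},\Sigma_{\ba},s)^r$ from Proposition~\ref{P:Lidentities}, which displays $L(E_{\ba}^{\vee}/K,\Sigma_{\ba},0)$ as a natural $r$-th power; the Fang--Taelman class formula, which equates $L(E_{\ba}^{\vee}/K,\Sigma_{\ba},0)$ with $\Reg(E_{\ba}/B)\cdot\rh(E_{\ba}/B)$; and the explicit period formula of Theorem~\ref{T:periodsintro}(b), which for basic $\ba=[a/f]$ writes an $\bA$-basis of the period lattice $\Lambda_{\ba}$ with components $\rho_b(\zeta)^{i-1}\Pi([ba/f])$ indexed by $b\in\cP_{\ba}$.

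First I would invoke the Fang--Taelman identity to produce
\[
L(E_{\ba}^{\vee}/K,\Sigma_{\ba},0) = \Reg(E_{\ba}/B)\cdot\rh(E_{\ba}/B).
\]
Since $\rh(E_{\ba}/B)\in A_+$, and since by Proposition~\ref{P:Lidentities} the $L$-value is a perfect $r$-th power (and the regulator will turn out to be one as well), the class module term is itself an $r$-th power; setting $\rh(E_{\ba}/B) = C_{\rH}^r$ with $C_{\rH}\in\ok^{\times}$ and $\sgn(C_{\rH})=1$ yields the class-module factor.

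Next I would compute $\Reg(E_{\ba}/B)$ using Proposition~\ref{P:RegCalc}. The regulator is realized as the absolute value of the determinant of the period matrix with respect to a fixed $B$-basis of the Taelman unit module. By Theorem~\ref{T:periodsintro}(b), for basic $\ba$ the matrix $[\pi_1\mid\cdots\mid\pi_r]$ is the Vandermonde-type matrix $\bigl(\rho_b(\zeta)^{i-1}\Pi([ba/f])\bigr)_{b\in\cP_{\ba},\,1\leqslant i\leqslant r}$, whose determinant factors as
\[
\det\bigl(\rho_b(\zeta)^{i-1}\bigr)_{b,i}\cdot\prod_{b\in\cP_{\ba}}\Pi([ba/f]).
\]
Since $(a,f)=1$, the map $b\mapsto ba$ permutes $(A/fA)^{\times}$, and the $\Pi$-product reindexes to $\prod_{b\text{ monic},\,\deg b<\deg f,\,(b,f)=1}\Pi(b/f)$, exactly the product appearing in the theorem. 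The Vandermonde factor $V = \det(\rho_b(\zeta)^{i-1})$ lies in $K^{\times}$, and its square is, up to sign, the discriminant of the $K/k$-conjugates of $\zeta$, hence lies in $k^{\times}$. Therefore $V^r\in k^{\times}$ when $r$ is even, while for $r$ odd only $V^{2r}\in k^{\times}$. Defining $C_{\rR}$ to be the unique sign-one representative produces the stated constant with the correct algebraicity dichotomy.

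Finally, Remark~\ref{R:LnoSigmas} together with Corollary~\ref{C:Eagoodprimesintro} lets us pass from $L(E_{\ba}^{\vee}/K,\Sigma_{\ba},0)$ to $L(E_{\ba}^{\vee}/K,0)$ (which omits only Euler factors at primes dividing $f$) by multiplying by $\prod_{\fp\in\Sigma_{\ba},\,\fp\nmid f} Q_{\ba,\fp}^{\vee}(1)^{-1}$; since $E_{\ba}$ has good reduction at every $\fp\nmid f$, each such local factor is a unit in the ring of integers of $K$. By Lemma~\ref{L:chiaGalsymm} the product is Galois-symmetric after raising to the $r$-th power, so it equals $C_{\rE}^r$ with $C_{\rE}\in K^{\times}$, $C_{\rE}^r\in k^{\times}$, and $\sgn(C_{\rE})=1$. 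The main obstacle I anticipate is the regulator step: Proposition~\ref{P:RegCalc} must be unraveled carefully to identify the fixed $B$-basis of the Taelman unit module so that the period matrix presents as a clean Vandermonde and the $\Pi$-product separates from the algebraic Vandermonde factor. A subtler technicality is the parity behaviour of the Vandermonde, which governs the statement's dichotomy between even and odd $r$ and should fall out of a careful Galois-theoretic analysis of $V$ under $\Gal(K/k)$.
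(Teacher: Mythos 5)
Your overall strategy is the same as the paper's: combine Proposition~\ref{P:Lidentities} with the Fang--Taelman class formula (Theorem~\ref{T:TaelmanFang}), compute the regulator from the period lattice via Theorem~\ref{T:periodsintro}(b), handle the class module term with a choice of $r$-th root, and pass from $L(E_{\ba}^{\vee}/K,\Sigma_{\ba},0)$ to $L(E_{\ba}^{\vee}/K,0)$ by absorbing the extra Euler factors into $C_{\rE}$ using Corollary~\ref{C:Eagoodprimesintro}.

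Your sketch of the regulator step, which you rightly flag as the main technical point, is imprecise in several ways that you would need to fix. First, the period matrix $\bigl(\rho_b(\zeta)^{i-1}\Pi([ba/f])\bigr)_{b\in\cP_{\ba},\,1\leqslant i\leqslant r}$ is $d\times r$ with $d=r/(q-1)$, so it is not square and you cannot take its determinant directly. The regulator lives in $\Lie(E_{\ba})(K_{\infty})$, a $k_{\infty}$-space of dimension $rd$; the paper's Proposition~\ref{P:RegCalc} embeds the period lattice via the places above $\infty$, producing for each $b\in I_f$ a genuine $r\times r$ block $M_b$ over $k_{\infty}$ (after expanding $K_{\rho_b(\infty_1)}$ over $k_\infty$ using the basis $\{1,\zeta,\dots,\zeta^{q-2}\}$), and the covolume is $\prod_b\det M_b$. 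Second, the Vandermonde that arises is on $\{\rho_c(\zeta)^{q-1}\mid c\in\cP_{\ba}\}$, i.e.\ the $K^+/k$-conjugates of $\zeta^{q-1}$, a $d\times d$ determinant whose square lies in $A$; it is not a single $r\times r$ Vandermonde in the $K/k$-conjugates of $\zeta$, and one must also show the Vandermonde factors at different places above $\infty$ agree up to sign. Third, the period lattice $\Lambda$ is generally a finite-index sublattice of $\rU(E_{\ba}/B)$, so $\Reg(E_{\ba}/B) = [\Lie(E_{\ba})(B):\Lambda]_A/[\rU(E_{\ba}/B):\Lambda]_A$, and the index $[\rU(E_{\ba}/B):\Lambda]_A\in A_+$ must be folded into $C_{\rR}$ alongside the Vandermonde and the scaling factor $\alpha$ from the model of \S\ref{SSS:EaBmodel}. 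Finally, the argument that $\rh(E_{\ba}/B)$ ``is itself an $r$-th power'' is unnecessary and not quite justified as stated: since $\rh(E_{\ba}/B)\in A_+$ one simply chooses the unique $r$-th root with $\sgn=1$ in $\ok^{\times}$, with no need to argue that it is an $r$-th power in $A$ or $k$.
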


The constants $C_{\rR}$, $C_{\rH}$, and $C_{\rE}$, arise from calculations for the regulator, class module, and extra Euler factors respectively, and typically depend on the model for $E_{\ba}$ (see \S\ref{SS:specialLvalues}).

\begin{corollaryintro} \label{C:HeckeLMainintro}
Under the same conditions as Theorem~\ref{T:GossLMainintro},
\[
L(\psi_{\ba},0) = C_{\rR} \cdot C_{\rH} \cdot C_{\rE} \cdot \prod_{\substack{b \in A_+ \\ \deg b < \deg f \\ (b,f)=1}} \Pi(b/f).
\]
\end{corollaryintro}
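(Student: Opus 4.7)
The plan is to derive the Hecke identity directly from Theorem~\ref{T:GossLMainintro} by extracting an $r$-th root on both sides of the Goss $L$-value formula, using the factorization from Proposition~\ref{P:Lidentities}. First I would combine Proposition~\ref{P:Lidentities}, which gives $L(E_{\ba}^{\vee}/K,\Sigma_{\ba},s) = L(\psi_{\ba},\Sigma_{\ba},s)^r$, with Remark~\ref{R:LnoSigmas} and Corollary~\ref{C:Eagoodprimesintro}, which together allow us to eliminate from $\Sigma_{\ba}$ any primes not dividing $f$. Setting $s=0$ and invoking Theorem~\ref{T:GossLMainintro} then yields
\[
L(\psi_{\ba},0)^r = \Biggl( C_{\rR} \cdot C_{\rH} \cdot C_{\rE} \cdot \prod_{\substack{b \in A_+ \\ \deg b < \deg f \\ (b,f)=1}} \Pi(b/f) \Biggr)^{r}.
\]

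The crux of the proof is then to show that the two $r$-th roots agree, not merely up to an $r$-th root of unity. All the factors $C_{\rR}$, $C_{\rH}$, $C_{\rE}$, and $\Pi(b/f)$ lie in $k_{\infty}(\zeta)^{\times}$, and by the sign hypotheses built into Theorem~\ref{T:GossLMainintro} together with Thakur's standard normalization of $\Pi$-values, the right-hand candidate root has sign $1$. On the other side, $L(\psi_{\ba},0)$ belongs to $k_{\infty}(\zeta)^{\times}$ by the convergence discussion of \S\ref{SSS:HeckeL}, and I would compute its sign directly from the Euler product $\prod_{\fp \nmid \ff} (1 - \psi_{\ba}(\fp))^{-1}$, using that each Frobenius value $\psi_{\ba}(\fp) = \chi_{-\ba}(\fp)$ has a controlled leading term prescribed by the construction of Anderson's Hecke character in~\S\ref{SS:AndHecke} from the normalized Coleman function $g_{\ba}$. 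Two elements of $k_{\infty}(\zeta)^{\times}$ that share the same $r$-th power and the same sign must coincide, which gives the claim.

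The main potential obstacle is the sign computation for $L(\psi_{\ba},0)$. Whenever $r = \#(A/fA)^{\times}$ has a factor coprime to the characteristic $p$, the group of $r$-th roots of unity in $k_{\infty}(\zeta)$ is nontrivial, so the sign-matching step is essential rather than cosmetic; this is precisely why the three constants $C_{\rR}$, $C_{\rH}$, $C_{\rE}$ were deliberately normalized to sign $1$ in Theorem~\ref{T:GossLMainintro}. Beyond this bookkeeping, the argument is formal: a tracking of the factorization of Proposition~\ref{P:Lidentities} through the special-value identity of Theorem~\ref{T:GossLMainintro}, with no new analytic or algebraic input required.
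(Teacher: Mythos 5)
Your proposal is correct and follows the same route as the paper: Remark~\ref{R:LnoSigmas} gives $L(E_{\ba}^{\vee}/K,0) = L(\psi_{\ba},0)^r$, Theorem~\ref{T:GossLMainintro} expresses that same quantity as $(C_{\rR} C_{\rH} C_{\rE}\prod\Pi(b/f))^r$, and the equality of $r$-th roots is extracted by a sign comparison. The paper only writes that the corollary ``follows immediately,'' but the implicit content is exactly the argument you spell out: the constants are normalized to sign $1$ in Theorem~\ref{T:GossLMainintro} precisely so that the $r$-th root is pinned down, and $L(\psi_{\ba},0)$ has sign $1$ because each Euler factor $(1-\psi_{\ba}(\fp))^{-1}$ has sign $1$ (by Theorem~\ref{T:AndHecke}(b), $\inorm{\psi_{\ba}(\fp)}<1$), as do the special $\Pi$-values. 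The only step you gesture at rather than carry out is that two positive elements of $\ok_{\infty}^{\times}$ with equal $r$-th powers coincide; this holds because $\ok_{\infty}^{\times}\cong\oFF_q^{\times}\times(\theta^{-1})^{\QQ}\times U_{\infty}$ and $U_{\infty}$ is torsion-free in characteristic~$p$, so the only issue is the $\oFF_q^{\times}$-component, which the sign condition kills. One small imprecision: you assert all the constants lie in $k_{\infty}(\zeta)^{\times}$, but $C_{\rR}$ (for example) involves the $r$-th root $[\rU(E_{\ba}/B):\Lambda]_A^{1/r}$ and a priori only lies in $\ok^{\times}$; the argument is unaffected since the sign function and the torsion-freeness of $U_{\infty}$ are available on all of $\ok_{\infty}^{\times}$.
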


\begin{remark}
These special value formulas rely on $\ba$ being basic in a fundamental way. By Theorem~\ref{T:periodsintro}, the coordinates of periods of $E_{\ba}$ are $B$-multiples of elements of $k_{\infty}$, and as such the period lattice comprises a portion of Taelman's regulator lattice when computing $\Reg(E_{\ba}/B)$. When $\ba$ is basic, the rank of $\Lambda_{\ba}$ is the same as the rank of the regulator lattice, and thus up to a multiple from $A_+$ the regulator can be determined from the period lattice (see Remark~\ref{R:basic} and Proposition~\ref{P:RegCalc}).
When $\ba$ is not basic, $\Lambda_{\ba}$ forms a submodule of strictly smaller rank than the regulator lattice, and there are additional contributions to $\Reg(E_{\ba}/B)$ coming from as yet to be determined logarithms.
\end{remark}

\subsection{Outline}
After preliminary discussions in \S\ref{S:Prelim}, we review information on Carlitz cyclotomic fields, Coleman functions, and Anderson Hecke characters in~\S\ref{S:cyclo}. We investigate refined properties of Sinha modules $E_{\ba}$ in~\S\ref{S:Sinha}, especially rigid analytic properties of their $t$-comotives in~\S\ref{SS:analyticSinha} and fields of definition in~\S\ref{SS:Fodef}. In~\S\ref{S:GossHeckeL} we prove our main results on Goss and Hecke $L$-series for $E_{\ba}$ and $\psi_{\ba}$, and we provide examples in \S\ref{S:examples}. Finally in \S\ref{App:GoodRed} we characterize primes of good reduction for $E_{\ba}$.

\begin{acknowledgments}
The authors thank C.-Y.\ Chang, N.\ Ramachandran, and F.-T.~Wei for useful discussions on the contents of this paper.
\end{acknowledgments}

\section{Preliminaries} \label{S:Prelim}

\subsection{Notation} \label{SS:Notation}
We will use the following notation throughout.
\begin{longtable}{p{1truein}@{\hspace{5pt}$=$\hspace{5pt}}p{4.75truein}}
$\FF_q$ & a finite field with $q = p^m$ elements.\\
$A$ & $\FF_q[\theta]$, a polynomial ring in a variable $\theta$.\\
$A_+$ & the monic elements of $A$. \\
$k$ & $\FF_q(\theta)$, the field of fractions of $A$. \\
$k_{\infty}$ & $\laurent{\FF_q}{1/\theta}$, the completion of $k$ at its infinite place. \\
$\C$ & completion of an algebraic closure of $k_{\infty}$. \\
$\inorm{\,\cdot\,}$ & the $\infty$-adic norm on $\C$, normalized so that $\inorm{\theta} = q$. \\
$\deg$ & $-\ord_{\infty} = \log_q \inorm{\,\cdot\,}$. \\
$\ok$ & the algebraic closure of $k$ in $\C$. \\
$R^{\perf}$ & for a ring $R\subseteq \C$, the perfection of $R$ in $\C$. \\
$\ttheta$ & a fixed choice of $(-\theta)^{1/(q-1)}$ in $\ok$. \\
$f$ & a fixed polynomial in $A_+$, $\deg f \geqslant 1$. \\
$a \bmod f$ & for $a \in A$, the remainder of $a$ upon division by~$f$. \\
$K$, $K_f$ & the $f$-th Carlitz cyclotomic extension of $k$ (see \S\ref{SS:cyclodefs}), contained in~$\ok$. \\
$B$, $B_f$ & the integral closure of $A$ in $K$. \\
$\bA$ & $\FF_q[t]$, for a variable $t$ independent from $\theta$. \\
$\bk$, $\bK$, $\bB$ & fixed isomorphic copies of $k$, $K$, $B$ in $\overline{\FF_q(t)}$.
\end{longtable}

\begin{subsubsec}{Matrices}
For a ring $R$, we let $\Mat_{\ell \times m}(R)$ denote the set of $\ell \times m$ matrices with entries in $R$. We set $\Mat_m(R) = \Mat_{m\times m}(R)$ and $R^m = \Mat_{m\times 1}(R)$. For a matrix $M$, $M^{\tr}$ denotes its transpose. When $R$ is commutative, we let $\Char(N,X) \in R[X]$ to be the characteristic polynomial of a square matrix $N$ and $\Char(\alpha,V,X)$ to be the characteristic polynomial of an $R$-linear map $\alpha : V \to V$ on a free $R$-module $V$ of finite rank.
\end{subsubsec}

\begin{subsubsec}{Rings of functions and Frobenius twisting}
For $c \in \C^{\times}$, we define the \emph{Tate algebra}
\[
\TT_c \assign \biggl\{ \sum_{i=0}^\infty a_i t^i \biggm| \inorm{c}^i \cdot \inorm{a_i} \to 0 \biggr\},
\]
which consists of power series that converge on the closed disk of $\C$ of radius $\inorm{c}$. The canonical Tate algebra for the closed unit disk is denoted $\TT = \TT_1$. Primarily we will use $\TT$ and $\TT_\theta$ in this paper. The \emph{Gauss norm} on $\TT_c$ is defined for $g = \sum a_i t^i \in \TT_c$ as
\[
\dnorm{g}_c \assign \max_{0 \leqslant i < \infty} \bigl( \inorm{c}^i \cdot \inorm{a_i} \bigr),
\]
and $\TT_c$ is complete with respect to $\dnorm{\,\cdot\,}_c$. Moreover, $\TT_c$ is the completion of $\C[t]$ with respect to the Gauss norm. If $F$ is complete, then $\TT_c(F)$ is also complete. We will write $\dnorm{\,\cdot\,} = \dnorm{\,\cdot\,}_1$. For more information on Tate algebras see~\cite{FresnelvdPut}.

For $n \in \ZZ$ and $g = \sum a_i t^i \in \laurent{\C}{t}$, we define as usual the \emph{$n$-th Frobenius twist} to be
$g^{(n)} \assign \sum a_i^{q^n} t^i$. Restricted to subrings such as $\C[t]$ or $\TT$, Frobenius twisting is an automorphism. Moreover, for $g \in \TT$, we have $\dnorm{g^{(n)}} = \dnorm{g}^{q^n}$. More generally, for $c \in \C^{\times}$, we have $g \mapsto g^{(n)} : \TT_c \iso \TT_{c^{q^n}}$. For a matrix $M$ with entries in $\laurent{\C}{t}$, we let $M^{(n)}$ be the corresponding matrix where entrywise $(M^{(n)})_{ij} = M_{ij}^{(n)}$.
\end{subsubsec}

\begin{subsubsec}{Twisted polynomial rings}
Let $F$ be a field extension of $\FF_q$. Let $\tau: F \to F$ denote the $q$-th power Frobenius map $x \mapsto x^q$. We set $F[\tau]$ to be the ring of twisted polynomials, subject to the relation $\tau c = c^q\tau$, for $c \in F$. Then we consider $F[\tau]$ to be the $\FF_q$-algebra $\End_{\FF_q}(\GG_a/F)$ (see \cite{Goss}*{Ch.~1}). We further consider the ring $F[t,\tau]$ of polynomials in $t$ and $\tau$, which contains $F[\tau]$ as a subring, and satisfies
\[
\tau h = h^{(1)} \tau, \quad h \in F[t].
\]
If $F$ is perfect, we let $\sigma = \tau^{-1}$, and define $F[\sigma]$ and $F[t,\sigma]$ similarly with $\sigma h = h^{(-1)} \sigma$.

For $n \geqslant 1$ we define $\Mat_n(F)[\tau] = \Mat_n(F[\tau])$ and $\Mat_n(F)[\sigma] = \Mat_n(F[\sigma])$ (when $F$ is perfect) similarly so that
\[
\tau M = M^{(1)} \tau, \quad \sigma M = M^{(-1)} \sigma \quad M \in \Mat_n(F).
\]
We consider $\Mat_n(F[\tau])$ to be the $\FF_q$-algebra $\End_{\FF_q}(\GG_a^n/F)$, and if $\alpha = M_0 + M_1 \tau + \dots + M_s \tau^s \in \Mat_n(F)[\tau]$ and $\bx \in F^n$, then
\begin{equation} \label{E:twistedpolyop}
\alpha(\bx) = M_0 \bx + M_1 \bx^{(1)} + \dots + M_s \bx^{(s)}.
\end{equation}
For such $\alpha$, we set $\rd \alpha \assign M_0$.
\end{subsubsec}

\begin{subsubsec}{The adjoint $*$-operator}
For $F$ perfect, $* : F[\tau] \to F[\sigma]$ is defined by
\[
\alpha = \sum a_i \tau^i \mapsto \alpha^* = \sum a_i^{1/q^i} \sigma^i.
\]
Then $*$ is an anti-isomorphism of $\FF_q$-algebras, and notably $(\alpha\beta)^* = \beta^*\alpha^*$ for $\alpha$, $\beta \in F[\tau]$. By abuse of notation the inverse of $*$ is also denoted by $* : F[\sigma] \to F[\tau]$, and it is thought of as an anti-involution. See \cite{Goss}*{\S 1.7} for more details.

We extend the $*$-operator to matrices, $* : \Mat_{\ell \times m}(F[\tau]) \to \Mat_{m \times \ell}(F[\sigma])$, by setting
\[
(\alpha_{ij})^* \assign (\alpha_{ij}^*)^{\tr}.
\]
In this way, if $\alpha$, $\beta$ are matrices over $F[\tau]$ with dimensions so that $\alpha\beta$ is defined, then $(\alpha\beta)^* = \beta^*\alpha^*$. Again by abuse of notation the inverse of $*$ is also denoted by~$*$. See \citelist{\cite{BP20}*{\S 1.5.4} \cite{NamoijamP24}*{\S 2.2}} for more details.
\end{subsubsec}

\subsection{Anderson \texorpdfstring{$t$}{t}-modules and \texorpdfstring{$t$}{t}-comotives} \label{SS:tmoddualtmot}
Anderson defined $t$-modules in~\cite{And86} as higher dimensional versions of Drinfeld modules. Together with this definition Anderson introduced the theory of its corresponding $t$-motive. Later $t$-comotives (n\'ee dual $t$-motives) were defined in~\cite{ABP04}, and although one can attach both a $t$-motive and $t$-comotive to a given $t$-module, they do not a priori carry the same information. From the standpoint of abelian $t$-modules, an explicit equivalence was settled by Maurischat~\cite{Maurischat21}. For more on the general connections among these objects see \cites{BP20, HartlJuschka20, NamoijamP24}.

\begin{remark} \label{R:comotive}
As suggested by Gazda and Maurischat~\cite{GazdaMaurischat25}*{\S 3}, we have elected to use the term ``$t$-comotive'' instead of ``dual $t$-motive.'' The reason is that the latter term, though historical, can lead to confusion when one refers also to the ``dual of a $t$-motive.'' For similar reasons and  additional consistency we later use ``coabelian'' instead of ``$\bA$-finite.''
\end{remark}

\begin{subsubsec}{$t$-modules}
Let $F$ be a field extension of $\FF_q$. We fix an $\FF_q$-algebra homomorphism $i : \bA \to F$, and let $\otheta = i(t)$. We call the kernel of $i$ the characteristic of $F$, and if $\ker i = \{0\}$, the characteristic is generic. When $F \supseteq A$, we will always choose $i(t) = \theta \in A$ as defined in \S\ref{SS:Notation}. Otherwise, in particular when $F$ is a finite field, we will need to specify $\otheta$. An \emph{Anderson $t$-module} defined over $F$ is an $\FF_q$-algebra homomorphism $E : \bA \to \Mat_d(F)[\tau]$ such that $\rd E_t - \otheta \rI_d$ is nilpotent. We refer to $d$ as the dimension of $E$, and if $d=1$, then~$E$ is a Drinfeld module.
Using~\eqref{E:twistedpolyop}, $E$ induces an $\bA$-module structure on $L^d$, where~$L$ is an $F$-algebra. That is, for $a \in \bA$ and $\bx \in L^d$, we have $a \cdot \bx = E_a(\bx)$. We will write~$E(L)$ for $L^d$ with this $\bA$-module structure. For $\Lie(E)$, we can identify $\Lie(E)(L)$ with~$L^d$, and then the induced $\bA$-module structure is obtained through $a \mapsto \rd E_a$.

For a $t$-module $E$ over $F$ and an element $b \in \bA$, the $b$-torsion submodule of $E$ is $E[b] = \{ \bx \in \oF{}^{d} \mid E_b(\bx) = 0 \}$.
We say that $E$ is regular if there is an integer $r$ so that for all $b \in \bA$ relatively prime to the characteristic $\ker i \subseteq \bA$, we have $E[b] \cong (\bA/b \bA)^r$. In this case, if $\lambda \in \bA_+$ is a prime element with $\lambda \notin \ker i$, we have the Tate module
\[
T_{\lambda}(E) = \varprojlim E[\lambda^m] \cong \bA_{\lambda}^r,
\]
where $\bA_{\lambda}$ is the $\lambda$-adic completion of $\bA$. As usual, $T_{\lambda}(E)$ is a Galois module over $\bA_{\lambda}$.
\end{subsubsec}

\begin{subsubsec}{$t$-comotives} \label{SSS:dualtmotdef}
We follow the initial definitions from~\cite{ABP04}*{\S 4}. Let $F$ be a perfect $\bA$-field, and let $H$ be a left $F[t,\sigma]$-module. Consider the three conditions:
\begin{enumerate}
\item[(i)] $H$ is free of finite rank as a left $F[t]$-module;
\item[(ii)] $H$ is free of finite rank as a left $F[\sigma]$-module;
\item[(iii)] For $N \gg 0$, we have $(t-\otheta)^N H \subseteq \sigma H$.
\end{enumerate}
We say that $H$ is a \emph{$t$-comotive} if (ii)--(iii) holds, and additionally $H$ is \emph{coabelian} if (i) is satisfied. The \emph{dimension} of $H$ is $\rank_{F[\sigma]}(H)$, and if $H$ is coabelian, the \emph{rank} of $H$ is $\rank_{F[t]}(H)$. Morphisms of $t$-comotives are morphisms of left $F[t,\sigma]$-modules.
\end{subsubsec}

\begin{subsubsec}{From $t$-modules to $t$-comotives} \label{SSS:tmod-dualtmot}
Let $E$ be a $d$-dimensional $t$-module over a perfect $\bA$-field $F$. Then we define the $t$-comotive associated to $E$ to be
\[
H(E) = \Mat_{1 \times d}(F[\sigma]).
\]
Then $H(E)$ has the natural structure of a left $F[\sigma]$-module, and for $b \in \bA$ and $h \in H(E)$ we define $b \cdot h \assign h E_b^*$. In this way $H(E)$ has the structure of a left $F[t,\sigma]$-module, with condition (ii) of \ref{SSS:dualtmotdef} trivially satisfied. Furthermore, the condition that $\rd E_t - \otheta I_d$ be nilpotent implies that condition (iii) of \ref{SSS:dualtmotdef} holds, and so $H(E)$ is a $t$-comotive. If $H(E)$ is coabelian, then we also say that $E$ is coabelian, and then $E$ is regular with $r = \rank_{F[t]} H(E)$. For more details, see \citelist{\cite{BP20}*{\S 1.5.4} \cite{HartlJuschka20}*{\S 2.5.2} \cite{NamoijamP24}*{\S 2.3}}.
\end{subsubsec}

\begin{subsubsec}{From $t$-comotives to $t$-modules}
After some preliminary definitions the transition from $t$-comotives to $t$-modules, originally due to Anderson, can be made explicit. Define two maps $\varepsilon_0$, $\varepsilon_1 : \Mat_{1\times d}(F[\sigma]) \to F^d$ by
\begin{equation} \label{E:eps01}
\varepsilon_0 \biggl( \sum \ba_i \sigma^i \biggr) \assign \ba_0^{\tr}, \quad
\varepsilon_1 \biggl( \sum \ba_i \sigma^i \biggr) \assign \biggl( \sum \ba_i^{(i)} \biggr)^{\tr},
\end{equation}
where $\varepsilon_0$ is $F$-linear and $\varepsilon_1$ is $\FF_q$-linear.
Now let $H$ be a $t$-comotive. One checks that after selecting an $F[\sigma]$-basis for $H$ the induced maps,
\begin{equation} \label{E:eps01isoms}
\varepsilon_0 : \frac{H}{\sigma H} \iso F^d, \quad
\varepsilon_1 : \frac{H}{(\sigma -1 )H} \iso F^d,
\end{equation}
are isomorphisms of $F$-vector spaces and $\FF_q$-vector spaces respectively. They induce $\bA$-module structures on $F^d$, for which there is a $t$-module $E: \bA \to \Mat_d(F[\tau])$ with
\begin{equation} \label{E:LieEFandEFisoms}
\frac{H}{\sigma H} \cong \Lie(E)(F)\ \ \textup{($F[t]$-linear),} \quad
\frac{H}{(\sigma -1 )H} \cong E(F)\ \ \textup{($\bA$-linear)}.
\end{equation}
The defining matrices for $E$ depend on the choice of $F[\sigma]$-basis for $H$. Specifically, if we fix an $F[\sigma]$-basis $h_1, \dots, h_d$ of $H$, then there is a matrix $Q \in \Mat_d(F[\sigma])$ such that $t(h_1, \dots, h_d)^{\tr} = Q(h_1,\dots, h_d)^{\tr}$. Then $E$ is defined by
\begin{equation} \label{E:Etdef}
E_t = Q^* \in \Mat_d(F[\tau]).
\end{equation}
Moreover, if $h_1', \dots, h_d'$ is another $F[\sigma]$-basis of $H$, say $(h_1', \dots, h_d')^{\tr} = P(h_1, \dots, h_d)^{\tr}$ for $P \in \GL_d(F[\sigma])$, then the resulting $t$-module $E' : \bA \to \Mat_d(F[\tau])$ satisfies
\begin{equation} \label{E:Etprime}
E'_t = (P^*)^{-1} \cdot E_t \cdot P^*
\end{equation}
(e.g., see \cite{NamoijamP24}*{Eq.~(3.22)}). Thus any two such $t$-modules are isomorphic over~$F$. Notable also is that the processes $H \mapsto E$ and $E \mapsto H$ in this paragraph and \S\ref{SSS:tmod-dualtmot} are mutual inverses up to isomorphism. See \citelist{\cite{BP20}*{\S 1.5.6} \cite{HartlJuschka20}*{Prop.~2.5.8} \cite{NamoijamP24}*{\S 3.1}} for more details.
\end{subsubsec}

\begin{subsubsec}{Exponentials}
Let $F$ be an intermediate field $k \subseteq F \subseteq \C$, and let $E : \bA \to \Mat_d(F[\tau])$ be a $t$-module. Anderson~\cite{And86} defined a series $\exp_E = \sum_{i\geqslant 0} C_i \tau^i \in \power{\Mat_d(F)}{\tau}$ such that $C_0=\rI_d$ and as power series in $\tau$,
\[
\exp_E \cdot\, \rd E_b = E_b \cdot \exp_E, \quad b \in \bA.
\]
Moreover, $\exp_E$ is unique and is determined by the single functional equation for~$t$ itself. Letting $\bz = (z_1, \dots, z_d)^{\tr} \in \C[z_1, \dots, z_d]^d$ denote a system of coordinates for $E(\C)$, we can express $\exp_E$ as $\exp_E(\bz) = \sum_{i\geqslant 0} C_i \bz^{(i)} \in \power{F}{z_1, \dots, z_d}^d$, and the induced function $\exp_E : \C^d \to \C^d$ is entire. The functional equation for $\exp_E$ does not immediately imply that each $C_i \in \Mat_d(F)$ (but rather only in $\Mat_d(\oF)$), but this can be verified from Anderson's original arguments in~\cite{And86} (see \cite{NamoijamP24}*{Rem.~2.6}). The map
\[
\exp_E : \Lie(E)(\C) \to E(\C),
\]
is an $\bA$-module homomorphism, and if it is surjective, then $E$ is said to be \emph{uniformizable}. The kernel $\Lambda_E = \ker \exp_E$ is a discrete $\bA$-submodule of $\Lie(E)(\C)$.
\end{subsubsec}

\begin{subsubsec}{$t$-frames of coabelian $t$-modules} \label{SSS:tframe}
Continuing with the constructions of the previous paragraph, assume now that $E$ is coabelian of rank~$r$, and let $\bh_1, \dots, \bh_r \in H(E)$ be an $F[t]$-basis. For $\bsalpha = (\alpha_1, \dots, \alpha_r) \in \Mat_{1 \times r}(F[t])$, the map
\[
\iota : \Mat_{1 \times r}(F[t]) \to H(E), \quad
(\bsalpha \mapsto \alpha_1 \bh_1 + \dots + \alpha_r \bh_r),
\]
is an isomorphism of $F[t]$-modules. Notably, for $\bsalpha \in \Mat_{1\times r}(F[t])$ and $b \in \bA$ we have
\[
\rd E_b \bigl( (\varepsilon_0 \circ \iota)(\bsalpha) \bigr) = (\varepsilon_0 \circ \iota)(b \bsalpha), \quad
E_b \bigl( (\varepsilon_1 \circ \iota)(\bsalpha) \bigr) = (\varepsilon_1 \circ \iota)(b \bsalpha),
\]
(e.g., see \cite{NamoijamP24}*{Lem.~3.7}). If we let $\bh = (\bh_1, \dots, \bh_r)^{\tr}$, then there is $\Phi \in \Mat_r(F[t])$ with
\[
\sigma \bh = \Phi \bh,
\]
and $(\iota,\Phi)$ is a \emph{$t$-frame} of $E$. Also, $\det \Phi = c(t-\theta)^d$ for $c \in F^{\times}$ (see~\cite{NamoijamP24}*{Prop.~3.5}).
\end{subsubsec}

\begin{subsubsec}{Anderson's exponentiation theorem} \label{SSS:AndExp}
Continuing with the constructions of \S\ref{SSS:tframe}, a matrix $\Psi \in \GL_r(\TT)$ that satisfies $\Psi^{(-1)} = \Phi \Psi$ is a \emph{rigid analytic trivialization} for $E$, or for $\Phi$. The entries of $\Psi$ converge everywhere on $\C$ by~\cite{ABP04}*{Prop.~3.1.3}.

Anderson proved that the map $\varepsilon_0 \circ \iota : \Mat_{1\times r}(\C[t]) \to \C^d$ extends to a unique bounded $\C$-linear map
\[
\cE_0 : \bigl(\Mat_{1 \times r}(\TT_{\theta}), \dnorm{\,\cdot\,}_{\theta} \bigr) \to \bigl( \C^d, \inorm{\,\cdot\,} \bigr)
\]
of normed vector spaces (see \citelist{\cite{HartlJuschka20}*{Prop.~2.5.8} \cite{NamoijamP24}*{Lem.~3.12}}). Determining $\cE_0$ can be worked out for many common $t$-modules (see \cite{NamoijamP24}*{\S 3.5} for various examples and \S\ref{SSS:E0} below). We also let $\cE_1 \assign \varepsilon_1 \circ \iota : \Mat_{1\times r}(\C[t]) \to \C^n$.
A key result connecting exponentiation on $E$ to its $t$-comotive is due to Anderson.
\end{subsubsec}

\begin{theorem}[{Anderson; see \citelist{\cite{HartlJuschka20}*{Thm.~2.5.21, Cor.~2.5.23, Thm.~2.5.32} \cite{Maurischat22}*{\S 1} \cite{NamoijamP24}*{Thm. 3.13, Thm.~3.18}}}] \label{T:Anderson}
Let $E : \bA \to \Mat_d(\C[\tau])$ be a coabelian $t$-module. Let $(\iota,\Phi)$ be a $t$-frame of $E$.
\begin{alphenumerate}
\item Suppose $\bg \in \Mat_{1\times r}(\TT_{\theta})$ and $\bsalpha \in \Mat_{1\times r}(\C[t])$ satisfy $\bg^{(-1)} \Phi - \bg = \bsalpha$. Then
\[
\exp_E \bigl(\cE_0(\bg+\bsalpha) \bigr) = \cE_1(\bsalpha).
\]
\item $E$ is uniformizable if and only if it has a rigid analytic trivialization.
\item For a rigid analytic trivialization $\Psi$ of $E$,
\[
\Lambda_E = \cE_0 \bigl( \Mat_{1\times r}(\bA) \cdot \Psi^{-1} \bigr) \subseteq \C^n.
\]
\end{alphenumerate}
\end{theorem}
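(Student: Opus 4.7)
The approach follows Anderson's original strategy as systematized in \cite{HartlJuschka20} and \cite{NamoijamP24}: one identifies $\exp_E$ via its explicit action on the $t$-comotive and uses the analytic extension $\cE_0$ to transport formal identities in $\TT_\theta$ into analytic ones in $\C^d$. The key observation is that the relation $\sigma \bh = \Phi \bh$ from the $t$-frame, paired with the isomorphisms $\varepsilon_0$, $\varepsilon_1$ of \eqref{E:eps01isoms}, rigidly connects the $\Lie(E)$-side to the $E$-side via the twist matrix $\Phi$.

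For part (a), I would iterate the defining equation $\bg^{(-1)} \Phi - \bg = \bsalpha$ to obtain, for each positive integer $N$, a telescoping identity
\[
\bg = \bg^{(-N)} \Phi^{(-N+1)} \Phi^{(-N+2)} \cdots \Phi - \sum_{i=0}^{N-1} \bsalpha^{(-i)} \Phi^{(-i+1)} \cdots \Phi.
\]
Applying $\cE_0$, one must show two things. First, that the leading term vanishes in the limit: this reduces to controlling $\dnorm{\bg^{(-N)} \Phi^{(-N+1)} \cdots \Phi}_\theta$ using that Frobenius twisting contracts Gauss norms of $\bg$ (taking $q^N$-th roots of coefficients) faster than the iterated products of $\Phi$ inflate them on the Tate algebra radius $\inorm{\theta}$. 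Second, that the sum's image under $\cE_0$ matches $\exp_E\bigl(\cE_0(\bg + \bsalpha)\bigr)$: here one uses the fact that the coefficients $C_i$ of $\exp_E$, viewed through the $t$-comotive, correspond exactly to the action of iterated $\Phi$-twists modulo $\sigma H$ and $(\sigma - 1) H$. Combining the $\FF_q$-linearity of $\varepsilon_1$ with the identity $\cE_1 = \varepsilon_1 \circ \iota$ converts the $i$-th term of the sum into $C_i \cdot \cE_1(\bsalpha)^{(i)}$, and summing yields the stated formula.

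Parts (b) and (c) then follow readily. For part (b), the implication that a rigid analytic trivialization $\Psi$ forces uniformizability follows from part (a): given any target $\cE_1(\bsalpha) \in E(\C)$, one constructs a preimage by taking $\bg$ to be the appropriate combination of rows of $\Psi$ and $\Psi^{-1}$, and the surjectivity of $\cE_1$ (which follows from the isomorphism in \eqref{E:LieEFandEFisoms}) produces every element of $E(\C)$. Conversely, if $E$ is uniformizable, a basis of periods of $\Lambda_E$ gives rise to a matrix $\Psi^{-1}$ whose rows $\bg$ satisfy $\bg^{(-1)} \Phi = \bg$ by applying $\sigma$ and using $\sigma \bh = \Phi \bh$. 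Part (c) is the specialization $\bsalpha = 0$ of part (a): each row of $\Psi^{-1}$ satisfies the hypothesis, so $\cE_0$ sends it into $\ker \exp_E = \Lambda_E$; an $\bA$-rank count (each side has rank $r$) upgrades the inclusion to equality.

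The main obstacle is the convergence analysis at the heart of part (a). Precisely controlling the growth of $\dnorm{\Phi^{(-i)} \cdots \Phi}_\theta$ requires quantitative estimates tied to the relation $\det \Phi = c(t - \theta)^d$ and the fact that the Tate algebra $\TT_\theta$ is exactly where $\cE_0$ extends boundedly; any misalignment of the norm bookkeeping would break the identification of the telescoping sum with the power series for $\exp_E$. Beyond that, all steps are essentially formal manipulations within the $t$-comotive framework.
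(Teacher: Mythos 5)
The paper does not prove Theorem~\ref{T:Anderson}. It is Anderson's exponentiation theorem, stated as a quoted result with citations to Hartl--Juschka, Maurischat, and Namoijam--Papanikolas; the text around it in \S\ref{SSS:AndExp} explains only the framework. So there is no ``paper's own proof'' against which to compare your proposal, and I will instead assess your sketch against what the cited sources actually do.

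Your high-level strategy for part (a) --- iterate $\bg^{(-1)}\Phi - \bg = \bsalpha$, telescope, pass to the limit --- is morally the right mechanism, and it matches the proofs in \cite{NamoijamP24}*{Thm.~3.13} and \cite{HartlJuschka20}. But the way you propose to handle the two key steps has a genuine gap. You claim the leading term $\bg^{(-N)}\Phi^{(-N+1)}\cdots\Phi$ vanishes in $\dnorm{\,\cdot\,}_\theta$ because ``Frobenius twisting contracts Gauss norms of $\bg$ faster than the iterated products of $\Phi$ inflate them.'' This does not work as a Gauss-norm estimate. For $\bg \in \TT_\theta$, the Gauss norm $\dnorm{\bg^{(-N)}}_\theta$ does \emph{not} tend to zero (in general it tends to a nonzero limit controlled by the $t$-degree where the max is attained), whereas $\dnorm{\Phi^{(-N+1)}\cdots\Phi}_\theta$ grows roughly like $\inorm{\theta}^{N\deg_t\Phi}$. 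So the naive product of norms blows up. The actual mechanism is structural, not metric: via $\iota$ one has $\iota(\bsalpha^{(-i)}\Phi^{(-i+1)}\cdots\Phi) = \sigma^i\iota(\bsalpha)$, so the leading term becomes (formally) an element of $\sigma^N H$, whose contribution to $\varepsilon_0$ and $\varepsilon_1$ is killed once $N$ is large --- and the convergence is then an $\infty$-adic statement in $\C^d$ after pushing through $\cE_0$, $\cE_1$, not a Gauss-norm argument in $\TT_\theta$.

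There is a second gap in your step two. You apply $\cE_0$ to the telescoping identity and assert that ``the sum's image under $\cE_0$ matches $\exp_E(\cE_0(\bg+\bsalpha))$.'' But applying $\cE_0$ to both sides of a telescoping identity for $\bg+\bsalpha$ can only give you information about $\cE_0(\bg+\bsalpha)$ itself, not about its image under $\exp_E$. The actual proof introduces $\exp_E$ through its power-series coefficients $C_i$ and matches the $i$-th twist against $\varepsilon_1$ applied to $\sigma^i\iota(\bsalpha)$, using $\varepsilon_1(h) = \sum_i \ba_i^{(i)}$ when $h = \sum\ba_i\sigma^i$; this is a genuinely different bookkeeping than applying $\cE_0$ term by term. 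As written, your sketch conflates $\cE_0$ with $\exp_E\circ\cE_0$. Parts (b) and (c) are deduced correctly in outline, but the rank-count in (c) requires knowing in advance that $\Lambda_E$ is a free $\bA$-module of rank $r$ (a theorem of Anderson), which should be cited rather than asserted.
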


Thus values of $\exp_E$ can be captured in terms of values of rigid analytic functions satisfying certain difference equations. Moreover, the period lattice is generated by the rows of $\Psi^{-1}$ under evaluation by $\cE_0$.

\subsection{Hecke characters}
We adapt definitions from Schappacher~\cite{Schappacher} for algebraic Hecke characters, but see also work of Goss~\cite{Goss92a} and Hayes~\cite{Hayes93}.

\begin{subsubsec}{Signs and positivity} \label{SSS:signs}
For precision we adopt notions from~\cite{ANT17b}*{\S 1} for defining sign functions on $\ok_{\infty}^{\times}$. See also \cite{Goss}*{\S 7.2}. For $r \in \QQ$, we define $\theta^{r}$ unambiguously in the following way. We let $\theta_1 =\theta$, and for $n \geqslant 2$, take $\theta_n \in \ok_{\infty}^{\times} \subseteq \C^{\times}$ so that $\theta_n^n=\theta_{n-1}$. Then for $r = m/n!$, we set $\theta^{r} \assign \theta_{n}^{m}$, which is well-defined. Then $\theta^r \theta^s = \theta^{r+s}$ for all $r$, $s \in \QQ$, and moreover, for $r \in \QQ$, we have $\deg(\theta^r) = r$. Letting $U_{\infty} \assign \{ x \in \ok_{\infty} : \ord_{\infty}(x-1) > 0 \}$, we have
$\ok_{\infty}^{\times} = \oFF_q^{\times} \times ( \theta^{-1} )^{\QQ} \times U_{\infty}$,
and so each $x \in \ok_{\infty}^{\times}$ can be expressed uniquely as
\[
x = \sgn(x)\cdot \theta^{\deg(x)} \cdot u(x), \quad \sgn(x) \in \oFF_q^{\times},\ u(x) \in U_{\infty}.
\]
In this way we define the sign function $\sgn : \ok_{\infty}^{\times} \to \oFF_q^{\times}$. Elements of $\ok_{\infty}^{\times}$ of sign~$1$ are called \emph{positive}, e.g., the monic polynomials~$A_+$ are precisely the positive elements of~$A$.
\end{subsubsec}

\begin{subsubsec}{Algebraic Hecke characters} \label{SSS:Hecke}
Let $K$ and $F$ be finite separable extensions of $k$ contained in $\C$, and assume $K/k$ is abelian. Let $G = \Gal(K/k)$, and fix $\Theta = \sum m_\rho \rho \in \ZZ[G]$. Let $B$ be the integral closure of $A$ in $K$. For a non-zero integral ideal $\ff$ of $B$, let~$\cI_{K,\ff}$ denote the group of fractional ideals of $K$ with support disjoint from~$\ff$.
An \emph{algebraic Hecke character} $\chi$ of conductor dividing $\ff$ and infinity type $\Theta$ is a group homomorphism
\[
\chi : \cI_{K,\ff} \to F^{\times},
\]
such that for any principal ideal $(\alpha) \in \cI_{K,\ff}$ with $\alpha \in K^{\times}$ and $\alpha \equiv 1 \pmod{\ff}$, we have
\begin{equation} \label{E:chiprinc}
\chi((\alpha)) = \prod_{\rho \in G} \rho(\alpha)^{m_{\rho}} \rassign \alpha^{\Theta}.
\end{equation}
\end{subsubsec}

\subsection{Goss \texorpdfstring{$L$}{L}-series} \label{SS:GossL}
We consider $L$-series initially defined by Goss~\cite{Goss}*{Ch.~8}, which take values in $\C$. Although these $L$-series can be extended to Goss's non-archimedean analytic space $S_{\infty}$ (see \cite{Goss}*{\S 8.1}), we will need only consider their values at integers.

\begin{subsubsec}{$L$-series attached to Anderson $t$-modules} \label{SSS:GossLtmod}
We appeal to the definitions in Goss~\cite{Goss}*{\S 8.6}. Let $F/k$ be a finite separable extension with ring of integers $B$, and let $E : \bA \to \Mat_d(B[\tau])$ be a coabelian $t$-module of rank~$r$. For $\lambda \in \bA_+$ irreducible, let $\beta_\lambda : \Gal(F^{\sep}/F) \to \GL_r(\bA_{\lambda})$ be the representation induced by the Galois action on the Tate module $T_{\lambda}(E)$, and let $\beta_{\lambda}^{\vee}$ be its dual. For an integral prime $\fp$ of $B$ such that $\fp \nmid \lambda(\theta)$, let $\alpha_{\fp} \in \Gal(F^{\sep}/F)$ be a Frobenius element and set
\[
P_{\beta_{\lambda},\fp}(X) = \Char(\alpha_{\fp},T_{\lambda}(E),X), \quad
P_{\beta_{\lambda}^{\vee},\fp}(X) = \Char(\alpha_{\fp},T_{\lambda}(E)^{\vee},X),
\]
to be the associated characteristic polynomials in $\bA_{\lambda}[X]$ and $\bk_{\lambda}[X]$.

We now assume that $(\beta_{\lambda})$ and $(\beta_{\lambda}^{\vee})$ form \emph{strictly compatible} families of representations (see Goss~\cite{Goss}*{Def.~8.6.5}). Therefore, there is a finite set of primes $\Sigma_E$ of $B$ such that for primes $\fp$ outside~$\Sigma_E$, $P_{\beta_{\lambda},\fp}(X)$ and $P_{\beta_{\lambda}^{\vee},\fp}(X)$ have coefficients in $\bA$ and $\bk$ respectively and are independent of the choice of $\lambda$. Thus we can write
\[
P_{\fp}(X) = P_{\beta_{\lambda},\fp}(X)|_{t=\theta} \in A[X], \quad
P_{\fp}^{\vee}(X) = P_{\beta_{\lambda},\fp}^{\vee}(X)|_{t=\theta} \in k[X],
\]
without ambiguity. We further take $Q_{\fp}(X) = X^{r} P_{\fp}(1/X)$ and $Q_{\fp}^{\vee}(X) = X^{r} P_{\fp}^{\vee}(1/X)$ to be their reciprocal polynomials. For $\fp$ a finite prime of $F$ lying above $\wp \in A_+$, i.e., a maximal ideal of $B$ above $\wp A$, we let $\cN(\fp) = \wp^{[\FF_{\fp}:\FF_{\wp}]}$, where $\FF_{\wp} = A/\wp A$ and $\FF_{\fp} = B/\fp$.  Moreover, we extend $\cN(\fb)$ to all integral ideals of~$B$, and then
\[
\cN(\fb) A = \biggl( \prod_{\rho \in \Emb(F/k)} \rho(\fb) \biggr) \cap A
\]
gives the compatibility with the ideal norm. The \emph{Goss $L$-series} for $E/F$ are defined as
\begin{equation}
L(E/F,s) \assign \prod_{\fp \notin \Sigma_E} Q_{\fp}\bigl( \cN(\fp)^{-s} \bigr)^{-1}, \quad
L(E^{\vee}/F,s) \assign \prod_{\fp \notin \Sigma_E} Q_{\fp}^{\vee} \bigl( \cN(\fp)^{-s} \bigr)^{-1}.
\end{equation}
Here $s \in \ZZ$, and the products take values (when they converge) in $k_{\infty}$.
\end{subsubsec}

\begin{remark}
Goss~\cite{Goss}*{Ex.~8.6.6.2} demonstrated that, when $E$ is a Drinfeld module of positive rank, its associated Galois representations are strictly compatible. In this case the primes~$\Sigma_E$ are the primes of bad reduction for~$E$. For higher dimensional $t$-modules the situation is more subtle, as discovered by Gardeyn~\cite{Gardeyn02}*{\S 9}. However, as we will see in \S\ref{SS:reduction} and \S\ref{App:GoodRed} the Sinha modules $E_{\ba}$ we investigate have well-defined primes of good reduction and we avoid the technicalities in~\cite{Gardeyn02}.
\end{remark}

\begin{subsubsec}{Hecke $L$-series} \label{SSS:HeckeL}
Hecke $L$-series over function fields were initially developed by Goss~\cite{Goss92a}. In our context let $\chi : \cI_{K,\ff} \to F^{\times}$ be a Hecke character as in \S\ref{SSS:Hecke}, and let $B$ be the integral closure of $A$ in $K$. We define the Goss $L$-series attached to $\chi$ to be
\[
L(\chi,s) \assign \sum_{\substack{\fb \subseteq B \\ (\fb, \ff)=1}} \frac{\chi(\fb)}{\cN(\fb)^s}
= \prod_{\fp\, \nmid\, \ff} \biggl( 1 - \frac{\chi(\fp)}{\cN(\fp)^{s}} \biggr)^{-1}, \quad s \in \ZZ,
\]
which, when it converges, takes values in the completion of $F$ in $\C$. For Anderson's Hecke characters (see Proposition~\ref{P:Lidentities}), these $L$-functions converge for all $s \gg 0$.
\end{subsubsec}

\subsection{Special functions}
Here we recall various functions associated to Sinha modules that we will need. Where possible we utilize the conventions from~\cite{ABP04}.

\begin{subsubsec}{Carlitz exponential and $\be$} \label{SSS:expC}
The following constructions go back to Carlitz~\cite{Carlitz35}, but also appear in many sources such as~\citelist{\cite{Goss}*{Ch.~3} \cite{Papikian}*{Ch.~5} \cite{Rosen}*{Ch.~12} \cite{Thakur}*{Ch.~2}}. The Carlitz module $C : \bA \to A[\tau]$ is the rank~$1$ Drinfeld module defined by $C_t = \theta + \tau$. For $a \in A$, we will abuse notation and write $C_a(z)$ for the more cumbersome $C_{a(t)}(z)$.

The exponential function for $C$ and its logarithm are given by the infinite series,
\[
\exp_C(z) = \sum_{i=0}^{\infty} \frac{z^{q^i}}{D_i}, \quad
\log_C(z) = \sum_{i=0}^{\infty} \frac{z^{q^i}}{L_i},
\]
where $D_0=1$, $D_i = (\theta^{q^i} - \theta)(\theta^{q^i}-\theta^q) \cdots (\theta^{q^i}-\theta^{q^{i-1}})$, $i \geqslant 1$, $L_0=1$, and $L_i = (\theta-\theta^q) \cdots (\theta - \theta^{q^i})$, $i \geqslant 1$.
Then $\ker \exp_C = \Lambda_C = \tpi A$ is generated by the Carlitz period
\begin{equation}
\tpi = -\ttheta^q\, \prod_{j=1}^{\infty} \Bigl( 1 - \theta^{1-q^j} \Bigr)^{-1} \in k_{\infty}( \ttheta),
\end{equation}
where $\ttheta = (-\theta)^{1/(q-1)}$ is a fixed choice of a $(q-1)$-st root of~$-\theta$. To be made consistent with the choice of $\theta^{1/(q-1)}$ from {\S\ref{SSS:signs}}, this amounts to fixing a root $(-1)^{1/(q-1)}$. Then following \cite{ABP04}*{\S 5.2} we define $\be : k_{\infty} \to k_{\infty}(\ttheta)$ by
\begin{equation} \label{E:bedef}
\be(x) \assign \exp_C(\tpi x), \quad x \in k_{\infty}.
\end{equation}
If we vary $x \in k^{\times}$, then $\be(x)$ runs through torsion points on $C$. That is, for $f \in A_+$,
\begin{equation} \label{E:ftors}
C[f(t)] = \bigl\{ \be\bigl( a/f \bigr) \mid a \in A \bigr\}.
\end{equation}
We note also that $\ker \be = A$ and that 
\[
\inorm{x} < 1 \quad \Rightarrow \quad \inorm{\be(x)} = \inorm{\theta}^{q/(q-1)} \cdot \inorm{x}.
\]
Furthermore, as $\exp_C$ restricted to the open disk in $\C$ of radius~$\inorm{\theta}^{q/(q-1)}$ is an isometry by~\cite{Goss}*{Prop.~4.14.2}, we find that $\be(k_{\infty}) = \tpi\cdot \power{\FF_q}{1/\theta}$ (see \cite{ABP04}*{\S 5.2.10}).
\end{subsubsec}

\begin{subsubsec}{The function $\Omega$}
As in \cite{ABP04}*{\S 3.1.2} we define $\Omega(t)$ as
\begin{equation} \label{E:Omegaprod}
\Omega(t) = \ttheta^{-q}\,\prod_{j=1}^\infty \biggl(1 - \frac{t}{\theta^{q^j}} \biggr) \in \TT,
\end{equation}
which satisfies the functional equation $\Omega^{(-1)} = (t-\theta) \Omega$. By \cite{ABP04}*{Prop.~3.1.3} it has an infinite radius of convergence on~$\C$. As such it is a rigid analytic trivialization for the $t$-comotive of the Carlitz module (e.g., see \cite{BP20}*{\S 1.5}), and $\Omega(\theta) = -1/\tpi$.

In analogy with the Anderson-Thakur function $\omega = \sum_{i\geqslant 0} \be(1/\theta^{i+1}) t^i$ (see~\cite{AndThak90}*{\S 2.5}), which is a generating series for Carlitz $t$-power torsion, we expand $\Omega(t)$,
\begin{equation} \label{E:Omegaseries}
\Omega(t) \rassign \sum_{i=0}^{\infty} \tbe \bigl( 1 / \theta^{i+1} \bigr) t^i.
\end{equation}
This defines the sequence $\{ \tbe(1/\theta^i) \mid i \geqslant 0 \} \subseteq k_{\infty}(\ttheta)$.
Moreover, we define $\tbe : k_{\infty} \to k_{\infty}(\ttheta)$ to be the unique continuous $\FF_q$-linear function that interpolates this sequence and satisfies $\tbe|_A = 0$. The product in~\eqref{E:Omegaprod} implies that $\tbe(k_{\infty}) \subseteq (k_{\infty}(\ttheta))^q$, and thus the function
\begin{equation} \label{E:bestardef}
\be^* \assign \bigl( x \mapsto \tbe(x)^{1/q} \bigr) : k_{\infty} \to k_{\infty}(\ttheta),
\end{equation}
is well-defined, continuous, and $\FF_q$-linear. One sees readily that $\be^*$ is the same function~$\be^*$ defined in~\cite{ABP04}*{\S 5.2.5}.
Moreover, \eqref{E:bestardef} shows that $\Omega^{(-1)}(t) \in \TT(k_{\infty}(\ttheta))$.

By comparing \eqref{E:Omegaprod} and \eqref{E:Omegaseries}, for $i \geqslant 0$,
$\inorm{\tbe(1/\theta^{i+1})} = \inorm{\ttheta}^{-q} \cdot \bigl| \theta^{-(q+q^2+\cdots+q^i)} \bigr|_{\infty} = \inorm{\ttheta}^{-q^{i+1}}$,
and thus for $x \in k_{\infty}^{\times}$ with $\deg x < 0$,
\[
\inorm{\tbe(x)} = \inorm{\ttheta}^{-q^{-\deg x}}.
\]
In particular,
\begin{equation} \label{E:estarnorm}
\inorm{\be^*(x)} = \inorm{\ttheta}^{-q^{-1-\deg x}}, \quad x \in k_{\infty}^{\times},\ \deg x < 0.
\end{equation}
See also \cite{ABP04}*{Pf.~of Lem.~5.2.6}.
\end{subsubsec}

\begin{subsubsec}{The Carlitz adjoint and $\tbe$}
As defined in \citelist{\cite{Goss95} \cite{Goss}*{\S 3.7}}, the adjoint of the Carlitz module is defined by the $\FF_q$-algebra homomorphism $C^* : \bA \to k^{\perf}[\sigma]$ such that $C^*_t = \theta + \sigma$, and more generally for $a \in \bA$, $C_a^* = (C_a)^{*}$. In this way $C^*$ defines an $\bA$-module structure on perfect intermediate fields $k^{\perf} \subseteq F \subseteq \C$. Since $t\Omega(t) = \theta \Omega(t) + \Omega^{(-1)}(t)$,
\[
C_t^* \bigl( \tbe( 1/\theta^{i+1}) \bigr) = \tbe( 1/\theta^i), \quad i \geqslant 0,
\]
and so $\{ \tbe(1/\theta^i) \mid i \geqslant 1 \}$ is a division sequence of $t$-power torsion on $C^*$. By~\eqref{E:Omegaseries} we see that $\Omega(t)$ is an Anderson generating function for torsion on the adjoint Carlitz module.
Moreover, it follows that for any $x \in k_{\infty}$, we have $C_t^*(\tbe(x)) = \tbe(\theta x)$. Now as in the case of the Carlitz module, for $a \in A$ we will write $C_a^*$ for $C_{a(t)}^*$ to simplify notation. It follows from the $\FF_q$-linearity of $\tbe$ that for $a \in A$ and $x \in k_{\infty}$,
\begin{equation} \label{E:Castartbe}
C_a^*\bigl( \tbe(x) \bigr) = \tbe(ax).
\end{equation}
See \cite{ABP04}*{Rem.~5.2.9}. In particular, $C^*[f(t)] = \{ \tbe ( a/f) \mid a \in A \}$ for $f \in A$, $f\neq 0$.
\end{subsubsec}

\begin{remark}
Given the similarities between $C \leftrightarrow \be$ and $C^* \leftrightarrow \tbe$, we might have chosen to use `$\be^*$' for notation here instead of `$\tbe$.' However, $\be^*$ was already defined in \cite{ABP04} as what turns out to be the $q$-th root of $\tbe$, and we chose to be consistent with this previous work to avoid confusion. Furthermore, $\be^*$ is the more natural function from the standpoint of Coleman functions and Sinha modules (see \S\ref{SS:Coleman} and \S\ref{SS:Sinhadefs}).
\end{remark}

\begin{subsubsec}{The $\Psi_N$ polynomials}
We recall $\Psi_N(x) \in k[x]$, $N \geqslant 0$, studied initially by Carlitz~\cite{Carlitz35} and then by Anderson and Thakur~\cite{AndThak90}*{\S 3.4}. One defines $\Psi_N(x)$ through
\[
\exp_C \bigl( x \log_C(z) \bigr) = \sum_{N=0}^{\infty} \Psi_N(x) z^{q^N} = \sum_{N=0}^{\infty} \biggl( \sum_{j=0}^N \frac{x^{q^j}}{D_j L_{N-j}^{q^j}} \biggr) z^{q^N}.
\]
We then have the factorizations,
\begin{equation} \label{E:PsiNfact}
\Psi_N(x) = \frac{1}{D_N} \prod_{\substack{a \in A \\ \deg a < N}} (x-a), \quad
\Psi_N(x) - 1 = \frac{1}{D_N} \prod_{\substack{a \in A_+ \\ \deg a = N}} (x-a).
\end{equation}
Of particular importance to us is how $\Psi_N(x)$ governs a kind of duality between $\be$ and $\be^*$. We let $\Res : k_{\infty} \to \FF_q$ denote the projection to $\FF_q$ given by $\Res(\sum c_i \theta^{-i}) = c_{1}$. That is, $\Res(x) = -\res_{\infty}(x\,d\theta)$ on~$\PP^1$. We then have the following result.
\end{subsubsec}

\begin{lemma}[{\cite{ABP04}*{Lem.~5.4.2}}] \label{L:estarepair}
Let $N \in \ZZ$, $x\in k_{\infty}$, satisfy $\deg x \leqslant \min(-1,N)$. Then
\[
\sum_{i \geqslant 0} \be^*\bigl( \theta^{-i-1} \bigr)^{q^{N+1}} \be(\theta^i x) =
\begin{cases}
-\Psi_N(x) & \textup{if $N \geqslant 0$,} \\
\Res(\theta^{-N-1}x) & \textup{if $N < 0$.}
\end{cases}
\]
\end{lemma}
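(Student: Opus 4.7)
Denote the left-hand side by $S_N(x)$. Using $\be^*(y)^{q^{N+1}} = \tbe(y)^{q^N}$, the coefficients $\be^*(\theta^{-i-1})^{q^{N+1}}$ are precisely the Taylor coefficients of the Frobenius twist
\[
\Omega^{(N)}(t) = \ttheta^{-q^{N+1}}\prod_{m \geqslant N+1}(1 - t/\theta^{q^m}).
\]
For $N \geqslant 0$, I would expand $\be(\theta^i x) = \sum_{j \geqslant 0}(\tpi\theta^i x)^{q^j}/D_j$ via the Carlitz exponential; the norm bound \eqref{E:estarnorm}, together with $\deg x \leqslant -1$ and $\inorm{D_j} = \inorm{\theta}^{jq^j}$, yield absolute iterated convergence, and swapping summation collapses the inner sum into an evaluation of $\Omega^{(N)}$:
\[
S_N(x) = \sum_{j \geqslant 0} \frac{(\tpi x)^{q^j}}{D_j}\,\Omega^{(N)}(\theta^{q^j}).
\]
The product formula kills $\Omega^{(N)}(\theta^{q^j})$ whenever $j > N$, and a direct cancellation of common factors against the Carlitz period formula $\tpi = -\ttheta^q\prod_{j \geqslant 1}(1 - \theta^{1-q^j})^{-1}$ produces $\tpi^{q^j}\Omega^{(N)}(\theta^{q^j}) = -L_{N-j}^{-q^j}$ for $0 \leqslant j \leqslant N$. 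Comparing term by term with the closed form $\Psi_N(x) = \sum_{j=0}^N x^{q^j}/(D_j L_{N-j}^{q^j})$ yields $S_N(x) = -\Psi_N(x)$.

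For $N < 0$ the Fubini interchange fails (for instance $\Omega^{(-1)}(\theta^{q^j}) = 0$ for every $j \geqslant 0$), so I would descend from Case $N \geqslant 0$ using the functional identity
\[
S_N(y)^q = S_{N+1}(\theta y) - \theta\, S_{N+1}(y),
\]
obtained from $\be(\theta^i y)^q = C_t(\be(\theta^i y)) - \theta\be(\theta^i y) = \be(\theta^{i+1} y) - \theta\be(\theta^i y)$ after re-indexing. Starting at $N = 0$ (where $S_0(y) = -y$ on $\deg y \leqslant -1$) and proceeding by induction on $-N$, each step evaluates the right-hand side either by the inductive formula or, in the boundary case $\deg(\theta y) = 0$, by the direct vanishing $S_0(1) = \sum_i \tbe(\theta^{-i-1})\be(\theta^i) = 0$ (since $\be(\theta^i) = 0$ for $i \geqslant 0$). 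For $\deg y \leqslant N \leqslant -2$ both $S_{N+1}(\theta y)$ and $S_{N+1}(y)$ are $\FF_q$-valued residues, and extracting $q$-th roots gives $S_N(y) = \Res(\theta^{-N-1}y)$ immediately. For $N = -1$ and $\deg y = -1$ the boundary computation $S_0(1) = 0$ completes the reduction. Finally, $\FF_q$-linearity and continuity (using \eqref{E:estarnorm} to bound the individual summands of $S_N$ on the Banach subspace $\{x \in k_{\infty} : \deg x \leqslant N\}$) extend the identity from monomials $\theta^{-M}$ to all admissible $x$.

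The main obstacle is Case $N < 0$: the clean generating-function argument of Case $N \geqslant 0$ collapses, and instead one must organize the descending recursion carefully, especially at the boundary $\deg(\theta y) = 0$ where the inductive hypothesis for $S_{N+1}$ does not directly apply. Establishing continuity of $S_N$ on the relevant Banach subspace is also nontrivial but ultimately reduces to the norm estimate \eqref{E:estarnorm} and growth control of $\be$ on arguments of small negative degree.
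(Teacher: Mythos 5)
The paper does not reprove this lemma; it is cited verbatim from \cite{ABP04}*{Lem.~5.4.2}, so there is no in-text proof to compare against. Taken on its own, your argument is correct in substance, and the two-case structure (generating-function collapse for $N \geqslant 0$, descending recursion via $\be(z)^q = \be(\theta z) - \theta\be(z)$ for $N < 0$) is sound. For $N \geqslant 0$ the Fubini interchange you invoke is justified: writing $E(i,j)$ for the degree of the $(i,j)$-th term, the maximum over $j$ of the $j$-dependent part $(q/(q-1) + i + \deg x - j)q^j$ occurs near $j = i + \deg x$ and equals $\frac{q^{i+\deg x+1}}{q-1}$, so $E(i,j) \leqslant \frac{q^{i+1}}{q-1}\bigl(q^{\deg x} - q^N\bigr)$, which tends to $-\infty$ precisely because $\deg x \leqslant -1 \leqslant N$. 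The evaluation $\tpi^{q^j}\Omega^{(N)}(\theta^{q^j}) = -L_{N-j}^{-q^j}$ is correct and is most cleanly seen by iterating $\Omega^{(M-1)}(t) = (t - \theta^{q^M})\Omega^{(M)}(t)$ to get $\Omega^{(N-j)}(\theta) = -1/(\tpi L_{N-j})$, then twisting by $q^j$. The recursion $S_N(y)^q = S_{N+1}(\theta y) - \theta S_{N+1}(y)$ and the base computation $S_0(1) = 0$ are both verified. The only place you are a shade too brisk is the inductive step for $N \leqslant -2$: the recursion yields $S_N(y)^q = \Res(\theta^{-N-1}y) - \theta\,\Res(\theta^{-N-2}y)$, and before you can extract $q$-th roots you must observe that $\Res(\theta^{-N-2}y) = 0$, which holds because it is the coefficient of $\theta^{N+1}$ in $y$ and $\deg y \leqslant N$. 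Granting this, the conclusion follows since $\Res$ takes values in $\FF_q$ and $q$-th roots are unique in $\C$.
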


\begin{subsubsec}{Geometric factorial $\Pi(x)$ and $\Psi_N(x)$} \label{SSS:PiPsiN}
Recalling Thakur's factorial $\Pi(x)$ from \S\ref{SSS:GammaPi}, we also have a factorization in terms of the $\Psi_N(x)$ polynomials~\cite{Thakur91}*{Rem.~5.8},
\begin{equation} \label{E:PiPsiN}
\Pi(x) = \prod_{N=0}^\infty \bigl( 1 + \Psi_N(x)\bigr)^{-1}.
\end{equation}
Indeed this follows from~\eqref{E:PsiNfact}, using that $\Psi_N(x)$ is odd and the fact that $D_N$ is the product of all monic polynomials in~$A$ of degree~$N$ (e.g., see~\cite{Goss}*{Prop.~3.1.6}).
\end{subsubsec}

\section{Coleman functions and Anderson Hecke characters} \label{S:cyclo}

\subsection{Cyclotomic fields and Carlitz torsion} \label{SS:cyclodefs}
The construction of Carlitz cyclotomic fields mirrors the development of cyclotomic extensions of~$\QQ$. It has its foundations in Carlitz~\cite{Carlitz38}, who investigated explicit class field theory for~$k$, which Hayes~\cite{Hayes74} completed. See also \citelist{\cite{Goss}*{Ch.~7} \cite{Papikian}*{Ch.~7} \cite{Thakur}*{Ch.~3}}.
We fix throughout a polynomial $f \in A_+$ with $\deg f \geqslant 1$. As noted in~\eqref{E:ftors}, $C[f(t)] = \{ \be(a/f) \mid a \in A,\ \deg a < \deg f \}$, and we set
\[
\zeta \assign \zeta_f \assign \be(1/f),
\]
which generates $C[f(t)]$ as an $\bA$-module. The \emph{$f$-th Carlitz cyclotomic field} is
\[
K \assign K_f \assign k(\zeta_f),
\]
which is a finite Galois extension. As $\be(a/f) = C_{a}(\be(1/f))$, we have $K_f = k(C[f(t)])$.

\begin{subsubsec}{Galois action}
The irreducible polynomial of $\zeta_f$ over $k$ is the irreducible factor $\delta_f(z)$ of $C_{f(t)}(z) \in A[z]$ such that
\[
\delta_f(z) = \prod_{\substack{a \in A \\ \deg a  < \deg f \\ (a,f)=1}} \Bigl( z - \be(a/f) \Bigr) \in A[z].
\]
It is the analogue for $k$ of the classical cyclotomic polynomial. We have an isomorphism
\[
(b \mapsto \rho_b) : (A/fA)^{\times} \iso \Gal(K_f/k),
\]
which is compatible with Artin automorphisms. Moreover, for all $a \in A$, $b \in (A/fA)^{\times}$,
\begin{equation}
\rho_b \bigl( \be(a/f) \bigr) = C_{b}\bigl( \be(a/f) \bigr) = \be(ba/f).
\end{equation}
See \cite{Papikian}*{\S 7.1} for details.
\end{subsubsec}

\begin{subsubsec}{Adjoint Carlitz torsion and $\be^*$}
Goss~\citelist{\cite{Goss95}*{Thm.~3.2} \cite{Goss}*{Thm.~1.7.11}} showed that
$k( C[f(t)]) = k ( C^*[f(t)])$,
and so
\begin{equation} \label{E:Kftbe}
K_f = k\bigl( \tbe(1/f) \bigr) = k \bigl( \tbe(a/f) \mid a \in A \bigr).
\end{equation}
Furthermore, \citelist{\cite{Goss}*{Prop.~4.14.13} \cite{Poonen96}*{Cor.~11}} show that the Galois action on $C^*[f(t)]$ is dual to that on $C[f(t)]$. It follows that for $b \in (A/fA)^{\times}$,
\begin{equation} \label{E:Galoistbe}
\rho_b^{-1}\bigl( \tbe(a/f) \bigr) = C_b^*\bigl( \tbe(a/f) \bigr) = \tbe(ba/f).
\end{equation}
\end{subsubsec}

\begin{proposition} \label{P:estargal}
Let $f\in A_+$, $\deg f \geqslant 1$. Then
\[
K_f = k\bigl( \be^*(1/f) \bigr) = k\bigl( \be^*(a/f) \mid a \in A \bigr).
\]
Moreover, for $b \in (A/fA)^{\times}$ and $a \in A$,
\[
\rho_b^{-1} \bigl( \be^*(a/f) \bigr)  = \be^*(ba/f).
\]
\end{proposition}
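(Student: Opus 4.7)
The plan is to derive an explicit identity expressing $\be^*(y)$ as a $k$-linear combination of $\tbe$-values, from which every assertion in the proposition will follow cleanly.

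First I would establish
\[
\be^*(y) = \tbe(\theta y) - \theta\, \tbe(y), \qquad y \in k_\infty.
\]
The derivation proceeds by matching coefficients in the functional equation $\Omega^{(-1)}(t) = (t-\theta)\Omega(t)$. Using the series from~\eqref{E:Omegaseries}, namely $\Omega(t) = \sum_{i\geqslant 0} \tbe(1/\theta^{i+1})\,t^i$, and the corresponding expansion $\Omega^{(-1)}(t) = \sum_{i\geqslant 0} \be^*(1/\theta^{i+1})\,t^i$ obtained from~\eqref{E:bestardef} by applying $\tau^{-1}$ to coefficients, the coefficient of $t^i$ on the right-hand side equals $\tbe(1/\theta^i) - \theta\, \tbe(1/\theta^{i+1})$ with the convention $\tbe(1) = 0$. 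This is exactly the displayed identity at $y = 1/\theta^{i+1}$. Since both sides are continuous, $\FF_q$-linear functions $k_\infty \to k_\infty(\ttheta)$ vanishing on $A$, and they agree on the family $\{1/\theta^n : n \geqslant 1\}$, whose $\FF_q$-span is dense in $k_\infty$ modulo $A$, the identity extends to all of $k_\infty$.

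With the identity in hand, for any $a \in A$ we have $\be^*(a/f) = \tbe(\theta a/f) - \theta\, \tbe(a/f)$, and both summands lie in $K_f$ by~\eqref{E:Kftbe}. Hence $\be^*(a/f) \in K_f$. Combined with the chain
\[
K_f = k\bigl(\tbe(1/f)\bigr) = k\bigl(\be^*(1/f)^q\bigr) \subseteq k\bigl(\be^*(1/f)\bigr) \subseteq k\bigl(\be^*(a/f) \mid a \in A\bigr) \subseteq K_f,
\]
this yields the claimed field equalities. For the Galois equivariance, apply $\rho_b^{-1}$ to both sides of the identity with $y = a/f$ and invoke~\eqref{E:Galoistbe}, giving $\rho_b^{-1}(\tbe(\theta a/f)) = \tbe(\theta \cdot ba/f)$ and $\rho_b^{-1}(\tbe(a/f)) = \tbe(ba/f)$. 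Therefore
\[
\rho_b^{-1}\bigl(\be^*(a/f)\bigr) = \tbe(\theta \cdot ba/f) - \theta\, \tbe(ba/f) = \be^*(ba/f),
\]
using the identity once more at $y = ba/f$.

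The one substantive step is the derivation of the identity, which rests on coefficient matching in the functional equation for $\Omega$ and a standard density argument. The rest is formal. The conceptual content is that the relation $\Omega^{(-1)} = (t-\theta)\Omega$ encodes a simple linear relation between $\be^*$- and $\tbe$-values at negative powers of $\theta$, which $\FF_q$-linearity and continuity propagate to all of~$k_\infty$.
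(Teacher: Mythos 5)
Your proof is correct, but it follows a genuinely different route from the paper's. The paper argues abstractly: from \eqref{E:bestardef}, $\be^*(a/f) \in k_\infty(\ttheta)\cap\ok \subseteq k^{\sep}$, and since $\be^*(a/f)^q = \tbe(a/f) \in K_f$, the extension $K_f(\be^*(a/f))/K_f$ is simultaneously separable and purely inseparable, hence trivial. You instead derive the explicit $k$-linear identity $\be^*(y) = \tbe(\theta y) - \theta\,\tbe(y)$ for all $y\in k_\infty$, which immediately places $\be^*(a/f)$ in $K_f$ and makes the Galois equivariance a two-line computation via \eqref{E:Galoistbe}. Your derivation by coefficient-matching in $\Omega^{(-1)} = (t-\theta)\Omega$ and the density argument are both sound (one could also extract the same identity directly from \eqref{E:Castartbe}: taking $a=\theta$ and writing $C_t^* = \theta + \sigma$ gives $\theta\,\tbe(x) + \tbe(x)^{1/q} = \tbe(\theta x)$, which is the identity after substituting $\be^*(x) = \tbe(x)^{1/q}$; this bypasses the density step). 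Your approach buys an explicit linear formula relating $\be^*$ to $\tbe$, which is useful in its own right; the paper's approach is shorter but less constructive.
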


\begin{proof}
Since $k_{\infty}(\ttheta) \cap \ok \subseteq k^{\sep}$, it follows from~\eqref{E:bestardef} that $\be^*(a/f) \in k^{\sep}$ for all $a \in A$. Moreover, since $\be^*(x) = \tbe(x)^{1/q}$ for $x \in k_{\infty}$, it must then be that $\be^*(a/f) \in K_f = k(\tbe(1/f))$ for all $a \in A$. The proposition then follows from~\eqref{E:Kftbe}--\eqref{E:Galoistbe}.
\end{proof}

The Carlitz torsion points $\be(a/f)$ are automatically integral over~$A$ since $C_f(x) \in A[x]$ has leading coefficient $\sgn(f)=1$. However, $\tbe(a/f)$ and $\be^*(a/f)$ need not be elements of $B_f$, though we can bound their denominators.

\begin{proposition} \label{P:estarval}
Let $f \in A_+$, $\deg f \geqslant 1$, and let $\fp$ be a finite prime of $B_f$. For $a \in A$,
\[
\ord_{\fp} \bigl( \be^*(a/f) \bigr) \geqslant -\frac{1}{q-1}\cdot \ord_{\fp}(f).
\]
\end{proposition}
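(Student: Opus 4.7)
The plan is to derive an explicit polynomial equation over $A$ satisfied by $y \assign \be^*(a/f)$, and then apply Newton polygon analysis at $\fp$. First reduce to the case $f \nmid a$: otherwise $a/f \in A$, so $\tbe(a/f) = 0$ (as $\tbe|_A = 0$) and hence $\be^*(a/f) = 0$ by~\eqref{E:bestardef}, making the bound trivial.

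Set $d = \deg f$, and write $C_f = \sum_{i=0}^d c_i \tau^i \in A[\tau]$, so that $c_0 = f$, $c_d = 1$ (as $f$ is monic), and each $c_i \in A$. By~\eqref{E:Castartbe} and $\tbe|_A = 0$,
\[
0 = \tbe(a) = C_f^*\bigl( \tbe(a/f) \bigr) = \sum_{i=0}^d c_i^{1/q^i}\, \tbe(a/f)^{1/q^i}.
\]
Substituting $\tbe(a/f) = y^q$ from~\eqref{E:bestardef}, applying $\tau^{d-1}$ to clear the fractional $q^i$-th power exponents, and dividing through by $y \neq 0$ produces the polynomial identity
\[
1 + c_{d-1}\, y^{q-1} + c_{d-2}^q\, y^{q^2-1} + \dots + c_1^{q^{d-2}}\, y^{q^{d-1}-1} + f^{q^{d-1}}\, y^{q^d - 1} = 0,
\]
with all coefficients in $A$.

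With this in hand I would analyze the Newton polygon of this polynomial at $\fp$, setting $e \assign \ord_{\fp}(f)$. The relevant points lie among $(q^j - 1, q^{j-1} \ord_{\fp}(c_{d-j}))$ for $0 \leqslant j \leqslant d$, all with non-negative ordinate (since $c_i \in A \subseteq B_f$), with endpoints $(0, 0)$ and $(q^d - 1, q^{d-1} e)$. Since $-\ord_{\fp}(y)$ is bounded above by the maximal slope of the polygon, which in turn equals $\max_{0 \leqslant j < d} \frac{q^{d-1} e - q^{j-1} \ord_{\fp}(c_{d-j})}{q^d - q^j}$, it suffices to verify
\[
\frac{q^{d-1} e - q^{j-1} \ord_{\fp}(c_{d-j})}{q^d - q^j} \leqslant \frac{e}{q-1}, \qquad 0 \leqslant j < d.
\]
Dropping the non-negative term in the numerator reduces this to $q^{d-1}(q-1) \leqslant q^d - q^j$, i.e., $q^j \leqslant q^{d-1}$, which holds for $j \leqslant d-1$.

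The main obstacle is setting up the polynomial equation for $y$ correctly from the difference-operator identity $C_f^*(\tbe(a/f)) = 0$ by Frobenius twisting; once this is done the Newton polygon bookkeeping is elementary, and the slope inequality collapses to the trivial bound $q^j \leqslant q^{d-1}$.
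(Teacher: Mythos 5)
Your proof is correct and takes essentially the same approach as the paper: both derive from $C_f^*(\tbe(a/f)) = 0$ an explicit polynomial over $A$ whose root is $\be^*(a/f)$ (or $\tbe(a/f) = \be^*(a/f)^q$) and then bound the root's $\fp$-adic valuation by the steepest Newton polygon slope, which collapses to the trivial inequality $q^j \leqslant q^{d-1}$. Your version is slightly more explicit, working directly in $y = \be^*(a/f)$ and spelling out all the intermediate slope comparisons, but the underlying idea and the final arithmetic match the paper's.
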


\begin{proof}
(cf.\ Papikian~\cite{Papikian}*{Prop.~6.4.1}) Letting $m = \deg f$, we note that $C_f^*(x)^{q^m}$ is in $A[x]$ and has $\tbe(a/f) = \be^*(a/f)^q$ as a root. Also, $C_f^*(x)^{q^m}$ has degree $q^m$ in $x$, constant term~$1$, and leading coefficient $f^m$. The next highest degree term of $C_f^*(x)^{q^m}$ has degree at most $q^{m-1}$, so the steepest possible slope in the $\fp$-adic Newton polygon for $C_f^*(x)^{q^m}$ would be
\[
\frac{q^m \ord_{\fp}(f)}{q^m-q^{m-1}} = \frac{q}{q-1} \ord_{\fp}(f).
\]
This yields the desired inequality.
\end{proof}

\subsection{Cyclotomic curves over \texorpdfstring{$\FF_q$}{Fq}} \label{SS:cyclocurves}
The main reference is~\cite{ABP04}*{\S 6.3}. See also~\citelist{\cite{And92} \cite{Sinha97b}*{\S 2.2}}.

\begin{subsubsec}{Definitions}
We define $\cC_f(t,z) \in \FF_q[t,z]$ by $\cC_f(t,z) \assign C_{f}(z)|_{\theta=t}$, and likewise $\cD_f(t,z) \assign \delta_f(z)|_{\theta = t} \in \FF_q[t,z]$. We let
\[
U \assign U_f/\FF_q \assign \Spec \FF_q[t,z]/(\cD_f(t,z)) = \Spec \bB
\]
be the plane curve in $\AAA^2/\FF_q$, and we let $X/\FF_q \assign X_f/\FF_q$ be its nonsingular projective model. For an algebraically closed field $k \subseteq \KK \subseteq \C$, we further let
\[
\bU \assign \bU_f \assign \KK \times_{\FF_q} U_f, \quad
\bX \assign \bX_f \assign \KK \times_{\FF_q} X_f,
\]
be their extensions to $\KK$. It was shown in~\cite{ABP04}*{\S 6.3.2--4} that $U$ and $X$ satisfy several useful properties. The curves $U$, $\bU$, $X$, and $\bX$ are smooth and absolutely irreducible. The function field $\bK \assign \bK_f \assign \FF_q(X_f)$ is naturally isomorphic to $K$ by sending $t \leftrightarrow \theta$, $z \leftrightarrow \zeta_f$. As such $(A/fA)^{\times} \cong \Gal(\bK/\bk)$ by $b \mapsto \trho_{b}$, and
\begin{equation} \label{E:trhodef}
\trho_{b}(z) = \cC_b(t,z) \quad (\in \bB).
\end{equation}
Projecting onto the $t$-line makes $X$ into a $\Gal(\bK/\bk)$-cover of $\PP^1/\FF_q$ of degree $r \assign [K:k] = (A/fA)^{\times}$.
We have $\KK[\bU] = \KK[t,z]/(\cD_f(t,z))$ and $\KK(\bX) = \KK(t,z)$, and for $n \in \ZZ$, $n$-fold twisting extends to both $\KK[\bU]$ and $\KK(\bX)$. For $g \in \KK(\bX)$, $\divf(g^{(n)}) = \divf(g)^{(n)}$.
\end{subsubsec}

\begin{subsubsec}{Generic points on $\bX_f$}
For each $a \in (A/fA)^{\times}$ we set
\[
\xi_a \assign \bigl( \theta, \be(a/f) \bigr) = \bigl( \theta, C_a(\zeta) \bigr) \in \bU(\KK) \subseteq \bX(\KK).
\]
The points $\xi_a$ are distinct and rational over $K$, possessing a compatible $\Gal(K/k)$-action,
\[
\rho_b(\xi_a) = \xi_{ba}, \quad b \in (A/fA)^{\times}.
\]
We further let $\xi \assign \xi_1 = (\theta,\zeta)$.
\end{subsubsec}

\begin{subsubsec}{Points at $\infty$} \label{SSS:infinity}
Under the cover $t:X \to \PP^1$ projecting on the $t$-line, the decomposition subgroup of $\infty \in \PP^1(\FF_q)$ in $\Gal(K/k)$ corresponds to $\FF_q^{\times}$. Taking $n = |(A/fA)^{\times}/\FF_q^{\times}| = r/(q-1)$, we find that $X$ has $n$ points above $\infty$,
\[
\infty_1, \dots, \infty_n \in X(\FF_q),
\]
which are $\FF_q$-rational. Letting $K^+ \subseteq K$ denote the fixed field of $\FF_q^{\times} \hookrightarrow \Gal(K/k)$, we obtain the \emph{maximal real subfield} of $K$, and $K^+ = k(\zeta_f^{q-1})$. Then $\infty$ is totally split in~$K^+$, and places above $\infty$ in $K^+$ are totally ramified in $K/K^+$. See \cite{Rosen}*{Thm.~12.14}.

Letting $I = \infty_1 + \dots + \infty_n$ be the sum of the points above infinity in $\Div(\bX)$, we note that as a function on~$\bX$,
\begin{equation} \label{E:divt-theta}
\divf(t-\theta) = -(q-1)I + \sum_{\substack{a \in A \\ \deg a < \deg f \\ (a,f) = 1}} \xi_a.
\end{equation}
\end{subsubsec}

\begin{subsubsec}{Rigid analytic functions}
Let $\BB = \TT[z]$, where as above we carry the relation $\cD_f(t,z)=0$. In this way, $\BB$ is the affinoid algebra of rigid analytic functions on the inverse image of the closed unit disk in $\C$ under the projection $t:\bX \to \PP^1$. We define similarly $\BB_{\theta} = \TT_{\theta}[z]$, and $\BB_{\theta} \subseteq \BB$. The Gauss norms on $\TT$ and~$\TT_{\theta}$ extend to $\BB$ and $\BB_{\theta}$: noting that $1$, $z, \dots, z^{r-1}$ is a $\TT$-basis for $\BB$ (respectively, a $\TT_{\theta}$-basis for $\BB_{\theta}$),
\[
g = \sum_{i=0}^{r-1} g_i z^i \quad \Rightarrow \quad \dnorm{g} = \max_{0 \leqslant i \leqslant r-1} \bigl\{ \dnorm{g_i} \bigr\},
\]
(similarly for $\dnorm{\,\cdot\,}_{\theta}$). The rings $\BB$ and $\BB_{\theta}$ are complete with respect to $\dnorm{\,\cdot\,}$ and $\dnorm{\,\cdot\,}_{\theta}$. See \cite{Sinha97a}*{\S 3.2} for more information in this particular situation.
\end{subsubsec}

\subsection{Coleman functions} \label{SS:Coleman}
We review the construction of Coleman functions, which were initially studied by Anderson~\cite{And92} and Sinha~\cites{Sinha97a, Sinha97b}, who used Anderson's soliton functions to generalize previous examples of Coleman~\cite{Coleman88}. The definition of Coleman functions was simplified in~\cite{ABP04}*{\S 6.3}, and we rely on the definition there.

\begin{subsubsec}{Parameter space~$\cA_f$} Again we fix $f \in A_+$ with $\deg f \geqslant 1$, and we define symbols $[x]$ for $x \in f^{-1}A$, such that $[x]=[x']$ if $x\equiv x' \pmod{A}$. We then define $\cA_f$ to be the free abelian group on $[x]$ for $x \in f^{-1}A$, and so $\cA_f$ has rank~$q^{\deg f}$ with
\[
\cA_f = \bigoplus_{\substack{a \in A \\ \deg a < \deg f}} \ZZ \biggl[ \frac{a}{f} \biggr].
\]
A parameter $\ba \in \cA_f$ of the form $\ba = [a/f]$ with $a \neq 0$ is called \emph{basic}. If $\ba = \sum m_a [a/f]$ has all non-negative coefficients, it is \emph{effective}, and the \emph{degree} of $\ba$ is
\[
\deg \ba \assign \sum_{\substack{a \in A \setminus \{0\} \\ \deg a < \deg f}} m_a.
\]
Then $\deg$ induces a homomorphism on $\cA_f/(\ZZ[0/f])$ but not on $\cA_f$ itself.
\end{subsubsec}

\begin{remark}
As defined here, $\deg \ba = (q-1)\wt \ba$ from \cite{ABP04}*{\S 6.1.1}.
We do not need to include the case of $[0/f]$, as we will only be concerned with $\ba \in \cA_f$ that are effective with $\deg \ba > 0$; however, we maintain terminology consistent with \cites{ABP04, SinhaPhD, Sinha97a, Sinha97b}.
\end{remark}

\begin{subsubsec}{The $\star$-operator on $\cA_f$} \label{SSS:starop}
For $b \in A$ with $(b,f) = 1$, there is a unique automorphism
\[
(\ba \mapsto b \star \ba) : \cA_f \to \cA_f,
\]
with $b \star [a/f] = [ba/f]$. For $b$, $c \in A$ relatively prime to~$f$, we have $b \star (c \star \ba) = (bc)\star \ba$.
\end{subsubsec}

\begin{subsubsec}{Diamond brackets}
For $x \in k_{\infty}$ and $N \geqslant 0$ we define $\diam{x}$, $\diam{x}_N \in \{ 0,1\}$ as follows (cf.~\cite{ABP04}*{\S 5.5.1}). Write $x = a + y$ uniquely with $a \in A$ and $y \in (1/\theta)\cdot \power{\FF_q}{1/\theta}$. Then
\[
\diam{x} \assign \begin{cases}
1 & \textup{if $\sgn(y) = 1$,} \\
0 & \textup{otherwise,}
\end{cases} \quad
\diam{x}_N \assign \begin{cases}
1 & \textup{if $y = 1/\theta^{N+1} + O(1/\theta^{N+2})$,} \\
0 & \textup{otherwise.}
\end{cases}
\]
It follows that $\diam{x} = \sum_{N \geqslant 0} \diam{x}_N$. We note that for $a \in A$ and $f \in A_+$ (cf.~\cite{Sinha97b}*{\S 3.3.7}),
\begin{equation} \label{E:diamaf}
\diam{a/f} = \diam{(a\bmod f)/f} = \begin{cases}
1 & \textup{if $a \bmod f \in A_+$,} \\
0 & \textup{otherwise.}
\end{cases}
\end{equation}
We set homomorphisms $\diam{\,\cdot\,}_N$, $\diam{\,\cdot\,} : \cA_f \to \ZZ$, with $\diam{[a/f]}_N = \diam{a/f}_N$, $\diam{[a/f]} = \diam{a/f}$.
\end{subsubsec}

\begin{subsubsec}{Sinha's divisors}
Let $\ba \in \cA_f$. Define divisors on $\bX$ by
\begin{align} \label{E:XiW}
\Xi_{\ba} &= \sum_{\substack{b \in A \\ \deg b  < \deg f  \\ (b,f) = 1}} \diam{b \star \ba} \xi_b, \\
W_\ba &= \sum_{\substack{b \in A \\ \deg b  < \deg f  \\ (b,f) = 1}} \sum_{N = 1}^\infty \diam{b \star \ba}_N \sum_{j = 0}^{N-1} \xi_b^{(j)}. \notag
\end{align}
The sum defining $W_{\ba}$ has only finitely many non-zero terms, and so $W_{\ba}$ is indeed a divisor.
Using~\eqref{E:diamaf}, we note that for $[a/f] \in \cA_f$,
\begin{align} \label{E:XiWaf} 
\Xi_{[a/f]} &= \sum_{\substack{b \in A \\ \deg b < \deg f \\ (b,f)=1}} \biggl\langle \frac{ba}{f} \biggr\rangle \xi_b, \\
W_{[a/f]} &= \sum_{j=0}^{\deg f -2} \sum_{\substack{b \in A \\ \deg b < \deg f \\ (b,f)=1 \\ \deg(ba \bmod f) \leqslant j}} \biggl\langle \frac{ba}{f} \biggr\rangle \xi_b^{(\deg f - j - 2)}, \notag
\end{align}
which coincides with~\cite{Sinha97b}*{\S 3.3.7}. In particular,
\begin{equation} \label{E:XiW1f}
\Xi_{[1/f]} = \sum_{\substack{b \in A_+ \\ \deg b < \deg f \\ (b,f)=1}} \xi_b, \quad
W_{[1/f]} = \sum_{j=0}^{\deg f -2} \sum_{\substack{b \in A_+ \\ \deg b \leqslant j \\ (b,f)=1}} \xi_b^{(\deg f - j - 2)}.
\end{equation}
Necessarily, $\Xi_{[0/f]} = W_{[0/f]} = 0$. When $[a/f] \neq [0/f]$ but possibly $(a,f) \neq 1$, we can write $a/f = a'/f'$ for $f' \in A_+$ and $(a',f') = 1$. Then for the natural projection $\pi : \bX_f \to \bX_{f'}$,
\begin{equation} \label{E:XiWafpullback}
\Xi_{[a/f]} = \pi^* \Xi_{[a'/f']}, \quad W_{[a/f]} = \pi^* W_{[a'/f']},
\end{equation}
which matches with~\cite{Sinha97b}*{\S 3.3.7}.
\end{subsubsec}

\begin{lemma} \label{L:XiWprops}
Let $\ba = \sum m_a [a/f] \in \cA_f$. The following hold.
\begin{alphenumerate}
\item We have $\Xi_\ba = \sum m_a \Xi_{[a/f]}$ and $W_{\ba} = \sum m_a W_{[a/f]}$.
\item For $c \in (A/fA)^{\times}$, we have $\Xi_{c\star \ba} = \rho_c^{-1}(\Xi_\ba)$ $W_{c\star \ba} = \rho_c^{-1}(W_\ba)$.
\end{alphenumerate}
\end{lemma}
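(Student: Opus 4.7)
The plan is to derive both identities from the linearity of the constituent operations and a single change of variable.

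For part (a), the key observation is that for fixed $b \in (A/fA)^{\times}$, the assignment $\ba \mapsto b \star \ba$ is a group homomorphism $\cA_f \to \cA_f$ by \S\ref{SSS:starop}, while $\diam{\,\cdot\,}$ and $\diam{\,\cdot\,}_N$ are both $\ZZ$-linear maps $\cA_f \to \ZZ$. Writing $\ba = \sum m_a [a/f]$ and substituting into~\eqref{E:XiW}, I would expand $\diam{b \star \ba} = \sum_a m_a \diam{b \star [a/f]}$ (and similarly for $\diam{\,\cdot\,}_N$), then swap the order of summation. The resulting inner sums over $b$ match the definitions of $\Xi_{[a/f]}$ and $W_{[a/f]}$ exactly.

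For part (b), I would start from the definition and use $b \star (c \star \ba) = (bc) \star \ba$ from \S\ref{SSS:starop} to write
\[
\Xi_{c \star \ba} = \sum_{b} \diam{b \star (c \star \ba)} \xi_b = \sum_b \diam{(bc) \star \ba} \xi_b,
\]
where $b$ runs through a set of representatives of $(A/fA)^{\times}$. The substitution $b' := bc$, which is a bijection on $(A/fA)^{\times}$, rewrites this as $\sum_{b'} \diam{b' \star \ba}\, \xi_{c^{-1}b'}$. Invoking the Galois transformation law $\rho_{c}^{-1}(\xi_{b'}) = \rho_{c^{-1}}(\xi_{b'}) = \xi_{c^{-1}b'}$ from \S\ref{SS:cyclocurves} and pulling the $\ZZ$-linear action of $\rho_c^{-1}$ on $\Div(\bX)$ outside the sum gives $\rho_c^{-1}(\Xi_\ba)$. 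The argument for $W_{c \star \ba}$ is the same reindexing, but the divisor additionally involves twisted points $\xi_b^{(j)}$.

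The only place where a sanity check is required is the last step for $W$: one must verify that the Galois action $\rho_c^{-1}$ commutes with Frobenius twisting on $\bX = \KK \times_{\FF_q} X_f$. This holds because $\rho_c$ arises from an automorphism of $X_f/\FF_q$ (acting on the right-hand factor), while $(\cdot)^{(j)}$ acts through the $\KK$-factor, so the two operations live on independent factors of the fibre product. Once this commutation is noted, no genuine obstacle remains and the entire proof reduces to bookkeeping.
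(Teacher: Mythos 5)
Your proof is correct and follows essentially the same path as the paper's: part (a) is the $\ZZ$-linearity of $\diam{\,\cdot\,}$ and $\diam{\,\cdot\,}_N$ applied termwise, and part (b) is a reindexing of the sum over $b$ via $b' = bc$ combined with the Galois transformation law $\rho_{c}^{-1}(\xi_{b'}) = \xi_{c^{-1}b'}$. The paper reduces (b) to the basic case $\ba = [a/f]$ and appeals to (a); you handle general $\ba$ directly, which is a harmless variation. The reindexing itself is the same calculation in both, just run in opposite directions.

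One small correction to your closing remark. You justify the commutation of $\rho_c^{-1}$ with Frobenius twisting by asserting that $\rho_c$ acts through an automorphism of $X_f/\FF_q$ and therefore lives on the other factor of $\bX = \KK \times_{\FF_q} X_f$ from the twist. That describes $\trho_c \in \Gal(\bK/\bk)$ from \eqref{E:trhodef}, not $\rho_c$. The action $\rho_c$ used in part (b) is the arithmetic Galois action on coordinates: $\rho_b(\xi_a) = (\rho_b(\theta), \rho_b(\be(a/f))) = (\theta, \be(ba/f)) = \xi_{ba}$, so $\rho_c$ acts through the $\KK$-factor, the \emph{same} factor the twist acts through. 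The commutation still holds, but for the more elementary reason that any field automorphism fixing $\FF_q$ commutes with the $q$-power map, so $\rho_c^{-1}(P^{(j)}) = (\rho_c^{-1}(P))^{(j)}$ coordinatewise. This is a bookkeeping slip rather than a gap in the proof, but the stated justification as written would not survive scrutiny.
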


\begin{proof}
Part (a) is straightforward from \eqref{E:XiW} and the $\ZZ$-linearity of diamond brackets on~$\cA_f$. For (b) we need only check the cases $\Xi_{[a/f]}$ and $W_{[a/f]}$. Choosing $e \in A$ with $ec\equiv 1 \pmod{f}$, it follows from~\eqref{E:XiWaf} that
\[
\rho_c^{-1} (\Xi_{[a/f]}) = \sum_{\substack{b \in A \\ \deg b < \deg f \\ (b,f)=1}} \biggl\langle \frac{ba}{f} \biggr\rangle \xi_{eb} = \sum_{\substack{b \in A \\ \deg b < \deg f \\ (b,f)=1}} \biggl\langle \frac{bca}{f} \biggr\rangle \xi_b =  \Xi_{c\star [a/f]}.
\]
The result for $W_{[a/f]}$ is similar.
\end{proof}

In particular, since $\Xi_{[a/f]}$ and $W_{[a/f]}$ are effective, Lemma~\ref{L:XiWprops}(a) implies that $\Xi_{\ba}$ and~$W_{\ba}$ are effective  whenever $\ba$ is effective. Furthermore, for $\ba \in \cA_f$,
\[
\deg \Xi_{\ba} = \frac{r \deg \ba}{q-1}, \quad r = [K:k].
\]

\begin{subsubsec}{$f$-dual families}
A pair of sequences $\{ a_i \}_{i=1}^{\deg f}$, $\{b_i \}_{i=1}^{\deg f}$, of polynomials in~$A$ is called an \emph{$f$-dual family} if
\[
\Res \biggl( \frac{a_ib_j}{f} \biggr) = \delta_{ij}, \quad 1 \leqslant i,j \leqslant \deg f,
\]
where $\delta_{ij}$ is the Kronecker delta. In particular $\{ a_i \}_{i=1}^{\deg f}$, $\{b_i \}_{i=1}^{\deg f}$ both represent $\FF_q$-bases of $A/fA$. Moreover, the pairing
\[
(b \bmod f,c \bmod f) \mapsto \Res(bc/f) : A/fA \times A/fA \to \FF_q
\]
is perfect (see~\cite{ABP04}*{\S 5.4.3}), which provides the existence of $f$-dual families.
\end{subsubsec}

\begin{example} \label{Ex:fdual}
Let $d= \deg f$. As outlined in~\cite{ABP04}*{\S 5.4.3}, we note that for $1 \leqslant i, j \leqslant d$,
\[
\Res \bigl( \theta^{i+d - j -1}/f \bigr) = \begin{cases}
0 & \textup{if $i < j$,} \\
1 & \textup{if $i=j$,} \\
* & \textup{if $i > j$.}
\end{cases}
\]
Thus if $r_{ij} = \Res(\theta^{i+d-j-1}/f)$, then the matrix $(r_{ij}) \in \GL_d(\FF_q)$ is lower triangular with $1$'s on the diagonal. Denoting its inverse by $(s_{ij})$, we find that taking
\[
a_i = \sum_{m=1}^i s_{im}\theta^{m-1} \quad (1 \leqslant i \leqslant d), \quad
b_j = \theta^{d-j} \quad (1 \leqslant j \leqslant d),
\]
we obtain an $f$-dual family. Each $a_i$, $b_j$ is monic with $\deg a_i = i-1$ and $\deg b_j = d-j$.
\end{example}

\begin{lemma} \label{L:duality}
Given an $f$-dual family $\{ a_i \}_{i=1}^{\deg f}$, $\{b_i \}_{i=1}^{\deg f}$, for any $c \in A$,
\[
\sum_{i=1}^{\deg f} \Res(ca_i/f) b_i \equiv c \pmod{f}.
\]
\end{lemma}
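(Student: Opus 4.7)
The plan is to exploit the fact that the $f$-dual condition $\Res(a_i b_j/f) = \delta_{ij}$ makes $\{a_i\}$ and $\{b_i\}$ into biorthogonal $\FF_q$-bases of $A/fA$ with respect to the perfect pairing $(b,c) \mapsto \Res(bc/f)$ described in the preceding paragraph. Once this is in hand, the desired identity is just the expansion-of-$c$ formula against the dual basis, mirroring the classical linear-algebra setup.

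First I would record that $\{b_j\}_{j=1}^{\deg f}$ is an $\FF_q$-basis of $A/fA$: its cardinality equals $\dim_{\FF_q}(A/fA) = \deg f$, and if $\sum_j \mu_j b_j \equiv 0 \pmod{f}$, then pairing with each $a_i$ via $\Res(\,\cdot\, a_i/f)$ and invoking $f$-duality yields $\mu_i = 0$. Next, for the given $c \in A$, I would write $c \equiv \sum_{i=1}^{\deg f} \mu_i b_i \pmod{f}$ uniquely, so that the lemma reduces to the assertion $\mu_j = \Res(c a_j/f)$ for each $j$. To prove this, pick $p \in A$ with $c - \sum_i \mu_i b_i = fp$, multiply through by $a_j/f$, and apply $\Res\colon k_\infty \to \FF_q$ termwise to obtain
\[
\Res(c a_j/f) = \sum_{i=1}^{\deg f} \mu_i \Res(b_i a_j/f) + \Res(p a_j) = \sum_{i=1}^{\deg f} \mu_i \delta_{ij} = \mu_j,
\]
using $f$-duality in the middle step.

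The only subtle point, which I expect to be the mildest of obstacles rather than a genuine difficulty, is the vanishing $\Res(p a_j) = 0$. This is immediate from the definition of $\Res$ as the $\theta^{-1}$-coefficient in the expansion in $1/\theta$, since any element of $A$ is a polynomial in $\theta$ and hence has no $\theta^{-1}$ term. With that observation the argument is a straightforward biorthogonal-basis computation transplanted to the function-field setting.
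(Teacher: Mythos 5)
Your proof is correct and takes essentially the same route as the paper: both expand $c$ in the $\FF_q$-basis $\{b_j\}$ of $A/fA$ and pair against $a_i$ to read off the coefficients via $f$-duality, with the key auxiliary observation that $\Res(ca_i/f)$ depends only on $c\bmod f$ (your $\Res(pa_j)=0$ step is exactly the paper's remark that $\Res(ca_i/f)=\Res(c'a_i/f)$ when $c\equiv c'\pmod f$).
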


\begin{proof}
Note that if $c \equiv c' \pmod{f}$, then for each $i$ we have $\Res(ca_i/f) = \Res(c'a_i/f)$.
There are unique $\gamma_j \in \FF_q$ so that $c \equiv \sum_{j=1}^{\deg f} \gamma_j b_j \pmod{f}$. Thus for each $i$,
\[
\Res(ca_i/f) = \sum_{j=1}^{\deg f} \gamma_j \Res( a_i b_j/f) = \gamma_i,
\]
where the last equality follows from the choice of $\{ a_i \}_{i=1}^{\deg f}$, $\{b_i \}_{i=1}^{\deg f}$ as an $f$-dual family. The result then follows from the choice of $\{\gamma_j\}$.
\end{proof}

Our primary application of $f$-dual families is an extension of Lemma~\ref{L:estarepair} due to Anderson, Brownawell, and the second author.

\begin{theorem}[{\cite{ABP04}*{Thm.~5.4.4}}] \label{T:fdualthm}
Let $f \in A_+$, $\deg f \geqslant 1$, be fixed, and let $\{ a_i \}_{i=1}^{\deg f}$, $\{b_i \}_{i=1}^{\deg f}$ be an $f$-dual family. For $a \in A$ with $\deg a < \deg f$, the following hold.
\begin{alphenumerate}
\item For $N \geqslant 0$,
\[
\sum_{i=1}^{\deg f} \be^*(a_i/f)^{q^{N+1}} \be(b_ia/f) = -\Psi_N(a/f).
\]
\item Furthermore, if $a \in A_+$, then
\[
\sum_{i=1}^{\deg f} \be^*(a_i/f) \be(b_ia/f)^{q^{\deg f - \deg a - 1}} = 1.
\]
\end{alphenumerate}
\end{theorem}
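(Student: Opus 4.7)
The plan is to reduce both identities to Lemma~\ref{L:estarepair}, which establishes analogous formulas for the canonical dual pair $\{\theta^{-i-1}\}_{i\geqslant 0}$, $\{\theta^i\}_{i\geqslant 0}$ indexed by arbitrary integers~$N$. Passage to an arbitrary $f$-dual family proceeds via two transformations: a duality decomposition for $\be$ built from Lemma~\ref{L:duality}, and a Laurent-expansion identity for $\be^*$. Because $\be$ and $\be^*$ both vanish on~$A$, replacing each $a_i$, $b_j$ by its remainder modulo~$f$ preserves both sides of (a) and (b), so we may assume $\deg a_j, \deg b_j < d \assign \deg f$; write $e \assign \deg a$.

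First, Lemma~\ref{L:duality} applied to $c \in A$ yields $c \equiv \sum_{j} \Res(ca_j/f)\,b_j \pmod{f}$; multiplying by $a/f$ and applying the $\FF_q$-linear $\be$ (which annihilates $A$) produces the duality decomposition
\[
\be(ca/f) = \sum_{j=1}^{d} \Res(ca_j/f)\, \be(b_j a/f).
\]
Separately, $a_j/f \in k_{\infty}$ has the convergent Laurent expansion $a_j/f = \sum_{i\geqslant 0} \Res(\theta^i a_j/f)\,\theta^{-i-1}$ (valid since $\deg a_j < d$); applying the continuous $\FF_q$-linear $\be^*$ and a Frobenius $q^M$ (using $\Res(\,\cdot\,) \in \FF_q$ and that Frobenius commutes with convergent sums in the complete characteristic-$p$ field~$\C$) gives
\[
\be^*(a_j/f)^{q^M} = \sum_{i\geqslant 0} \Res(\theta^i a_j/f)\, \be^*(\theta^{-i-1})^{q^M}, \qquad M \in \ZZ.
\]
For $M < 0$ the factors $\be^*(\theta^{-i-1})^{q^M}$ are interpreted inside the perfect field~$\C$; convergence follows from~\eqref{E:estarnorm}.

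For (a), apply Lemma~\ref{L:estarepair} with $x = a/f$ and $N \geqslant 0$ (the hypothesis $\deg x = e - d \leqslant -1 \leqslant N$ holds), substitute the duality decomposition with $c = \theta^i$, and interchange summation to recognize the inner $i$-sum as $\be^*(a_j/f)^{q^{N+1}}$ via the Laurent identity with $M = N+1$; this gives (a).

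For (b), apply Lemma~\ref{L:estarepair} with $x = a/f$ and $N = e - d < 0$; the hypothesis $\deg x \leqslant \min(-1, N) = N$ holds with equality, and since $a \in A_+$ and $f$ are both monic, $\theta^{d-e-1} a/f = \theta^{-1} + O(\theta^{-2})$, so the right-hand side $\Res(\theta^{d-e-1}a/f) = 1$. Raising both sides to the $q^{d-e-1}$-th power clears the fractional Frobenius on $\be^*$ and yields
\[
\sum_{i\geqslant 0} \be^*(\theta^{-i-1})\,\be(\theta^i a/f)^{q^{d-e-1}} = 1.
\]
Raising the duality decomposition (with $c = \theta^i$) to the $q^{d-e-1}$-th power (a finite freshman's-dream step with $\FF_q$ coefficients), substituting, and collapsing the inner $i$-sum via the Laurent identity with $M = 0$ then yields~(b). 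The main subtlety is the negative-$N$ manipulation: the intermediate fractional Frobenius powers on $\be^*$ require perfection of $\C$, and monicity of $a \in A_+$ is essential for the residue on the right-hand side to equal~$1$.
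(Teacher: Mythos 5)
Your proof is correct and follows essentially the same route the paper sketches immediately after the theorem statement (and that~\cite{ABP04}*{Thm.~5.4.4} pursues): expand $\be^*(a_i/f)^{q^{N+1}}$ via the residue identity~\eqref{E:estarRes}, interchange with the finite sum, collapse the inner $\FF_q$-combination via Lemma~\ref{L:duality}, and invoke Lemma~\ref{L:estarepair}, once with $N\geqslant 0$ for part~(a) and once with $N=\deg a-\deg f<0$ for part~(b). Your final raising to the $q^{\deg f-\deg a-1}$-th power to clear the fractional Frobenius, together with the observation that monicity of $a$ forces the residue to equal~$1$, is exactly the step that converts the $N<0$ case of Lemma~\ref{L:estarepair} into the form asserted in~(b).
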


The connection with Lemma~\ref{L:estarepair} is obtained through the following calculation~\cite{ABP04}*{p.~290}. Note that for $n \geqslant 0$ and $x \in k_{\infty}$, $\Res(\theta^n x)$ is the coefficient of $\theta^{-n-1}$ in~$x$, and so as in~\cite{ABP04}*{\S 5.2.5}, we have
\begin{equation} \label{E:estarRes}
\be^*(x) = \sum_{n=0}^{\infty} \Res(\theta^n x) \be^*(1/\theta^{n+1}).
\end{equation}
Therefore, for $N \in \ZZ$,
\begin{align*}
\sum_{i=1}^{\deg f} \be^*(a_i/f)^{q^{N+1}} \be(b_ia/f) &= \sum_{i=1}^{\deg f} \sum_{n=0}^{\infty} \Res(\theta^n a_i/f) \be^*(\theta^{-n-1})^{q^{N+1}} \be(b_ia/f) \\
&=  \sum_{n=0}^{\infty} \be^*(\theta^{-n-1})^{q^{N+1}} \be\biggl( \sum_{i=1}^{\deg f} \Res(\theta^n a_i/f) b_i \cdot \frac{a}{f} \biggr) \\
&= \sum_{n=0}^{\infty} \be^*(\theta^{-n-1})^{q^{N+1}} \be(\theta^n a/f),
\end{align*}
where Lemma~\ref{L:duality} implies the last equality, and the rest follows from Lemma~\ref{L:estarepair}.

Theorem~\ref{T:fdualthm}(a) plays the role here that Anderson's interpolation formula~\cite{And92}*{Thm.~2} played in Sinha~\cite{Sinha97b}, but without the need of soliton machinery. Theorem \ref{T:fdualthm}(b) is instrumental in determining the divisor of the Coleman function below (see \cite{ABP04}*{\S 6.3.6}).

\begin{subsubsec}{Definition of Coleman functions}
Select an $f$-dual family $\{ a_i \}_{i=1}^{\deg f}$, $\{b_i \}_{i=1}^{\deg f}$. For $[a/f] \in \cA_f$, the \emph{Coleman function} $g_{[a/f]} \in \ok(\bX)$ is defined as in~\cite{ABP04}*{\S 6.3.5} by
\begin{equation} \label{E:gdef}
g_{[a/f]} \assign 1 - \sum_{i=1}^{\deg f} \be^*(a_i/f)\cC_{ab_i}(t,z).
\end{equation}
One checks that this definition is independent of the choice of $f$-dual family~\citelist{\cite{ABP04}*{\S 6.3.5} \cite{DavisPhD}*{Prop.~3.5.2.2}} and the representation of $[a/f]$ in $\cA_f$. We note that $g_{[0/f]} = 1$. For $[a/f] \neq [0/f]$, letting $I = \infty_1 + \dots + \infty_n$ as in~\S\ref{SSS:infinity}, by \cite{ABP04}*{\S 6.3.6} the divisor of $g_{[a/f]}$ is
\[
\divf\bigl( g_{[a/f]} \bigr) =  -I + \sum_{\substack{b \in A \\ (b,f)=1 \\ \deg b < \deg f}} \sum_{N=0}^{\infty} \biggl\langle \frac{ba}{f} \biggr\rangle_N \,\xi_{b}^{(N)} = -I +\sum_{\substack{b \in A \\ (b,f)=1 \\ \deg b < \deg f}} \biggl\langle \frac{ba}{f} \biggr\rangle \xi_b^{(\deg f - \deg (ba \bmod f) -1)}.
\]
\end{subsubsec}

\begin{remark}
When originally defined by Sinha~\cite{Sinha97b} using solitons, Sinha used the equivalent formulation
\begin{equation} \label{E:divgaf}
\divf\bigl( g_{[a/f]} \bigr)  = W_{[a/f]}^{(1)} - W_{[a/f]} + \Xi_{[a/f]} - I, \quad ([a/f] \neq [0/f]),
\end{equation}
which will be important for us in defining Sinha modules in \S\ref{S:Sinha}. There is a natural comparison to make with divisors of shtuka functions for Drinfeld-Hayes modules (see \citelist{\cite{Goss}*{\S 7.11} \cite{Thakur93} \cite{Thakur}*{\S 8.2}}).
\end{remark}

\begin{proposition} \label{P:coldef}
Let $g_{[a/f]} \in \ok(\bX)$ be defined as above. Then
\[
g_{[a/f]} \in B[f^{-1}][t,z].
\]
\end{proposition}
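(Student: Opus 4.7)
The plan is to unpack the explicit definition
\[
g_{[a/f]} = 1 - \sum_{i=1}^{\deg f} \be^*(a_i/f)\,\cC_{ab_i}(t,z)
\]
and verify that each of its two ingredients---the polynomial pieces $\cC_{ab_i}(t,z)$ and the scalar coefficients $\be^*(a_i/f)$---already lies in $B[f^{-1}][t,z]$. The argument splits cleanly into two checks, and the substance lies entirely in the second.

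For the polynomial side, since the Carlitz module takes values in $A[\tau]$, the polynomial $C_c(z) \in A[z] = \FF_q[\theta][z]$ for every $c \in A$. Substituting $\theta = t$ therefore produces $\cC_c(t,z) \in \FF_q[t,z]$, and in particular $\cC_{ab_i}(t,z) \in B[t,z] \subseteq B[f^{-1}][t,z]$ for each $i$.

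For the coefficient side, which is the heart of the matter, I will show $\be^*(a_i/f) \in B[f^{-1}]$. Proposition~\ref{P:estargal} places $\be^*(a_i/f)$ inside $K$, so it suffices to check integrality at every finite prime $\fp$ of $B$ with $\fp \nmid f$. This is exactly what Proposition~\ref{P:estarval} delivers: for such $\fp$ we have $\ord_\fp(f) = 0$, and the bound $\ord_\fp(\be^*(a_i/f)) \geqslant -\ord_\fp(f)/(q-1)$ collapses to $\ord_\fp(\be^*(a_i/f)) \geqslant 0$. Since $B$ is Dedekind, integrality at all primes outside those dividing $f$ is equivalent to membership in $B[f^{-1}]$.

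Combining the two observations, each summand $\be^*(a_i/f)\,\cC_{ab_i}(t,z)$ lies in $B[f^{-1}][t,z]$, and adding the constant $1$ keeps us there. There is no real obstacle; the statement is a direct consequence of Propositions~\ref{P:estargal} and~\ref{P:estarval}. The only point worth flagging is that the fractional-looking denominator in Proposition~\ref{P:estarval} becomes harmless at precisely the primes we care about, namely those coprime to $f$, which is what makes $B[f^{-1}]$ (rather than some ramified enlargement) the natural ring of definition.
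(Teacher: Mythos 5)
Your proof is correct and follows essentially the same route as the paper: observe that $\cC_{ab_i}(t,z) \in \FF_q[t,z]$, then apply Proposition~\ref{P:estarval} to control the denominators of the $\be^*(a_i/f)$, concluding they lie in $B[f^{-1}]$. The only addition is your explicit citation of Proposition~\ref{P:estargal} to place $\be^*(a_i/f)$ in $K$ in the first place, which the paper leaves implicit but which is a reasonable point to flag.
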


\begin{proof}
Since $\cC_a(t,z) \in \FF_q[t,z]$ for all $a \in A$, the result follows from \eqref{E:gdef} combined with Proposition~\ref{P:estarval}.
\end{proof}

For $\ba \in \cA_f$, $\ba = \sum m_a [a/f]$, we set
\begin{equation} \label{E:coleman}
g_{\ba} \assign \prod_{\substack{a \in A \\ \deg a < \deg f}} g_{[a/f]}^{m_a},
\end{equation}
and so $g_0 = 1$. By construction (and Lemma~\ref{L:XiWprops}(a)),
\begin{equation} \label{E:divgba}
\divf(g_{\ba}) = W_{\ba}^{(1)} - W_{\ba} + \Xi_{\ba} - I_{\ba},
\end{equation}
where for uniformity of notation we write $I_{\ba} \assign \deg(\ba)\cdot I$.

\begin{subsubsec}{Infinite products of Frobenius twists}
For $a \in A$ with $\deg a < \deg f$, we consider $g_{[a/f]} \in K[t,z]$ as an element of the affinoid algebra $\BB$. We see from~\eqref{E:gdef}, since $\cC_{ab_i}(t,z) \in \FF_q[t,z]$, that the Gauss norm of $g_{[a/f]}$ is governed by the quantities $\inorm{\be^*(a_i/f)}$. Moreover, using~\eqref{E:estarnorm} we see that $\dnorm{g_{[a/f]}} = 1$,
and more granularly that
\begin{equation} \label{E:normgaf-1}
\dnorm{g_{[a/f]}-1} \leqslant \max_{i=1}^{\deg f} \bigl\{ \inorm{\be^*(a_i/f)} \bigr\} = \inorm{\ttheta}^{-1} < 1,
\end{equation}
where the maximum in the middle occurs when $\deg a_i = \deg f - 1$.  We thus obtain the following proposition, improving a result of Sinha~\cite{Sinha97b}*{\S 5.1}.
\end{subsubsec}

\begin{proposition} \label{P:cGaf}
For $a\in A$ with $\deg a < \deg f$, the infinite product
\[
\cG_{[a/f]} \assign \prod_{N=1}^{\infty} g_{[a/f]}^{(N)}
\]
converges in $\BB_{\theta}$ with respect to $\dnorm{\,\cdot\,}_{\theta}$. Moreover, $\cG_{[a/f]} \in \BB_{\theta}^{\times}$.
\end{proposition}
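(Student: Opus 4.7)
The plan is to separate the claim into two parts, convergence in the complete normed ring $(\BB_\theta, \dnorm{\,\cdot\,}_\theta)$ and membership in $\BB_\theta^\times$, each of which follows from a norm bound supplied by~\eqref{E:normgaf-1} together with the divisor data from~\eqref{E:divgba}. By Proposition~\ref{P:coldef}, $g_{[a/f]}\in B[f^{-1}][t,z]\subseteq K[t,z]$, so it is a polynomial in both $t$ and $z$ of some finite $t$-degree $D$. Frobenius twisting raises coefficients to $q^N$-th powers and preserves $K[t,z]\subseteq \BB_\theta$, so every $g_{[a/f]}^{(N)}$ lies in $\BB_\theta$.

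For convergence, I would translate the canonical Tate-norm bound~\eqref{E:normgaf-1} into a $\dnorm{\,\cdot\,}_\theta$-bound. Writing
\[
g_{[a/f]} - 1 = \sum_{j=0}^{r-1}\sum_{i=0}^{D} c_{ij}\, t^i z^j, \quad c_{ij} \in K,
\]
the inequality $\dnorm{g_{[a/f]}-1}\leqslant \inorm{\ttheta}^{-1}$ yields $\inorm{c_{ij}}\leqslant \inorm{\ttheta}^{-1}$ for every $(i,j)$. Hence
\[
\dnorm{g_{[a/f]}^{(N)}-1}_\theta = \max_{i,j} \inorm{c_{ij}}^{q^N}\inorm{\theta}^i \leqslant \inorm{\ttheta}^{-q^N}\cdot \inorm{\theta}^D \longrightarrow 0
\]
as $N\to\infty$, since the doubly-exponential decay of the first factor swamps the fixed constant $\inorm{\theta}^D$. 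Because $\BB_\theta$ is complete under $\dnorm{\,\cdot\,}_\theta$, the standard non-archimedean criterion for infinite products forces $\cG_{[a/f]}$ to converge in $\BB_\theta$.

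For the unit claim, I would pick $N_0\geqslant 1$ large enough that $\dnorm{g_{[a/f]}^{(N)}-1}_\theta <1$ for all $N\geqslant N_0$. Then the tail $\prod_{N\geqslant N_0} g_{[a/f]}^{(N)}$ is a unit in $\BB_\theta$ by the familiar geometric-series inversion of $1+x$ with $\dnorm{x}_\theta<1$. For each remaining factor $g_{[a/f]}^{(N)}$ with $1\leqslant N<N_0$, the divisor formula~\eqref{E:divgba} gives $\divf(g_{[a/f]}^{(N)}) = W_{[a/f]}^{(N+1)} - W_{[a/f]}^{(N)} + \Xi_{[a/f]}^{(N)} - I$, whose zero part $\Xi_{[a/f]}^{(N)}+W_{[a/f]}^{(N+1)}$ is supported on points $\xi_b^{(j)}$ with $j\geqslant N\geqslant 1$ and hence $t$-coordinate of absolute value $\inorm{\theta}^{q^j}>\inorm{\theta}$. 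Therefore $g_{[a/f]}^{(N)}$ does not vanish on the affinoid $\{\inorm{t}\leqslant \inorm{\theta}\}$ cut out by $\BB_\theta$, and the standard criterion for units in affinoid algebras forces $g_{[a/f]}^{(N)}\in\BB_\theta^\times$. Multiplying the tail unit by this finite product of units yields $\cG_{[a/f]}\in\BB_\theta^\times$.

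The main obstacle is the mismatch between the two Gauss norms $\dnorm{\,\cdot\,}$ and $\dnorm{\,\cdot\,}_\theta$: the sharp estimate~\eqref{E:normgaf-1} is calibrated for the canonical Tate norm, and passing to $\dnorm{\,\cdot\,}_\theta$ rescales $t$ by $\inorm{\theta}$, which can inflate coefficient norms. The decisive observation is that $g_{[a/f]}$ is a polynomial, so this inflation costs only the bounded factor $\inorm{\theta}^D$, which is annihilated by the doubly-exponential decay $\inorm{\ttheta}^{-q^N}$. A subsidiary subtlety is that for small $N$ one may have $\dnorm{g_{[a/f]}^{(N)}-1}_\theta\geqslant 1$, so the $1+x$ inversion shortcut is unavailable there and the divisor-based non-vanishing argument is genuinely needed for those factors.
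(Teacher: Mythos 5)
Your proof is correct and follows essentially the same route as the paper's. Both translate the canonical Gauss-norm bound \eqref{E:normgaf-1} into a $\dnorm{\,\cdot\,}_\theta$-bound by exploiting the finite $t$-degree of $g_{[a/f]}$ (the paper's constant $M$ plays the same role as your $D$), and both use the divisor formula \eqref{E:divgaf} to locate the zeros of $g_{[a/f]}^{(N)}$ outside the affinoid $\{\inorm{t}\leqslant\inorm{\theta}\}$ for $N\geqslant 1$. One minor divergence is in the unit claim: you split the product into a tail (handled via the ultrametric $1+x$ inversion) and finitely many head factors (handled by the divisor argument), whereas the paper invokes the divisor argument uniformly for all $N\geqslant 1$ and then asserts the invertibility of the infinite product; your tail decomposition makes explicit the standard point that a convergent infinite product of factors approaching $1$ in norm is itself a unit, which the paper leaves implicit. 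Your parenthetical remark that the divisor argument is "genuinely needed" only for small $N$ is slightly misleading — the paper applies it for every $N$ — but this does not affect the validity of either argument.
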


\begin{proof}
We see from~\eqref{E:normgaf-1} that the product for $\cG_{[a/f]}$ converges in $\BB$. Sinha showed this in~\cite{Sinha97b}*{\S 5.1} using Green's functions and verified that $\cG_{[a/f]} \in \BB^{\times}$. To obtain the results in $\BB_{\theta}$ we analyze~\eqref{E:gdef} with respect to $\dnorm{\,\cdot\,}_{\theta}$ (cf.\ Wei~\cite{Wei22}*{Lem.~5.3.2, Rem.~5.3.3}).

Each $\cC_{ab_i}(t,z) \in \FF_q[t,z] = \bB$ that appears in \eqref{E:gdef} can be expressed uniquely as an $\FF_q[t]$-linear combination of $1$, $z, \dots, z^{r-1}$. Let $M \geqslant 0$ be the maximum of the degrees in~$t$ of these coefficients as we vary over all $\cC_{ab_i}(t,z)$. It then follows from~\eqref{E:gdef}, \eqref{E:normgaf-1}, and the definition of $\dnorm{\,\cdot\,}_{\theta}$, that for $N \geqslant 0$,
\[
\bigl\lVert g_{[a/f]}^{(N)} - 1 \bigr\rVert_{\theta} \leqslant \inorm{\ttheta}^{-q^N} \cdot \inorm{\theta}^{M}.
\]
(See also \S\ref{SSS:precursorRAT}.) Thus $\dnorm{g_{[a/f]}^{(N)} - 1}_{\theta} \to 0$ as $N \to \infty$, which shows that $\cG_{[a/f]} \in \BB_{\theta}$. Now by~\eqref{E:divgaf} we see that, for $N \geqslant 1$, the zeros of $g_{[a/f]}^{(N)}$ lie outside the inverse image under $t:\bX \to \PP^1$ of the disk of radius $\inorm{\theta}$ in $\C$. In fact the $t$-coordinates of the zeros of $g_{[a/f]}^{(N)}$ have norm going to infinity as $N \to \infty$. Thus each $g_{[a/f]}^{(N)}$, $N \geqslant 1$, is invertible in $\BB_{\theta}$, and also their product is in $\BB_{\theta}^{\times}$.
\end{proof}

\begin{remark}
Since for each $b \in (A/fA)^{\times}$ we have $t(\xi_b) = \theta$, this proposition implies that $\cG_{[a/f]}$ is regular and non-vanishing at each $\xi_b \in \bX(K)$.
\end{remark}

For $\ba \in \cA_f$ with $\ba = \sum m_a[a/f]$, we then define $\cG_{\ba} \assign \prod \cG_{[a/f]}^{m_a}$, which lies in the fraction field of $\BB_{\theta}$.

\begin{subsubsec}{Interpolation formulas} \label{SSS:interp}
The main application of Coleman functions to special $\Pi$-values was proved by Anderson~\cite{And92}*{\S 5.3}, but see also \citelist{\cite{ABP04}*{\S 6.3} \cite{Sinha97b}*{\S 5.3}}. Let $a$, $b \in A$, with $(b,f)=1$, and of degrees $< \deg f$. For $N \geqslant 0$, we see from \eqref{E:gdef} that
\begin{multline} \label{E:interp}
g_{[a/f]}^{(N+1)}(\xi_b)  = 1 - \sum_{i=1}^{\deg f} \be^*(a_i/f)^{q^{N+1}} C_{ab_i}(\be(b/f)) \\
= 1 - \sum_{i=1}^{\deg f} \be^*(a_i/f)^{q^{N+1}} \be(abb_i/f)
=1 + \Psi_{N} \biggl( \frac{ab \bmod f}{f} \biggr),
\end{multline}
where the last equality follows from Theorem~\ref{T:fdualthm}(a). Thus by~\eqref{E:PiPsiN},
\begin{equation}
\Pi\bigl( (ab \bmod f)/f \bigr) = \cG_{[a/f]}(\xi_b)^{-1} = \cG_{[ba/f]}(\xi)^{-1},
\end{equation}
which equates specializations of $\cG_{[a/f]}$ with special $\Pi$-values. This prompts the definition for $a \in A$, not necessarily of degree $< \deg f$,
\[
\Pi\bigl( [a/f] \bigr) \assign \Pi \bigl( (a \bmod f)/f \bigr),
\]
and also for $\ba = \sum_a m_a[a/f]$ we take $\Pi(\ba) = \prod_a \Pi ([a/f])^{m_a}$. In this way, we have
\begin{equation} \label{E:Pibstara}
\Pi(b \star \ba) = \cG_{\ba}(\xi_b)^{-1} = \cG_{b \star \ba}(\xi)^{-1}.
\end{equation}
for $b \in A$ with $\deg b < \deg f$, $(b,f)=1$.
\end{subsubsec}

\begin{remark}
There is a slight difference between this formula and the corresponding one in \cite{ABP04}*{\S 6.3.9}, which is stated as
\[
\Pi( b \star \ba) = \prod_{N=1}^{\infty} \bigl( g_{\ba}^{(N)}(\xi_b) \bigr)^{-1}.
\]
This latter one is an infinite product of values in $\CC_{\infty}$, whereas \eqref{E:Pibstara} arises from infinite products of functions in $\BB$ which are then evaluated at $\xi_b$.
\end{remark}

\begin{subsubsec}{Galois actions on $g_{\ba}$} \label{SSS:gaGalois}
For $\ba \in \cA_f$, there are two natural Galois actions on $g_{\ba}$. First $\Gal(K/k)$ acts on the coefficients of $g_{\ba}$, and secondly $\Gal(\bK(t,z)/\bk(t,z)) \cong \Gal(\bK/\bk)$, so there is a complementary $\Gal(\bK/\bk)$-action as well.
\end{subsubsec}

\begin{proposition} \label{P:gaGalois}
Let $\ba \in \cA_f$. For $b \in (A/fA)^{\times}$ the following hold.
\begin{alphenumerate}
\item $\rho_b^{-1} ( g_{\ba}) = g_{b \star \ba}$.
\item $\trho_b(g_{\ba}) = g_{b\star \ba}$.
\end{alphenumerate}
\end{proposition}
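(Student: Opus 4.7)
The plan is to reduce both identities to the basic case $\ba = [a/f]$ using multiplicativity of the Coleman function, then dispose of (b) by a direct substitution and (a) by a change-of-$f$-dual-family argument. Both sides of each identity are multiplicative in $\ba$: the right side because $g_{b\star\ba} = \prod_a g_{[ba/f]}^{m_a}$ follows from \eqref{E:coleman} together with the $\ZZ$-linearity of $b\star(\,\cdot\,)$ on $\cA_f$, and the left side because $\rho_b^{-1}$ and $\trho_b$ are ring automorphisms on $\ok(\bX)$. It thus suffices to verify both statements for basic $\ba = [a/f]$, and the case $a \equiv 0 \pmod{f}$ is trivial since $g_{[0/f]} = 1$.

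For (b), the automorphism $\trho_b$ fixes $\ok$ (hence the coefficients $\be^*(a_i/f)$) and sends $z$ to $\cC_b(t,z)$. The Carlitz composition rule $\cC_{pq}(t,z) = \cC_p(t,\cC_q(t,z))$ immediately gives $\cC_{ab_i}(t,\cC_b(t,z)) = \cC_{bab_i}(t,z)$, so
\[
\trho_b(g_{[a/f]}) = 1 - \sum_{i=1}^{\deg f} \be^*(a_i/f)\,\cC_{bab_i}(t,z),
\]
which is precisely \eqref{E:gdef} for $g_{[ba/f]}$ computed relative to the same $f$-dual family $\{a_i\},\{b_i\}$.

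For (a), since each $\cC_{ab_i}(t,z)$ has $\FF_q$-coefficients, $\rho_b^{-1}$ acts only on the $\ok$-coefficients, and Proposition~\ref{P:estargal} gives $\rho_b^{-1}(\be^*(a_i/f)) = \be^*(ba_i/f)$, yielding
\[
\rho_b^{-1}(g_{[a/f]}) = 1 - \sum_{i=1}^{\deg f} \be^*(ba_i/f)\,\cC_{ab_i}(t,z).
\]
To identify this with $g_{[ba/f]}$, I use independence of the defining formula from the choice of $f$-dual family. Let $b' \in A$ be a lift of the inverse of $b$ modulo $f$, and set $a_i' = ba_i$, $b_i' = b'b_i$. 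Since $bb' \equiv 1 \pmod f$ and $\Res(c/f)$ depends only on $c \bmod f$, one checks $\Res(a_i'b_j'/f) = \Res(a_ib_j/f) = \delta_{ij}$, so $\{a_i'\},\{b_i'\}$ is another $f$-dual family. Moreover, $\cD_f \mid \cC_f$ in $\FF_q[t,z]$ (from $\delta_f \mid C_f$ in $A[z]$), so $\cC_f(t,z) \equiv 0$ in $\bK$; combined with the $\FF_q$-linearity of $c \mapsto \cC_c(t,z)$ and the composition rule above, this shows that $\cC_c(t,z)$ in $\bK$ depends only on $c \bmod f$. Since $bab_i' \equiv ab_i \pmod{f}$, we get $\cC_{bab_i'}(t,z) = \cC_{ab_i}(t,z)$ in $\bK$, and hence
\[
g_{[ba/f]} = 1 - \sum_{i=1}^{\deg f} \be^*(a_i'/f)\,\cC_{bab_i'}(t,z) = 1 - \sum_{i=1}^{\deg f} \be^*(ba_i/f)\,\cC_{ab_i}(t,z) = \rho_b^{-1}(g_{[a/f]}).
\]
The main obstacle is this bookkeeping in part (a) — simultaneously changing the $f$-dual family and invoking the reduction of $\cC_c(t,z)$ modulo $f$ in $\bK$ — whereas (b) is essentially immediate once the composition law $\cC_{ab_i}(t,\cC_b(t,z)) = \cC_{bab_i}(t,z)$ is observed.
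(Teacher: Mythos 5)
Your part (b) is essentially the paper's own proof, relying on the Carlitz composition law $\cC_{ab_i}(t,\cC_b(t,z)) = \cC_{bab_i}(t,z)$. Your part (a), however, takes a genuinely different route. The paper's proof of (a) expands each coefficient $\be^*(ba_i/f)$ via the series $\be^*(x) = \sum_{n\geqslant 0} \Res(\theta^n x)\be^*(1/\theta^{n+1})$ from \eqref{E:estarRes}, swaps the order of summation, applies Lemma~\ref{L:duality} to collapse $\sum_i \Res(\theta^n a_i b/f)\, b_i \equiv \theta^n b \pmod f$, and then reverses the expansion to land on $g_{[ba/f]}$. You instead keep the finite sum intact and pass to a new $f$-dual family $\{ba_i\},\{b'b_i\}$, appealing to two facts: that the definition \eqref{E:gdef} of $g_{[a/f]}$ is independent of the choice of $f$-dual family (stated with citations immediately after \eqref{E:gdef}), and that $\cC_c(t,z)$ as an element of $\bB$ depends only on $c\bmod f$, which holds because $\cD_f \mid \cC_f$ forces $\cC_f(t,z)=0$ in $\bB$. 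Both arguments are correct, and yours is arguably more conceptual: it makes the Galois twist visibly an instance of the freedom to choose the $f$-dual family, rather than a computation with infinite series. The tradeoff is that you invoke the independence-of-family statement as a black box, whereas the paper's calculation is self-contained modulo \eqref{E:estarRes} and Lemma~\ref{L:duality}, which it has fully developed. Your multiplicative reduction to the basic case $\ba=[a/f]$ matches the paper's opening remark.
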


\begin{proof}
It suffices to consider the case $\ba=[a/f]$. For (b), if we let $\{a_i\}$, $\{b_i\}$ be $f$-dual bases of $A/fA$, then \eqref{E:trhodef} and~\eqref{E:gdef} imply
\[
\trho_b(g_{[a/f]}) = 1 - \sum_{i=1}^{\deg f} \be^*(a_i/f) \cC_{a b_i}\bigl( t, \cC_b(t,z) \bigr) =
1 - \sum_{i=1}^{\deg f} \be^*(a_i/f) \cC_{ba b_i}(t,z)
=g_{[ba/f]},
\]
and the result follows. For (a), we observe that
\[
\rho_b^{-1} (g_{[a/f]}) = 1 - \sum_{i=1}^{\deg f} \be^*(ba_i/f) \cC_{ab_i}(t,z) = 1 - \sum_{n=0}^{\infty} \be^*(1/\theta^{n+1}) \sum_{i=1}^{\deg f} \Res (\theta^n a_i b/f) \cC_{ab_i}(t,z),
\]
where the first equality follows from Proposition~\ref{P:estargal} and~\eqref{E:gdef} and the second from~\eqref{E:estarRes}. We then find
\[
\rho_b^{-1}(g_{[a/f]}) = 1 - \sum_{n=0}^{\infty} \be^*(1/\theta^{n+1}) \cC_{a \sum_{i=1}^{\deg f} \Res(\theta^n a_i b/f)b_i}(t,z)
= 1 - \sum_{n=0}^{\infty} \be^*(1/\theta^{n+1}) \cC_{a\theta^n b}(t,z),
\]
where here the first equality follows from fact that $\Res$ takes values in $\FF_q$ and the second from Lemma~\ref{L:duality}. Then we reverse the calculation, while keeping $b$ on the right, and
\[
\rho^{-1}_b(g_{[a/f]}) = 1 - \sum_{i=1}^{\deg f} \be^*(a_i/f) \cC_{abb_i}(t,z) = g_{b \star [a/f]} \] as desired. \end{proof}

\subsection{Anderson Hecke characters} \label{SS:AndHecke}
We review the construction Hecke characters of Anderson~\cite{And92}, extended by Sinha~\cite{Sinha97a},
\[
\chi_{\ba} : \cI_{K,\ff} \to K^{\times}, \quad \ba \in \cA_f,
\]
of conductor dividing the radical $\ff$ of $fB$. We first define $\chi_{\ba}$ for $\ba$ basic and then extend multiplicatively. We should note that Anderson's definition is achieved through specializations of his soliton function on the surface $X \times X$, whereas we phrase things equivalently in terms of Coleman functions on $\bX$ itself.

\begin{subsubsec}{Reduction of Coleman functions}
Fix a finite prime~$\fp$ of $B$ that is relatively prime to~$f$, let $B_{\fp}$ be the localization of $B$ at $\fp$, and let $\FF_{\fp} = B_{\fp}/\fp B_{\fp} \cong B/\fp$ be its residue field. The extensions of scalars $\cU = B_{\fp} \times_{\FF_q} U$ and $\cX = B_{\fp} \times_{\FF_q} X$ define curves over $\Spec B_{\fp}$. Since $X$ is smooth and proper over $\FF_q$, the resulting base extension $\cX \to \Spec B_{\fp}$ is also smooth and proper (e.g., see \cite{SilvermanATAEC}*{Prop.~IV.2.9}). Now $B_{\fp}[t,z]$ is the affine coordinate ring of $\cU$, which is irreducible, and its fraction field is $K(t,z) = K(\cX)$.

As $\cX$ is a smooth and proper model of $X$ over $B_{\fp}$, we have $\cX(B_{\fp}) = \bX(K)$, and so there is a well-defined reduction map $Q \mapsto \overline{Q}:  \bX(K) \to X(\FF_{\fp})$ modulo~$\fp$, which extends to divisors $\Div_K(\bX) \to \Div_{\FF_{\fp}}(X)$. We also have the natural reduction map
\[
h \mapsto \oh : B_{\fp}[t,z] \to \FF_{\fp}[t,z].
\]
Because the special fiber of $\cX$, which is isomorphic to $\FF_{\fp} \times_{\FF_q} X$, is irreducible, a function $h \in B_{\fp}[t,z]$ vanishes on the special fiber if and only if $\oh = 0$. Thus if $\oh \neq 0$, the divisor of~$h$ as an element of $\Div(\cX)$ is supported on horizontal divisors (e.g., see \cite{SilvermanATAEC}*{\S IV.7}). Each of these horizontal divisors intersect the special fiber at some closed point of $\FF_{\fp} \times_{\FF_q} X$. All of this is compatible with taking divisors of $h$ as a rational function on the generic fiber of $\cX$, yielding the identity in $\Div_{\FF_{\fp}}(X)$,
\begin{equation}\label{E:divred}
\divf(\oh) = \overline{\divf(h)}, \quad (\forall\, h \in B_{\fp}[t,z],\ \oh \neq 0).
\end{equation}
\end{subsubsec}

\begin{proposition} \label{P:ogaf}
Let $\fp$ be a finite prime of $B$ that is relatively prime to~$f$. For $[a/f] \in \cA_f$, we have $g_{[a/f]} \in B_{\fp}[t,z]$ and
\[
\og_{[a/f]} \neq 0.
\]
Moreover,
\[
\divf \bigl( \og_{[a/f]} \bigr) = \oW_{[a/f]}^{(1)} - \oW_{[a/f]} + \oXi_{[a/f]} - I_{[a/f]}.
\]
\end{proposition}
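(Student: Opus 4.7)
The plan is to verify the three assertions in succession, each following directly from earlier results in the paper. The containment $g_{[a/f]} \in B_{\fp}[t,z]$ is immediate from Proposition~\ref{P:coldef}: that result gives $g_{[a/f]} \in B[f^{-1}][t,z]$, and since $\fp$ is prime to $f$ we have $B[f^{-1}] \subseteq B_{\fp}$.

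To establish $\og_{[a/f]} \neq 0$, I would evaluate at $z = 0$. From the explicit formula~\eqref{E:gdef},
\[
g_{[a/f]}(t,0) = 1 - \sum_{i=1}^{\deg f} \be^*(a_i/f)\, \cC_{ab_i}(t,0) = 1,
\]
since each $\cC_{ab_i}(t,z) = C_{ab_i(t)}(z)$ is an $\FF_q$-linear polynomial in $z$ with no constant term (the Carlitz module acts via $\tau$-polynomials, so $C_b(0) = 0$ for every $b$). Reducing modulo~$\fp$ preserves this scalar, so $\og_{[a/f]}(t,0) = 1 \neq 0$ in $\FF_{\fp}[t]$, and in particular $\og_{[a/f]} \neq 0$ in $\FF_{\fp}[t,z]$.

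With non-vanishing of $\og_{[a/f]}$ in hand, I would apply equation~\eqref{E:divred} to $h = g_{[a/f]}$ and combine it with the divisor formula~\eqref{E:divgaf} to obtain
\[
\divf(\og_{[a/f]}) = \overline{W_{[a/f]}^{(1)}} - \overline{W_{[a/f]}} + \overline{\Xi_{[a/f]}} - \overline{I_{[a/f]}}.
\]
It then remains to identify each term. Frobenius twisting is defined entrywise on coordinates and commutes with reduction modulo~$\fp$, so $\overline{W_{[a/f]}^{(1)}} = \oW_{[a/f]}^{(1)}$ (and the bars match up for $W_{[a/f]}$ and $\Xi_{[a/f]}$ as well). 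The divisor $I_{[a/f]} = \deg([a/f])\cdot I$ is supported on the points $\infty_1,\dots,\infty_n \in X(\FF_q)$, which lie in the base and are therefore fixed under reduction, giving $\overline{I_{[a/f]}} = I_{[a/f]}$. Putting these identifications together yields the stated formula. (The degenerate case $[a/f] = [0/f]$ is trivial, since then $g_{[0/f]} = 1$ and all divisors on the right are zero.)

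The only step with any real potential for trouble is the non-vanishing $\og_{[a/f]} \neq 0$: a priori the $\fp$-adic integrality of the coefficients $\be^*(a_i/f)$ is guaranteed only after inverting $f$ (cf.\ Proposition~\ref{P:estarval}), and one might worry that a cancellation modulo~$\fp$ could collapse $g_{[a/f]}$ to zero. The constant-term observation bypasses this concern entirely, because the coefficient of $z^0$ in $g_{[a/f]}$ is the scalar $1 \in \FF_q^{\times}$ and survives every reduction.
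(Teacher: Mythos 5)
Your first and third steps are sound and essentially match the paper. The containment $g_{[a/f]} \in B_{\fp}[t,z]$ does follow from Proposition~\ref{P:coldef} plus $\fp \nmid f$, and once non-vanishing of $\og_{[a/f]}$ is known, the divisor formula follows from~\eqref{E:divred} and~\eqref{E:divgaf} exactly as you say.

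The middle step has a genuine gap, and it is precisely the one you flag as harmless. The object $g_{[a/f]}$ lives in the coordinate ring of the curve, i.e., in $B_{\fp}[t,z]/(\cD_f(t,z))$, and ``$\og_{[a/f]} \neq 0$'' must be read in $\FF_{\fp}[t,z]/(\cD_f(t,z))$ — otherwise the divisor formula that follows would not make sense. Setting $z=0$ is not evaluation at a point of $\bX$ (the locus $z=0$ is generally disjoint from the curve, since $\cD_f(t,0) = \delta_f(0)|_{\theta\mapsto t} \neq 0$), and the coefficient of $z^0$ in a polynomial representative is not an invariant of its class modulo $\cD_f(t,z)$. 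Concretely, your argument only shows that the polynomial representative \eqref{E:gdef} is not divisible by $\cD_f(t,z)$ when $\cD_f(t,0)$ is a non-unit of $\FF_{\fp}[t]$. But $\delta_f(0) = \wp$ when $f = \wp^e$ is a prime power, and $\delta_f(0) = 1$ as soon as $f$ has at least two distinct prime factors (the function-field analogue of $\Phi_n(0)=1$ for $n$ not a prime power). In the latter case $\cD_f(t,0) = 1$ is a unit, and nothing prevents a multiple of $\cD_f(t,z)$ from having constant $z$-coefficient $1$, so your argument does not exclude $\og_{[a/f]}$ being the zero function on the special fiber.

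The paper avoids this by evaluating at an actual $K$-rational point of $\bX$, namely $\xi_b = (\theta, C_b(\zeta))$, which has coordinates in $B_{\fp}$ and reduces to a point $\oxi_b$ of $X_{\FF_\fp}$. Taking $N=0$ in the interpolation identity~\eqref{E:interp} gives $g_{[a/f]}^{(1)}(\xi_b) = 1 + (ab\bmod f)/f$, and one chooses $b$ (essentially a multiple of the rational prime $\wp$ below $\fp$) so that this value is a $\fp$-adic unit. That shows $\og_{[a/f]}^{(1)}(\oxi_b) \neq 0$, hence $\og_{[a/f]}^{(1)} \neq 0$ in the coordinate ring of $X_{\FF_\fp}$, hence $\og_{[a/f]} \neq 0$. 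To repair your write-up you would need either this point-evaluation argument, or some other argument that rules out $\cD_f(t,z)$ dividing the reduced polynomial when $f$ has several prime factors; the constant-term observation alone does not.
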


\begin{proof}
The proposition is trivially true if $[a/f]=[0/f]$, since then $g_{[0/f]}=1$. Assume then that $[a/f] \neq [0/f]$.
That $g_{[a/f]} \in B_{\fp}[t,z]$ follows directly from Proposition~\ref{P:coldef}. To show that $\og_{[a/f]} \neq 0$ we use~\eqref{E:interp}. In particular, taking $N=0$ in \eqref{E:interp} and noting that $\Psi_0(x) = x$, it follows that for $b \in A$ with $(b,f)=1$ and $\deg b < \deg f$, we have $g_{[a/f]}^{(1)}(\xi_b) = 1 + (ab \bmod f)/f$.
Now $\xi_b = (\theta,C_b(\zeta))$ has coordinates in $B_{\fp}$, so
\[
\og_{[a/f]}^{(1)}(\oxi_b) = \overline{g_{[a/f]}^{(1)}(\xi_b)} \equiv 1 + \frac{ab \bmod f}{f} \pmod{\fp}.
\]
We claim that there exists a choice of $b$ so that this expression is non-zero modulo $\fp$. Assuming this claim, we conclude that $\og_{[a/f]}^{(1)} \neq 0$, whence $\og_{[a/f]} \neq 0$, and then the statement on divisors then follows from~\eqref{E:divred}.
To address the claim, if $f = cf'$, $a = ca'$, with $(a',f')=1$ and $c \in A_+$, then $\{ ab \bmod f \mid b \in (A/fA)^{\times} \} = \{ cb' \bmod f \mid b' \in (A/f'A)^{\times} \}$. Moreover, if we let $\wp \in A_+$ be the unique monic prime of $A$ contained in $\fp$, then we wish to find $b' \in (A/f'A)^{\times}$ such that $f+ (cb' \bmod f) \not\equiv 0 \pmod{\wp}$. Since $\wp \nmid c$ and $\wp \nmid f$, taking $b' = \wp$ does the trick.
\end{proof}

\begin{subsubsec}{Anderson's Hecke character}
Letting $[a/f] \in \cA_f$, for $\fp$ a finite prime of $B$ relatively prime to~$f$ we obtain $\og_{[a/f]} \in \FF_{\fp}[t,z]$ as in Proposition~\ref{P:ogaf}. Letting $\ell = [\FF_{\fp}:\FF_q]$,
\begin{equation} \label{E:Gafdef}
G_{[a/f],\fp} \assign \prod_{i=0}^{\ell-1} \og_{[a/f]}^{(i)} \in \FF_q[t,z] = \bB,
\end{equation}
since $\Gal(\FF_{\fp}(t,z)/\FF_q(t,z)) \cong \Gal(\FF_{\fp}/\FF_q)$. Thus we can define
\begin{equation} \label{E:chiafdef}
\chi_{[a/f]}(\fp) \assign G_{[a/f],\fp}(\xi) = \prod_{i=0}^{\ell-1} \og_{[a/f]}^{(i)}(\xi) \in \FF_q[\theta,\zeta]=B,
\end{equation}
where we recall that $\xi = \xi_1 = (\theta,\zeta) \in \bX(K)$. In the following theorem, Anderson~\cite{And92} proved that $\chi_{[a/f]}$ is an algebraic Hecke character. Sinha~\cites{SinhaPhD, Sinha97a} then investigated~$\chi_{\ba}$ for general parameters. For a version of Anderson's proof relying on the properties of Coleman functions above, see \cite{DavisPhD}*{Thm.~4.2.2.2}.
\end{subsubsec}

\begin{theorem}[{Anderson~\cite{And92}*{\S 3}, Sinha~\cite{Sinha97a}*{\S 2.2}}] \label{T:AndHecke}
For $[a/f] \in \cA_f$, the assignment $\fp \mapsto \chi_{[a/f]}(\fp)$ from \eqref{E:chiafdef} extends multiplicatively to a function $\chi_{[a/f]} : \cI_{K,\ff} \to K^{\times}$ such that the following hold.
\begin{alphenumerate}
\item Letting
\[
\Theta_{[a/f]} = \sum_{\substack{b \in A \\ \deg b < \deg f \\ (b,f)=1}} \biggl\langle \frac{ba}{f} \biggr\rangle \rho_b^{-1} \quad \in \ZZ[\Gal(K/k)],
\]
for any finite prime $\fp$ of $B$ we have the equality of ideals in $B$,
\[
\chi_{[a/f]}(\fp) \cdot B = \fp^{\Theta_{[a/f]}}.
\]
\item For a finite prime $\fp$ of $B$ and $b \in (A/fA)^{\times}$, we have
\[
\bigl| \rho_b \bigl( \chi_{[a/f]}(\fp) \bigr) \bigr|_{\infty} = \inorm{\ttheta}^{\ell}, \quad \ell = [\FF_{\fp}:\FF_q].
\]
\item Moreover, $\chi_{[a/f]} : \cI_{K,\ff} \to K^{\times}$ is an algebraic Hecke character of conductor dividing~$\ff$ and infinity type $\Theta_{[a/f]}$. In particular, for $\alpha \in K^{\times}$ with $\alpha \equiv 1 \pmod{\ff}$,
\[
\chi_{[a/f]}(\alpha B) = \alpha^{\Theta_{[a/f]}}.
\]
\end{alphenumerate}
\end{theorem}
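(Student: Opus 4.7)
My plan is to deduce all three parts from a single divisor calculation of $G_{[a/f],\fp}$ on $X/\oFq$, then transfer to the ideal structure of $B$ via the isomorphism $\phi : \bB \iso B$ sending $(t,z) \mapsto (\theta, \zeta)$. The extension of $\chi_{[a/f]}$ from primes to $\cI_{K,\ff}$ is forced by multiplicativity, since $\cI_{K,\ff}$ is the free abelian group on primes of $B$ not dividing $\ff$. Proposition~\ref{P:ogaf} furnishes $\divf(\og_{[a/f]}) = \oW_{[a/f]}^{(1)} - \oW_{[a/f]} + \oXi_{[a/f]} - I$, so summing over Frobenius twists $i = 0, \ldots, \ell - 1$ telescopes the $\oW$-contributions to $\oW_{[a/f]}^{(\ell)} - \oW_{[a/f]}$. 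The key observation is that $\oxi_b \in X(\FF_\fp) = X(\FF_{q^\ell})$ implies $\oxi_b^{(\ell)} = \oxi_b$ as a point of $X$, whence $\oW_{[a/f]}^{(\ell)} = \oW_{[a/f]}$ as divisors, yielding
\[
\divf(G_{[a/f],\fp}) = \sum_{i=0}^{\ell-1} \oXi_{[a/f]}^{(i)} - \ell I.
\]

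For (a), I will transfer this divisor on the affine part $\bU$ of $\bX$ into the prime factorization of $\chi_{[a/f]}(\fp) = G_{[a/f],\fp}(\xi) \in B$ via $\phi$. For a prime $\fq = \rho_c^{-1}(\fp)$ of $B$ with $\wp = \fp \cap A$, $\ord_\fq(\chi_{[a/f]}(\fp))$ equals the multiplicity of $\divf(G_{[a/f],\fp})$ at any $\oFq$-point in the Galois orbit corresponding to $\fq$. Using $\oxi_b^{(\deg \wp)} = \oxi_{\wp b}$ and the decomposition group $D = \langle \rho_\wp \rangle$ in $\Gal(K/k)$, this multiplicity works out to $\sum_{b \in cD} \langle ba/f \rangle$, matching $\ord_\fq(\fp^{\Theta_{[a/f]}})$; primes of $B$ above rational primes other than $\wp$ contribute $0$ on both sides. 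For (b), the same divisor shows a pole of exact order $\ell$ at each of the $n$ points $\infty_j \in X(\FF_q)$. Since infinity is split with residue degree $1$ in $K^+/k$ and totally ramified with index $q-1$ in $K/K^+$, a local uniformizer $\pi_j$ at $\infty_j$ satisfies $|\pi_j|_{\infty_j} = |\ttheta|^{-1}$, giving $|\chi_{[a/f]}(\fp)|_{\infty_j} = |\ttheta|^{\ell}$ at every $\infty$-place of $K$; since $\rho_b$ permutes these places, the same equality holds for every Galois conjugate $\rho_b(\chi_{[a/f]}(\fp))$ under the fixed embedding $K \hookrightarrow \C$.

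For (c), part (a) already implies that $\chi_{[a/f]}(\alpha B)$ and $\alpha^{\Theta_{[a/f]}}$ generate the same fractional ideal of $B$, so they differ by a unit $u \in B^\times$, and the remaining task is to show $u = 1$ under $\alpha \equiv 1 \pmod{\ff}$. I expect this to be the main obstacle: the ideal-level identity must be sharpened to a genuine equality of elements, which requires matching $\chi_{[a/f]}(\alpha B)$ and $\alpha^{\Theta_{[a/f]}}$ both modulo $\ff$ and at the $\infty$-places simultaneously. The natural strategy is to analyze $\chi_{[a/f]}(\fp) \bmod{\fp'}$ for $\fp' \mid \ff$, but the formula~\eqref{E:gdef} for $g_{[a/f]}$ involves $\be^*$-values with $\ff$-adic poles by Proposition~\ref{P:estarval}, so a direct mod-$\ff$ reduction fails; a more careful Coleman-style congruence argument in the spirit of Anderson~\cite{And92}, exploiting the $f$-dual family identities of Theorem~\ref{T:fdualthm}, will be needed here.
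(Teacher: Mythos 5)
The paper does not supply its own proof of this theorem: it is cited to Anderson~\cite{And92}*{\S 3} and Sinha~\cite{Sinha97a}*{\S 2.2}, with a pointer to a Coleman-function-based argument in the first author's thesis. Your attempt is therefore a fresh reconstruction rather than something that can be compared line-by-line with the paper.

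Parts (a) and (b) of your argument are sound. The divisor identity $\divf(G_{[a/f],\fp}) = \sum_{i=0}^{\ell-1} \oXi_{[a/f]}^{(i)} - \ell I$ follows by summing Frobenius twists of Proposition~\ref{P:ogaf}, since $I$ is $\FF_q$-rational and the support of $\oW_{[a/f]}$ lies in $X(\FF_{\fp})$, so $\oW_{[a/f]}^{(\ell)} = \oW_{[a/f]}$ as you say. For (a), the transfer to prime factorizations works: the multiplicity of this divisor at the closed point of $U$ corresponding to $\rho_c^{-1}(\fp)$, read at the geometric point $\oxi_c$ over it, aggregates those $\oxi_b^{(i)}$ equal to $\oxi_c$; this forces $i = m\deg\wp$ and $b \equiv \wp^{-m}c \pmod{f}$, and $\{\wp^{-m}c \bmod f : 0 \leqslant m < s\}$ (with $s = [\FF_{\fp}:\FF_{\wp}]$) is exactly the coset of $c$ by the decomposition subgroup of $\fp$, of order~$s$. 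That matches the $\rho_c^{-1}(\fp)$-adic order of $\fp^{\Theta_{[a/f]}}$, and primes above $\wp' \neq \wp$ contribute zero on both sides. For (b), the order~$-\ell$ pole at each $\infty_j$, together with $\inorm{\pi_j} = \inorm{\ttheta}^{-1}$ for a uniformizer $\pi_j$ (from ramification index $q-1$ over $k_\infty$), yields the stated absolute-value formula, and the Galois-conjugation step is handled correctly.

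Part (c) has a genuine gap, which you rightly acknowledge. Parts (a) and (b) only constrain $u = \chi_{[a/f]}(\alpha B)/\alpha^{\Theta_{[a/f]}}$ to be a unit of $B^{\times}$; getting $u = 1$ (not merely a root of unity) requires an actual $\ff$-adic input from the hypothesis $\alpha \equiv 1 \pmod{\ff}$. Your diagnosis of why this is delicate is accurate: a naive reduction of~\eqref{E:gdef} modulo primes of $\ff$ breaks down because the $\be^*(a_i/f)$ have $\ff$-adic poles by Proposition~\ref{P:estarval}. Closing this requires the Stickelberger-type local analysis at the conductor that is the technical core of Anderson's~\cite{And92}*{\S 3}; nothing in the divisor computation alone delivers it.
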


\begin{remark}
(i) Comparing notation we see that $\Theta_{[1/f]}$ here is the same as $\Theta_f(1)^*$ in~\cite{And92}. (ii) Sinha~\cite{SinhaPhD}*{Prop.~3.1} has proved a refined version of part (c) for all $\alpha \in K^{\times}$.
\end{remark}

\begin{lemma} \label{L:chiGalaction}
Let $a$, $b \in A$ with $(b,f) = 1$. For a finite prime $\fp$ of $B$ prime to~$f$,
\[
\chi_{[ba/p]}(\fp) = \rho_b \bigl( \chi_{[a/f]}(\fp) \bigr).
\]
\end{lemma}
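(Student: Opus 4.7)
The plan is to reduce the identity to a statement about the Coleman function $g_{[a/f]}$ via the geometric Galois action $\trho_b$ (Proposition~\ref{P:gaGalois}(b)), and then transport it back by evaluating at the generic point $\xi = (\theta,\zeta)$. The bridge between the two Galois actions is the following elementary observation: for any $h(t,z) \in \FF_q[t,z] = \bB$,
\[
\rho_b\bigl(h(\xi)\bigr) = h\bigl(\theta, C_b(\zeta)\bigr) = h(\xi_b) = (\trho_b h)(\xi),
\]
since $\rho_b$ fixes $\FF_q$ and $\theta$ while sending $\zeta \mapsto C_b(\zeta)$, and $\trho_b$ is induced by $z \mapsto \cC_b(t,z)$.

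First I would establish the reduction-level identity $\og_{[ba/f]} = \trho_b(\og_{[a/f]})$ in $\FF_\fp[t,z]/(\cD_f)$. By Proposition~\ref{P:gaGalois}(b), $\trho_b(g_{[a/f]}) = g_{[ba/f]}$ as elements of $\bK$, and by Proposition~\ref{P:coldef} both sides lie in $B[f^{-1}][t,z] \subseteq B_\fp[t,z]$ since $\fp \nmid f$. Because $\trho_b$ acts only on the variable~$z$ via a polynomial with coefficients in $\FF_q$, it extends to a $B_\fp$-algebra automorphism of $B_\fp[t,z]/(\cD_f)$ that commutes with reduction modulo~$\fp$; reducing both sides of the identity then gives the claim.

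Next I would observe that on $\FF_\fp[t,z]/(\cD_f)$ the $q$-Frobenius twist $h \mapsto h^{(1)}$ commutes with $\trho_b$ (both fix $t$, the twist fixes $\cC_b(t,z)$ since its coefficients lie in $\FF_q$, and the two agree on $z$). Hence, taking the Galois-invariant product over $i = 0, \ldots, \ell-1$,
\[
G_{[ba/f],\fp} = \prod_{i=0}^{\ell-1} \og_{[ba/f]}^{(i)} = \prod_{i=0}^{\ell-1} \trho_b\bigl(\og_{[a/f]}^{(i)}\bigr) = \trho_b\bigl(G_{[a/f],\fp}\bigr)
\]
in $\bB = \FF_q[t,z]/(\cD_f)$. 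Evaluating at $\xi$ and applying the bridge identity above completes the proof:
\[
\chi_{[ba/f]}(\fp) = G_{[ba/f],\fp}(\xi) = \bigl(\trho_b G_{[a/f],\fp}\bigr)(\xi) = G_{[a/f],\fp}(\xi_b) = \rho_b\bigl(G_{[a/f],\fp}(\xi)\bigr) = \rho_b\bigl(\chi_{[a/f]}(\fp)\bigr).
\]

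The argument is conceptually routine once one identifies the two Galois actions through evaluation at $\xi$; the only point demanding care is the triple commutativity of reduction mod $\fp$, $q$-Frobenius twisting, and the geometric automorphism $\trho_b$ on the various coefficient rings, together with keeping straight which action (arithmetic $\rho_b$ on coefficients versus geometric $\trho_b$ on the curve) is in force at each stage.
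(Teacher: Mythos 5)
Your proof is correct and takes essentially the same approach as the paper's: both rest on the bridge identity $\rho_b(h(\xi)) = h(\xi_b) = (\trho_b h)(\xi)$ for $h \in \FF_q[t,z]$ together with the fact that $\trho_b$ carries $g_{[a/f]}$ to $g_{[ba/f]}$. The only difference is one of factoring: the paper unwinds the definitions in a single displayed computation, whereas you cleanly invoke Proposition~\ref{P:gaGalois}(b) and then check that $\trho_b$ commutes with reduction mod $\fp$ and with Frobenius twisting; this is slightly more verbose but arguably more modular, and your checks are all correct (the one small slip — saying the identity $\trho_b(g_{[a/f]}) = g_{[ba/f]}$ holds ``as elements of $\bK$'' when it in fact lives in $K[t,z] \subseteq \ok(\bX)$ — is harmless).
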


\begin{proof}
By~\eqref{E:gdef} and~\eqref{E:chiafdef}, we see that for an $f$-dual family $\{ a_i \}$, $\{ b_j \}$,
\[
\chi_{[ab/f]}(\fp) = \prod_{i=1}^{\ell-1} \biggl( 1 - \sum_{j=1}^{\deg f} \overline{\be^*(a_j/f)}^{\,q^i} C_{ab_j}(C_b(\zeta)) \biggr)
= G_{[a/f],\fp}(\xi_b) = \rho_b \bigl( G_{[a/f],\fp}(\xi) \bigr),
\]
and the result follows.
\end{proof}

\begin{subsubsec}{General Anderson Hecke characters} \label{SSS:GenAndHecke}
As investigated by Sinha~\cite{Sinha97a}, for $\ba \in \cA_f$ we define
$\chi_{\ba} : \cI_{K,\ff} \to K^{\times}$, by setting
\[
\chi_{\ba}(\fb) = \prod_a \chi_{[a/f]}(\fb)^{m_a}, \quad \ba = \sum_a m_a [a/f].
\]
Then $\chi_{\ba}$ is an algebraic Hecke character of conductor dividing $\ff$ with infinity type $\Theta_{\ba} = \sum_a m_a \Theta_{[a/f]}$. For $\rho_b \in \Gal(K/k)$, Lemma~\ref{L:chiGalaction} shows that
\[
\chi_{\ba}^{\rho_b} \assign \rho_b \circ \chi_{\ba} = \chi_{b \star \ba}
\]
is a Hecke character. The character $\chi_{\ba}$ is also Galois symmetric.
\end{subsubsec}

\begin{lemma}[{Sinha~\cite{SinhaPhD}*{\S 3.2}}] \label{L:chiaGalsymm}
Let $\ba \in \cA_f$. For $\fb \in \cI_{K,\ff}$ and $\rho \in \Gal(K/k)$,
\[
\chi_{\ba}^{\rho}(\fb) = \chi_{\ba}(\fb^{\rho}).
\]
\end{lemma}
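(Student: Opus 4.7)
The plan is to verify the identity on a prime $\fp$ of $B$ relatively prime to $\ff$ for a basic parameter $\ba = [a/f]$; the general case then follows by multiplicativity of $\chi_{\ba}$ both in its argument and in the parameter $\ba$ (see \S\ref{SSS:GenAndHecke}). Fix $\rho = \rho_c$ for some $c \in (A/fA)^{\times}$. I will show
\[
\chi_{[a/f]}(\rho_c(\fp)) = \rho_c\bigl( \chi_{[a/f]}(\fp) \bigr).
\]

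The first input is Proposition~\ref{P:gaGalois}(a), which after setting $b=c$ and replacing $a$ by $ca$ gives $g_{[a/f]} = \rho_c(g_{[ca/f]})$, with $\rho_c$ acting on coefficients. By Proposition~\ref{P:coldef}, $g_{[ca/f]} \in B[f^{-1}][t,z]$, and since $\fp \nmid f$, this identity descends modulo $\rho_c(\fp)$. The isomorphism $\bar\rho_c : B/\fp \iso B/\rho_c(\fp)$ induced by~$\rho_c$ commutes with reduction, yielding
\[
\og_{[a/f]}^{\rho_c(\fp)} = \bar\rho_c\bigl( \og_{[ca/f]}^{\fp} \bigr).
\]

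Next, $\fp$ and $\rho_c(\fp)$ lie over the same rational prime of~$A$ and hence share the residue degree $\ell$. Any $\FF_q$-algebra homomorphism of finite fields commutes with the $q$-power Frobenius, so applying $i$-th Frobenius twists and multiplying over $i = 0, \dots, \ell-1$ yields
\[
G_{[a/f], \rho_c(\fp)} = \bar\rho_c\bigl( G_{[ca/f], \fp} \bigr).
\]
However $G_{[ca/f],\fp} \in \FF_q[t,z]=\bB$ by construction~\eqref{E:Gafdef}, and $\bar\rho_c$ is the identity on $\FF_q$. Evaluating at $\xi$ and invoking Lemma~\ref{L:chiGalaction} completes the chain
\[
\chi_{[a/f]}(\rho_c(\fp)) = G_{[ca/f],\fp}(\xi) = \chi_{[ca/f]}(\fp) = \rho_c\bigl( \chi_{[a/f]}(\fp) \bigr) = \chi_{[a/f]}^{\rho_c}(\fp).
\]

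The main obstacle is simply bookkeeping: one has to align three distinct actions---the action of $\rho_c$ on the coefficients of $g_{[a/f]}$, the permutation action of $\rho_c$ on primes of~$B$, and the induced action $\bar\rho_c$ on residue fields---and confirm they commute with Frobenius twisting. Once this is set up, no new ingredient beyond Propositions~\ref{P:gaGalois}(a) and~\ref{P:coldef}, Lemma~\ref{L:chiGalaction}, and the rationality of $G_{[ca/f],\fp}$ over~$\FF_q$ is required.
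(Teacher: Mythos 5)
Your proof is correct, and it takes a genuinely different route from the paper's. The paper establishes the identity by a direct computation: it introduces a residue-field embedding diagram (maps $i$, $j$ into $\oFF_q$), unwinds $(\rho^{-1} \circ \chi_{[a/f]} \circ \rho)(\fp)$ entirely from the definition~\eqref{E:gdef}, invokes Proposition~\ref{P:estargal} to move $\rho_c$ across $\be^*$, and then exploits the fact that $\{a_n b\}$, $\{b_n c\}$ is again an $f$-dual family. You instead factor the argument through results that are already on the shelf: Proposition~\ref{P:gaGalois}(a) gives you the coefficient-level relation $g_{[a/f]} = \rho_c(g_{[ca/f]})$ in one step, Proposition~\ref{P:coldef} guarantees integrality at $\fp$ so the relation survives reduction, and Lemma~\ref{L:chiGalaction} handles the final conversion back to $\rho_c(\chi_{[a/f]}(\fp))$. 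The pivotal observation that makes this work --- and that the paper does not use --- is that $G_{[ca/f],\fp}$ is $\FF_q$-rational by construction, which makes the residue-field isomorphism $\bar\rho_c$ act trivially and eliminates any need for the embedding diagram. The net effect is a shorter, more modular proof, at the cost of recognizing that the content of Propositions~\ref{P:gaGalois} and~\ref{P:coldef} is precisely the input the paper's from-scratch calculation re-derives.
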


\begin{proof}
It suffices to prove this identity for $\ba = [a/f]$, and $\fb = \fp$ a maximal ideal of $B$. We claim that $(\rho^{-1} \circ \chi_{[a/f]} \circ \rho)(\fp) = \chi_{[a/f]}(\fp)$. Consider the following commutative diagram:
\begin{center}
\begin{tikzcd}
B_{\fp} \arrow[d, "\rho"'] \arrow[r, "x\,\mapsto\,\ox", two heads] & 
B/\fp \arrow[d, "\rho"] \arrow[r, "i", hook] & \oFF_q \arrow[r, hook] &  \C, \\
B_{\fp^{\rho}} \arrow[r, "x\,\mapsto\,\tx"', two heads] & 
B/\fp^{\rho} \arrow[ru, "j", hook]
\end{tikzcd}
\end{center}
where $i$ is a chosen embedding of $\FF_\fp$ in $\oFF_q$ and $j$ is the embedding of $\FF_{\fp^{\rho}}$ then induced by~$\rho^{-1}$.
We observe that for $x \in B_{\fp}$,
\[
i(\ox) = j(\widetilde{\rho(x)}).
\]
Let $\ell = [\FF_{\fp}:\FF_q] = [\FF_{\fp^{\rho}}:\FF_q]$. Suppose that $\rho = \rho_b$ for $b \in (A/fA)^{\times}$, and take $c \in A$ such that $bc \equiv 1 \pmod{f}$. Then~\eqref{E:chiafdef} implies
\[
(\rho^{-1} \circ \chi_{[a/f]} \circ \rho)(\fp) = (\rho_c \circ \chi_{[a/f]} )(\fp^{\rho_b}) = \rho_c \biggl( \prod_{m=0}^{\ell - 1} \tg_{[a/f]}^{(m)}(\xi_1) \biggr) = \prod_{m=0}^{\ell - 1}\tg_{[a/f]}^{(m)}(\xi_c),
\]
where $\tg_{[a/f]}$ is the reduction of $g_{[a/f]}$ modulo $\fp^{\rho_b}$. Taking $\{ a_n \}$, $\{ b_n \}$ to be an $f$-dual family, continuing this calculation, and using~\eqref{E:gdef} with the diagram above, yields
\begin{align*}
(\rho^{-1} \circ \chi_{[a/f]} \circ \rho)(\fp) &=
\prod_{m=0}^{\ell-1} \biggl(1 - \sum_{n=1}^{\deg f} j \Bigl( \widetilde{\be^*(a_n/f)} \Bigr)^{q^m} \rho_c ( \be( a b_n/f)) \biggr) \\
&= \prod_{m = 0}^{\ell-1} \biggl(1 - \sum_{n = 1}^{\deg f} i \bigl( \overline{ \rho_c( \be^*(a_n/f))} \bigr)^{q^m} \be (a b_n c/f) \biggr), \\
\intertext{where now applying Proposition~\ref{P:estargal},}
&= \prod_{m = 0}^{\ell-1} \biggl(1 - \sum_{n=1}^{\deg f} i \bigl( \overline{ \be^\star(a_n b/f) } \bigr)^{q^m} \be(a b_n c/f) \biggr).
\end{align*}
Since $\{ a_n b\}$, $\{ b_n c\}$ is also an $f$-dual family, we finally obtain
\[
(\rho^{-1} \circ \chi_{[a/f]} \circ \rho)(\fp)
= \prod_{m=0}^{\ell-1} \biggl(1 - \sum_{n=1}^{\deg f} i \bigl( \overline{\be^* (a_n/f)} \bigr)^{q^m} \be (a b_n/f) \biggr)^{-1}
= \chi_{[a/f]}(\fp),
\]
as desired.
\end{proof}

\section{Sinha modules} \label{S:Sinha}

\subsection{Sinha \texorpdfstring{$t$}{t}-comotives and \texorpdfstring{$t$}{t}-modules} \label{SS:Sinhadefs}
As in \S\ref{S:cyclo}, fix $f \in A_+$, $\deg f \geqslant 1$, and pick effective $\ba \in \cA_f$ with $\deg \ba > 0$. Let $\KK$ be an algebraically closed intermediate field $k \subseteq \KK \subseteq \C$. As in \S\ref{SS:cyclocurves}, we take $\bU = \KK \times_{\FF_q} U$ and $\bX = \KK \times_{\FF_q} X$. We then set
\begin{equation} \label{E:Hadef}
H_{\ba} \assign H^0(\bU,\cO_{\bX}(-W_{\ba}^{(1)})),
\end{equation}
which is a non-zero ideal of $\KK[t,z]$, the affine coordinate ring of $\bU$. If we wish to indicate the dependence on $\KK$, we will write $H_{\ba}(\KK)$.
Following \cite{ABP04}*{\S 6.4}, we make $H_{\ba}$ into a left $\KK[t,\sigma]$-module using the inherent left $\KK[t]$-module structure and setting
\[
\sigma h \assign g_{\ba} h^{(-1)}, \quad h \in H_{\ba}.
\]
Then $H_{\ba}$ is a coabelian $t$-comotive by~\cite{ABP04}*{\S 6.4.2}. The following lemma is then a consequence of~\cite{BCPW22}*{Prop.~4.2.3}, but we provide proofs of (a)--(c) in this case for completeness. That $H_{\ba}$ has a finite $\KK[\sigma]$-basis will be revisited in \S\ref{SS:Fodef}.

\begin{lemma} \label{L:Haprops}
The $t$-comotive $H_{\ba}$ is coabelian. Moreover, the following hold.
\begin{alphenumerate}
\item The rank of $H_{\ba}$ is $r = [K:k] = \#(A/fA)^{\times}$.
\item The dimension of $H_{\ba}$ is $d = \deg(\Xi_{\ba}) = \deg(\ba) \cdot r/(q-1)$.
\item We have $\sigma H_{\ba} = H^0(\bU,\cO_{\bX}(-W_{\ba}^{(1)} - \Xi_{\ba}))$.
\end{alphenumerate}
\end{lemma}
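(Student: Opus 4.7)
The plan is to prove the three assertions in the order (a), (c), (b), taking advantage of the identification of $H_{\ba}$ with a nonzero ideal of $\KK[\bU]$ and the divisor identity~\eqref{E:divgba}. For (a), note that $t:\bU \to \AAA^1_{\KK}$ is a finite flat cover of degree $r = [K:k]$ by \S\ref{SS:cyclocurves}, so $\KK[\bU]$ is free of rank $r$ over the PID $\KK[t]$; as a nonzero ideal, $H_{\ba}$ is torsion-free, finitely generated, and generates $\KK(\bX)$ over $\KK(t)$, hence is free of rank $r$. This simultaneously verifies condition~(i) of \S\ref{SSS:dualtmotdef}.

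For (c), I would unpack the action $\sigma h = g_{\ba} h^{(-1)}$ in two steps. First, the $(-1)$-twist commutes with $\divf$ (per \S\ref{SS:cyclocurves}) and the convention gives $W_{\ba}^{(1)(-1)} = W_{\ba}$, so $h \mapsto h^{(-1)}$ identifies $H_{\ba}$ with $H^0(\bU, \cO_{\bX}(-W_{\ba}))$. Second, by~\eqref{E:divgba} and the fact that $I_{\ba}$ is supported off $\bU$, we have $\divf(g_{\ba})|_{\bU} = W_{\ba}^{(1)} - W_{\ba} + \Xi_{\ba}$, so multiplication by $g_{\ba}$ identifies $H^0(\bU, \cO_{\bX}(-W_{\ba}))$ with $H^0(\bU, \cO_{\bX}(-W_{\ba}^{(1)} - \Xi_{\ba}))$. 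Composing yields~(c).

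For (b), apply $H^0(\bU, \cdot)$ to the short exact sequence
\[
0 \to \cO_{\bX}(-W_{\ba}^{(1)} - \Xi_{\ba}) \to \cO_{\bX}(-W_{\ba}^{(1)}) \to \cO_{\Xi_{\ba}} \to 0.
\]
Affineness of $\bU$ kills $H^1$, and since $\Xi_{\ba}$ is supported on $\bU$, the right-hand term contributes $\KK^{\deg \Xi_{\ba}}$; combining with (c) gives $\dim_{\KK}(H_{\ba}/\sigma H_{\ba}) = \deg \Xi_{\ba}$. Since $H_{\ba}$ is free of finite rank over $\KK[\sigma]$ (the $t$-comotive property invoked from~\cite{ABP04}*{\S 6.4.2}), this dimension equals the $\KK[\sigma]$-rank. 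Finally, the identification $\deg \Xi_{\ba} = \deg(\ba) \cdot r/(q-1)$ reduces by $\ZZ$-linearity (Lemma~\ref{L:XiWprops}(a)) and~\eqref{E:XiWafpullback} to the basic case $[a/f]$ with $(a,f)=1$, where multiplication by $a$ permutes $(A/fA)^{\times}$ and we count monic residue classes modulo~$f$ to obtain $r/(q-1)$. The main obstacle I anticipate is the Frobenius-twist bookkeeping in~(c): specifically, verifying $W_{\ba}^{(1)(-1)} = W_{\ba}$ on divisors supported on Frobenius orbits requires the convention $(\xi_b^{(j+1)})^{(-1)} = \xi_b^{(j)}$ to be applied consistently, and similar care is needed when tracking how multiplication by $g_{\ba}$ shifts the divisor conditions.
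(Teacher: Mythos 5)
Your proposal is correct, and its overall route coincides with the paper's: freeness of $\KK[\bU]$ over $\KK[t]$ gives (a), the divisor identity \eqref{E:divgba} identifies $\sigma H_{\ba}$ as in (c), and the codimension count delivers (b). The small variations mildly strengthen the write-up: in (c) you factor $\sigma$ as the $(-1)$-twist followed by multiplication by $g_{\ba}$ and verify each piece is a bijection, where the paper shows the two containments directly (your check that the twist preserves $\bU$ because the $\infty_i$ are $\FF_q$-rational is exactly the right point to flag); in (b) you replace the citation of \cite{BCPW22}*{Lem.~4.2.4} with the explicit sequence $0 \to \cO_{\bX}(-W_{\ba}^{(1)}-\Xi_{\ba}) \to \cO_{\bX}(-W_{\ba}^{(1)}) \to \cO_{\Xi_{\ba}} \to 0$, using that $W_{\ba}^{(1)}$ and $\Xi_{\ba}$ have disjoint supports and that $\bU$ is affine so $H^1$ vanishes; and you actually prove the identity $\deg \Xi_{\ba} = \deg(\ba)\cdot r/(q-1)$, which the paper states after Lemma~\ref{L:XiWprops} without comment, by reducing via $\ZZ$-linearity and \eqref{E:XiWafpullback} to the basic $(a,f)=1$ case and counting monic units modulo $f$. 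One cosmetic caution: the map $h \mapsto h^{(-1)}$ in (c) is $q^{-1}$-semilinear rather than $\KK$-linear, so ``identifies'' should be read as a set-theoretic (or $\FF_q$-linear) bijection; this does not affect the argument, as $\sigma$ itself is semilinear and the image is still a $\KK$-subspace, which is all that (b) requires.
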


\begin{proof}
The ring $\KK[t,z]$ is the extension of scalars of $\bB = \FF_q[t,z]$ to $\KK$. Since $U/\FF_q$ is smooth and absolutely irreducible, $\KK[t,z]$ is also a Dedekind domain, and $[\KK[t,z]:\KK[t]] = [\bB:\bA] = r$. As an ideal of $\KK[t,z]$, $H_{\ba}$ also has rank $r$ over $\KK[t]$, which establishes~(a).

We next prove (c). We see from~\eqref{E:divgba} and~\eqref{E:Hadef} that $\sigma H_{\ba} \subseteq H^0(\bU,\cO_{\bX}(-W_{\ba}^{(1)} - \Xi_{\ba}))$. For the opposite containment, suppose that $\beta \in \KK[t,z]$ satisfies
$\divf(\beta)|_{\bU} \geqslant W_{\ba}^{(1)} + \Xi_{\ba}$. Then letting $\alpha = \beta^{(1)}/g_{\ba}^{(1)}$, we find that $\divf(\alpha)|_{\bU} \geqslant W_{\ba}^{(1)}$. Thus $\alpha \in H_{\ba}$, and $\sigma \alpha = \beta$, which proves (c) (see also \cite{ABP04}*{\S 6.4.2}).
For (b), we have
\[
\rank_{\KK[\sigma]} (H_{\ba}) = \dim_{\KK} \biggl( \frac{H_{\ba}}{\sigma H_{\ba}} \biggr) = \dim_{\KK} \left( \frac{H^0(\bU,\cO_{\bX}(-W_{\ba}^{(1)}))}{H^0(\bU,\cO_{\bX}(-W_{\ba}^{(1)} - \Xi_{\ba}))} \right).
\]
Through a short calculation the right-hand side is~$\deg(\Xi_{\ba})$ by \cite{BCPW22}*{Lem.~4.2.4}.
\end{proof}

\begin{subsubsec}{Definition of Sinha modules}
As in \S\ref{SS:tmoddualtmot}, we use the $t$-comotive $H_{\ba}(\KK)$ to define the \emph{Sinha module}
\[
E_{\ba} : \bA \to \Mat_d(\KK[\tau]),
\]
where $d= \deg(\Xi_{\ba})$ as in Lemma~\ref{L:Haprops}. The Sinha module $E_{\ba}$ is an abelian Anderson $t$-module, and since we can take $\KK = \ok$, we can find a model for $E_{\ba}$ defined over~$\ok$. A few remarks are in order.
\begin{itemize}
\item Since multiplication by $b \in \bB$ on $H_{\ba}$ is $\KK[t,\sigma]$-linear, we obtain a natural $\FF_q$-algebra homomorphism $\bB \to \End_{\KK[t,\sigma]}(H_\ba(\KK))$. Through functoriality there is a natural extension
\[
E_{\ba} : \bB \to \Mat_d(\KK[\tau]).
\]
\item The model for $E_{\ba}$ depends on the choice of $\KK[\sigma]$-basis for $H_{\ba}(\KK)$ as in~\eqref{E:Etdef}--\eqref{E:Etprime}. We will explore different models in \S\ref{SS:reduction}.
\item The $t$-comotives $H_{\ba}$ were originally defined in \cite{ABP04}*{\S 6.4}, based on a similar $t$-motivic construction of Sinha~\cites{SinhaPhD,Sinha97b}. Sinha also investigated the $t$-modules associated to these $t$-motives and determined a number of their properties, including the calculation of their period lattices in terms of special $\Gamma$-values. The $t$-modules $E_{\ba}$ defined here are not strictly speaking the ones that he worked with but are isogenous to them. Sinha modules are occasionally referred to in terms of ``soliton $t$-modules'' or ``soliton $t$-motives'' (e.g., see \citelist{\cite{BCPW22}*{\S 1.5.3} \cite{BP02}*{\S 4} \cite{Wei22}*{\S 5.3}}), based as they are on Anderson's soliton functions~\cite{And92}.
\item If we write $\ba = \sum m_a[a/f]$, then $g_{\ba}$, $H_{\ba}$, and $E_{\ba}$ do not depend on $m_0$, and we will without loss of generality assume that $m_0 = 0$.
\end{itemize}
\end{subsubsec}

\begin{subsubsec}{Hyperderivatives with respect to~$t$} \label{SSS:hyper}
We recall the sequence of $\C$-linear operators $\pd_t^j : \C[t] \to \C[t]$, $j \geqslant 0$, defined by $\pd_t^j(t^m) = \binom{m}{j} t^{m-j}$. These operators extend uniquely to both $\power{\C}{t}$ and $\C(t)^{\sep}$ (e.g., see \citelist{\cite{Conrad00}*{\S 4} \cite{Jeong11} \cite{NamoijamP24}*{\S 2.4}}). Then each $\pd_t^j$ leaves $\TT$ and $\TT_{\theta}$ invariant. Since $\C(\bX) = \C(t,z)$ is separable over $\C(t)$, we can uniquely define $\pd_t^j(z)$ for each $j \geqslant 0$. Moreover, by \cite{Conrad00}*{Thm.~5},
\[
\pd_t^j : \C(\bX) \to \C(\bX), \quad j \geqslant 0.
\]
In this way we can extend $\pd_t^j$ to $\C$-linear operators on the fraction fields of $\BB = \TT[z]$ and $\BB_{\theta} = \TT_{\theta}[z]$. To be a bit more precise, we note that $\pd_t^j(\cC_f(t,z)) = 0$, and if we write
\[
\cC_f(t,z) = f(t)z + \sum_{i=1}^{\deg f} a_i(t) z^{q^i}, \quad a_i(t) \in \FF_q[t],
\]
then the product rule \citelist{\cite{Jeong11}*{\S 2.2} \cite{NamoijamP24}*{Prop.~2.14(a)}} implies that
\[
\pd_t^j\bigl( \cC_f(t,z) \bigr) = 
\sum_{\ell=0}^j \pd_t^\ell(f(t))\pd_t^{j-\ell}(z)  + \sum_{i=1}^{\deg f} \sum_{\ell=0}^{j} \pd_t^{\ell}(a_i(t)) \pd_t^{j-\ell}\bigl( z^{q^i} \bigr) = 0.
\]
By the $p$-th power rule \citelist{\cite{Jeong11}*{\S 2.2} \cite{NamoijamP24}*{Prop.~2.14(b)}}, for $1 \leqslant i \leqslant \deg f$ the $\pd_t^{j-\ell}(z^{q^i})$ terms are either $0$ or a power of $\pd_t^{k}(z)$ with $k \leqslant (j-\ell)/q$. In particular the only occurrence of $\pd_t^j(z)$ in the above expression is in the first sum with $\ell=0$, and all other derivatives of $z$ occur with order strictly less than $j$. Thus an induction argument implies that $\pd_t^j(z) \in \FF_q(t)[z] = \bK$, and more generally
\[
\pd_t^j : \bk \otimes_{\bA} \BB \to \bk \otimes_{\bA} \BB, \quad
\pd_t^j : \bk \otimes_{\bA} \BB_{\theta} \to \bk \otimes_{\bA} \BB_{\theta}, \quad
j \geqslant 0.
\]
We think of $\bk\otimes_{\bA} \BB$, $\bk \otimes_{\bA} \BB_{\theta}$ as subrings of the fraction fields of $\BB$, $\BB_{\theta}$. For any $g \in \bk \otimes_{\bA} \BB_{\theta}$, we can evaluate $\pd_t^j(g)$ at any of the points $\xi_a \in \bX(\C)$ for $a \in (A/fA)^{\times}$.
\end{subsubsec}

\begin{subsubsec}{Expansions at $\xi_a$} \label{SSS:xiaexpansions}
Let $a \in (A/fA)^{\times}$, and let $h \in \C(\bX)$ be regular at $\xi_a$. Since $t-\theta$ is a uniformizer at $\xi_a$ by~\eqref{E:divt-theta}, in the completion of the local ring at $\xi_a$ we can expand~$h$ as power series,
\[
h = \sum_{j=0}^{\infty} \pd_t^j(h)|_{\xi_a} \cdot (t-\theta)^{j}.
\]
In particular $\pd_t^j(h)|_{\xi_a}=0$ for $0 \leqslant j < \ord_{\xi_a}(h)$, and $\pd_t^j(h)|_{\xi_a} \neq 0$ for $j = \ord_{\xi_a}(h)$.
\end{subsubsec}

\begin{subsubsec}{Normalized $\KK[\sigma]$-bases} \label{SSS:normalized}
We make the following definitions (cf.~Sinha~\cite{Sinha97b}*{\S 5.2.4}). For a divisor $D \in \Div(\bX)$ and $P \in \bX(\KK)$, we let $\mult_P(D) \in \ZZ$ be the multiplicity of~$P$ occurring in~$D$. We then define the index sets
\begin{gather*}
\cS_{\ba} = \left\{ (a,j) \in A \times \ZZ \biggm| \begin{gathered}
\deg a < \deg f,\ (a,f) = 1,\ \mult_{\xi_a}(\Xi_{\ba}) \neq 0,\\ 0 \leqslant j \leqslant \mult_{\xi_a}(\Xi_{\ba})-1 \end{gathered} \right\}, \\[5pt]
\cP_{\ba} = \{ a \in A \mid \deg a < \deg f,\ (a,f) = 1,\ \mult_{\xi_a}(\Xi_{\ba}) \neq 0 \}.
\end{gather*}
We observe that $\#\cS_{\ba} = \deg(\Xi_{\ba}) = d$ as in Lemma~\ref{L:Haprops}(b), and $\cP_{\ba}$ is simply the projection of $\cS_{\ba}$ to $A$. Suppose that $\{ \tilh_1, \dots, \tilh_d\}$ is a $\KK[\sigma]$-basis of $H_{\ba}(\KK)$. By Lemma~\ref{L:Haprops}(c) and~\cite{BCPW22}*{Lem.~4.2.4}, we can take a $\KK$-linear change of variables on $\{ \tilh_i \}$ to obtain a new $\KK[\sigma]$-basis $\{ h_{(a,j)} \mid (a,j) \in S_{\ba} \}$ so that
\begin{equation} \label{E:norm1}
h_{(a,j)} \in H^0( \bU, \cO_{\bX}(-W_{\ba}^{(1)} - \Xi_{\ba}+ (j+1)\xi_{a})) \setminus
H^0( \bU, \cO_{\bX}(-W_{\ba}^{(1)} - \Xi_{\ba}+ j\xi_{a})),
\end{equation}
and, taking
\[
\ell_a = \mult_{\xi_a}(\Xi_{\ba})-1,
\]
we have
\begin{equation} \label{E:norm2}
\frac{h_{(a,j)}}{(t-\theta)^{\ell_a - j}}\bigg|_{\xi_a} = 1.
\end{equation}
We note that we can do this since $t-\theta$ is a uniformizer at $\xi_a$, and
\[
\ord_{\xi_a}(h_{(a,j)}) = \mult_{\xi_a}(W_{\ba}^{(1)} + \Xi_{\ba}) - j - 1 = \ell_a - j,
\]
this last equality following because $W_{\ba}^{(1)}$ and $\Xi_{\ba}$ have disjoint supports.
Suppose further that for $a \in \cP_{\ba}$ and $0 \leqslant j, k \leqslant \ell_a$, we have
\begin{equation} \label{E:norm3}
\pd_t^k\bigl( h_{(a,j)} \bigr)\big|_{\xi_a} = \begin{cases}
1 & \textup{if $k = \ell_a - j$,} \\
0 & \textup{if $k \neq \ell_a-j$.}
\end{cases}
\end{equation}
In this case, we call a $\KK[t]$-basis $\{ h_{(a,j)} \mid (a,j) \in \cS_{\ba} \}$ for $H_{\ba}(\KK)$ a \emph{normalized basis}.
\end{subsubsec}

\begin{subsubsec}{Constructing normalized bases} \label{SSS:Wronskian}
As observed in \S\ref{SSS:normalized}, we can find a $\KK[t]$-basis $\{ h_{(a,j)} \mid (a,j) \in \cS_{\ba} \}$ satisfying \eqref{E:norm1}--\eqref{E:norm2}, but condition~\eqref{E:norm3} is not automatic. However, \eqref{E:norm1}--\eqref{E:norm2} do imply, as in \S\ref{SSS:xiaexpansions}, that
for $a \in \cP_{\ba}$ and $0 \leqslant j,k \leqslant \ell_a$,
\begin{equation} \label{E:diffhajeval}
\pd_t^k\bigl( h_{(a,j)} \bigr)\big|_{\xi_a} = \begin{cases}
1 & \textup{if $k=\ell_a-j$,} \\
0 & \textup{if $0 \leqslant k < \ell_a-j$.}
\end{cases}
\end{equation}
Thus for each $a \in \cP_{\ba}$ the Wronskian matrix
\[
M_a = \left. \begin{pmatrix}
\pd_t^{\ell_a}(h_{(a,0)}) & \cdots & \pd_t^{\ell_a}(h_{(a,\ell_a)}) \\
\vdots & & \vdots \\
\pd_t^{1}(h_{(a,0)}) & \cdots & \pd_t^{1}(h_{(a,\ell_a)}) \\
h_{(a,0)} & \cdots & h_{(a,\ell_a)}
\end{pmatrix} \right|_{\xi_a}
=
\begin{pmatrix}
1 & \pd_t^{\ell_a}(h_{(a,1)})|_{\xi_a} & \cdots & \pd_t^{\ell_a}(h_{(a,\ell_a)})|_{\xi_a} \\
& 1 & \ddots & \vdots \\
& & 1 & \pd_t^1(h_{(a,\ell_a)})|_{\xi_a} \\
& & & 1
\end{pmatrix}
\]
is upper triangular with $1$'s along the diagonal. Notably, $M_a^{-1}$ has the same shape, and
\[
M_a,\ M_a^{-1} \in \GL_{\ell_a+1}\bigl( \FF_q \bigl[ \pd_t^k \bigl( h_{(a,j)} \bigr)\big|_{\xi_a} : 0 \leqslant j \leqslant \ell_a,\ \ell_a-j < k \leqslant \ell_a \bigr] \bigr).
\]
Now for each $a \in \cP_{\ba}$ let
\[
\bigl( h'_{(a,0)}, \ldots, h'_{(a,\ell_a)} \bigr)
\assign 
\bigl (h_{(a,0)}, \dots, h_{(a,\ell_a)} \bigr) M_a^{-1}.
\]
We claim that $\{ h'_{(a,j)} \mid (a,j) \in \cS_{\ba} \}$ is a normalized $\KK[\sigma]$-basis for $H_{\ba}(\KK)$. Indeed for each $a \in \cP_{\ba}$, because $M_a^{-1}$ is upper triangular with $1$'s on the diagonal, $\{ h'_{(a,j)} \}$ satisfies \eqref{E:norm1}--\eqref{E:norm2}. Also, for $0 \leqslant j \leqslant \ell_a$,
\[
\bigl( \pd_t^j(h'_{(a,0)}), \ldots, \pd_t(h'_{(a,\ell_a)}) \bigr)\big|_{\xi_a}
= \bigl( \pd_t^j(h_{(a,0)}, \ldots, \pd_t^j(h_{(a,\ell_a)}) \bigr) \big|_{\xi_a}  \cdot M_a^{-1}.
\]
Letting $M_a'$ be the Wronskian matrix for $h'_{(a,0}, \dots, h'_{(a,\ell_a)}$ as above,
\[
M_a' = M_a \cdot M_a^{-1} = \rI.
\]
Thus $\{ h'_{(a,j)} \mid (a,j) \in \cS_{\ba} \}$ satisfies \eqref{E:norm3} and forms a normalized $\KK[\sigma]$-basis of $H_{\ba}(\KK)$.
\end{subsubsec}

\begin{lemma} \label{L:normdecomp}
Fix a normalized $\KK[\sigma]$-basis $\{ h_{(a,j)} \}$ for $H_{\ba}(\KK)$. For $h \in H_{\ba}(\C)$,
\[
h = \sum_{(a,j) \in \cS_{\ba}} \pd_t^{\ell_a - j}(h)|_{\xi_a} \cdot h_{(a,j)}
\pmod{\sigma H_{\ba}(\C)}.
\]
\end{lemma}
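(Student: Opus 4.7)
The plan is to subtract the claimed sum from $h$ and show the difference
\[
h' \assign h - \sum_{(a,j) \in \cS_{\ba}} \pd_t^{\ell_a - j}(h)\bigl|_{\xi_a} \cdot h_{(a,j)}
\]
lies in $\sigma H_{\ba}(\C)$. By Lemma~\ref{L:Haprops}(c) we have $\sigma H_{\ba}(\C) = H^0(\bU, \cO_{\bX}(-W_{\ba}^{(1)} - \Xi_{\ba}))$, and since $W_{\ba}^{(1)}$ involves only Frobenius twists $\xi_b^{(j)}$ with $j \geqslant 1$, its support is disjoint from $\{\xi_b : b \in \cP_{\ba}\}$. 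Consequently, once we know $h' \in H_{\ba}(\C)$, the membership $h' \in \sigma H_{\ba}(\C)$ reduces to the local condition $\ord_{\xi_b}(h') \geqslant \ell_b + 1$ for every $b \in \cP_{\ba}$.

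For each $(a,j) \in \cS_{\ba}$ the functional $\lambda_{(a,j)}(h) \assign \pd_t^{\ell_a - j}(h)|_{\xi_a}$ is well-defined on $H_{\ba}(\C)$ because its elements are regular at $\xi_a$. The key step is to show
\[
\lambda_{(b,k)}(h_{(a,j)}) = \delta_{(a,j),(b,k)} \quad \textup{for all } (a,j), (b,k) \in \cS_{\ba}.
\]
When $a = b$, this is exactly the normalization~\eqref{E:norm3}. When $a \neq b$, condition~\eqref{E:norm1} together with $\xi_b \notin \supp W_{\ba}^{(1)}$ gives $\ord_{\xi_b}(h_{(a,j)}) \geqslant \mult_{\xi_b}(W_{\ba}^{(1)} + \Xi_{\ba}) = \ell_b + 1$, so the expansion described in~\S\ref{SSS:xiaexpansions} forces $\pd_t^m(h_{(a,j)})|_{\xi_b} = 0$ for every $0 \leqslant m \leqslant \ell_b$; in particular $\lambda_{(b,k)}(h_{(a,j)}) = 0$.

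Applying $\lambda_{(b,k)}$ to $h'$ and using linearity together with the identity above yields
\[
\lambda_{(b,k)}(h') = \pd_t^{\ell_b - k}(h)|_{\xi_b} - \sum_{(a,j)} \pd_t^{\ell_a - j}(h)|_{\xi_a} \cdot \delta_{(a,j),(b,k)} = 0
\]
for every $(b,k) \in \cS_{\ba}$. Thus $\pd_t^m(h')|_{\xi_b} = 0$ for all $0 \leqslant m \leqslant \ell_b$ and every $b \in \cP_{\ba}$. Since $t-\theta$ is a uniformizer at $\xi_b$ by~\eqref{E:divt-theta}, the power series expansion $h' = \sum_{m \geqslant 0} \pd_t^m(h')|_{\xi_b} \cdot (t-\theta)^m$ forces $\ord_{\xi_b}(h') \geqslant \ell_b + 1$, completing the proof. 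No serious obstacle is expected: the argument essentially dualizes the Wronskian construction of normalized bases in~\S\ref{SSS:Wronskian}.
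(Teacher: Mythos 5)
Your proof is correct and takes essentially the same approach as the paper's: both identify the coefficients by applying the hyperderivative evaluation functionals at the points $\xi_b$, using the normalization \eqref{E:norm3} for $a=b$ and the excess vanishing $\ord_{\xi_b}(h_{(a,j)}) > \ell_b$ for $a \neq b$. The only cosmetic difference is direction — the paper starts from the existence of a decomposition $h = h' + \sum c_{(a,j)} h_{(a,j)}$ with $h' \in \sigma H_{\ba}(\C)$ and solves for the $c_{(a,j)}$, whereas you construct $h'$ directly from the claimed coefficients and verify $h' \in \sigma H_{\ba}(\C)$ via the order-of-vanishing characterization from Lemma~\ref{L:Haprops}(c); these are trivially equivalent.
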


\begin{proof}
Pick $c_{(a,j)} \in \C$ so that $h = h' + \sum_{(a,j) \in \cS_{\ba}} c_{(a,j)} \cdot h_{(a,j)}$, with $h' \in \sigma H_{\ba}(\C)$. Then for $k \geqslant 0$,
\[
\pd_t^k(h) = \pd_t^k(h') + \sum_{a \in \cP_{\ba}} \sum_{j=0}^{\ell_a} c_{(a,j)} \cdot \pd_t^k \bigl( h_{(a,j)} \bigr).
\]
For $b \in \cP_{\ba}$, we note that (i) $\ord_{\xi_b}(h') > \ell_b$; (ii) $\ord_{\xi_b}(h_{(a,j)}) > \ell_b$ when $a \neq b$; and as before (iii) $\ord_{\xi_b}(h_{(b,j)}) = \ell_b-j$. Thus if $k \leqslant \ell_b$, then replacing $k$ with $\ell_b-k$ and evaluating at~$\xi_b$, we have
\[
\pd_t^{\ell_b-k}(h)|_{\xi_b} = \sum_{j=0}^{\ell_b} c_{(b,j)} \cdot \pd_t^{\ell_b-k} \bigl( h_{(b,j)} \bigr) \big|_{\xi_b} = c_{(b,k)},
\]
where the last equality follows from~\eqref{E:norm3}.
\end{proof}

\begin{subsubsec}{The map $\cE_0$ for $H_{\ba}$} \label{SSS:E0}
Continuing with the notation above, assume we have chosen a normalized $\KK[\sigma]$-basis $\{ h_{(a,j)} \}$ for $H_{\ba}(\KK)$. We define
\[
\teps_0 : H_{\ba}(\C) \to \C^{d}
\]
by
\[
\teps_0(h) \assign \begin{pmatrix}
\vdots \\ \pd_t^{\ell_a-j}(h)|_{\xi_a}  \\ \vdots
\end{pmatrix}_{(a,j) \in \cS_{\ba}},
\]
which is $\C$-linear and
whose kernel is precisely $\sigma H_{\ba}(\C)$. For a $\C[t]$-basis $\{ v_1, \dots, v_r\}$ of $H_{\ba}(\C)$, we 
let $\tiota : \Mat_{1 \times r}(\C[t]) \to H_{\ba}(\C)$ be the evident $\C[t]$-isomorphism. We then have a commutative diagram, where the vertical isomorphism is the $\C[\sigma]$-linear map induced by the basis $\{ h_{(a,j)} \}$ and $\iota$ is then the map induced by $\tiota$:
\begin{center}
\begin{tikzcd}
\Mat_{1 \times r}(\C[t]) \arrow[d, equal] \arrow[r, "\iota"] & \Mat_{1 \times d}(\C[\sigma]) \arrow[d, "\rotatebox{90}{$\sim$}"] \arrow[r, "\varepsilon_0"] & \C^d. \\
\Mat_{1 \times r}(\C[t]) \arrow[r, "\tiota"] & 
H_{\ba}(\C) \arrow[ru, "\teps_0"']
\end{tikzcd}
\end{center}
In this way $\cE_0 : \Mat_{1 \times r}(\TT_{\theta}) \to \C^d$ is defined continuously via $\teps_0 \circ \tiota = \varepsilon_0 \circ \iota$. Therefore, for $(\beta_1, \dots, \beta_r) \in \Mat_{1 \times r}(\TT_{\theta})$, if we let $\beta = \beta_1 v_1 + \dots + \beta_r v_r$, then
\begin{equation} \label{E:E0}
\cE_0(\beta_1, \dots, \beta_r) = \begin{pmatrix}
\vdots \\ \pd_t^{\ell_a-j}(\beta)|_{\xi_a}  \\ \vdots
\end{pmatrix}_{(a,j) \in \cS_{\ba}}.
\end{equation}
In particular, in the case that $\mult_{\xi_a}(\Xi_{\ba}) = 1$ for all $a \in \cP_{\ba}$, e.g., when $\ba \in \cA_f$ is basic, things are even simpler. In this case $\ell_a=0$ for all $a \in \cP_{\ba}$, and we obtain
\begin{equation} \label{E:E0basic}
\cE_0(\beta_1, \dots, \beta_r) = \begin{pmatrix}
\vdots \\ \beta(\xi_a) \\ \vdots
\end{pmatrix}_{a \in \cP_{\ba}}.
\end{equation}
\end{subsubsec}

\begin{lemma} \label{L:dEab}
Fix a normalized $\KK[\sigma]$-basis of $H_{\ba}(\KK)$ as above. For $b \in \bB$, the matrix $\rd E_{\ba,b}$ is a block diagonal matrix, where for each $a \in \cP_{\ba}$, there is an $(\ell_a+1) \times (\ell_a+1)$ upper-triangular block in $\rd E_{\ba,b}$ of the form
\[
\begin{pmatrix}
b(\xi_a) & \pd_t^1(b)|_{\xi_a} & \cdots & \pd_t^{\ell_a}(b)|_{\xi_a} \\
& \ddots & \ddots & \vdots \\
& & b(\xi_a) & \pd_t^1(b)|_{\xi_a} \\
& & & b(\xi_a)
\end{pmatrix} = \rho_a \begin{pmatrix}
b(\xi) & \pd_t^1(b)|_{\xi} & \cdots & \pd_t^{\ell_a}(b)|_{\xi} \\
& \ddots & \ddots & \vdots \\
& & b(\xi) & \pd_t^1(b)|_{\xi} \\
& & & b(\xi)
\end{pmatrix}.
\]
\end{lemma}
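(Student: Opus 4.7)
The plan is to compute $\rd E_{\ba,b}$ using the identification $\Lie(E_{\ba})(\C) \cong H_{\ba}(\C)/\sigma H_{\ba}(\C)$ from \eqref{E:LieEFandEFisoms}, under which multiplication by $b \in \bB$ on the right-hand side corresponds to $\rd E_{\ba,b}$ on the left. Via the chosen normalized $\KK[\sigma]$-basis $\{h_{(a,j)}\}$, the isomorphism $\varepsilon_0$ of \eqref{E:eps01isoms} is exactly the map that sends $h_{(a,j)}$ to the standard basis vector indexed by $(a,j) \in \cS_{\ba}$. By Lemma~\ref{L:normdecomp}, for any $h \in H_{\ba}(\C)$ we have
\[
\varepsilon_0(h) = \bigl( \pd_t^{\ell_a - j}(h)|_{\xi_a} \bigr)_{(a,j) \in \cS_{\ba}}^{\tr}.
\]
Thus the $((a,j),(a',j'))$-entry of $\rd E_{\ba,b}$ equals $\pd_t^{\ell_a - j}(b\, h_{(a',j')})|_{\xi_a}$, and we expand this by the hyperderivative product rule:
\[
\pd_t^{\ell_a - j}\bigl(b\, h_{(a',j')}\bigr)\big|_{\xi_a} = \sum_{m=0}^{\ell_a - j} \pd_t^m(b)|_{\xi_a} \cdot \pd_t^{\ell_a - j - m}\bigl( h_{(a',j')} \bigr)\big|_{\xi_a}.
\]

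Next I would verify that the matrix is block diagonal. The key observation is that $W_{\ba}^{(1)}$ and $\Xi_{\ba}$ have disjoint supports, since $W_{\ba}^{(1)}$ lies on Frobenius twists $\xi_b^{(j)}$ with $j \geqslant 1$ while $\Xi_{\ba}$ is supported on the $\xi_b$ themselves. Consequently, for $a \neq a'$ with $a \in \cP_{\ba}$, the condition \eqref{E:norm1} forces $\ord_{\xi_a}(h_{(a',j')}) \geqslant \mult_{\xi_a}(\Xi_{\ba}) = \ell_a + 1$, so every term $\pd_t^{\ell_a - j - m}(h_{(a',j')})|_{\xi_a}$ in the sum above vanishes because $\ell_a - j - m \leqslant \ell_a < \ell_a + 1$. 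This kills all off-diagonal blocks.

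For the diagonal block at a fixed $a \in \cP_{\ba}$, the normalization condition \eqref{E:norm3} gives $\pd_t^{\ell_a - j - m}(h_{(a,j')})|_{\xi_a} = 1$ precisely when $\ell_a - j - m = \ell_a - j'$ (i.e., $m = j' - j$) and $0$ otherwise, provided $0 \leqslant \ell_a - j - m \leqslant \ell_a$. Combined with the constraint $0 \leqslant m \leqslant \ell_a - j$, this shows that the $((a,j),(a,j'))$-entry equals $\pd_t^{j'-j}(b)|_{\xi_a}$ when $j' \geqslant j$ and vanishes when $j' < j$. This is exactly the upper-triangular Toeplitz form asserted in the lemma. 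Finally, since $b \in \bB = \FF_q[t,z]$ has $\FF_q$-coefficients and $\xi_a = \rho_a(\xi)$, the evaluation $\pd_t^k(b)|_{\xi_a} = \rho_a\bigl( \pd_t^k(b)|_{\xi} \bigr)$ follows because Galois commutes with the hyperderivatives $\pd_t^k$ (which act trivially on $\FF_q$-coefficients). This yields the displayed Galois equivariance and completes the proof.

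The only mildly delicate point is the careful bookkeeping in ruling out off-diagonal contributions, but once the disjointness of $\supp(W_{\ba}^{(1)})$ and $\supp(\Xi_{\ba})$ is invoked together with~\eqref{E:norm1}, this is immediate.
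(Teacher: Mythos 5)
Your proof is correct and follows essentially the same route as the paper's: identify $\Lie(E_{\ba})(\C)$ with $H_{\ba}(\C)/\sigma H_{\ba}(\C)$, read off the matrix of multiplication by $b$ via the hyperderivative coordinates of Lemma~\ref{L:normdecomp}, and apply the product rule together with the normalization conditions. The only difference is one of exposition: the paper records the single-block matrix identity and then defers to Lemma~\ref{L:normdecomp} and its proof for the block-diagonal structure, whereas you spell out the vanishing of the off-diagonal $((a,j),(a',j'))$-entries for $a \neq a'$ directly from \eqref{E:norm1} and the disjointness of $\supp(W_{\ba}^{(1)})$ and $\supp(\Xi_{\ba})$, which is a slightly more self-contained (and perfectly valid) presentation.
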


\begin{proof}
As in \eqref{E:LieEFandEFisoms}, the induced map
\[
\teps_0 : \frac{H_{\ba}(\C)}{\sigma H_{\ba}(\C)} \iso \Lie(E_{\ba})(\C)
\]
is an isomorphism of $\C[t]$-modules. And in fact it is an isomorphism of $\C[t,z]$-modules. Now for $b \in \bB \subseteq \C[t,z]$ and $h \in H_{\ba}(\C)$, we see from the product rule,
\[
\begin{pmatrix}
\pd_t^{\ell_a}(bh)|_{\xi_a} \\
\vdots \\
\pd_t^1(bh)|_{\xi_a} \\
(bh)|_{\xi_a}
\end{pmatrix}
= \begin{pmatrix}
b(\xi_a) & \pd_t^1(b)|_{\xi_a} & \cdots & \pd_t^{\ell_a}(b)|_{\xi_a} \\
& \ddots & \ddots & \vdots \\
& & b(\xi_a) & \pd_t^1(b)|_{\xi_a} \\
& & & b(\xi_a)
\end{pmatrix}
\begin{pmatrix}
\pd_t^{\ell_a}(h)|_{\xi_a} \\
\vdots \\
\pd_t^1(h)|_{\xi_a} \\
(h)|_{\xi_a}
\end{pmatrix}.
\]
The conclusion then follows from Lemma~\ref{L:normdecomp} (and its proof) with respect to our normalized $\KK[\sigma]$-basis.
\end{proof}

\begin{remark} \label{R:EaCM}
In the language of \cite{BCPW22}*{\S 4.2}, $H_{\ba}$ is a CM $t$-comotive with generalized CM type $(\bK,\Xi_{\ba})$. In this way $E_{\ba}$ is a $t$-module with CM by $\bB$ and generalized CM type $(\bK,\Xi_{\ba})$ as in~\cite{BCPW22}*{\S 8}.
\end{remark}

\subsection{Analytic properties of Sinha modules} \label{SS:analyticSinha}
Fix effective $\ba \in \cA_f$ with $\deg \ba > 0$ as in \S\ref{SS:Sinhadefs}. It was shown in \cite{ABP04}*{Lem.~6.4.3} that $H_{\ba}$ is rigid analytically trivial, and as such the $t$-module $E_{\ba}$ is uniformizable.
This was also demonstrated in \cite{BCPW22}*{\S 4.3} using Hilbert-Blumenthal $t$-modules. As we will need more refined information about the rigid analytic trivialization for $H_{\ba}$, we present a modified version of the constructions of~\cite{ABP04}*{\S 6.4} here. One should also compare to Wei~\cite{Wei22}*{\S 5.3}.

\begin{subsubsec}{Precursor to rigid analytic trivialization} \label{SSS:precursorRAT}
We begin by following the setup of~\cite{ABP04}*{Lem.~6.4.3} but pay attention when possible to fields of definition. The set $\{1, z, \dots, z^{r-1} \}$ is a $K[t]$-basis of $K[t,z]$. Since $g_{\ba} \in K[t,z]$ by Proposition~\ref{P:coldef} (as $\ba$ is effective), we can define $\Phi_{\ba} \in \Mat_{r}(K[t])$ as the matrix representing multiplication by~$g_{\ba}$ with respect to $\{1, z, \dots, z^{r-1} \}$. In other words, letting $\bz \assign (1,z, \dots, z^{r-1})^{\tr}$,
\begin{equation} \label{E:gaeigen}
g_{\ba}\cdot \bz \equiv \Phi_{\ba}\cdot \bz \pmod{\cD_f(t,z)}.
\end{equation}
Likewise, since $\{1, z, \dots, z^{r-1} \}$ is an $\FF_q[t]$-basis of $\bB=\FF_q[t,z]$, we can define $Z(t) \in \Mat_r(\FF_q[t])$ to represent multiplication by $z$ on $\bB$ with respect to this basis,
\begin{equation} \label{E:zeigen}
z\cdot \bz \equiv Z(t) \cdot \bz \pmod{\cD_f(t,z)}.
\end{equation}

Now as in Proposition~\ref{P:coldef} and the proof of Proposition~\ref{P:cGaf}, we can write
\begin{equation} \label{E:gasum}
g_{\ba} = 1 + \sum_{i=0}^M \sum_{j=0}^{r-1} c_{ij} t^i z^j, \quad c_{ij} \in B[f^{-1}],
\end{equation}
for some $M \geqslant 0$. It then follows from~\eqref{E:gaeigen}--\eqref{E:zeigen} that
\begin{equation} \label{E:Phiadef}
\Phi_{\ba} = \rI_r + \sum_{i=0}^M \sum_{j=0}^{r-1} c_{ij} t^i Z(t)^j \quad \in \Mat_r(B[f^{-1}][t]).
\end{equation}
By~\eqref{E:estarnorm} and~\eqref{E:normgaf-1}, we see that $\inorm{c_{ij}} \leqslant \inorm{\ttheta}^{-1} < 1$ for all $i$, $j$, and so $\dnorm{\Phi_{\ba} - \rI_r} < 1$. Thus we can define
\[
\Psi_{\ba} \assign \prod_{N=1}^{\infty} \Phi_{\ba}^{(N)}, \quad \Bigl( = \Phi_{\ba}^{(1)}\Phi_{\ba}^{(2)} \cdots \Bigr),
\]
which converges in $\Mat_r(\TT)$. We observe that $\Psi_{\ba}^{(-1)} = \Phi_{\ba} \Psi_{\ba}$, and so from~\cite{ABP04}*{Prop.~3.1.3} it follows that the entries of $\Psi_{\ba}$ are entire functions. Moreover, $\Psi_{\ba} \in \GL_r(\TT_{\theta}(k_{\infty}(\zeta)))$.
\end{subsubsec}

\begin{proposition} \label{P:PhiaPsiadiag}
For effective $\ba \in \cA_f$ with $\deg \ba > 0$, let $\Phi_{\ba}$ be defined as in~\eqref{E:Phiadef}. Then there exists $\Delta \in \GL_r(\bB[f(t)^{-1}])$ so that the following diagonalizations hold.
\begin{alphenumerate}
\item $\displaystyle \Delta^{-1}\Phi_{\ba}\Delta = \begin{pmatrix}
\ddots & & \\
& g_{b\star \ba} & \\
& & \ddots
\end{pmatrix}_{b \in (A/fA)^{\times}}$.
\item $\displaystyle \Delta^{-1}\Psi_{\ba}\Delta = \begin{pmatrix}
\ddots & & \\
& \cG_{b\star\ba} & \\
& & \ddots
\end{pmatrix}_{b \in (A/fA)^{\times}}$.
\end{alphenumerate}
\end{proposition}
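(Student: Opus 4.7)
The plan is to exploit the Galois symmetry of $\bB$ over $\FF_q[t]$. Since $\bB$ is Galois over $\FF_q[t]$ with group $G = (A/fA)^{\times}$, ramified only at primes dividing $f(t)$, after extending scalars to $\bB[f(t)^{-1}]$ the multiplication-by-$g_{\ba}$ action on $K[t,z]/(\cD_f(t,z))$ should split into $r$ one-dimensional actions indexed by $G$, whose eigenvalues are the Galois conjugates $\trho_b(g_{\ba})$. By Proposition~\ref{P:gaGalois}(b) these conjugates are precisely $g_{b \star \ba}$, so the diagonalization will take exactly the predicted form. Throughout, both $\Phi_{\ba}$ and $\Delta$ will be viewed as matrices over the common ring $K \otimes_{\FF_q} \bB[f(t)^{-1}]$.

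First I would take $\Delta$ to be the Vandermonde-type matrix whose $b$-th column is $\trho_b(\bz) = (1, \cC_b(t,z), \ldots, \cC_b(t,z)^{r-1})^{\tr}$, where $\bz$ is the basis from~\eqref{E:gaeigen}. Since $\Phi_{\ba}$ has coefficients in $K[t]$, it is fixed entrywise by $\trho_b$, so applying $\trho_b$ to the defining relation $g_{\ba} \cdot \bz \equiv \Phi_{\ba} \cdot \bz \pmod{\cD_f(t,z)}$ will yield $g_{b\star\ba} \cdot \trho_b(\bz) \equiv \Phi_{\ba} \cdot \trho_b(\bz)$. This says precisely that the $b$-th column of $\Delta$ is an eigenvector of $\Phi_{\ba}$ with eigenvalue $g_{b\star\ba}$, giving part~(a) modulo invertibility of $\Delta$.

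The hard part will be verifying that $\Delta \in \GL_r(\bB[f(t)^{-1}])$. Its determinant equals, up to sign, the Vandermonde product
\[
\det \Delta = \pm \prod_{b \neq b'} \bigl( \cC_{b'}(t,z) - \cC_b(t,z) \bigr) = \pm \prod_{b \neq b'} \cC_{b'-b}(t,z),
\]
using the $\FF_q$-linearity of the Carlitz action. Each factor $\cC_c(t,z)$ with $c \not\equiv 0 \pmod f$ is a nonzero Carlitz torsion point in $\bB$ of order $f' = f/\gcd(c,f)$, and I would extract from the constant term of $\cC_{f'}(t,Z)/Z \in \FF_q[t][Z]$ that its norm from $\bK$ to $\bk$ is an $\FF_q^{\times}$-multiple of a power of $f'(t)$, and hence of $f(t)$. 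Consequently each $\cC_c(t,z)$ will be a unit in $\bB[f(t)^{-1}]$, and so will $\det \Delta$.

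Part~(b) should then follow formally. The key observation is that $\Delta$ has entries in $\FF_q[t,z]$, so $\Delta^{(N)} = \Delta$ for every $N \geqslant 1$. Applying part~(a) to each Frobenius twist gives $\Delta^{-1} \Phi_{\ba}^{(N)} \Delta = \diag(g_{b \star \ba}^{(N)})$, and multiplying these identities over $N \geqslant 1$ while invoking the convergence of the products $\cG_{b\star\ba} = \prod_{N \geqslant 1} g_{b\star\ba}^{(N)}$ in $\BB_\theta^{\times}$ from Proposition~\ref{P:cGaf} will produce the diagonalization of $\Psi_{\ba}$ in part~(b).
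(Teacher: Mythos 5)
Your proposal is correct and follows essentially the same route as the paper: same choice of the Vandermonde-type matrix $\Delta$ built from $\trho_b(\bz)$, same use of Proposition~\ref{P:gaGalois}(b) to identify the eigenvalues as $g_{b\star\ba}$, and the same partial-product/limit argument for part~(b) via Proposition~\ref{P:cGaf}. The only real variation is the invertibility of $\Delta$: the paper observes that $(\det\Delta)^2$ is the discriminant of the $\FF_q[t]$-basis $\{1,z,\dots,z^{r-1}\}$ of $\bB$, hence supported on ramified primes (those dividing $f(t)$), while you factor $\det\Delta$ as a Vandermonde product of Carlitz torsion points $\cC_{b'-b}(t,z)$ and bound their norms to $\bk$. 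The two arguments are morally equivalent since the discriminant is exactly the square of your Vandermonde product; your norm bookkeeping should read ``divides a power of $f'(t)$'' rather than ``equals an $\FF_q^\times$-multiple of a power of $f'(t)$'' (the minimal polynomial of $\cC_c(t,z)$ is $\cD_{f'}(t,Z)$, whose constant term is a single prime dividing $f'$ or a unit, not $f'$ itself), but the conclusion that each factor is a unit in $\bB[f(t)^{-1}]$ still holds.
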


\begin{proof}
For $b \in (A/fA)^{\times}$, let $z_b \assign \trho_b(z)$ in $\bK$. If we apply $\trho_b$ to the identity in~\eqref{E:zeigen}, we see that $(1,z_b, \dots, z_b^{r-1})^{\tr}$ is an eigenvector of $Z(t)$ with eigenvalue~$z_b$. We define $\Delta \in \GL_r(\bB[f(t)^{-1}])$ so that
\begin{equation} \label{E:Deltainv}
\Delta \assign \begin{pmatrix}
\cdots & 1 & \cdots \\ \cdots  & z_b & \cdots \\  & \vdots & \\ \cdots & z_b^{r-1} & \cdots  
\end{pmatrix}_{b \in (A/fA)^{\times}},
\end{equation}
where the columns are indexed by $b \in (A/fA)^{\times}$ in some order. To confirm that $\Delta \in \GL_r(B[f(t)^{-1}])$, we observe that $\det \Delta^2$ is the discriminant of the $\bA$-basis $\{1, z, \dots, z^{r-1}\}$ of $\bB$. In particular, it is invertible and its determinant is divisible only by ramified primes of $\bB$, namely those dividing $f(t)$. Therefore, we have the diagonalization
\[
\Delta^{-1} Z(t) \Delta = \diag \bigl( z_b \mid b \in (A/fA)^{\times} \bigr).
\]
Then~\eqref{E:gasum} and~\eqref{E:Phiadef} imply that
\[
\Delta^{-1} \Phi_{\ba} \Delta = \diag \bigl( \trho_b(g_{\ba}) \mid b \in (A/fA)^{\times} \bigr),
\]
and (a) follows from Proposition~\ref{P:gaGalois}(b). To prove (b) let $\Psi_R = \Phi_{\ba}^{(1)} \cdots \Phi_{\ba}^{(R)}$ for $R \geqslant 1$. Then (a) implies
\[
\Delta^{-1} \Psi_R \Delta = \prod_{N=1}^R \diag \bigl( g_{b\star \ba}^{(N)} \mid b \in (A/fA)^{\times} \bigr).
\]
As $R \to \infty$, the left-hand side has limit $\Delta^{-1} \Psi_{\ba} \Delta$ by the definition of $\Psi_{\ba}$. The right-hand side has the desired limit by Proposition~\ref{P:cGaf}.
\end{proof}

\begin{subsubsec}{Rigid analytic trivialization of $H_{\ba}$} \label{SSS:HaRAT}
As $H_{\ba}(\KK) = H^0(\bU/\KK,\cO_{\bX/\KK}(-W_{\ba}^{(1)}))$, we note that since the support of $W_{\ba}^{(1)}$ is $K^{(1)}$-rational (as are the points at infinity), we can find a $\KK[t]$-basis $\{n_1, \dots, n_r \}$ of $H_{\ba}(\KK)$ such that $\{n_1, \dots, n_r \} \subseteq K^{(1)}[t,z]$, (e.g., see~\cite{Rosen}*{Thm.~5.4}). Let $\bn \assign (n_1, \dots, n_r)^{\tr}$, and choose $\Phi \in \Mat_r(K^{\perf}[t])$ so that $\sigma \bn = \Phi\bn$. Since $\{ \sigma n_1, \dots, \sigma n_r\} \subseteq K[t,z]$, we see that under these choices,
\begin{equation} \label{E:Phirational}
\Phi \in \Mat_r(K[t]).
\end{equation}
Now there exists a unique matrix $Q \in \Mat_r(K^{(1)}[t])$ such that $\bn \equiv Q \cdot \bz \pmod{\cD_f(t,z)}$. As noted in~\cite{ABP04}*{Pf.~of Prop.~6.4.4}, $Q \in \GL_r(\TT_{\theta})$ and $\det Q(\theta) \neq 0$. On the one hand, $\sigma \bn = \Phi \bn \equiv \Phi Q \cdot \bz \pmod{\cD_f(t,z)}$, but on the other, using~\eqref{E:Phiadef},
\[
\sigma \bn \equiv \sigma Q \cdot \bz \equiv Q^{(-1)} g_{\ba} \cdot \bz \equiv Q^{(-1)} \Phi_{\ba} \cdot \bz \pmod{\cD_f(t,z)}.
\]
Therefore, $\Phi Q = Q^{(-1)} \Phi_{\ba}$. If we set $\Psi \assign Q\Psi_{\ba} \in \GL_r(\TT_{\theta})$, then
\[
\Psi^{(-1)} = Q^{(-1)}\Psi_{\ba}^{(-1)} = Q^{(-1)}\Phi_{\ba}\Psi_{\ba} = \Phi Q \Psi_{\ba} = \Phi\Psi,
\]
and so $\Psi$ is a rigid analytic trivialization for $H_{\ba}$ and has entries in $\TT_{\theta}(k_{\infty}(\zeta))$.
\end{subsubsec}

\begin{subsubsec}{Periods of $E_{\ba}$}
Using Anderson's exponentiation theorem (Theorem~\ref{T:Anderson}), we can calculate the period lattice $\Lambda_{\ba} \subseteq \C^d$ for $E_{\ba}$ in terms of special $\Pi$-values. We obtain an extension of one of Sinha's main theorems from~\cite{Sinha97b}*{\S 5.3.3, \S 5.3.9}. 
\end{subsubsec}

\begin{theorem} \label{T:periods}
Let $\ba \in \cA_f$ be effective with $\deg \ba > 0$. Let $E_{\ba}$ be constructed with respect to a normalized $\KK[\sigma]$-basis on $H_{\ba}(\KK)$. The period lattice $\Lambda_{\ba}$ of $E_{\ba}$ has an $\bA$-basis $\pi_1, \dots, \pi_r$ so that the following hold.
\begin{alphenumerate}
\item For $1 \leqslant i \leqslant r$,
\[
\pi_i = \begin{pmatrix}
\vdots \\ \pd_t^{\ell_b - j} \bigl( z^{i-1} \cG_{\ba}^{-1} \bigr)\big|_{\xi_b} \\ \vdots
\end{pmatrix}_{(b,j) \in \cS_{\ba}}.
\]
The coordinate corresponding to $(b,\ell_b) \in \cS_{\ba}$ is $\rho_b(\zeta)^{i-1} \Pi (b \star \ba)$ \textup{(cf.~\cite{Sinha97b}*{\S 5.3})}.
\item When $\ba = [a/f]$ is basic, for $1 \leqslant i \leqslant r$,
\[
\pi_i = \begin{pmatrix}
\vdots \\ \rho_b(\zeta)^{i-1} \Pi\bigl( [ba/f] \bigr) \\ \vdots
\end{pmatrix}_{b \in \cP_{\ba}}.
\]
\end{alphenumerate}
\end{theorem}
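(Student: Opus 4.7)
The plan is to apply Anderson's exponentiation theorem (Theorem~\ref{T:Anderson}(c)) to the rigid analytic trivialization $\Psi = Q\Psi_{\ba}$ of $H_{\ba}$ constructed in \S\ref{SSS:HaRAT}, and then to exploit the simultaneous diagonalization of $\Phi_{\ba}$ and $Z(t)$ from Proposition~\ref{P:PhiaPsiadiag}. Set
\[
\pi_i \assign \cE_0(\be_i \Psi^{-1}), \quad 1 \leqslant i \leqslant r,
\]
where $\be_i$ is the $i$-th standard row vector in $\Mat_{1\times r}(\bA)$. Because $\Psi^{-1} \in \GL_r(\TT_\theta)$ and any uniformizable coabelian $t$-module of rank $r$ has a period lattice of $\bA$-rank $r$, Theorem~\ref{T:Anderson}(c) will force $\pi_1,\dots,\pi_r$ to be an $\bA$-basis of $\Lambda_{\ba}$.

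The first step is to rewrite $(\be_i \Psi^{-1})\cdot \bn$ modulo $\cD_f(t,z)$. The $\KK[t]$-basis $\bn = (n_1,\dots,n_r)^{\tr}$ of $H_{\ba}(\KK)$ from \S\ref{SSS:HaRAT} satisfies $\bn \equiv Q\bz \pmod{\cD_f}$ with $\bz = (1,z,\dots,z^{r-1})^{\tr}$. The construction of $\cE_0$ in \S\ref{SSS:E0} via the basis $\{n_j\}$ yields $\cE_0(\be_i \Psi^{-1}) = \teps_0((\be_i \Psi^{-1})\bn)$, and the factorization $\Psi = Q\Psi_{\ba}$ then gives
\[
(\be_i \Psi^{-1})\bn = \be_i \Psi_{\ba}^{-1} Q^{-1}\bn \equiv \be_i \Psi_{\ba}^{-1} \bz \pmod{\cD_f}.
\]
Reduction modulo $\cD_f$ is harmless, since $\teps_0$ only probes values and hyperderivatives at the points $\xi_b \in \bU(\C)$, where $\cD_f$ vanishes.

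The crucial computation, and the single place where the argument is not routine, is the identity $\Psi_{\ba}^{-1}\bz = \cG_{\ba}^{-1}\bz$ in the fraction field of $\BB_\theta$. Proposition~\ref{P:PhiaPsiadiag}(b) provides $\Psi_{\ba}^{-1} = \Delta D^{-1}\Delta^{-1}$ with $D = \diag(\cG_{c\star\ba})_{c\in (A/fA)^\times}$, and the $c$-th column of $\Delta$ is $(1,z_c,\dots,z_c^{r-1})^{\tr}$ with $z_c = \trho_c(z)$. Ordering the columns so that $c=1$ is first, the first column is exactly $\bz$ since $z_1 = z$, and the corresponding diagonal entry of $D^{-1}$ is $\cG_{\ba}^{-1}$. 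The chain
\[
\Psi_{\ba}^{-1}\bz = \Delta D^{-1}(\Delta^{-1}\bz) = \Delta D^{-1}(1,0,\dots,0)^{\tr} = \cG_{\ba}^{-1} \cdot \Delta(1,0,\dots,0)^{\tr} = \cG_{\ba}^{-1}\bz
\]
yields $\be_i \Psi_{\ba}^{-1}\bz = z^{i-1}\cG_{\ba}^{-1}$, and substituting into formula \eqref{E:E0} for $\teps_0$ recovers the formula for $\pi_i$ in (a). This diagonalization is the main obstacle: without the eigenbasis viewpoint of Proposition~\ref{P:PhiaPsiadiag} there is no obvious reason to expect such a clean closed form for $\Psi_{\ba}^{-1}\bz$.

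For the distinguished coordinate in (a) at $(b,\ell_b)$, the hyperderivative $\pd_t^{0}$ collapses to evaluation at $\xi_b$, producing $\rho_b(\zeta)^{i-1}\cdot \cG_{\ba}(\xi_b)^{-1}$, which the interpolation identity \eqref{E:Pibstara} rewrites as $\rho_b(\zeta)^{i-1}\Pi(b\star\ba)$. For part~(b), a basic $\ba = [a/f]$ forces $\mult_{\xi_b}(\Xi_{[a/f]}) \in \{0,1\}$ by \eqref{E:XiWaf}, so $\ell_b = 0$ for every $b \in \cP_{\ba}$ and every coordinate of $\pi_i$ already collapses to $\rho_b(\zeta)^{i-1}\Pi([ba/f])$, giving the formula displayed in~(b).
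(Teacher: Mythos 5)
Your proof is correct and follows essentially the same route as the paper: apply Anderson's exponentiation theorem to $\Psi = Q\Psi_{\ba}$, cancel $Q$ against $\bn = Q\bz$, and collapse $\Psi_{\ba}^{-1}\bz$ to $\cG_{\ba}^{-1}\bz$ via the diagonalization of Proposition~\ref{P:PhiaPsiadiag}(b) with $\Delta^{-1}\bz = (1,0,\dots,0)^{\tr}$. Your explicit justification for part~(b) — that $\eqref{E:XiWaf}$ forces $\mult_{\xi_b}(\Xi_{[a/f]}) \in \{0,1\}$, hence $\ell_b = 0$ for $b \in \cP_{\ba}$ — is a small clarification the paper leaves implicit, but it is the same argument.
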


\begin{proof}
Fix a normalized $\KK[\sigma]$-basis $\{ h_{(a,j)} \mid (a,j) \in \cS_{\ba} \}$ of $H_{\ba}(\KK)$ as in \S\ref{SSS:normalized}, and let $E_{\ba}: \bA \to \Mat_d(\KK[\sigma])$ be the corresponding $t$-module. We fix all of the same notation from earlier in \S\ref{SS:analyticSinha}. As $\Psi = Q\Psi_{\ba}$ is a rigid analytic trivialization of $E_{\ba}$, if we take $\br \in \Mat_{1\times r}(\TT_{\theta})$ to be a row of $\Psi^{-1}$, we have $\br^{(-1)} \Phi - \br = 0$,
and so $\cE_0(\br) \in \Lambda_{\ba}$ by Theorem~\ref{T:Anderson}(a). Furthermore, as $\cE_0(\br)$ ranges over all rows of $\Psi^{-1}$, we obtain a $\rd E_{\ba}(\bA)$-basis of $\Lambda_{\ba}$ by Theorem~\ref{T:Anderson}(c).

Now the entries of $\bn = Q\bz \in \Mat_{r\times 1} (K^{(1)}[t,z])$ form a $\KK[t]$-basis of $H_{\ba}(\KK)$. If $\bsalpha = (\alpha_1, \dots, \alpha_r) \in \Mat_{1 \times r}(\C[t])$, then by \S\ref{SSS:E0}, $(\varepsilon_0 \circ \iota)(\bsalpha) = (\teps_0 \circ \tiota)(\bsalpha) = \teps_0 (\bsalpha \cdot Q \cdot \bz)$.
Extending $\teps_0$ continuously to $\tcE_0 : \BB_{\theta} \to \C$, we have for $\bsalpha = (\alpha_1, \dots, \alpha_r) \in \Mat_{1 \times r}(\TT_{\theta})$,
\[
\cE_0(\bsalpha) = \tcE_0 ( \bsalpha \cdot Q \cdot \bz ),
\]
as in~\eqref{E:E0}. Recalling from Proposition~\ref{P:PhiaPsiadiag}(b) that $\Psi = Q\Psi_{\ba} = Q\Delta \Gamma_{\ba}\Delta^{-1}$, where we set $\Gamma_{\ba} \assign \diag( \cG_{b \star \ba} \mid b \in (A/fA)^{\times} )$, we have
\[
\Psi^{-1} = \Delta \Gamma_{\ba}^{-1}\Delta^{-1} Q^{-1}.
\]
Therefore, if $\bs_i \in \Mat_{1 \times r}(\TT_{\theta})$ is the standard basis vector with $1$ in the $i$-th entry, then taking $\br_i = \bs_i\Psi^{-1}$, we set
\[
\pi_i \assign \cE_0(\br_i) = \tcE_0 \bigl( \bs_i \cdot \Delta \Gamma_{\ba}^{-1} \Delta^{-1} \cdot \bz\bigr),
\]
as defined in~\eqref{E:E0}. By \eqref{E:Deltainv}, if we index the columns of $\Delta$ so that the first one is~$\bz$ (and corresponds to $b=1 \in (A/fA)^{\times}$), we have $\Delta^{-1} \cdot \bz = (1, 0, \dots, 0)^{\tr}$. It follows that
\[
\pi_i = \tcE_0 \Bigl( \bs_i \bigl( 
\cG_{\ba}^{-1}, z\cG_{\ba}^{-1}, \dots, z^{r-1} \cG_{\ba}^{-1} \bigr)^{\tr} \Bigr) = \tcE_0 \bigl( z^{i-1}\cG_{\ba}^{-1} \bigr).
\]
Thus~\eqref{E:E0} implies the formula for $\pi_i$ in~(a) as desired. The coordinates of $\pi_i$ corresponding to $(b,\ell_b) \in \cS_{\ba}$ are $(z^{i-1}\cG_{\ba}^{-1})|_{\xi_b} = \rho_b(\zeta)^{i-1} \Pi(b\star \ba)$ from \eqref{E:Pibstara}.
\end{proof}

\begin{remark}
Sinha's identities in~\cite{Sinha97b}*{\S 5.3.3, 5.3.9} apply to the case $\ba = [1/f]$, and if we compare his formulas to Theorem~\ref{T:periods}(b), we see that his period is almost exactly~$\pi_1$, with only a discrepancy of a factor $b/f$ when $\deg b = \deg f - 1$.
\end{remark}

\subsection{Fields of definition} \label{SS:Fodef}
Again we fix effective $\ba \in \cA_f$ with $\deg \ba > 0$. The goal of this section is to show that the associated $t$-module $E_{\ba}$ has a model defined over $K$. As previously observed, we see from \citelist{\cite{ABP04}*{\S 6.4} \cite{BCPW22}*{Thm.~4.2.2}}, using arguments that go back to~\cite{Sinha97b}*{Thm.~3.2.5}, that $H_{\ba}(\ok)$ has a $\ok[\sigma]$-basis and that therefore $E_{\ba}$ has a model over~$\ok$. However, in all of these arguments the $\ok[\sigma]$-basis is not explicitly constructed, but rather relies on an existential result of Anderson (see~\citelist{\cite{And86}*{Lem.~1.4.5} \cite{ABP04}*{Prop.~4.3.2}}) to show that it suffices to prove that $H_{\ba}(\ok)$ is finitely generated over~$\ok[\sigma]$.

\begin{subsubsec}{The definability problem over $K$}
To show that $E_{\ba}$ has a model defined over $K$, we first establish that $H_{\ba}(\KK)$ has a (normalized) $\KK[\sigma]$-basis consisting of functions in $K[t,z]$ (see Proposition~\ref{P:HaKdef}). Then we analyze the construction of $E_{\ba}$ with respect to this basis as in~\eqref{E:Etdef} to show that this model of $E_{\ba}$ is defined over~$K$ (see Theorem~\ref{T:EaKdef}).

We make two definitions:
\begin{itemize}
\item A $\KK$-subspace of $\KK[t,z]$ (in particular a subspace of $H_{\ba}(\KK)$) is \emph{defined over $K$ as a $\KK$-vector space} if it has a $\KK$-basis contained in $K[t,z]$.
\item A free left $\KK[\sigma]$-submodule of $H_{\ba}(\KK)$ is \emph{defined over $K$ as a $\KK[\sigma]$-module} if it has a $\KK[\sigma]$-basis contained in $K[t,z]$.
\end{itemize}
We note that a free $\KK[\sigma]$-submodule may be defined over $K$ as a $\KK[\sigma]$-module but not as a $\KK$-vector space due to the action of $\sigma$. Nevertheless, when it will not cause confusion we will refer to $\KK$-subspaces and $\KK[\sigma]$-submodules as ``defined over $K$'' without additional qualifiers when it is clear which situation we are in from the context.
\end{subsubsec}

\begin{proposition} \label{P:HaKdef}
For $\ba \in \cA_f$ effective with $\deg \ba > 0$, the following hold.
\begin{alphenumerate}
\item $H_{\ba}(\KK)$ is defined over $K$ as a $\KK[\sigma]$-module.
\item $H_{\ba}(\KK)$ has a normalized $\KK[\sigma]$-basis defined over $K$.
\end{alphenumerate}
\end{proposition}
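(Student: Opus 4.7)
The plan is to carry out the normalization procedure of \S\ref{SSS:normalized}--\S\ref{SSS:Wronskian} over the subfield $K$ rather than $\KK$, exploiting the $K$-rationality of every divisor involved. Proving (b) in this way automatically yields (a). First, I would verify the $K$-rationality of the divisors $W_{\ba}^{(1)}$, $\Xi_{\ba}$, and $-W_{\ba}^{(1)} - \Xi_{\ba} + k\xi_a$ for $k \geqslant 0$. The key point is that each $\xi_b = (\theta, \be(b/f)) \in \bX(K)$, and its Frobenius twist $\xi_b^{(j)} = (\theta^{q^j}, \be(b/f)^{q^j})$ lies in $\bX(K)$ as well, because $K$ is closed under $q$-th powers. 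Setting $\cU_K \assign K \times_{\FF_q} U$ and $\cX_K \assign K \times_{\FF_q} X$, define
\[
H_{\ba}(K) \assign H^0 \bigl( \cU_K, \cO_{\cX_K}(-W_{\ba}^{(1)}) \bigr), \quad
H_{\ba}'(K) \assign H^0 \bigl( \cU_K, \cO_{\cX_K}(-W_{\ba}^{(1)} - \Xi_{\ba}) \bigr),
\]
both ideals of $K[t,z]$. Affineness of $\cU_K$ together with base change give $H_{\ba}(\KK) = H_{\ba}(K) \otimes_K \KK$; using Lemma~\ref{L:Haprops}(c) then yields $\sigma H_{\ba}(\KK) = H_{\ba}'(K) \otimes_K \KK$.

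Next, for each $(a,j) \in \cS_{\ba}$, I would choose $h_{(a,j)} \in H^0(\cU_K, \cO_{\cX_K}(-W_{\ba}^{(1)} - \Xi_{\ba} + (j+1)\xi_a)) \setminus H^0(\cU_K, \cO_{\cX_K}(-W_{\ba}^{(1)} - \Xi_{\ba} + j\xi_a))$; this is available because the successive quotient is $1$-dimensional over $K$ (it is the $K$-form of a known $1$-dimensional $\KK$-quotient). The leading-coefficient normalization \eqref{E:norm2} is achieved by scaling in $K^{\times}$. To enforce \eqref{E:norm3}, I would apply the Wronskian correction of \S\ref{SSS:Wronskian}: hyperderivatives preserve $K(\bX)$ by \S\ref{SSS:hyper}, and since $\xi_a \in \bX(K)$, the entries $\pd_t^k(h_{(a,j)})|_{\xi_a}$ of the Wronskian matrix $M_a$ lie in $K$. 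Consequently $M_a^{-1} \in \GL_{\ell_a+1}(K)$, and the adjusted collection $\{h'_{(a,j)}\}$ remains in $K[t,z]$.

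That the resulting $K$-rational normalized family is genuinely a $\KK[\sigma]$-basis of $H_{\ba}(\KK)$ follows from the $\KK[\sigma]$-module theory of coabelian $t$-comotives (\cite{ABP04}*{Prop.~4.3.2}; cf.~\cite{And86}*{Lem.~1.4.5}): the images in $H_{\ba}(\KK)/\sigma H_{\ba}(\KK) = (H_{\ba}(K)/H_{\ba}'(K)) \otimes_K \KK$ form a $\KK$-basis by the successive-divisor triangularization from \eqref{E:norm1}, and for a coabelian $t$-comotive any such lift constitutes a $\KK[\sigma]$-basis. Hence (b) holds, and (a) is immediate. The main obstacle is ensuring the divisor-theoretic construction is simultaneously compatible with the base change $K \to \KK$ and with the $\KK[\sigma]$-lifting principle; both are resolved by the $K$-rationality of every relevant divisor together with affineness of $\cU_K$ and the general structure theory of coabelian $t$-comotives.
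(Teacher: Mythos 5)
The $K$-rationality bookkeeping in your proposal is fine: the divisors are $K$-rational, the Riemann--Roch spaces descend to $K$, hyperderivatives preserve $K(\bX)$, and the Wronskian correction stays over $K$. The gap is in the final step, where you assert that for a coabelian $t$-comotive ``any such lift constitutes a $\KK[\sigma]$-basis'' because the images form a $\KK$-basis of $H_{\ba}(\KK)/\sigma H_{\ba}(\KK)$. This is false, and neither~\cite{And86}*{Lem.~1.4.5} nor~\cite{ABP04}*{Prop.~4.3.2} says it; those results assert freeness of $H_{\ba}(\ok)$ over $\ok[\sigma]$, not that arbitrary lifts of a basis of the quotient are bases. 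A concrete failure: if $\{h_{(a,j)}\}$ is a normalized $\KK[\sigma]$-basis and you replace one element by $h'_{(a,j)} = (1+\sigma)h_{(a,j)} = h_{(a,j)} + g_{\ba} h_{(a,j)}^{(-1)}$, then the new family still satisfies~\eqref{E:norm1}--\eqref{E:norm3} and represents the same class in $H_{\ba}/\sigma H_{\ba}$: the added term lies in $\sigma H_{\ba} = H^0(\bU,\cO_{\bX}(-W_{\ba}^{(1)}-\Xi_{\ba}))$, so its vanishing order at $\xi_a$ is at least $\ell_a+1$ and all the hyperderivatives in~\eqref{E:norm3} are untouched. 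Yet the change-of-basis matrix from $\{h_{(a,j)}\}$ to the new family is $\diag(1,\ldots,1+\sigma,\ldots,1)$, which is not invertible in $\Mat_d(\KK[\sigma])$ since $1+\sigma$ is not a unit; so the new family is not a $\KK[\sigma]$-basis. Your construction, which only constrains vanishing orders at the $\xi_a$ and never bounds the pole order at infinity, gives no way to exclude such perturbations.

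This is precisely the difficulty the paper's proof is organized around. Rather than lifting from $H_{\ba}/\sigma H_{\ba}$ in one step, the paper works through the filtration $M_i = \cL(-W_{\ba}^{(1)} + i I_{\ba})$, selecting $T_i \subseteq M_i$ inducing a basis of $M_i/(M_{i-1} + \sigma M_{i-1})$ for each $i$ (\S\ref{SSS:UVTSdefs}), and then proves by the multi-level induction of Lemma~\ref{L:Silinind} that the union $S_N$ is $\KK[\sigma]$-linearly independent and generating. The $K$-rationality then follows because each $M_i$ and each $M_{i-1}+\sigma M_{i-1}$ is defined over $K$ (Remark~\ref{R:SiKdef}), so the $T_i$ can be chosen in $K[t,z]$. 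Requiring $T_i \subseteq M_i$ — bounding the pole order at infinity at the $i$-th step — is exactly what rules out the $(1+\sigma)$-type perturbation above, and is the ingredient missing from your argument. To repair the proposal you would need to replicate this filtered construction; there is no shortcut via Nakayama-type lifting from $H_{\ba}/\sigma H_{\ba}$.
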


\begin{remark}
(i) A major issue with showing that $H_{\ba}(\KK)$ is defined over~$K$ is that~$K$ is not perfect. For example, one can use the arguments in \citelist{\cite{ABP04}*{\S 6.4} \cite{BCPW22}*{\S 4.2}} to show that $H_{\ba}(\KK)$ is defined over $K^{\perf}$, but descending to $K$ itself is more tricky.
(ii) The proof of Proposition~\ref{P:HaKdef} takes several steps. It is essentially a more laborious version of the proofs of \citelist{\cite{ABP04}*{Lem.~6.4.1} \cite{BCPW22}*{Thm.~4.2.2} \cite{Sinha97b}*{Thm.~3.2.5}}, but where we pay attention at each stage to obtaining bases of $\KK$-subspaces and $\KK[\sigma]$-submodules defined over $K$.
\end{remark}

\begin{subsubsec}{Riemann-Roch spaces}
For a divisor $D \in \Div_{\KK}(\bX)$, we define
\[
\cL(D) \assign H^0(\bX,\cO_{\bX}(D)),
\]
which is a finite dimensional $\KK$-vector space. We note that if $D \in \Div_K(\bX)$ and $\cL(D) \subseteq \KK[t,z]$, then $\cL(D)$ is defined over $K$ by \cite{Rosen}*{Thm.~5.4}. For $i \in \ZZ$, we set
\begin{equation} \label{E:Midef}
M_i \assign \cL(-W_{\ba}^{(1)} + i I_{\ba}) \subseteq H_{\ba}(\KK),
\end{equation}
which is then finite dimensional over $\KK$ and defined over~$K$. 
\end{subsubsec}

\begin{lemma} \label{L:Miprops}
The following hold.
\begin{alphenumerate}
\item For $i < 0$, $M_i=\{0\}$. We have $M_0 \subseteq M_1 \subseteq M_2 \subseteq \cdots$, and $H_{\ba}(\KK) = \cup_{i=0}^{\infty} M_i$.
\item For $i$, $j \in \ZZ$, $M_i^{(-j)} = \cL(-W_{\ba}^{(1-j)} + i I_{\ba})$.
\item For $i \in \ZZ$ and $j \geqslant 1$,
\begin{align*}
\sigma^j M_{i} &= g_{\ba}g_{\ba}^{(-1)} \cdots g_{\ba}^{(-j+1)} M_{i}^{(-j)} \\
&= \cL(-W_{\ba}^{(1)} - \Xi_{\ba} - \Xi_{\ba}^{(-1)} - \cdots - \Xi_{\ba}^{(-j+1)} + (i + j)I_{\ba}).
\end{align*}
\item For $i$, $k \in \ZZ$ and $j$, $\ell \geqslant 0$,
\[
\sigma^j M_i \cap \sigma^{\ell} M_k
= \cL \bigl(-W_{\ba}^{(1)} - \Xi_{\ba} - \Xi_{\ba}^{(-1)} - \cdots - \Xi_{\ba}^{(-\max(j,\ell)+1)} + \min(i+j,k+\ell)I_{\ba} \bigr).
\]
\item For $i$, $k \in \ZZ$ and $j$, $\ell \geqslant 0$,
\[
\sigma^j M_i \cap \sigma^\ell M_k = \begin{cases}
\{0\}, & \text{if $\max(j,\ell) > \min(i+j,k+\ell)$,} \\
\sigma^{\max(j,\ell)} M_{\min(i+j,k+\ell) - \max(j,\ell)}, & \text{if $\max(j,\ell) \leqslant \min(i+j,k+\ell)$.}
\end{cases}
\]
Equivalently, for $i$, $k \in \ZZ$ and $\ell \geqslant 0$, $M_i \cap \sigma^{\ell} M_k = \sigma^{\ell} M_{\min(i,k+\ell) - \ell}$. 
\item For $i \in \ZZ$ and $j \geqslant 0$,
$M_i + \sigma M_i + \cdots + \sigma^j M_i \subseteq M_{i+j}$.
\item There exists $N \geqslant 0$ such that for $i > N$, $M_{i} = M_{i-1} + \sigma M_{i-1}$.
\end{alphenumerate}
\end{lemma}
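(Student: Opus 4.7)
The plan is to prove the seven parts essentially as a long divisor bookkeeping exercise based on the identity $\divf(g_{\ba}) = W_{\ba}^{(1)} - W_{\ba} + \Xi_{\ba} - I_{\ba}$ from \eqref{E:divgba}, the $\FF_q$-rationality of the points at infinity (so that $I_{\ba}^{(-j)} = I_{\ba}$), and the fact that $\cL(D_1) \cap \cL(D_2) = \cL(\min(D_1,D_2))$ pointwise on divisors. Only part (g) requires a substantive tool (Riemann-Roch on $\bX$); the other parts are routine.

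For (a), one observes that $\deg(-W_{\ba}^{(1)} + iI_{\ba}) = -\deg W_{\ba} + in\deg(\ba)$ is negative for $i<0$, so $M_i = \{0\}$; the chain $M_0 \subseteq M_1 \subseteq \cdots$ comes from the inequality of divisors, and the union exhausts $H_{\ba}(\KK)$ because elements of $H_{\ba}(\KK) \subseteq \KK[t,z]$ have finite pole order at infinity. Part (b) is immediate since Frobenius twisting commutes with taking divisors and fixes $I_{\ba}$. For (c), I would first prove by induction that $\sigma^j h = g_{\ba}g_{\ba}^{(-1)}\cdots g_{\ba}^{(-j+1)}h^{(-j)}$ (the relation $\sigma h = g_{\ba} h^{(-1)}$ iterates cleanly). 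Then the telescoping identity
\[
\divf\bigl(g_{\ba}g_{\ba}^{(-1)}\cdots g_{\ba}^{(-j+1)}\bigr) = W_{\ba}^{(1)} - W_{\ba}^{(1-j)} + \Xi_{\ba} + \Xi_{\ba}^{(-1)} + \cdots + \Xi_{\ba}^{(-j+1)} - jI_{\ba}
\]
combined with (b) turns both inclusions in the claimed equality $\sigma^j M_i = \cL(-W_{\ba}^{(1)} - \Xi_{\ba} - \cdots - \Xi_{\ba}^{(-j+1)} + (i+j)I_{\ba})$ into divisor comparisons.

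Part (d) follows directly from (c) and the Riemann-Roch intersection formula, noting that the supports of $W_{\ba}^{(1)}$ and the $\Xi_{\ba}^{(-m)}$ are pairwise disjoint (they sit at different Frobenius twists of the generic points $\xi_b$), so the coefficient-wise minimum absorbs $\Xi_{\ba}^{(-m)}$ for all $0 \leqslant m \leqslant \max(j,\ell)-1$ while giving $\min(i+j,k+\ell)$ at infinity. For (e), I would split into the two cases: when $\max(j,\ell) \leqslant \min(i+j,k+\ell)$, rewrite the divisor from (d) in the form of (c) with $j \to \max(j,\ell)$ and $i \to \min(i+j,k+\ell) - \max(j,\ell)$; in the opposite case the degree of the intersection divisor is $-\deg W_{\ba} + (\min(i+j,k+\ell) - \max(j,\ell))n\deg(\ba) < 0$, forcing $\cL = \{0\}$. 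Part (f) reduces via (c) to checking $-\Xi_{\ba} - \cdots - \Xi_{\ba}^{(-\ell+1)} + (\ell-j)I_{\ba} \leqslant 0$ in $\Div(\bX)$, which holds whenever $\ell \leqslant j$ since the $\Xi$'s are effective.

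The main obstacle is (g), where we need to upgrade the inclusion of (f) to equality for $i$ large. I would apply Riemann-Roch on the smooth projective curve $\bX$ to get $\dim_{\KK} M_i = -\deg W_{\ba} + in\deg(\ba) - g(\bX) + 1$ once $i$ is past a threshold $N$ making $\deg(-W_{\ba}^{(1)} + iI_{\ba}) > 2g(\bX) - 2$. Since $\sigma$ is injective on $H_{\ba}(\KK)$ (because $g_{\ba} \neq 0$), $\dim \sigma M_{i-1} = \dim M_{i-1}$; and by the equivalent form of (e) with $\ell=1$, $k=i-1$, we have $M_{i-1} \cap \sigma M_{i-1} = \sigma M_{i-2}$. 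Therefore
\[
\dim(M_{i-1} + \sigma M_{i-1}) = 2\dim M_{i-1} - \dim M_{i-2},
\]
and the Riemann-Roch formula shows this equals $\dim M_i$; combined with the containment from (f) this gives (g). The only technical nuisance is choosing $N$ large enough that Riemann-Roch applies to all three of $M_{i-2}$, $M_{i-1}$, $M_i$ simultaneously, which is immediate.
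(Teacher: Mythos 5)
Your proof is correct and follows essentially the same route as the paper: rewrite $\sigma^j M_i$ as a Riemann-Roch space via the divisor of $g_\ba$ and telescoping, do pointwise divisor arithmetic for the intersections, and use a dimension count from Riemann-Roch for (g). The only cosmetic difference is in (e), where the paper unifies the two cases by noting $M_i=\{0\}$ for $i<0$ rather than computing the degree directly, and your phrase ``Riemann-Roch intersection formula'' in (d) is a misnomer --- the identity $\cL(D)\cap\cL(D')=\cL(\min(D,D'))$ is just the definition of $\cL$.
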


\begin{proof}
Parts (a)--(c) are straightforward applications of the Riemann-Roch theorem together with the definition of $H_{\ba}(\KK)$ and its $\sigma$-action. Part (d) follows from (c) and the definition of $\cL(D)$. The identity in the second case of (e) also holds for the first, as $M_i = \{0\}$ if $i< 0$, and this identity follows from (d). The equivalence in (e) is then a simple calculation, using that if $\ell \geqslant j$, then $\sigma^j M_i \cap \sigma^{\ell} M_k =  \sigma^{j} (M_i \cap \sigma^{\ell-j}M_k)$. Part (f) follows from (c). For (g), we note that (e) implies
\[
M_{i-1} \cap \sigma M_{i-1} = \sigma M_{i-2}.
\]
Therefore, as $\dim_{\KK} M_{i-1} = \dim_{\KK} \sigma M_{i-1}$, we have $\dim_{\KK} (M_{i-1} + \sigma M_{i-1}) = 2 \dim_{\KK} M_{i-1} - \dim_{\KK} M_{i-2}$. Now the Riemann-Roch theorem implies that there exists $N \geqslant 0$ such that for $i > N$ we have $\dim_{\KK} M_{i-2} = -\deg W_{\ba}^{(1)} + (i-2) \deg I_{\ba} - \gamma + 1$, where $\gamma$ is the genus of $\bX$. In particular,
$\dim_{\KK} M_{i-1} = \dim_{\KK} M_{i-2} + \deg I_{\ba}$. Thus for $i > N$,
\[
\dim_{\KK}(M_{i-1} + \sigma M_{i-1}) = \dim_{\KK} M_{i-1} + \deg I_{\ba} = \dim_{\KK} M_{i}.
\]
As $M_{i-1} + \sigma M_{i-1} \subseteq M_{i}$ by (f), we are done.
\end{proof}

\begin{remark}
We recall that Lemma~\ref{L:Miprops}(g) is one of the essential tools in showing that $H_{\ba}(\KK)$ is finitely generated over $\KK[\sigma]$ in \citelist{\cite{ABP04}*{\S 6.4} \cite{BCPW22}*{Thm.~4.2.2}}. It will also play a crucial role for us at the end of the proof of Proposition~\ref{P:HaKdef}.
\end{remark}

\begin{criteria} \label{Cr:Mi}
For more flexible use in \S\ref{SS:reduction}, we formalize some of these properties. Let us assume we have a sequence $\{ M_i \}_{i \in \ZZ}$ of finite dimensional $\KK$-vector subspaces of $H_{\ba}(\KK)$ satisfying the following.
\begin{alphenumerate}
\item For $i < 0$, $M_i = \{0 \}$. We have $M_0 \subseteq M_1 \subseteq M_2 \subseteq \cdots$, and $H_{\ba}(\KK) = \cup_{i=0}^{\infty} M_i$.
\item For $i$, $k \in \ZZ$ and $\ell \geqslant 0$,
\[
M_i \cap \sigma^{\ell} M_k = \sigma^{\ell} M_{\min(i,k+\ell) - \ell}.
\]
\item For $i \in \ZZ$ and $j \geqslant 0$, $M_i + \sigma M_i + \cdots + \sigma^j M_i \subseteq M_{i+j}$.
\item There exists $N \geqslant 0$ such that for $i > N$, $M_i = M_{i-1} + \sigma M_{i-1}$.
\end{alphenumerate}
That is, $\{ M_i \}$ satisfies the properties in Lemma~\ref{L:Miprops}(a), (e)--(g), and in particular our original choice of $\{ M_i \}$ in \eqref{E:Midef} satisfies these criteria.
\end{criteria}

\begin{subsubsec}{Definitions of spaces and bases} \label{SSS:UVTSdefs}
Fix a sequence $\{ M_i \}$ of $\KK$-vector subspaces of $H_{\ba}(\KK)$ as in Criteria~\ref{Cr:Mi}.
For $i \geqslant 0$, we recursively construct $\KK$-vector spaces,
\[
U_i \subseteq V_i \subseteq M_i,
\]
together with respective $\KK$-bases $T_i \subseteq U_i$ and $T_i \subseteq S_i \subseteq V_i$.

We set $U_0\assign V_0 \assign M_0$. If $M_0 = \{ 0 \}$, we set $S_0 \assign T_0 \assign \emptyset$. Otherwise, we let $S_0 \assign T_0 \assign \{ u_{0,1}, \dots, u_{0,s_0} \}$ be any $\KK$-basis for $M_0$.

For $i > 0$, if $M_i = M_{i-1} + \sigma M_{i-1}$, we set $T_i \assign \emptyset$. Otherwise we let $T_i$ be a set of $\KK$-linearly independent elements of $M_i$ such that $T_i$ induces a $\KK$-basis of $M_i/(M_{i-1} + \sigma M_{i-1})$. We write $T_i = \{u_{i,1}, \dots, u_{i,s_i} \}$, and we set
\[
U_i \assign \Span_{\KK}(T_i), \quad V_i \assign V_{i-1} + U_i, \quad S_i \assign S_{i-1} \cup T_i.
\]
Since $M_i = (M_{i-1} + \sigma M_{i-1}) \oplus U_i$ by construction, we see that the sum $V_{i-1} \oplus U_i$ is direct (as $V_{i-1} \subseteq M_{i-1}$), and so $S_i$ is a $\KK$-basis of $V_i$. Moreover,
\begin{equation} \label{E:ViUl}
V_i = \bigoplus_{\ell=0}^i U_\ell,
\end{equation}
where we will use ``$\oplus$'' to indicate that an internal sum is direct.
\end{subsubsec}

\begin{remark} \label{R:Kdef}
If $M_0$ is defined over $K$, then we can find $T_0= S_0 \subseteq K[t,z]$.
Suppose further that for each $i > 0$, both $M_i$ and $M_{i-1} + \sigma M_{i-1}$ are defined over $K$. Then $M_i/(M_{i-1} + \sigma M_{i-1})$ is defined over~$K$, and so at each stage we can choose $T_i \subseteq K[t,z]$. Therefore, under these additional conditions we can select $T_i \subseteq S_i \subseteq K[t,z]$ for all $i \geqslant 0$.
\end{remark}

\begin{lemma} \label{L:Silinind}
For $i \geqslant 0$, the following hold.
\begin{alphenumerate}
\item $S_i$ is a $\KK[\sigma]$-linearly independent subset of $H_{\ba}(\KK)$.
\item $\displaystyle M_i = \bigoplus_{\ell=0}^i \bigoplus_{j=0}^{i-\ell} \sigma^j U_{\ell} = \bigoplus_{j=0}^i \sigma^{i-j}V_j$.
\end{alphenumerate}
\end{lemma}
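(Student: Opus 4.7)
The plan is to prove part (b) by induction on $i$ and then deduce part (a) as a quick corollary. For the base case $i=0$, we have $V_0 = U_0 = M_0$ by the definitions in \S\ref{SSS:UVTSdefs}, so (b) is immediate. For the inductive step, assume (b) at $i-1$, i.e.,
\[
M_{i-1} = V_{i-1} \oplus \sigma V_{i-2} \oplus \cdots \oplus \sigma^{i-1} V_0.
\]
The first substep is to establish the sum identity $M_i = V_i + \sigma V_{i-1} + \sigma^2 V_{i-2} + \cdots + \sigma^i V_0$: by the construction in \S\ref{SSS:UVTSdefs}, $M_i = (M_{i-1} + \sigma M_{i-1}) \oplus U_i$, and substituting the inductive description of $M_{i-1}$ together with the nested inclusions $V_0 \subseteq V_1 \subseteq \cdots$ gives $M_{i-1} + \sigma M_{i-1} = V_{i-1} + \sigma V_{i-1} + \sigma^2 V_{i-2} + \cdots + \sigma^i V_0$; adjoining $U_i$ and using $V_i = V_{i-1} \oplus U_i$ yields the claimed expression for $M_i$.

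The second substep is to show this sum is direct, for which I would use a $\KK$-dimension count. Since $\sigma$ is injective and $\KK$-semilinear (with respect to $c \mapsto c^{1/q}$), each $\sigma^j V_{i-j}$ is a $\KK$-subspace with $\dim_{\KK} \sigma^j V_{i-j} = \dim_{\KK} V_{i-j} = \sum_{\ell=0}^{i-j} s_\ell$, where $s_\ell \assign \dim_{\KK} U_\ell$ and the internal sum $V_{i-j} = \bigoplus_\ell U_\ell$ is direct by construction. By Criteria~\ref{Cr:Mi}(b) one has $M_{\ell-1} \cap \sigma M_{\ell-1} = \sigma M_{\ell-2}$, so writing $d_n \assign \dim_{\KK} M_n$ gives $\dim_{\KK}(M_{\ell-1} + \sigma M_{\ell-1}) = 2d_{\ell-1} - d_{\ell-2}$ and hence $s_\ell = d_\ell - 2d_{\ell-1} + d_{\ell-2}$. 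A short telescoping computation then yields
\[
\sum_{j=0}^i \dim_{\KK} \sigma^j V_{i-j} = \sum_{\ell=0}^i (i+1-\ell) s_\ell = d_i = \dim_{\KK} M_i,
\]
so the sum is direct; refining with the $U_\ell$-decomposition of each $V_{i-j}$ upgrades this to $M_i = \bigoplus_{\ell,j} \sigma^j U_\ell$, completing (b).

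For (a), given a $\KK[\sigma]$-linear relation $\sum_{\ell, k, s} c_{\ell, k, s} \sigma^s u_{\ell, k} = 0$ among the elements of $S_i = \bigcup_{\ell=0}^i T_\ell$, I would pick $N$ large enough that every summand lies in $M_{i+N}$ (possible by Criteria~\ref{Cr:Mi}(a)) and invoke (b) at index $i+N$. Grouping the coefficients into elements $w_{\ell, s} \assign \sum_k c_{\ell,k,s} u_{\ell,k} \in U_\ell$ gives $\sum_{\ell, s} \sigma^s w_{\ell, s} = 0$, and the direct-sum decomposition of $M_{i+N}$ forces each $w_{\ell, s} = 0$; since $T_\ell$ is a $\KK$-basis of $U_\ell$, every $c_{\ell, k, s}$ vanishes, proving (a).

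The main obstacle I anticipate is the sum identity in the first substep: it requires unwinding the inductive description of $M_{i-1}$ and tracking the collapses of chains like $\sigma^{j+1} V_{k-1} \subseteq \sigma^{j+1} V_k$ carefully. Once that identity is in place, the rest reduces to the semilinear dimension bookkeeping, which goes through cleanly because $\sigma$ preserves $\KK$-dimensions and Criteria~\ref{Cr:Mi}(b) provides exact intersection dimensions.
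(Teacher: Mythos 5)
Your proof is correct and takes a genuinely different route from the paper's. You decouple (b) from (a): you first establish (b) unconditionally by a $\KK$-dimension count, exploiting the exact intersection formula $M_{\ell-1}\cap\sigma M_{\ell-1}=\sigma M_{\ell-2}$ from Criteria~\ref{Cr:Mi}(b) together with the fact that $\sigma$ is injective and $q^{-1}$-semilinear over the (perfect) field $\KK$, hence $\KK$-dimension-preserving. The telescoping computation $\sum_{\ell=0}^i(i+1-\ell)(d_\ell-2d_{\ell-1}+d_{\ell-2})=d_i$ is correct, and once the total dimension of the ambient space matches the sum of the subspace dimensions, directness of $M_i=\sum_{j}\sigma^jV_{i-j}$ is automatic, and refines to $\bigoplus_{\ell,j}\sigma^jU_\ell$ because $\sigma$ preserves direct sums. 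You then obtain (a) cheaply by pushing any putative relation into $M_{i+N}$ for $N$ large and invoking (b) there. The paper, by contrast, runs a simultaneous induction in which (b) at level $i$ requires the induction hypothesis (a) at levels $\leqslant i-1$ (to get the directness of $\oplus_{\ell<i}\oplus_{j}\sigma^jU_\ell$), and (a) at level $i$ is then established by a fairly intricate nested ``claim'' inducting on $\sigma$-degree and a secondary index $\ell$. Your route avoids that nested claim entirely; what it costs is the dimension bookkeeping, which you handle correctly. One cosmetic remark: because $\sigma$ is $q^{-1}$-semilinear, $\sigma^s\bigl(\sum_k c_{\ell,k,s}u_{\ell,k}\bigr)=\sum_k c_{\ell,k,s}^{1/q^s}\sigma^su_{\ell,k}$, so to make the identity $\sum\sigma^sw_{\ell,s}=0$ literal you should set $w_{\ell,s}=\sum_k c_{\ell,k,s}^{q^s}u_{\ell,k}$; this does not affect the argument, since what you need is only that $w_{\ell,s}\in U_\ell$ and $\sigma^sw_{\ell,s}=0\Rightarrow w_{\ell,s}=0\Rightarrow c_{\ell,k,s}=0$.
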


\begin{proof}
We proceed by induction on $i$, and consider first the case $i=0$. Then (b) holds automatically since $M_0=U_0=V_0$. If $S_0 = \emptyset$, then (a) holds trivially. So we suppose $S_0 \neq \emptyset$, and suppose
\begin{equation} \label{E:i=0basesum}
\alpha_1 u_{0,1} + \cdots + \alpha_{s_0} u_{0,s_0} = 0, \quad \alpha_1, \dots, \alpha_{s_0} \in \KK[\sigma].
\end{equation}
For a finite subset $X \subseteq \KK[\sigma]$, we set
\[
\max\deg_{\sigma}(X) \assign \max_{\alpha \in X} \bigl( \deg_\sigma (\alpha) \bigr).
\]
We proceed by induction on $\max\deg_{\sigma}(\alpha_1, \dots, \alpha_{s_0})$. Suppose $\max \deg_{\sigma}(\alpha_1, \dots, \alpha_{s_0}) \leqslant 0$, in which case $\alpha_n \in \KK$ for all $1 \leqslant n \leqslant s_0$. The $\KK$-linear independence of $S_0$ implies $\alpha_1 = \cdots = \alpha_{s_0} = 0$. Next suppose that $\max \deg_{\sigma}(\alpha_1, \dots, \alpha_{s_0}) \leqslant j$ for $j > 0$. For each $n$ we write
\[
\alpha_n = \beta_n + b_n \sigma^j, \quad \deg_{\sigma}(\beta_n) \leqslant j-1,\ b_n \in \KK.
\]
Then \eqref{E:i=0basesum} implies
\begin{equation} \label{E:i=0second}
\beta_1 u_{0,1} + \dots + \beta_{s_0} u_{0,s_0} = -\sigma^j \bigl( b_1^{q^j} u_{0,1} + \dots + b_{s_0}^{q^j}u_{0,s_0} \bigr)
\end{equation}
The left-hand side of this expression is in
\[
U_0 + \sigma U_0 + \dots + \sigma^{j-1} U_0 =
M_0 + \sigma M_0 + \dots + \sigma^{j-1}M_0 \subseteq M_{j-1},
\]
where the final containment follows from Criterion~\ref{Cr:Mi}(c). The right-hand side of \eqref{E:i=0second} is in $\sigma^j U_0 = \sigma^j M_0$. By Criterion~\ref{Cr:Mi}(b), $M_{j-1} \cap \sigma^j M_0 = \{ 0 \}$, so both sides of \eqref{E:i=0second} are~$0$. By the induction hypothesis, $\beta_1 = \dots = \beta_{s_0} = 0$, and the $\KK$-linear independence of $S_0$ implies that $b_1 = \dots = b_{s_0} = 0$. Thus $S_0$ is $\KK[\sigma]$-linearly independent.

Now we consider the case $i > 0$. The induction hypothesis implies two things:
\begin{equation} \label{E:IH1}
\bigoplus_{j=0}^{\infty} \sigma^j V_m = \bigoplus_{j=0}^{\infty} \biggl( \bigoplus_{\ell=0}^m \sigma^j U_{\ell} \biggr) = \bigoplus_{\ell=0}^m \bigoplus_{j=0}^{\infty} \sigma^j U_{\ell}, \quad (0 \leqslant m \leqslant i-1),
\end{equation}
where all of these internal sums of $\KK$-vector spaces are direct;
also,
\begin{equation} \label{E:IH2}
M_m = \bigoplus_{\ell=0}^m \bigoplus_{j=0}^{m-\ell} \sigma^j U_{\ell}, \quad (0 \leqslant m \leqslant i-1).
\end{equation}
Notably this second identity implies that
\begin{equation} \label{E:Mi-1sigmaMi-1}
M_{i-1} + \sigma M_{i-1} =
\biggl( \bigoplus_{\ell=0}^{i-1} \bigoplus_{j=0}^{i-1-\ell} \sigma^j U_{\ell} \biggr) + \biggl( \bigoplus_{\ell=0}^{i-1} \bigoplus_{j=1}^{i-\ell} \sigma^j U_{\ell} \biggr)
= \bigoplus_{\ell=0}^{i-1} \bigoplus_{j=0}^{i-\ell} \sigma^j U_{\ell},
\end{equation}
where the final term is a direct sum by~\eqref{E:IH1}. Now because $M_i = (M_{i-1} + \sigma M_i) \oplus U_i$ by the definition of~$U_i$, we see that the first identity in part~(b) for $i$ follows immediately, and the second from~\eqref{E:ViUl}. The rest of the proof is devoted to proving~(a).

Suppose that $T_i = \emptyset$. In this case $S_i = S_{i-1}$ and $U_i=\{0\}$, and then (a) holds by the induction hypothesis. 
Now assume that $T_i = \{u_{i,1}, \dots, u_{i,s_i} \} \neq \emptyset$, and suppose that
\begin{equation} \label{E:ibasesum}
\sum_{m=0}^i \sum_{n=1}^{s_m} \alpha_{m,n} u_{m,n} = 0, \quad \alpha_{m,n} \in \KK[\sigma].
\end{equation}
Here we use the convention that $s_m=0$ if $T_m = \emptyset$, whereby the corresponding sum is empty. We make the following \textbf{claim}: if for $j \geqslant i$,
\begin{equation} \label{E:jcondition}
\max\deg_{\sigma}(\alpha_{m,n} \mid 1 \leqslant n \leqslant s_m) \leqslant j-m, \quad \forall\,m,\ 0 \leqslant m \leqslant i,
\end{equation}
then $\alpha_{m,n}=0$ for all $0 \leqslant m \leqslant i$, $1 \leqslant n \leqslant s_m$.

We prove the claim by induction on $j$. For the base case we assume $j=i$, so that $\deg_{\sigma} \alpha_{m,n} \leqslant i-m$. Then the left-hand side of~\eqref{E:ibasesum} is an element of $\oplus_{m=0}^i \oplus_{\ell=0}^{i-m} \sigma^{\ell} U_m$, which is direct by the proof of (b) for $i$ above. Thus $\alpha_{m,n} = 0$ for all $0 \leqslant m \leqslant i$, $1 \leqslant n \leqslant s_m$. Now take $j > i$, and assume the claim holds up to $j-1$. First suppose
\begin{gather} \label{E:ijfirstconditions}
\max \deg_{\sigma}( \alpha_{0,1}, \dots, \alpha_{0,s_0}) \leqslant j, \\
\max \deg_{\sigma}(\alpha_{m,n} \mid 1 \leqslant n \leqslant s_m) \leqslant j-1-m, \quad \forall\,m,\ 1 \leqslant m \leqslant i. \notag
\end{gather}
For $1 \leqslant n \leqslant s_0$, write $\alpha_{0,n} = \beta_n + b_n \sigma^j$, $\deg_{\sigma}(\beta_n) \leqslant j-1$, $b_n \in \KK$. We rewrite \eqref{E:ibasesum} as
\begin{equation} \label{E:ijsecondsum}
\sum_{n=1}^{s_0} \beta_n u_{0,n} + \sum_{m=1}^i \sum_{n=1}^{s_m} \alpha_{m,n} u_{m,n} = - \sigma^j \biggl( \sum_{n=1}^{s_0} b_n^{q^j} u_{0,n} \biggr).
\end{equation}
Using that $U_m \subseteq M_m$ together with Criterion~\ref{Cr:Mi}(c), we find that the left-hand side is in $M_{j-1}$. The right-hand side is in $\sigma^j M_0$. Thus the expressions on both sides of~\eqref{E:ijsecondsum} are in $M_{j-1} \cap \sigma^j M_0 = \{0\}$ (using Criterion~\ref{Cr:Mi}(b)). Therefore, the induction hypothesis implies that $\beta_n=0$ for $1 \leqslant n \leqslant s_0$ and that $\alpha_{m,n}=0$ for $1 \leqslant m \leqslant i$, $1 \leqslant n \leqslant s_m$. Likewise the $\KK$-linear independence of $u_{0,1}, \dots, u_{0,s_0}$ implies that $b_n=0$ for $1 \leqslant n \leqslant s_0$. We conclude that under the conditions in~\eqref{E:ijfirstconditions}, the claim is proved.

We now suppose that $1 \leqslant \ell \leqslant i$. We will show that if the claim holds when
\begin{gather} \label{E:ijlconditions}
\max \deg_{\sigma}(\alpha_{m,n} \mid 1 \leqslant n \leqslant s_m) \leqslant j-m, \quad \forall\,m,\ 0 \leqslant m \leqslant \ell-1, \\
\max \deg_{\sigma}(\alpha_{m,n} \mid 1 \leqslant n \leqslant s_m) \leqslant j-1-m, \quad \forall\,m,\ \ell \leqslant m \leqslant i, \notag
\end{gather}
then it also holds when
\begin{gather} \label{E:ijl+1conditions}
\max \deg_{\sigma}(\alpha_{m,n} \mid 1 \leqslant n \leqslant s_m) \leqslant j-m, \quad \forall\,m,\ 0 \leqslant m \leqslant \ell, \\
\max \deg_{\sigma}(\alpha_{m,n} \mid 1 \leqslant n \leqslant s_m) \leqslant j-1-m, \quad \forall\,m,\ \ell+1 \leqslant m \leqslant i. \notag
\end{gather}
Once we show this then the claim is proved, thus concluding the induction on $j$ as well as the induction on $i$ and completing the proof of part~(a).

We proceed as in the previous cases. For $0 \leqslant m \leqslant \ell$ and $1 \leqslant n \leqslant s_m$, we write $\alpha_{m,n} = \beta_{m,n} + b_{m,n} \sigma^{j-m}$, $\deg_{\sigma} \beta_{m,n} \leqslant j-m-1$, $b_{m,n} \in \KK$. Rearranging~\eqref{E:ibasesum}, we have
\begin{equation} \label{E:ijlsecondsum}
\sum_{m=0}^{\ell} \sum_{n=1}^{s_m} \beta_{m,n} u_{m,n} + \sum_{m=\ell+1}^i \sum_{n=1}^{s_m} \alpha_{m,n} u_{m,n} = -\sum_{m=0}^{\ell} \sigma^{j-m} \biggl( \sum_{n=1}^{s_m} b_{m,n}^{q^{j-m}} u_{m,n} \biggr).
\end{equation}
As in previous cases, the left-hand side is in~$M_{j-1}$, and the right is visibly in
\[
\sigma^{j-\ell} \cdot \sum_{m=0}^{\ell} \sigma^{\ell-m} M_{m} \subseteq \sigma^{j-\ell} M_{\ell}, \quad \textup{(Criterion~\ref{Cr:Mi}(c)).}
\]
Thus both sides of~\eqref{E:ijlsecondsum} are in
\[
M_{j-1} \cap \sigma^{j-\ell} M_{\ell} = \sigma^{j-\ell} M_{\ell-1}, \quad \textup{(Criterion~\ref{Cr:Mi}(b)).}
\]
Since $\ell - 1 \leqslant i-1$, we see from~\eqref{E:IH2} that
\begin{equation} \label{E:final1}
\sigma^{j-\ell} M_{\ell-1} = \sigma^{j-\ell} \Biggl(
\biggl( \bigoplus_{\lambda=0}^{\ell-1} \sigma^{\lambda} U_0 \biggr) \oplus \biggl( \bigoplus_{\lambda=0}^{\ell-2} \sigma^{\lambda} U_{1} \biggr) \oplus \cdots \oplus \bigl( U_{\ell-2} \oplus \sigma U_{\ell-2} \bigr) \oplus U_{\ell-1} \Biggr).
\end{equation}
On the other hand, the right-hand side of~\eqref{E:ijlsecondsum} is in
\begin{equation} \label{E:final2}
\sigma^{j-\ell} \bigl( \sigma^{\ell}U_0 \oplus \sigma^{\ell-1} U_1 \oplus \cdots \oplus \sigma U_{\ell-1} \oplus U_{\ell} \bigr).
\end{equation}
If $\ell \leqslant i-1$, then this sum is indeed direct, as all terms are then sub-terms of~\eqref{E:IH1}. If $\ell=i$, then this sum is direct by our previous proof of part~(b) for~$i$. When we compare the terms that appear in~\eqref{E:final1} and~\eqref{E:final2}, there is no overlap, so it must be that both sides of~\eqref{E:ijlsecondsum} are~$0$, and then the vanishing of coefficients follows as in previous cases. Thus, we have completed the proof of~(a).
\end{proof}

\begin{remark} \label{R:SiKdef}
Returning to the case that $M_i = \cL(-W_{\ba}^{(1)} + i I_{\ba})$ as in~\eqref{E:Midef}, we have already observed each $M_i$ is defined over $K$. By Lemma~\ref{L:Miprops}(c), for $i > 0$,
\[
M_{i-1} + \sigma M_{i-1} = \cL(-W_{\ba}^{(1)} + (i-1)I_{\ba}) + \cL(-W_{\ba}^{(1)} - \Xi_{\ba} + iI_{\ba}),
\]
and so it follows that $M_{i-1} + \sigma M_{i-1}$ is defined over $K$. By Remark~\ref{R:Kdef}, we can choose $T_i \subseteq K[t,z]$ for each $i \geqslant 0$. Therefore, in this case the $\KK[\sigma]$-linear span of $S_i$ is defined over $K$ as a $\KK[\sigma]$-module.
\end{remark}

\begin{proof}[Proof of Proposition~\ref{P:HaKdef}]
Let $N \geqslant 0$ be chosen as in Criterion~\ref{Cr:Mi}(d). By construction then, for $i > N$ we have $U_i=\{0\}$ and $V_i = V_{i-1}$. We claim that $S_N$ is a $\KK[\sigma]$-basis of $H_{\ba}(\KK)$. By Lemma~\ref{L:Silinind}(a), $S_N$ is $\KK[\sigma]$-linearly independent. From Lemma~\ref{L:Silinind}(b), we find that for $i > N$,
\[
M_i = \bigoplus_{j=0}^N \sigma^{i-j} V_j \oplus \bigoplus_{j=N+1}^{i} \sigma^{i-j} V_N\subseteq \bigoplus_{j=0}^i \sigma^{i-j}V_N,
\]
and as $H_{\ba}(\KK) = \cup_{i=0}^{\infty} M_i$, we see that $V_N$, and hence $S_N$, generate $H_{\ba}(\KK)$ as $\KK[\sigma]$-modules.

As noted in Remark~\ref{R:SiKdef}, if we take $\{ M_i \}$ defined as in~\eqref{E:Midef}, then we can choose the elements of $S_N$ to be in $K[t,z]$, and so $H_{\ba}(\KK)$ is defined over~$K$. Furthermore, in the construction of a normalized $\KK[\sigma]$-basis in~\S\ref{SSS:normalized}, \S\ref{SSS:Wronskian}, if the given basis $\{ \tilh_1, \dots, \tilh_d \}$ is defined over $K$, then their values at the points $\xi_a$ are all in $K$, and thus the linear changes of variables to obtain the normalized $\{ h'_{(a,j)} \}_{a \in \cS_{\ba}}$ are defined over $K$.
\end{proof}

\begin{remark} \label{R:algorithm}
The proofs of Proposition~\ref{P:HaKdef} and Lemma~\ref{L:Silinind} show that the recursive definitions of $S_0 \subseteq S_1 \subseteq \cdots \subseteq S_N$ in \S\ref{SSS:UVTSdefs} provide an algorithm for calculating $\KK[\sigma]$-bases of $H_{\ba}(\KK)$.
\end{remark}

\begin{theorem} \label{T:EaKdef}
Let $\ba \in \cA_f$ be effective with $\deg \ba > 0$. Then $E_{\ba}$ has a model defined over $K$.
\end{theorem}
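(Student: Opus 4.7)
The strategy is to analyze the model of $E_{\ba}$ produced by a normalized $\KK[\sigma]$-basis of $H_{\ba}(\KK)$ lying in $K[t,z]$---whose existence is guaranteed by Proposition~\ref{P:HaKdef}(b)---and show that it is defined over $K$. Fix such a basis $\{h_{(a,j)}\}_{(a,j) \in \cS_{\ba}} \subseteq K[t,z]$, put $\bh = (h_{(a,j)})^{\tr}$, and let $Q \in \Mat_d(\KK[\sigma])$ be determined by $t\bh = Q\bh$, so that $E_{\ba,t} = Q^*$ by \eqref{E:Etdef}. It suffices to show that $Q^* \in \Mat_d(K[\tau])$, that is, each entry $Q_{ij} = \sum_{\ell \geqslant 0} c_\ell \sigma^\ell$ satisfies $c_\ell \in K^{1/q^\ell}$ for all $\ell$ (so its image under $*$ becomes $\sum c_\ell^{q^\ell}\tau^\ell \in K[\tau]$). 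I would prove this level-by-level control for the coefficients of every element of $H_{\ba}(\KK) \cap K[t,z]$ in the basis $\{h_{(a,j)}\}$, then specialize to $\alpha = t h_{(a',j')}$.

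The proof proceeds by induction on $N = \max_{(a,j)} \deg_\sigma P_{(a,j)}$, where $\alpha = \sum_{(a,j)} P_{(a,j)} h_{(a,j)}$ with $P_{(a,j)} = \sum_{\ell} c_{(a,j),\ell} \sigma^\ell$ is the unique $\KK[\sigma]$-expansion. The base case $N=0$ is immediate from Lemma~\ref{L:normdecomp}, which identifies $c_{(a,j),0} = \pd_t^{\ell_a - j}(\alpha)|_{\xi_a}$; since $t$-hyperderivatives preserve $K$-rationality (\S\ref{SSS:hyper}) and $\xi_a \in \bX(K)$, this evaluation lies in $K = K^{1/q^0}$. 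For the inductive step, the same argument first yields $c_{(a,j),0} \in K$. Setting
\[
\sigma\beta \assign \alpha - \sum_{(a,j)} c_{(a,j),0}\,h_{(a,j)} \in K[t,z],
\]
the element $\beta \in H_{\ba}(\KK)$ has $\KK[\sigma]$-expansion $\beta = \sum_{(a,j)} \tilde P_{(a,j)} h_{(a,j)}$ with $\tilde P_{(a,j)} = \sum_{\ell \geqslant 1} c_{(a,j),\ell}^{q}\,\sigma^{\ell-1}$, of $\sigma$-degree at most $N-1$. The crucial remaining step is to verify $\beta \in K[t,z]$: since $\sigma\beta = g_{\ba}\,\beta^{(-1)}$ with $g_{\ba} \in K[t,z]$ (Proposition~\ref{P:coldef}), the function $\beta^{(-1)} = \sigma\beta/g_{\ba}$ lies in $K(t,z) \cap \KK[t,z]$, and this intersection equals $K[t,z]$ by the faithful flatness of $\KK[t,z] = K[t,z] \otimes_K \KK$ over $K[t,z]$. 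Hence $\beta^{(-1)} \in K[t,z]$ and $\beta = (\beta^{(-1)})^{(1)} \in K^q[t,z] \subseteq K[t,z]$. Applying the inductive hypothesis to $\beta$ then gives $c_{(a,j),\ell}^q \in K^{1/q^{\ell-1}}$ for $\ell \geqslant 1$, i.e., $c_{(a,j),\ell} \in K^{1/q^\ell}$, completing the induction.

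The main obstacle is that $K$ is imperfect (since $k = \FF_q(\theta)$ is not perfect), so a naive Galois descent from $\KK$ would land only in $K^{\perf}$, producing at best a model of $E_{\ba,t}$ over $\Mat_d(K^{\perf}[\tau])$ rather than $\Mat_d(K[\tau])$. The plan above bypasses this by using Lemma~\ref{L:normdecomp} to extract $\sigma^0$-coefficients via hyperderivative evaluations that are provably $K$-valued, and then recursively inverting $\sigma$ via $\beta = (\sigma\beta/g_{\ba})^{(1)}$ to descend the coefficients at each successive $\sigma$-level with precisely the finer restriction $c_{(a,j),\ell} \in K^{1/q^\ell}$ required for $Q^* \in \Mat_d(K[\tau])$.
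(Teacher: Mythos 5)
Your proof is correct, and it takes a genuinely different route for the inductive step than the paper's proof. Both proofs reduce to showing that the $\sigma$-expansion coefficients $c_{(a,j),\ell}$ of $t\,h_{(a_0,j_0)}$ satisfy $c_{(a,j),\ell}^{q^\ell}\in K$, and both handle the $\ell=0$ case identically via Lemma~\ref{L:normdecomp} and the $K$-rationality of hyperderivatives. Where you diverge is the recursion: the paper's proof performs induction on the Frobenius level $m$ directly, applying $\pd_t^\ell$ to the defining equation, twisting $m$ times, evaluating at each $\xi_b$, and then running a careful secondary induction on $\ell$ that exploits the vanishing of $g_{\ba}^{(n)}$ and $h_{(a,j)}^{(m-i)}$ at the various $\xi_b^{(k)}$. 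Your proof instead peels off one power of $\sigma$ structurally: set $\sigma\beta = \alpha - \sum c_{(a,j),0} h_{(a,j)}$, observe $\beta^{(-1)} = \sigma\beta/g_{\ba} \in K(t,z)\cap\KK[t,z]$, and conclude $\beta^{(-1)}\in K[t,z]$ by faithful flatness of $\KK[t,z] = \KK\otimes_K K[t,z]$ over $K[t,z]$ (which gives $bK[t,z] = b\KK[t,z]\cap K[t,z]$ and hence the intersection identity $K(t,z)\cap\KK[t,z] = K[t,z]$), so that $\beta = (\beta^{(-1)})^{(1)}\in K^q[t,z]\subseteq K[t,z]$ inherits the hypotheses and the degree drops. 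Your bookkeeping of the shifted coefficients $\tilde{c}_{(a,j),m} = c_{(a,j),m+1}^{q}$ under the relation $c\sigma = \sigma c^q$ is correct, and the exponent tracking $c_{(a,j),\ell}\in K^{1/q^\ell}$ is precisely what is needed for $Q^*\in\Mat_d(K[\tau])$. The trade-off is that your argument is structurally cleaner and proves the stronger statement for arbitrary $\alpha\in H_{\ba}(\KK)\cap K[t,z]$, but relies on the descent lemma $K(t,z)\cap\KK[t,z]=K[t,z]$ that the paper's more computational proof avoids; the paper's proof stays entirely inside explicit evaluation identities at $\xi_b$.
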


\begin{proof}
We may as well switch to the case that $\KK=\ok$. By Proposition~\ref{P:HaKdef}, we can choose a $\ok[\sigma]$-basis of $H_{\ba}(\ok)$ defined over $K$. Since transforming such a basis to a normalized one can be done over $K$, as in \S\ref{SSS:Wronskian}, it suffices to assume our basis is normalized. To that end, let $\{h_{(a,j)} \mid (a,j) \in \cS_{\ba} \} \subseteq H_{\ba}(\ok)$ be a normalized $\ok[\sigma]$-basis defined over~$K$. We wish to determine the matrix $Q^*$ as in~\eqref{E:Etdef} and show that the entries of $Q^*$ are all in $K[\tau]$. Fix $(a_0,j_0) \in \cS_{\ba}$. Then
\begin{equation} \label{E:tha0j0}
t \cdot h_{(a_0,j_0)} = \sum_{i \geqslant 0} \sum_{(a,j) \in \cS_{\ba}} \alpha_{i,(a,j)} \sigma^i h_{(a,j)}
= \sum_{i \geqslant 0} \sum_{(a,j) \in \cS_{\ba}}\alpha_{i,(a,j)} g_{\ba} g_{\ba}^{(-1)} \cdots g_{\ba}^{(-i+1)} h_{(a,j)}^{(-i)},
\end{equation}
for unique $\alpha_{i,(a,j)} \in \ok$, almost all of which are~$0$. We will be done after proving the following \textbf{claim}: for $i \geqslant 0$ and $(a,j) \in \cS_{\ba}$, we have $\alpha_{i,(a,j)}^{(i)} \in K$.

We proceed by induction on $i$. When $i=0$, Lemma~\ref{L:normdecomp} implies for $(a,j) \in \cS_{\ba}$ that
\[
\alpha_{0,(a,j)} = \pd_t^{\ell_a-j} \bigl( t \cdot h_{(a_0,j_0)} \bigr) \big|_{\xi_a},
\]
which is an element of $K$ since $t \cdot h_{(a_0,j_0)} \in K[t,z]$ and differentiation with respect to $t$ leaves $K(t,z)$ invariant, as in \S\ref{SSS:hyper}. This concludes the base case.

Now let $m \geqslant 1$ and suppose that $\alpha_{i,(a,j)}^{(i)} \in K$ for all $0 \leqslant i \leqslant m-1$ and $(a,j) \in \cS_{\ba}$. Let $b \in \cP_{\ba}$. For $0 \leqslant \ell \leqslant \ell_b$, we apply $\pd_t^\ell$ to~\eqref{E:tha0j0} and obtain,
\[
\pd_t^{\ell} \bigl(t\cdot h_{(a_0,j_0)} \bigr)
= \sum_{a \in \cP_{\ba}} \sum_{j=0}^{\ell_a} \sum_{i \geqslant 0} \alpha_{i,(a,j)} \pd_t^{\ell} \bigl( g_{\ba} g_{\ba}^{(-1)}\cdots g_{\ba}^{(-i+1)} h_{(a,j)}^{(-i)} \bigr).
\]
For $i \geqslant 0$, we note that $g_{\ba} g_{\ba}^{(-1)} \cdots g_{\ba}^{(-i+1)} h_{(a,j)}^{(-i)}$
vanishes along the divisor $W_{\ba}^{(1)} + \Xi_{\ba} + \Xi_{\ba}^{(-1)} + \dots + \Xi_{\ba}^{(-i+1)}$. Therefore, when $i > m$, we have $\ord_{\xi_b}(g_{\ba}^{(m)} \cdots g_{\ba}^{(m-i+1)}h_{(a,j)}^{(m-i)}) > \ell_b$, which implies that
$\pd_t^{\ell}(g_{\ba}^{(m)} \cdots g_{\ba}^{(m-i+1)}h_{(a,j)}^{(m-i)})|_{\xi_b} = 0$. Thus, using the product rule, twisting $m$ times, and evaluating at~$\xi_b$, we obtain
\begin{equation} \label{E:mtwist2}
\pd_t^{\ell} \bigl( t \cdot h_{(a_0,j_0)}^{(m)} \bigr) \big|_{\xi_b}
= \sum_{i=0}^m \sum_{a \in \cP_{\ba}} \sum_{j=0}^{\ell_a} \alpha_{i,(a,j)}^{(m)} \sum_{k=0}^{\ell} \pd_t^{\ell-k}\bigl( g_{\ba}^{(m)} \cdots g_{\ba}^{(m-i+1)}\bigr) \pd_t^{k} \bigl(h_{(a,j)}^{(m-i)} \bigr) \big|_{\xi_b}.
\end{equation}
Before evaluating we have used that differentiation with respect to $t$ commutes with Frobenius twisting. Again differentiation with respect to $t$ leaves $K(t,z)$ invariant, so the left-hand side of this equation is in $K$. Likewise, by the induction hypothesis,
\[
\sum_{i=0}^{m-1} \sum_{a \in \cP_{\ba}} \sum_{j=0}^{\ell_a} \alpha_{i,(a,j)}^{(m)} \sum_{k=0}^{\ell} \pd_t^{\ell-k}\bigl( g_{\ba}^{(m)} \cdots g_{\ba}^{(m-i+1)}\bigr) \pd_t^{k} \bigl(h_{(a,j)}^{(m-i)} \bigr) \big|_{\xi_b} \in K.
\]
The remaining terms to account for in \eqref{E:mtwist2} have $i=m$, and are thus
\begin{equation} \label{E:remain}
\sum_{a \in \cP_{\ba}} \sum_{j=0}^{\ell_a} \alpha_{m,(a,j)}^{(m)} \sum_{k=0}^{\ell} \pd_t^{\ell-k}\bigl( g_{\ba}^{(m)} \cdots g_{\ba}^{(1)}\bigr) \pd_t^{k} \bigl(h_{(a,j)} \bigr) \big|_{\xi_b}.
\end{equation}
If $a \neq b$, then as observed in the proof of Lemma~\ref{L:normdecomp}, $\ord_{\xi_b}(h_{(a,j)}) > \ell_b$, so as $k \leqslant \ell_b$ in~\eqref{E:remain}, we have $\pd_t^k(h_{(b,j)})|_{\xi_b} = 0$. When $a=b$, we apply \eqref{E:norm3}, and see that the only possibly non-zero terms in \eqref{E:remain} occur when $a=b$, $k=\ell_b-j$. This condition on~$k$ forces $\ell_b-\ell \leqslant j \leqslant \ell_b$. Bringing everything together and using~\eqref{E:norm3}, we find
\begin{equation} \label{E:finalsteps}
\sum_{j=\ell_b - \ell}^{\ell_b} \alpha_{m,(b,j)}^{(m)} \pd_t^{\ell+j-\ell_b} \bigl( g_{\ba}^{(m)} \dots g_{\ba}^{(1)} \bigr) \big|_{\xi_b} \in K.
\end{equation}
If $\ell = 0$, then there is only one term, and we obtain
\[
\alpha_{m,(b,\ell_b)}^{(m)} \bigl( g_{\ba}^{(m)} \cdots g_{\ba}^{(1)} \bigr)\big|_{\xi_b} \in K.
\]
For $n > 0$, the zeros of $g_{\ba}^{(n)}$ are supported on $\{ \xi_a^{(k)} \mid a \in \cP_{\ba},\ k \geqslant n \}$, whence $(g_{\ba}^{(m)} \cdots g_{\ba}^{(1)})|_{\xi_b} \in K^{\times}$. Thus $\alpha_{m,(b,\ell_b)}^{(m)} \in K$. Suppose that we have shown that $\alpha_{m,(b,\ell_b)}^{(m)}$, $\alpha_{m,(b,\ell_b-1)}^{(m)}, \dots, \alpha_{m,(b,\ell_b-\ell+1)}^{(m)} \in K$. Then \eqref{E:finalsteps} implies that $\alpha_{m,(b,\ell_b-\ell)}^{(m)} ( g_{\ba}^{(m)} \cdots g_{\ba}^{(1)} )|_{\xi_b} \in K$, and so as in the $\ell=0$ case, we obtain $\alpha_{m,(b,\ell_b-\ell)}^{(m)} \in K$. Therefore, by induction on $\ell$ we have shown that $\alpha_{m,(b,j)}^{(m)} \in K$ for all $0 \leqslant j \leqslant \ell_b$, and thus also for all $b \in \cP_{\ba}$. This completes the proof of the claim and the theorem.
\end{proof}

\subsection{Reduction modulo finite primes} \label{SS:reduction}
Let $f \in A_+$ with $\deg f \geqslant 1$, and fix $\ba \in \cA_f$ effective with $\deg \ba > 0$. As Theorem~\ref{T:EaKdef} fashions a model of $E_{\ba}$ defined over~$K$, we can conjugate by a suitable matrix $b\rI$, $b \in B$, to clear denominators and obtain a model
\[
E_{\ba} : \bA \to \Mat_{d}(B[\tau])
\]
defined over $B$. In this section we explore different models for $E_{\ba}$ and investigate good reduction at finite primes of~$B$. We more fully determine primes of good reduction in~\S\ref{App:GoodRed}.

\begin{subsubsec}{Primes in $B^{\perf}$}
Temporarily let $F/k$ be a finite separable extension, and let $B \subseteq F$ be the integral closure of $A$ in $F$.
For a maximal ideal $\fp \subseteq B$, let $B_{\fp}$ be the localization of $B$ at $\fp$. We take $\FF_{\fp} = B_{\fp}/\fp B_{\fp} \cong B/\fp$. We also have $\fp^{\perf} \subseteq B^{\perf} \subseteq (B_{\fp})^{\perf}  \subseteq F^{\perf}$, where for $S \subseteq F$, $S^{\perf} = \{ \alpha \in F^{\perf} \mid \exists\,n \geqslant 0,\ \alpha^{p^n} \in S \}$. It is straightforward to check that $B^{\perf}$ is a perfect ring and that $\fp^{\perf}$ is an ideal of $B^{\perf}$. The following properties can be verified (cf.~\cite{GazdaMaurischat25}*{Lem.~2.3}).
\begin{itemize}
\item $\fp^{\perf}$ is a maximal ideal of $B^{\perf}$.
\item $\fp^{\perf} \cap B = \fp$.
\item $B^{\perf}/\fp^{\perf} \cong \FF_{\fp}$.
\item The localization $(B^{\perf})_{\fp^{\perf}}$ is the same as $(B_{\fp})^{\perf}$, and will be denoted $B_{\fp}^{\perf}$.
\end{itemize}
\end{subsubsec}

\begin{subsubsec}{Reduction of $t$-modules} \label{SSS:redtmods}
Suppose that $E : \bA \to \Mat_{d}(B_{\fp}[\tau])$ is a $t$-module. We will denote the reduction of $E$ modulo~$\fp$ by
\[
\oE : \bA \to \Mat_{d}(\FF_{\fp}[\tau]),
\]
which is a $t$-module over $\FF_\fp$ defined in the natural way. That is, if $E_a = \sum_{i=0}^s C_i \tau^i$, $C_i \in \Mat_d(B_{\fp})$, for $a \in \bA$, then $\oE_a \assign \sum_{i=0}^s \overline{C}_i \tau^i$, $\overline{C}_i \in \Mat_d(\FF_\fp)$, where $\overline{C}_i$ is obtained from $C_i$ by reducing its entries modulo~$\fp$. For any $a \in \bA$, we have commutative diagrams,
\begin{center}
\begin{tikzcd}
E(B_{\fp}) \arrow[d, "E_a"'] \arrow[r, two heads] & 
\oE(\FF_{\fp}), \arrow[d, "\oE_a"] & E(B_{\fp}^{\perf}) \arrow[d, "E_a"'] \arrow[r, two heads] & 
\oE(\FF_{\fp}), \arrow[d, "\oE_a"] \\
E(B_{\fp}) \arrow[r, two heads] & 
\oE(\FF_{\fp}) & E(B_{\fp}^{\perf}) \arrow[r, two heads] & 
\oE(\FF_{\fp})
\end{tikzcd}
\end{center}
where the horizontal maps are the natural reduction maps. In other words,
$E(B_{\fp}) \twoheadrightarrow \oE(\FF_{\fp})$ and $E(B_{\fp}^{\perf}) \twoheadrightarrow \oE(\FF_{\fp})$ are $\bA$-module homomorphisms.
\end{subsubsec}

\begin{subsubsec}{Reduction of $t$-comotives}
Fix a maximal ideal $\fp \subseteq B$. Suppose that $E : \bA \to \Mat_d(B_{\fp}[\tau])$ is a $t$-module over $B_{\fp}$. We define the $t$-comotive associated to $E$ over $B_{\fp}^{\perf}$,
\[
H(E)(B_{\fp}^{\perf}) \assign \Mat_{1 \times d}(B_{\fp}^{\perf}[\sigma]) \subseteq H(E)(F^{\perf}) \assign \Mat_{1 \times d}(F^{\perf}[\sigma]).
\]
Then $H(E)(F^{\perf})$ is the $t$-comotive of $E$ over $F^{\perf}$ as in~\S\ref{SSS:tmod-dualtmot}, and $H(E)(B_{\fp}^{\perf})$ inherits the structure of a left $B_{\fp}^{\perf}[t,\sigma]$-module.

Likewise, we have the $t$-comotive of the reduction of $E$ modulo $\fp$, $H(\oE)(\FF_\fp) \assign \Mat_{1 \times d}(\FF_\fp[\sigma])$. Then for any $a \in \bA$, we have a commutative diagram,
\begin{center}
\begin{tikzcd}
H(E)(B_{\fp}^{\perf}) \arrow[d, "(\cdot)E_a^*"'] \arrow[r, two heads] & 
H(\oE)(\FF_{\fp}). \arrow[d, "(\cdot)\oE_a^*"] \\
H(E)(B_{\fp}^{\perf}) \arrow[r, two heads] & 
H(\oE)(\FF_{\fp})
\end{tikzcd}
\end{center}
\end{subsubsec}

\begin{subsubsec}{Good reduction} \label{SSS:goodred}
When $E : \bA \to \Mat_{d}(B_{\fp}[\tau])$ is coabelian, we say that $E$ has \emph{good reduction} if $\oE$ is coabelian and the rank of $\oE$, i.e., the rank of $H(\oE)(\FF_\fp)$ as an $\FF_{\fp}[t]$-module, is the same as the rank of $E$. Otherwise, $E$ has \emph{bad reduction}.

If $E : \bA \to \Mat_{d}(K[\tau])$ is a coabelian $t$-module, then we say that $E$ has \emph{good reduction at $\fp$} if $E$ is isomorphic over $K[\tau]$ to a $t$-module $E' : \bA \to \Mat_d(B_{\fp}[\tau])$ that has good reduction. Otherwise, $E$ has \emph{bad reduction at $\fp$}.
\end{subsubsec}

\begin{subsubsec}{Good reduction of $E_{\ba}$}
Returning to the setting of Sinha modules, let $\ba \in \cA_f$ be effective with $\deg \ba >0$. Suppose $\fp \in B$ is a maximal ideal prime to~$f$, and let
\[
H_{\ba,\fp} = H_{\ba,\fp}(\oFF_{\fp}) = H^0 \bigl(U,\cO_{X}(-\oW_{\ba}^{(1)}) \bigr) \subseteq \oFF_{\fp}[t,z],
\]
where $\oW_{\ba} \in \Div_{\FF_{\fp}}(X)$ is the reduction of $W_{\ba}$ modulo~$\fp$ as in \S\ref{SS:AndHecke}. We make $H_{\ba,\fp}$ into a left $\oFF_{\fp}[t,\sigma]$-module by setting $\sigma h \assign \og_{\ba} h^{(-1)}$ for $h \in H_{\ba,\fp}$, which is well-defined by Proposition~\ref{P:ogaf}.
Moreover, $H_{\ba,\fp}$ is an integral ideal of $\oFF_{\fp}[t,z]$ and so is free and finitely generated as an $\oFF_{\fp}[t]$-module of rank~$r = [K:k]$. As $\oW_{\ba}$ and $\oXi_{\ba}$ have disjoint supports from $I_{\ba}$, the same proof as in \cite{ABP04}*{\S 6.4.2} demonstrates that $H_{\ba,\fp}$ is a $t$-comotive over~$\oFF_{\fp}$. As in Lemma~\ref{L:Haprops}, it satisfies
\begin{gather} \label{E:Haprankdim}
\rank H_{\ba,\fp} = \rank_{\oFF_{\fp}[t]} H_{\ba,\fp} = \rank H_{\ba} = r, \\
\dim H_{\ba,\fp} = \rank_{\oFF_{\fp}[\sigma]} H_{\ba,\fp} = \deg \oXi_{\ba} = \deg \Xi_{\ba} =\dim H_{\ba} = d. \notag
\end{gather}
We let $H_{\ba,\fp}(\FF_{\fp}) = H_{\ba,\fp}(\oFF_{\fp}) \cap \FF_{\fp}[t,z]$, and note that $H_{\ba,\fp}(\FF_{\fp})$ is a $t$-comotive as a left~$\FF_{\fp}[t,\sigma]$-module with the same rank and dimension as $H_{\ba}$. Furthermore, the algorithm mapped out in \S\ref{SS:Fodef}, and in particular in Proposition~\ref{P:HaKdef}, works equally well for $H_{\ba,\fp}(\FF_{\fp})$, from which we can obtain an $\FF_{\fp}[\sigma]$-basis of $H_{\ba,\fp}(\FF_{\fp})$ that is defined over $\FF_{\fp}$.
\end{subsubsec}

\begin{proposition} \label{P:goodbasis}
Let $\ba \in \cA_f$ be effective with $\deg \ba > 0$, and let $\fp$ be a maximal ideal of $B$ relatively prime to $f$. Suppose that $\{ h_1, \dots, h_d\}$ is a $\ok[\sigma]$-basis of $H_{\ba}(\ok)$ such that
\begin{alphenumerate}
\item $\{ h_1, \dots, h_d \}$ is contained in $B_{\fp}^{\perf}[t,z]$,
\item $\{ \oh_1, \dots, \oh_d \}$ generates $H_{\ba,\fp}(\FF_{\fp})$ as an $\FF_{\fp}[\sigma]$-module.
\end{alphenumerate}
Then $\{ \oh_1, \dots, \oh_d \}$ is an $\FF_{\fp}[\sigma]$-basis of $H_{\ba,\fp}(\FF_{\fp})$, and $\{ h_1, \dots, h_d \}$ is a $B_{\fp}^{\perf}[\sigma]$-basis of $H_{\ba}(B_{\fp}^{\perf}) \assign H_{\ba}(\ok) \cap B_{\fp}^{\perf}[t,z]$.
\end{proposition}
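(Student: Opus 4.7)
My plan is to prove the two statements in sequence: first that $\{\oh_1,\ldots,\oh_d\}$ is an $\FF_{\fp}[\sigma]$-basis of $H_{\ba,\fp}(\FF_{\fp})$, and then to use this basis over~$\FF_{\fp}$ to control denominators of expansions in $H_{\ba}(B_{\fp}^{\perf})$.

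For the first assertion, I will invoke that $H_{\ba,\fp}(\FF_{\fp})$ is a $t$-comotive over~$\FF_{\fp}$ whose $\FF_{\fp}[\sigma]$-rank equals $\deg \oXi_{\ba} = d$; this follows from applying the calculation of Lemma~\ref{L:Haprops}(c) to $U/\FF_{\fp}$, together with~\eqref{E:Haprankdim}. Hypothesis~(b) then gives a surjection of left $\FF_{\fp}[\sigma]$-modules from $\FF_{\fp}[\sigma]^d$ onto $H_{\ba,\fp}(\FF_{\fp})$. To conclude it is an isomorphism I will run the standard filtered-Nakayama argument for $t$-comotives: passing to the quotient by~$\sigma$ yields a surjection of $d$-dimensional $\FF_{\fp}$-vector spaces, hence an isomorphism, so the images $\overline{\oh_i}$ form a basis of $H_{\ba,\fp}(\FF_{\fp})/\sigma H_{\ba,\fp}(\FF_{\fp})$. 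A hypothetical nontrivial relation $\sum_i \beta_i \oh_i = 0$ with $\beta_i \in \FF_{\fp}[\sigma]$ then forces each $\beta_i$ to have vanishing constant term, after which one factors~$\sigma$ out on the left (using $c\sigma = \sigma c^q$) and inducts on $\max_i \deg_{\sigma}(\beta_i)$, using that~$\sigma$ acts injectively on the free $\FF_{\fp}[\sigma]$-module $H_{\ba,\fp}(\FF_{\fp})$.

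For the second assertion, $\ok[\sigma]$-linear independence of $\{h_1,\ldots,h_d\}$ immediately gives $B_{\fp}^{\perf}[\sigma]$-linear independence, so only generation remains. Given $h \in H_{\ba}(B_{\fp}^{\perf})$, write uniquely $h = \sum_i \alpha_i h_i$ with $\alpha_i \in \ok[\sigma]$. A first reduction shows $\alpha_i \in K^{\perf}[\sigma]$: since $h$ and all $h_i$ lie in $K^{\perf}[t,z]$, they are fixed by the natural action of $\Gal(K^{\sep}/K)$ on~$\ok$, and the uniqueness of the expansion forces each $\alpha_i$ to be Galois-fixed, hence to have coefficients in $\ok^{\Gal(K^{\sep}/K)} = K^{\perf}$. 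It then remains to show each such coefficient is integral at~$\fp$. Arguing by contradiction, let $v \in \ZZ[1/p]$ be the minimum of $\ord_{\fp}(c)$ over all coefficients $c$ of the~$\alpha_i$ (extending $\ord_{\fp}$ from $K$ to $K^{\perf}$ in the standard way), and assume $v < 0$. Choose $\pi \in B_{\fp}^{\perf}$ with $\ord_{\fp}(\pi) = -v > 0$ (any appropriate $p^n$-th root of a positive power of a uniformizer works). Then $\beta_i \assign \pi\alpha_i \in B_{\fp}^{\perf}[\sigma]$, and by the choice of~$v$ at least one $\beta_i$ has a coefficient of $\fp$-adic valuation zero, so that $\overline{\beta_i} \neq 0$ in $\FF_{\fp}[\sigma]$. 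The identity $\pi h = \sum_i \beta_i h_i$ now reduces modulo~$\fp^{\perf}$ (via $B_{\fp}^{\perf}/\fp^{\perf} \cong \FF_{\fp}$) to $0 = \sum_i \overline{\beta_i}\,\overline{h_i}$ in $H_{\ba,\fp}(\FF_{\fp})$, contradicting the $\FF_{\fp}[\sigma]$-linear independence of $\{\oh_i\}$ from part~(1).

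The main obstacle I anticipate is the descent that pins coefficients in~$\ok[\sigma]$ down to~$B_{\fp}^{\perf}[\sigma]$; this requires Galois descent $\ok^{\Gal(K^{\sep}/K)} = K^{\perf}$, a standard feature of positive characteristic, together with valuation-theoretic bookkeeping inside~$K^{\perf}$, using that $\ord_{\fp}$ extends to~$K^{\perf}$ with values in~$\ZZ[1/p]$ and that the associated valuation ring is precisely~$B_{\fp}^{\perf}$ with residue field~$\FF_{\fp}$ (as recorded in the bullet list preceding~\S\ref{SSS:redtmods}). Once these identifications are in place, the Nakayama step in part~(1) and the reduction-mod-$\fp^{\perf}$ contradiction in part~(2) are routine.
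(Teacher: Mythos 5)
Your proof is correct, but it takes a genuinely different route for the second assertion than the paper does. The paper's argument is: $\oS = \{\oh_1,\dots,\oh_d\}$ generates the $d$-dimensional $\FF_{\fp}$-vector space $H_{\ba,\fp}(\FF_{\fp})/\sigma H_{\ba,\fp}(\FF_{\fp})$, hence induces a basis of it; Nakayama's lemma (applied to the finitely generated $B_{\fp}^{\perf}$-module $H_{\ba}(B_{\fp}^{\perf})/\sigma H_{\ba}(B_{\fp}^{\perf})$ with residue field $\FF_{\fp}$) then gives that $S$ induces a $B_{\fp}^{\perf}$-basis of that quotient; one then shows by induction that $\oS \sqcup \sigma\oS \sqcup \cdots \sqcup \sigma^{\ell-1}\oS$ and $S \sqcup \sigma S \sqcup \cdots \sqcup \sigma^{\ell-1}S$ give bases of the respective quotients by $\sigma^{\ell}$, and passes to the direct limit over $\ell$. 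You instead first pin the $\oS$-basis statement down by a direct degree-induction in $\FF_{\fp}[\sigma]$ (factoring $\sigma$ off on the left, which is equivalent in content to the paper's $\sigma$-filtration step), and then for the $B_{\fp}^{\perf}$-statement you replace Nakayama-plus-filtration by a Galois-descent and denominator-clearing argument: Galois equivariance of the $\ok[\sigma]$-module structure on $H_{\ba}(\ok)$ forces the expansion coefficients of any $h \in H_{\ba}(B_{\fp}^{\perf})$ into $K^{\perf}[\sigma]$, and multiplying through by a suitable $\pi \in \fp^{\perf}$ of valuation $-v$ produces a nontrivial $\FF_{\fp}[\sigma]$-relation among the $\oh_i$ upon reduction, which is a contradiction. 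Both routes are sound. The paper's is a bit more compact once Nakayama is cited, and it works entirely inside the reduction map $H_{\ba}(B_{\fp}^{\perf}) \to H_{\ba,\fp}(\FF_{\fp})$ without needing to extend $\ord_{\fp}$ to $K^{\perf}$ or invoke $\ok^{\Gal(K^{\sep}/K)} = K^{\perf}$; yours is more hands-on and makes explicit exactly how integrality of the coefficients is enforced, at the cost of some valuation-theoretic bookkeeping. One small caution on presentation: you write $\pi$ both for the element of $\fp^{\perf}$ you multiply by and (implicitly, via the paper's setup) for the reduction map; renaming one of them would avoid confusion. Also, your argument that $\beta_i = \pi\alpha_i$ has coefficients $\pi c_{ij}$ is correct precisely because you multiply on the left, but it is worth stating explicitly since $\pi$ does not commute with $\sigma$ on the right.
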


\begin{proof}
We consider the reduction map $\pi : H_{\ba}(B_{\fp}^{\perf}) \to H_{\ba,\fp}(\FF_{\fp})$, which commutes with the action of $\sigma$. The kernel of $\pi$ is $\fp^{\perf} \cdot H_{\ba}(B_{\fp}^{\perf}) = H_{\ba}(K^{\perf}) \cap \fp^{\perf} \cdot B_{\fp}^{\perf}[t,z]$. Also,
\[
\sigma \bigl( \fp^{\perf} \cdot H_{\ba}(B_{\fp}^{\perf}) \bigr) = \fp^{\perf} \cdot \sigma \bigl( H_{\ba}(B_{\fp}^{\perf}) \bigr),
\]
since $\fp^{\perf}$ is invariant under Frobenius twisting. We take $S = \{ h_1, \dots, h_d\}$ and $\oS = \{ \oh_1, \dots, \oh_d \}$. We note that as $\oS$ generates $H_{\ba,\fp}(\FF_{\fp})/\sigma H_{\ba,\fp}(\FF_{\fp})$, which has dimension~$d$ over $\FF_{\fp}$, it follows that $\oS$ is $\FF_{\fp}$-linearly independent. Nakayama's lemma~\cite{LangAlg}*{\S X.4} implies that $S$ induces a $B_{\fp}^{\perf}$-basis of $H_{\ba}(B_{\fp}^{\perf})/\sigma H_{\ba}(B_{\fp}^{\perf})$. By induction, for $\ell \geqslant 1$, $\oS \cup \sigma \oS \cup \dots \cup \sigma^{\ell-1} \oS$ is an $\FF_{\fp}$-basis of $H_{\ba,\fp}(\FF_{\fp})/\sigma^{\ell} H_{\ba,\fp}(\FF_{\fp})$, implying that $S \cup \sigma S \cup \dots \cup \sigma^{\ell-1} S$ induces a $B_{\fp}^{\perf}$-basis of $H_{\ba}(B_{\fp}^{\perf})/\sigma^{\ell} H_{\ba}(B_{\fp}^{\perf})$. Taking a direct limit over $\ell$, we find that $S$ is a $B_{\fp}^{\perf}[\sigma]$-basis of $H_{\ba}(B_{\fp}^{\perf})$.
\end{proof}

\begin{proposition} \label{P:Eagoodred}
For $\ba \in \cA_f$ with $\deg \ba >0$, let $E_{\ba} : \bA \to \Mat_d(K[\tau])$ be constructed as in Theorem~\ref{T:EaKdef} with respect to a $\ok[\sigma]$-basis $\{ h_1, \dots, h_d \}$ of $H_{\ba}(\ok)$ defined over $K$. Suppose $\fp$ is a prime of $B$, not dividing $f$, such that $\{ h_1, \dots, h_d\}$ satisfies the conditions of Proposition~\ref{P:goodbasis}. Then $E_{\ba}$ has good reduction at~$\fp$.
\end{proposition}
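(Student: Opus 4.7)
My strategy is to follow the construction of $E_{\ba}$ from the given $\ok[\sigma]$-basis $\{h_1,\dots,h_d\}$ through reduction modulo $\fp$, exploiting Proposition~\ref{P:goodbasis} to show that the whole $H \leadsto E$ machinery of \S\ref{SS:tmoddualtmot} commutes with reduction in our situation. Let $Q \in \Mat_d(K[\sigma])$ be the unique matrix with $t \cdot \bh = Q \bh$ for $\bh = (h_1,\dots,h_d)^{\tr}$, so that $E_{\ba,t} = Q^*$ as in~\eqref{E:Etdef}, and this lies in $\Mat_d(K[\tau])$ by Theorem~\ref{T:EaKdef}. The two things I need to verify are that $Q$ has entries in $B_{\fp}[\sigma]$ (so that $E_{\ba}$ already has a model over $B_{\fp}$) and that the reduction $\oE_{\ba}$ is coabelian of the same rank~$r$ as $E_{\ba}$.

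For the first assertion, by Proposition~\ref{P:goodbasis} the set $\{h_1,\dots,h_d\}$ is a $B_{\fp}^{\perf}[\sigma]$-basis of the $B_{\fp}^{\perf}[t,\sigma]$-module $H_{\ba}(B_{\fp}^{\perf}) = H_{\ba}(\ok) \cap B_{\fp}^{\perf}[t,z]$. Each element $t \cdot h_i$ lies in $H_{\ba}(B_{\fp}^{\perf})$, so its unique expansion in this basis has coefficients in $B_{\fp}^{\perf}[\sigma]$; but this expansion must coincide with the expansion over $\ok[\sigma]$, which by Theorem~\ref{T:EaKdef} has coefficients in $K[\sigma]$. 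Thus each entry of $Q$ lies in $K \cap B_{\fp}^{\perf}$ (coefficient by coefficient of each $\sigma^i$). Since $B_{\fp}$ is a discrete valuation ring and therefore integrally closed in $K$, any $\alpha \in K$ with $\alpha^{p^n} \in B_{\fp}$ satisfies the monic polynomial $X^{p^n} - \alpha^{p^n} \in B_{\fp}[X]$ and is thus in $B_{\fp}$; hence $K \cap B_{\fp}^{\perf} = B_{\fp}$ and $Q \in \Mat_d(B_{\fp}[\sigma])$. Consequently $E_{\ba,t} = Q^* \in \Mat_d(B_{\fp}[\tau])$, and the same holds for every $E_{\ba,b}$, $b \in \bA$.

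For the second assertion, reducing the identity $t \cdot \bh = Q \bh$ modulo $\fp$ yields $t \cdot \overline{\bh} = \overline{Q}\,\overline{\bh}$ inside $H_{\ba,\fp}(\FF_{\fp})$, and by Proposition~\ref{P:goodbasis} the reductions $\{\oh_1,\dots,\oh_d\}$ form an $\FF_{\fp}[\sigma]$-basis of $H_{\ba,\fp}(\FF_{\fp})$. This is precisely the data for which the construction of \S\ref{SS:tmoddualtmot} produces a $t$-module whose $t$-action matrix is $\overline{Q}^*$, which is exactly $\oE_{\ba,t}$. Therefore, after base change to $\oFF_{\fp}$, the $t$-comotive $H(\oE_{\ba})(\oFF_{\fp})$ is identified with $H_{\ba,\fp}(\oFF_{\fp})$; by \eqref{E:Haprankdim} the latter is coabelian of rank $r$, so $\oE_{\ba}$ is coabelian of rank~$r$. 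This matches $\rank E_{\ba} = r$, and $E_{\ba}$ has good reduction at $\fp$ per the definition in \S\ref{SSS:goodred}. The main obstacle in the argument is the descent of $Q$ from $\Mat_d(B_{\fp}^{\perf}[\sigma])$ to $\Mat_d(B_{\fp}[\sigma])$; without it one would only obtain good reduction after a purely inseparable extension. The rest of the proof is an essentially formal compatibility check between the functor $H \leadsto E$ on the generic and special fibers, made possible by the tight control afforded by Proposition~\ref{P:goodbasis}.
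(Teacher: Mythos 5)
Your overall strategy coincides with the paper's: use Proposition~\ref{P:goodbasis} to realize $\{h_i\}$ as a $B_{\fp}^{\perf}[\sigma]$-basis, intersect the $K$-rationality of the resulting $t$-module with the $B_{\fp}^{\perf}$-rationality to land in $B_{\fp}$, then reduce modulo $\fp$ and match ranks via~\eqref{E:Haprankdim}. You also supply a justification of $K \cap B_{\fp}^{\perf} = B_{\fp}$ via integral closedness of the DVR, which the paper asserts without comment, and that is a nice addition.

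There is, however, a genuine slip in the middle step. You claim that the expansion of $t\cdot h_i$ in the basis $\{h_1,\dots,h_d\}$ has coefficients in $K[\sigma]$ ``by Theorem~\ref{T:EaKdef}'', and from this conclude $Q \in \Mat_d(K[\sigma])$ and then $Q\in \Mat_d(B_{\fp}[\sigma])$. That is not what the theorem gives. Its proof establishes only that the coefficients $\alpha_{i,(a,j)}$ of $\sigma^i$ satisfy $\alpha_{i,(a,j)}^{(i)}=\alpha_{i,(a,j)}^{q^i}\in K$, i.e.\ $\alpha_{i,(a,j)}\in K^{1/q^i}$; in general $Q$ only lies in $\Mat_d(K^{\perf}[\sigma])$, not $\Mat_d(K[\sigma])$. (Heuristically: $\sigma^i h_\ell = g_{\ba} g_{\ba}^{(-1)}\cdots g_{\ba}^{(-i+1)} h_\ell^{(-i)}$ involves inverse Frobenius twists of $K$-rational functions, so the coefficients picking them out live in $K^{1/q^i}$.) It is $Q^*=E_{\ba,t}$ whose coefficients lie in $K$ — that is precisely what ``$E_{\ba}$ is defined over $K$'' means. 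Your final sentence identifying the main obstacle as descending $Q$ to $\Mat_d(B_{\fp}[\sigma])$ is therefore a misdiagnosis: $Q$ generally does not land there, only $Q^*$ lands in $\Mat_d(B_{\fp}[\tau])$. The fix is immediate and brings your argument exactly in line with the paper's: apply the intersection argument to the coefficients of $\tau^i$ in $Q^*$, which lie in $K$ (by Theorem~\ref{T:EaKdef}) and in $B_{\fp}^{\perf}$ (since $Q\in\Mat_d(B_{\fp}^{\perf}[\sigma])$ implies $Q^*\in\Mat_d(B_{\fp}^{\perf}[\tau])$, as $B_{\fp}^{\perf}$ is closed under $q^i$-th powers); then $E_{\ba,t}\in\Mat_d(K[\tau])\cap\Mat_d(B_{\fp}^{\perf}[\tau])=\Mat_d(B_{\fp}[\tau])$. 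The remainder of your proof — reduction giving $\oE_{\ba,t}=\overline{Q}^*$ and rank comparison — is fine, modulo the harmless detour through $\oFF_{\fp}$ where the $\FF_{\fp}[t]$-rank of $H_{\ba,\fp}(\FF_{\fp})$ would do.
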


\begin{proof}
As in Theorem~\ref{T:EaKdef}, we can use $S = \{h_1, \dots, h_d\}$ together with~\eqref{E:Etdef} to construct a model $E_{\ba} : \bA \to \Mat_d(K[\tau])$ defined over $K$. Likewise, we use $\oS = \{ \oh_1, \dots, \oh_d \}$ and \eqref{E:Etdef} to construct a $t$-module $E_{\ba,\fp} : \bA \to \Mat_d(\FF_{\fp}[\tau])$. Using the $\bA$-stability of $H_{\ba}(B_{\fp}^{\perf})$ and $H_{\ba,\fp}(\FF_{\fp})$,  we have a commutative diagram via~\eqref{E:Etdef},
\begin{equation}
\begin{tikzcd}
\Mat_{1 \times d}(B_{\fp}^{\perf}[\sigma]) \arrow[d, "(\cdot)E_{\ba,t}^{*}"'] \arrow[r, "\sim"]
& H_{\ba}(B_{\fp}^{\perf}) \arrow[d, "t(\cdot)"'] \arrow[r, "\pi", two heads]
& H_{\ba,\fp}(\FF_{\fp}) \arrow[d, "t(\cdot)"] \arrow[r, "\sim"]
& \Mat_{1 \times d}(\FF_{\fp}[\sigma]) \arrow[d, "(\cdot) E_{\ba,\fp,t}^*"] \\
\Mat_{1 \times d}(B_{\fp}^{\perf}[\sigma]) \arrow[r, "\sim"]
& H_{\ba}(B_{\fp}^{\perf}) \arrow[r, "\pi", two heads]
& H_{\ba,\fp}(\FF_{\fp}) \arrow[r, "\sim"]
& \Mat_{1 \times d}(\FF_{\fp}[\sigma]).
\end{tikzcd}
\end{equation}
We note from the first square that
\[
E_{\ba,t} \in \Mat_{d}(K[\tau]) \cap \Mat_d(B_{\fp}^{\perf}[\tau]) = \Mat_d(B_{\fp}[\tau]),
\]
and thus $E_{\ba}$ is defined over $B_{\fp}$. A short calculation then yields
\begin{equation} \label{E:EatbarisEapt}
\oE_{\ba,t} = E_{\ba,\fp,t},
\end{equation}
where $\oE_{\ba}$ is the reduction of $E_{\ba}$ modulo~$\fp$ as in~\S\ref{SSS:redtmods}. Since the rank of $E_{\ba,\fp}$ is the same as the rank of $E_{\ba}$, we see that $E_{\ba}$ has good reduction at~$\fp$.
\end{proof}

\begin{corollary} \label{C:EaSigma}
Let $\ba \in \cA_f$ with $\deg \ba > 0$, and let $E_{\ba}: \bA \to \Mat_d(K[\tau])$ be constructed as in Theorem~\ref{T:EaKdef}. Then there is a finite set of primes $\Sigma$ of $B$, containing the primes above $f$, such that $E_{\ba}$ has good reduction modulo $\fp \notin \Sigma$.
\end{corollary}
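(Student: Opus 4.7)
The plan is to combine Propositions~\ref{P:HaKdef}, \ref{P:goodbasis}, and~\ref{P:Eagoodred} via a spreading-out argument. I would first fix a $\ok[\sigma]$-basis $S = \{h_1, \ldots, h_d\} \subseteq K[t,z]$ of $H_{\ba}(\ok)$ produced by the algorithm of \S\ref{SS:Fodef} as guaranteed by Proposition~\ref{P:HaKdef}, and take $E_{\ba}: \bA \to \Mat_d(K[\tau])$ to be the corresponding model from Theorem~\ref{T:EaKdef}. By Proposition~\ref{P:Eagoodred}, it suffices to produce a finite set $\Sigma$ of primes of $B$ containing the primes above $f$ such that for every $\fp \notin \Sigma$, the basis $S$ satisfies both conditions of Proposition~\ref{P:goodbasis} at $\fp$.

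Condition (a) is handled by letting $\Sigma_1$ consist of the primes above $f$ together with the finitely many primes at which some coefficient of some $h_i$ has a pole; then for $\fp \notin \Sigma_1$ we have $S \subseteq B_{\fp}[t,z] \subseteq B_{\fp}^{\perf}[t,z]$. For condition (b), I would run the algorithm from \S\ref{SSS:UVTSdefs} on the reduced Riemann-Roch spaces $\oM_i = \cL(-\oW_\ba^{(1)} + i I_{\ba})$ over the base change $X/\FF_{\fp}$, and argue that for almost all $\fp$ the algorithm's output can be taken to be $\{\oh_1, \ldots, \oh_d\}$, so that the reductions form an $\FF_{\fp}[\sigma]$-basis of $H_{\ba,\fp}(\FF_{\fp})$. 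Two ingredients are needed. First, since the points $\xi_b = (\theta, \be(b/f))$ are $K$-rational with coordinates integral and pairwise distinct at any $\fp \nmid f$, and since $X/\FF_q$ is smooth, Riemann-Roch dimensions are preserved under reduction and $\{\oM_i\}$ satisfies Criteria~\ref{Cr:Mi} with the same dimensions as $\{M_i\}$. Second, the linear independence of the classes $[h_i] \in M_{j(i)}/(M_{j(i)-1} + \sigma M_{j(i)-1})$ selected in \S\ref{SSS:UVTSdefs} is encoded by the non-vanishing of finitely many determinants with entries in $K$, each of which has nonzero reduction modulo $\fp$ outside a finite set.

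The main obstacle is the second of these: tracking the recursive construction of the bases $T_i \subseteq U_i$ of \S\ref{SSS:UVTSdefs} carefully enough to isolate the relevant determinants and confirm that each reduces to a nonzero element modulo $\fp$ for all but finitely many primes. Adjoining these finitely many exceptional primes to $\Sigma_1$ yields the desired $\Sigma$. For every $\fp \notin \Sigma$ both conditions of Proposition~\ref{P:goodbasis} hold, and Proposition~\ref{P:Eagoodred} then delivers the conclusion that $E_{\ba}$ has good reduction at $\fp$.
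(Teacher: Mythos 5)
Your overall strategy matches the paper's: reduce to conditions (a) and (b) of Proposition~\ref{P:goodbasis}, handle (a) by clearing denominators, and then invoke Proposition~\ref{P:Eagoodred}. The difference lies in how condition~(b) is verified. You propose re-running the algorithm of \S\ref{SSS:UVTSdefs} on the reduced Riemann-Roch spaces $\oM_i$ and arguing that the spaces $T_i$ selected over $K$ remain linearly independent in the relevant quotients after reduction, encoding this as the non-vanishing of finitely many determinants. This works, but it is the harder route, and you correctly flag it as the main obstacle. The paper avoids it by using a \emph{normalized} basis from Theorem~\ref{T:EaKdef}: the conditions \eqref{E:norm1}--\eqref{E:norm3} pin down the vanishing orders of each $h_{(a,j)}$ at the points $\xi_a$ and the leading Taylor coefficients with respect to $t-\theta$, and these conditions are visibly preserved under reduction modulo $\fp$ outside a finite set (they are closed conditions expressed via $\pd_t^k(h_{(a,j)})|_{\xi_a}$, which land in $B_{\fp}$ for almost all $\fp$). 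Once the reductions $\oh_{(a,j)}$ satisfy \eqref{E:norm1}--\eqref{E:norm3} modulo $\fp$, the analogue of Lemma~\ref{L:normdecomp} over $\FF_{\fp}$ gives generation of $H_{\ba,\fp}(\FF_{\fp})$ directly, with no determinant-tracking needed. So the normalization conditions serve precisely as the shortcut that eliminates the obstacle you identify.

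One small point you gloss over: what must be excluded is not merely collision among the $\xi_b$ modulo $\fp$ but collision between the supports of $\oW_{\ba}^{(1)}$ (Frobenius twists $\xi_a^{(j)}$ for $j \geqslant 1$) and $\oXi_{\ba}$ (the $\xi_a$ themselves). Over $\ok$ these supports are disjoint, but a twist $\xi_a^{(j)}$ can become congruent to some $\xi_b$ modulo $\fp$. This only happens for finitely many $\fp$, so your ``almost all $\fp$'' hedging silently covers it, but it is the subtlety the paper explicitly calls out after \eqref{E:norm2} and is worth making explicit, since the normalization equalities $\ord_{\xi_a}(h_{(a,j)}) = \ell_a - j$ depend on this disjointness.
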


\begin{proof}
As in the proof of Theorem~\ref{T:EaKdef}, we obtain a model for $E_{\ba}$ over $K$ by selecting a normalized $\ok[\sigma]$-basis $\{ h_{(a,j)} \mid (a,j) \in \cS_{\ba} \}$ of $H_{\ba}(\ok)$ defined over~$K$. We can therefore find a set of primes $\Sigma$ of $B$, containing the primes above~$f$, such that for $\fp \notin \Sigma$, we have $h_{(a,j)} \in B_{\fp}[t,z]$ for all $(a,j) \in \cS_{\ba}$. Moreover, by possibly increasing $\Sigma$, we find for $\fp \notin \Sigma$ that $\{ \oh_{(a,j)} \mid (a,j) \in \cS_{\ba}\}$ satisfy conditions~\eqref{E:norm1}--\eqref{E:norm3} in~\S\ref{SSS:normalized} modulo~$\fp$. (There is a subtle point here, noted after~\eqref{E:norm2}, that we need $\oW_{\ba}^{(1)}$ and $\oXi_{\ba}$ to have disjoint supports modulo~$\fp$, but having common supports can only happen for finitely many primes.) The reduction map
\[
H_{\ba}(\ok) \cap B_{\fp}^{\perf}[t,z] \to H_{\ba,\fp}(\FF_{\fp})
\]
is surjective and commutes with $\sigma$. For $\fp \notin \Sigma$, the normalization of $\{ \oh_{(a,j)} \}$ ensures it generates $H_{\ba,\fp}(\FF_{\fp})$ as an $\FF_{\fp}[\sigma]$-module. Proposition~\ref{P:Eagoodred} completes the proof.
\end{proof}

\begin{remark}
It is a natural question to characterize primes of good reduction for $E_{\ba}$. Although Corollary~\ref{C:EaSigma} is sufficient for our main results on $L$-values, Theorem~\ref{T:GossLMain} and Corollary~\ref{C:HeckeLMain}, we show in \S\ref{App:GoodRed} that for each prime $\fp$ of $B$ not dividing $f$, $E_{\ba}$ is isomorphic over $K$ to a model over $B_{\fp}$ with good reduction modulo~$\fp$. See Corollary~\ref{C:Eagoodprimes}.
\end{remark}

\section{Goss and Hecke \texorpdfstring{$L$}{L}-functions} \label{S:GossHeckeL}

We now consider Goss $L$-functions attached to Sinha modules and identities for them in terms of Hecke $L$-functions for Anderson's Hecke character. Our main results focus on special values of these $L$-functions at $s=0$.

\subsection{\texorpdfstring{$L$}{L}-functions for Sinha modules} \label{SS:GossLSinha}
We fix effective $\ba \in \cA_f$ with $\deg \ba > 0$, and let $E_{\ba} : \bA \to \Mat_d(K[\tau])$ be its associated Sinha module defined over $K$ as in \S\ref{S:Sinha}. We let $\Sigma_{\ba}$ denote a finite set of primes of $B$, containing all primes above $f$, such that $E_{\ba}$ has good reduction at $\fp \notin \Sigma_{\ba}$ as in Corollary~\ref{C:EaSigma}. As in \S\ref{SSS:GossLtmod}, in order to define the Goss $L$-functions for~$E_{\ba}$, we need to analyze the Galois action on its associated Tate modules.

Fix a prime $\fp \notin \Sigma_{\ba}$, and choose a model
\[
E_{\ba} : \bA \to \Mat_{d}(B_{\fp}[\tau])
\]
for which $E_{\ba}$ has good reduction at~$\fp$. Then as in \eqref{E:EatbarisEapt}, we have $\oE_{\ba} \cong E_{\ba,\fp}$ over $\FF_{\fp}$.

Let $\lambda \in \bA_+$ be irreducible with $\fp \nmid \lambda(\theta)$. Following \S\ref{SSS:GossLtmod}, we let $\alpha_{\fp} \in \Gal(K^{\sep}/K)$ be a Frobenius element and set
\begin{align} \label{E:Papdef}
P_{\ba,\fp}(X) &\assign \Char(\alpha_\fp, T_{\lambda}(E_{\ba}),X)|_{t = \theta} \in A_{\lambda(\theta)}[X], \\
P_{\ba,\fp}^{\vee}(X) &\assign \Char(\alpha_\fp, T_{\lambda}(E_{\ba})^{\vee},X)|_{t = \theta} \in k_{\lambda(\theta)}[X], \notag
\end{align}
where $T_{\lambda}(E_{\ba})$ is the $\lambda$-adic Tate module of $E_{\ba}$ and $T_{\lambda}(E_{\ba})^{\vee}$ is its dual. As one may expect since we are in the situation of good reduction, $T_{\lambda}(E_{\ba})$ and $T_{\lambda}(E_{\ba})^{\vee}$ form families of strictly compatible representations in the sense of \cite{Goss}*{Def.~8.6.5}. We prove in Proposition~\ref{P:charpolys} that these polynomials are independent of the choice of~$\lambda$, and furthermore $P_{\ba,\fp}(X) \in A[X]$, $P_{\ba,\fp}^{\vee}(X) \in k[X]$. For Drinfeld modules with good reduction, such results were proved originally by Takahashi~\cite{Takahashi82} (see also~\cite{Goss}*{Ex.~8.6.6.2}), and our proof is similar. However, using results from~\cite{HuangP22} we also obtain more detailed information.

The reduction map $\pi : H_{\ba}(B_{\fp}^{\perf}) \to H_{\ba,\fp}(\FF_{\fp})$ from Proposition~\ref{P:goodbasis} also commutes with the operation of $\bA$, and moreover, as in \S\ref{SSS:HaRAT}, we can pick a $\ok[t]$-basis of $H_{\ba}(\ok)$, $\{ n_1, \dots, n_r \} \subseteq B_{\fp}^{(1)}[t,z]$, such that their reductions $\{ \on_1, \dots, \on_r \} \subseteq \FF_{\fp}[t,z]$ form a $\FF_{\fp}[t]$-basis of $H_{\ba,\fp}(\FF_{\fp})$. Letting $\bn = (n_1, \dots, n_r)^{\tr}$ and $\obn = (\on_1, \dots, \on_r)^{\tr}$, we see that the matrix $\Phi$ from~\eqref{E:Phirational} can be chosen in $\Mat_r(B_{\fp}[t])$, and satisfies,
\[
\sigma \bn = \Phi \bn, \quad \sigma \obn = \oPhi \obn,
\]
where $\oPhi \in \Mat_r(\FF_{\fp}[t])$ is the reduction of $\Phi$ modulo $\fp$. Moreover, the matrix $Q$ from \S\ref{SSS:HaRAT} satisfies $Q \in \Mat_r(B_{\fp}^{(1)}[t])$, and
\begin{equation} \label{E:Phiconj}
\Phi = Q^{(-1)} \Phi_{\ba} Q^{-1}, \quad \oPhi = \oQ^{(-1)} \oPhi_{\ba} \oQ^{-1}.
\end{equation}
Recall Anderson's Hecke character $\chi_{\ba} : \cI_{K,f} \to K^{\times}$ from \S\ref{SS:AndHecke}. For convenience define
\begin{equation} \label{E:psiadef}
\psi_{\ba} \assign \chi_{-\ba} = \chi_{\ba}^{-1} : \cI_{K,\ff} \to K^{\times},
\end{equation}
which is a Hecke character of infinity type $\Theta_{-\ba} = -\Theta_{\ba}$ as in \S\ref{SSS:GenAndHecke}.

\begin{proposition} \label{P:charpolys}
Let $\ba \in \cA_{f}$ be effective with $\deg \ba > 0$. Let $\fp$ be a prime of $B$, $\fp \notin \Sigma_{\ba}$. Then
\[
P_{\ba,\fp}(X) = \prod_{b \in (A/fA)^{\times}} \bigl( X - \chi_{\ba}^{\rho_b}(\fp) \bigr) \in A[X], \quad
P_{\ba,\fp}^{\vee}(X) = \prod_{b \in (A/fA)^{\times}} \bigl( X - \psi_{\ba}^{\rho_{b}}(\fp) \bigr) \in k[X].
\]
These characteristic polynomials are independent of the choice of $\lambda \in \bA_+$ from~\eqref{E:Papdef}.
\end{proposition}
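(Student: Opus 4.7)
The plan is to relate the characteristic polynomials of Frobenius on the Tate module $T_{\lambda}(E_{\ba})$ to the matrix $\oPhi \in \Mat_r(\FF_{\fp}[t])$ arising from the $t$-comotive, and then leverage Proposition~\ref{P:PhiaPsiadiag} to obtain an explicit diagonalization in terms of the Coleman functions $g_{b \star \ba}$. First I would choose a model $E_{\ba} : \bA \to \Mat_d(B_{\fp}[\tau])$ with good reduction at $\fp$ using Proposition~\ref{P:Eagoodred}, so that $H_{\ba,\fp}(\FF_{\fp})$ has rank $r$ as an $\FF_{\fp}[t]$-module and inherits a basis $\obn$ with $\sigma \obn = \oPhi \obn$. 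Applying techniques analogous to those of \cite{HuangP22}, the characteristic polynomial $\Char(\alpha_{\fp}, T_{\lambda}(E_{\ba})^{\vee}, X)$ is computed (after evaluating $t = \theta$) as the characteristic polynomial of the $\FF_{\fp}[t]$-linear endomorphism $\sigma^{\ell}$ on $H_{\ba,\fp}(\FF_{\fp})$, which in the basis $\obn$ is represented by the matrix
\[
\oPhi^{[\ell]} \assign \oPhi^{(-\ell+1)} \oPhi^{(-\ell+2)} \cdots \oPhi^{(-1)} \oPhi.
\]

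The main computational step is to carry out a telescoping argument using the conjugation relation $\oPhi = \oQ^{(-1)} \oPhi_{\ba} \oQ^{-1}$ from \eqref{E:Phiconj}. Since $\oQ \in \Mat_r(\FF_{\fp}[t])$ has coefficients in $\FF_{\fp}$, which is fixed by $\sigma^{\ell} = \tau^{-\ell}$, we have $\oQ^{(-\ell)} = \oQ$, and the successive $\oQ^{(-i)-1} \cdot \oQ^{(-i)}$ factors cancel, yielding
\[
\oPhi^{[\ell]} = \oQ \, \oPhi_{\ba}^{[\ell]} \, \oQ^{-1}.
\]
Next I would apply Proposition~\ref{P:PhiaPsiadiag}(a): since $\Delta \in \GL_r(\bB[f(t)^{-1}])$ has entries in $\FF_q[t,z]$, it is fixed by every Frobenius twist, and the same diagonalization holds for each $\oPhi_{\ba}^{(-i)}$. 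Multiplying the $\ell$ diagonal matrices and using commutativity together with $\og_{b \star \ba}^{(-\ell)} = \og_{b \star \ba}$, one obtains
\[
\Delta^{-1} \oPhi_{\ba}^{[\ell]} \Delta = \diag \biggl( \prod_{i=0}^{\ell-1} \og_{b \star \ba}^{(-i)} \biggr)_{b}
= \diag\bigl( G_{b \star \ba, \fp} \bigr)_{b \in (A/fA)^{\times}},
\]
where $G_{b \star \ba,\fp}$ is Anderson's function from~\eqref{E:Gafdef}.

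Consequently $\Char(\oPhi^{[\ell]}, X) = \prod_{b} (X - G_{b \star \ba,\fp})$, an identity of polynomials whose coefficients (being the elementary symmetric functions in the Galois orbit $\{G_{b \star \ba, \fp}\}$) are $\Gal(\bK/\bk)$-invariant and therefore lie in $\FF_q[t][X]$. Evaluating at $t = \theta$ transforms the product into $\prod_b (X - G_{b \star \ba,\fp}(\xi))$, because the coefficients are polynomials in $t$ alone, so substituting $z = \zeta$ leaves them unchanged. Then~\eqref{E:chiafdef} gives $G_{b \star \ba, \fp}(\xi) = \chi_{b \star \ba}(\fp)$, and Lemma~\ref{L:chiGalaction} identifies this with $\chi_{\ba}^{\rho_b}(\fp)$. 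This yields the stated factorization of $P_{\ba,\fp}^{\vee}(X)$, and the corresponding formula for $P_{\ba,\fp}(X)$ follows by the same analysis applied to $T_{\lambda}(E_{\ba})$ (with reciprocal eigenvalues, noting $\psi_{\ba} = \chi_{\ba}^{-1}$). Independence of $\lambda$ is immediate since the final expression does not involve $\lambda$.

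The main obstacle will be Step 2: justifying the explicit identification of $P_{\ba,\fp}^{\vee}(X)|_{t = \theta}$ with $\Char(\oPhi^{[\ell]}, X)|_{t = \theta}$, which requires carefully tracking the duality between the Tate module action and the $\sigma^{\ell}$ action on $H_{\ba,\fp}(\FF_{\fp})$, as well as confirming that the choice of $\lambda \in \bA_+$ does not affect either the coefficients in $\FF_q[t]$ or the eventual specialization. Once this comparison theorem is in hand, the telescoping and diagonalization arguments above proceed formally from the structure already developed in Sections \ref{SS:analyticSinha}--\ref{SS:reduction}.
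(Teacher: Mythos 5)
Your proposal follows the same overall strategy as the paper's proof: move to a good-reduction model, compute the characteristic polynomial of $\sigma^\ell$ on $H_{\ba,\fp}(\FF_{\fp})$ as $\Char(\oPhi^{(-\ell+1)}\cdots\oPhi^{(-1)}\oPhi,X)$, telescope down to $\oPhi_{\ba}^{(-\ell+1)}\cdots\oPhi_{\ba}^{(-1)}\oPhi_{\ba}$ via $\oQ$, diagonalize with $\Delta$ from Proposition~\ref{P:PhiaPsiadiag}, and specialize at $\xi$. The telescoping is handled correctly (indeed $\oQ^{(-\ell)}=\oQ$ gives $\oQ\,\oPhi_{\ba}^{[\ell]}\,\oQ^{-1}$, and in any case only the conjugacy class matters), the rearrangement of negative twists $\og^{(-i)}$ into the positive twists appearing in $G_{b\star\ba,\fp}$ using $\og^{(\ell)}=\og$ is valid, the Galois-invariance argument placing the elementary symmetric polynomials in $\FF_q[t]$ is sound, and the specialization $t=\theta$, $z=\zeta$ is correctly justified.

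There is, however, a genuine error at exactly the step you flag as the main obstacle. You assert that $\Char(\alpha_\fp, T_\lambda(E_{\ba})^{\vee}, X)|_{t=\theta}$ (the \emph{dual} Tate module) equals $\Char(\oPhi^{(-\ell+1)}\cdots\oPhi^{(-1)}\oPhi,X)|_{t=\theta}$, and then conclude that this yields the stated factorization of $P_{\ba,\fp}^{\vee}(X)$. But the identification should be with the non-dual Tate module: it is $P_{\ba,\fp}(X) = \Char(\alpha_\fp, T_\lambda(E_{\ba}), X)|_{t=\theta}$ that equals $\Char(\oPhi^{(-\ell+1)}\cdots\oPhi,X)|_{t=\theta} = \prod_b(X - \chi_{\ba}^{\rho_b}(\fp))$. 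Your computation produces eigenvalues $\chi_{\ba}^{\rho_b}(\fp)$, which gives the stated $P_{\ba,\fp}$, not $P_{\ba,\fp}^{\vee}$; the dual representation has reciprocal eigenvalues $\psi_{\ba}^{\rho_b}(\fp)$. As written, your argument would yield $P_{\ba,\fp}^{\vee}(X) = \prod_b(X - \chi_{\ba}^{\rho_b}(\fp))$, the opposite of the proposition. This is precisely the duality bookkeeping that the paper spends the bulk of the proof on: passing from the $t$-motive matrix $\oGamma$ (to which \cite{HuangP22} directly applies, giving $\Char(\tau^{\ell},T_\lambda(E),X) = \Char(\oGamma^{(\ell-1)}\cdots\oGamma,X)$) to the $t$-comotive matrix $\oPhi$ via Maurischat's abelian-coabelian equivalence and the Hartl--Juschka relation $\oGamma^{\tr}V = V^{(-1)}\oPhi$, which shows that the products of $\oGamma$-twists and $\oPhi$-twists share a characteristic polynomial, still with the non-dual $T_\lambda(E)$ on the left. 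Without pinning this direction down, the proof establishes the statement with $P$ and $P^{\vee}$ interchanged.
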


\begin{proof}
Pick $\lambda \in \bA_+$ irreducible such that $\fp \nmid \lambda(\theta)$. Since $\fp$ is a prime of good reduction for $E_{\ba}$ and $\fp \nmid \lambda(\theta)$, there is an isomorphism of Tate modules, $T_{\lambda}(E_{\ba}) \to T_{\lambda}(\oE_{\ba})$, compatible with the Galois action. If $\ell = [\FF_{\fp}:\FF_q]$, by the definition in~\eqref{E:Papdef},
\[
P_{\ba,\fp}(X) = \Char(\tau^\ell,T_{\lambda}(\oE_{\ba}),X)|_{t = \theta} = \Char(\tau^\ell,T_{\lambda}(E_{\ba,\fp}),X)|_{t = \theta},
\]
where the second equality follows from the isomorphism $\oE_{\ba} \cong E_{\ba,\fp}$ over $\FF_{\fp}$. On the other hand it follows from \cite{HuangP22}*{Eq.~(3.7.1), Cor.~3.7.3} (with $Z = \emptyset$) (cf.~\cite{Taelman09}*{Prop.~7}) that
\begin{equation} \label{E:chartwists}
\Char(\tau^\ell, T_{\lambda}(E_{\ba,\fp}),X) = \Char \bigl( \oPhi^{(-\ell+1)} \cdots \oPhi^{(-1)} \oPhi,X \bigr) \in \bA[X].
\end{equation}
Strictly speaking, the results in \cite{HuangP22}*{\S 3.7} do not appear to immediately imply this identity, as the proofs there center on $t$-motives rather than $t$-comotives. It is possible to rewrite these results from the standpoint of $t$-comotives instead, but there is a more straightforward explanation. Maurischat~\cite{Maurischat21} has shown that abelian and coabelian are equivalent conditions for Anderson $t$-modules. By Hartl and Juschka~\cite{HartlJuschka20}*{Thm.~2.5.13}, this implies that if $\oGamma \in \Mat_{r}(\FF_{\fp}[t])$ represents multiplication by~$\tau$ on the $t$-motive of~$E_{\ba,\fp}$, then there is $V \in \GL_r(\FF_{\fp}[t])$ such that $\oGamma^{\tr} V = V^{(-1)} \oPhi$ (see also~\cite{NamoijamP24}*{Eq.~(4.65)}). In \cite{HuangP22}*{\S 3.7} it is shown that $\Char(\tau^\ell, T_{\lambda}(E_{\ba,\fp}),X) = \Char( \oGamma^{(\ell-1)} \cdots \oGamma^{(1)} \oGamma,X)$. As matrices with entries in $\FF_{\fp}[t]$, it follows that $V^{(\ell)} = V$ and $\oPhi^{(\ell)} = \oPhi$, and so
\begin{multline*}
\bigl( \oGamma^{(\ell-1)} \cdots \oGamma^{(1)} \oGamma \bigr)^{\tr}
= \oGamma^{\tr} \bigl( \oGamma^{\tr} \bigr)^{(1)} \cdots \bigl( \oGamma^{\tr} \bigr)^{(\ell-1)} \\
= V^{(-1)} \oPhi\, \oPhi^{(1)} \cdots \oPhi^{(\ell-1)} \bigl( V^{-1} \bigr)^{(\ell-1)}
= V^{(-1)} \oPhi\, \oPhi^{(-\ell+1)} \cdots \oPhi^{(-1)} \bigl( V^{(-1)} \bigr)^{-1}.
\end{multline*}
Thus $\oGamma^{(\ell-1)} \cdots \oGamma^{(1)} \oGamma$ and $\oPhi\, \oPhi^{(-\ell+1)} \cdots \oPhi^{(-1)}$ have the same characteristic polynomial, and the latter has the same characteristic polynomial as $\oPhi^{(-\ell+1)} \cdots \oPhi^{(-1)} \oPhi$, justifying~\eqref{E:chartwists}.

Now $\oPhi = \oQ^{(-1)} \oPhi_{\ba} \oQ^{-1}$, where $\oQ \in \GL_r(\FF_{\fp}[t])$. It follows that $\oPhi^{(-\ell+1)} \cdots \oPhi^{(-1)} \oPhi = \oQ^{(-1)} \oPhi_{\ba}^{(-\ell+1)} \cdots \oPhi_{\ba}^{(-1)} \oPhi_{\ba} (\oQ^{(-1)})^{-1}$, and thus
\[
P_{\ba,\fp}(X)= \Char \bigl(\oPhi_{\ba}^{(-\ell+1)} \cdots \oPhi_{\ba}^{(-1)} \oPhi_{\ba},X \bigr) \big|_{t = \theta}
= \prod_{b \in (A/fA)^{\times}} \biggl(X - \prod_{i=0}^{\ell-1} \og_{b \star \ba}^{(i)} \biggr) \bigg|_{\xi},
\]
with the second equality following from Proposition~\ref{P:PhiaPsiadiag}(a). By the definition of $\chi_{\ba}$ from~\S\ref{SS:AndHecke} and Lemma~\ref{L:chiGalaction}, we see that
\[
P_{\ba,\fp}(X) = \prod_{b \in (A/fA)^{\times}} \bigl( X - \chi_{b \star \ba}(\fp) \bigr) = \prod_{b \in (A/fA)^{\times}} \bigl( X - \chi_{\ba}^{\rho_{b}}(\fp) \bigr).
\]
Finally the result for $P_{\ba,\fp}^{\vee}(X)$ follows directly from this one and the definition of $\psi_{\ba}$.
\end{proof}

As in \S\ref{SSS:GossLtmod}, we define reciprocal polynomials $Q_{\ba,\fp}(X)$ and $Q_{\ba,\fp}^{\vee}(X)$ for $P_{\ba,\fp}(X)$ and $P_{\ba,\fp}^{\vee}(X)$ and Goss $L$-functions, for $s \in \ZZ$,
\begin{equation} \label{E:LEaKSigmadef}
L(E_{\ba}/K,\Sigma_{\ba},s) = \prod_{\fp \notin \Sigma_{\ba}} Q_{\ba,\fp}\bigl( \cN(\fp)^{-s} \bigr)^{-1}, \
L(E_{\ba}^{\vee}/K,\Sigma_{\ba},s) = \prod_{\fp \notin \Sigma_{\ba}} Q_{\ba,\fp}^{\vee} \bigl( \cN(\fp)^{-s} \bigr)^{-1}.
\end{equation}
When they converge, these $L$-functions take values in $k_{\infty}$. As defined these $L$-values depend on the model $E_{\ba}$ we have chosen.

\begin{remark}
We will show in Corollary~\ref{C:Eagoodprimes} that $E_{\ba}$ has good reduction at all primes $\fp \nmid f$, so it makes sense to define
\begin{equation} \label{E:LEaKdef}
L(E_{\ba}/K,s) = \prod_{\fp \nmid f} Q_{\ba,\fp}\bigl( \cN(\fp)^{-s} \bigr)^{-1}, \quad
L(E_{\ba}^{\vee}/K,s) = \prod_{\fp \nmid f} Q_{\ba,\fp}^{\vee} \bigl( \cN(\fp)^{-s} \bigr)^{-1},
\end{equation}
though there may not be a single model for $E_{\ba}$ over $B$ with good reduction at every $\fp \nmid f$.
\end{remark}

We also have Hecke $L$-functions as in \S\ref{SSS:HeckeL}, for $s \in \ZZ$,
\begin{equation} \label{E:AndHeckeLdef}
L(\chi_\ba,s) = \prod_{\fp \nmid f} \biggl( 1 - \frac{\chi_{\ba}(\fp)}{\cN(\fp)^s} \biggr)^{-1}, \quad
L(\psi_\ba,s) = \prod_{\fp \nmid f} \biggl( 1 - \frac{\psi_{\ba}(\fp)}{\cN(\fp)^s} \biggr)^{-1}.
\end{equation}
When they converge, these Hecke $L$-functions take values in $k_{\infty}(\zeta)$. We further set
\[
L(\chi_\ba,\Sigma_{\ba},s) = \prod_{\fp \notin \Sigma_{\ba}} \biggl( 1 - \frac{\chi_{\ba}(\fp)}{\cN(\fp)^s} \biggr)^{-1}, \quad
L(\psi_\ba,\Sigma_{\ba}, s) = \prod_{\fp \notin \Sigma_{\ba}} \biggl( 1 - \frac{\psi_{\ba}(\fp)}{\cN(\fp)^s} \biggr)^{-1}.
\]
The restricted $L$-function $L(\chi_{\ba},\Sigma_{\ba},s)$ differs from $L(\chi_{\ba},s)$ by finitely many Euler factors.

\begin{proposition} \label{P:Lidentities}
Let $\ba \in \cA_{f}$ be effective with $\deg \ba > 0$, and let $D_{\ba} = \deg(\ba)/(q-1)$. The following hold.
\begin{alphenumerate}
\item $L(\chi_{\ba},s)$ converges for $s > D_{\ba}$, and $L(\psi_{\ba},s)$ converges for $s > -D_{\ba}$.
\item $L(E_{\ba}/K,\Sigma_{\ba},s)$ converges for $s > D_{\ba}$, and $L(E_{\ba}^{\vee}/K,\Sigma_{\ba},s)$ converges for $s > -D_{\ba}$.
\item $\displaystyle L(E_{\ba}/K,\Sigma_{\ba},s) = \prod_{b \in (A/fA)^{\times}} L(\chi_{\ba}^{\rho_b},\Sigma_{\ba},s) = L(\chi_{\ba},\Sigma_{\ba},s)^r$.
\item $\displaystyle L(E_{\ba}^{\vee}/K,\Sigma_{\ba},s) = \prod_{b \in (A/fA)^{\times}} L(\psi_{\ba}^{\rho_b},\Sigma_{\ba},s) = L(\psi_{\ba},\Sigma_{\ba},s)^r$.
\end{alphenumerate}
\end{proposition}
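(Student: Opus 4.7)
The plan is to deduce all four parts in one sweep from Proposition~\ref{P:charpolys} together with the sign calculation in Theorem~\ref{T:AndHecke}(b) and the Galois symmetry of Lemma~\ref{L:chiaGalsymm}.

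For part (a), Theorem~\ref{T:AndHecke}(b) combined with the multiplicativity $\chi_{\ba} = \prod_a \chi_{[a/f]}^{m_a}$ and the definition $\psi_{\ba} = \chi_{-\ba}$ gives, for every finite prime $\fp$ of $B$ prime to~$f$ with $\ell = [\FF_\fp:\FF_q]$, the identity $\inorm{\chi_{\ba}(\fp)} = \inorm{\ttheta}^{\ell \deg(\ba)} = \cN(\fp)^{D_{\ba}}$, and similarly $\inorm{\psi_{\ba}(\fp)} = \cN(\fp)^{-D_{\ba}}$. The $\infty$-adic term $\chi_{\ba}(\fp)/\cN(\fp)^s$ then has norm $\cN(\fp)^{D_{\ba}-s}$, which tends to~$0$ as $\cN(\fp) \to \infty$ precisely when $s > D_{\ba}$; the non-archimedean Euler product therefore converges in $k_{\infty}$ in that range, and symmetrically for $\psi_{\ba}$. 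For part~(b), Proposition~\ref{P:charpolys} displays the $\fp$-th Euler factor of $L(E_{\ba}/K,\Sigma_{\ba},s)$ as $\prod_b(1-\chi_{\ba}^{\rho_b}(\fp)/\cN(\fp)^s)$; since $\Gal(K/k)$ acts isometrically on $K \subseteq \C$, each Galois twist $\chi_{\ba}^{\rho_b}$ satisfies the same norm estimate as $\chi_{\ba}$, so the product converges for $s > D_{\ba}$, and likewise $L(E_{\ba}^{\vee}/K,\Sigma_{\ba},s)$ converges for $s > -D_{\ba}$.

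For part (c), I would substitute $X = \cN(\fp)^{-s}$ into the reciprocal polynomial $Q_{\ba,\fp}(X) = \prod_b(1-\chi_{\ba}^{\rho_b}(\fp)X)$ obtained from Proposition~\ref{P:charpolys}, then interchange the double product (justified because the estimates above show the product of $b$-factors converges uniformly on the relevant range of $s$), giving
\[
L(E_{\ba}/K,\Sigma_{\ba},s) = \prod_{b \in (A/fA)^{\times}} \prod_{\fp \notin \Sigma_{\ba}} \bigl(1 - \chi_{\ba}^{\rho_b}(\fp)/\cN(\fp)^s \bigr)^{-1} = \prod_{b \in (A/fA)^{\times}} L(\chi_{\ba}^{\rho_b},\Sigma_{\ba},s).
\]
For the second equality, Lemma~\ref{L:chiaGalsymm} rewrites $\chi_{\ba}^{\rho_b}(\fp) = \chi_{\ba}(\fp^{\rho_b})$, and since $\cN$ is Galois-invariant, the map $\fp \mapsto \fp^{\rho_b}$ induces a bijection on primes outside $\Sigma_{\ba}$ provided $\Sigma_{\ba}$ is Galois-stable. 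Part (d) is identical, using the $P_{\ba,\fp}^{\vee}$ factorization and $\psi_{\ba}^{\rho_b}$.

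The one point of friction, and the main obstacle worth flagging, is the Galois-stability of $\Sigma_{\ba}$: the primes above $f$ are automatically Galois-stable, but the bad-reduction primes (coming from a fixed model of $E_{\ba}$) a priori might not be. The remedy is to enlarge $\Sigma_{\ba}$ to its $\Gal(K/k)$-orbit, which only removes finitely many extra Euler factors and leaves all convergence statements and identities in parts (b)--(d) intact, since both sides are defined as Euler products indexed by $\fp \notin \Sigma_{\ba}$. After this enlargement the bijection $\fp \mapsto \fp^{\rho_b}$ respects the index set, so $L(\chi_{\ba}^{\rho_b},\Sigma_{\ba},s) = L(\chi_{\ba},\Sigma_{\ba},s)$ (and similarly for $\psi_{\ba}$), completing the chain of equalities in~(c) and~(d).
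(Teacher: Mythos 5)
Your argument is correct and follows essentially the same path as the paper: norm estimates from Theorem~\ref{T:AndHecke}(b) for part (a), Proposition~\ref{P:charpolys} for the first equalities in (c) and (d), and the Galois symmetry of Lemma~\ref{L:chiaGalsymm} for the second. One small difference is how you finish (c) and (d): you reindex the entire Euler product via $\fp \mapsto \fp^{\rho_b}$ and deduce $L(\chi_{\ba}^{\rho_b},\Sigma_{\ba},s)=L(\chi_{\ba},\Sigma_{\ba},s)$ directly, whereas the paper groups Euler factors by $\wp\in A_+$ and counts multiplicities via the transitive $\Gal(K/k)$-action on primes above $\wp$ with stabilizer of size $[\FF_\fp:\FF_\wp]$. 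The two arguments are equivalent; yours is a bit more streamlined, but both lean on the same Galois-stability hypothesis.

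Your flag about Galois-stability of $\Sigma_{\ba}$ is well-taken. The paper only imposes the condition that $\Sigma_{\ba}$ be closed under the $\Gal(K/k)$-action later, in~\S\ref{SSS:EaBmodel} (where it requires that all primes above a given $\wp$ be included together), after Proposition~\ref{P:Lidentities} is proved; as written, the paper's grouping by $\wp$ already implicitly uses this. Your proposed remedy of enlarging $\Sigma_{\ba}$ to its $\Gal(K/k)$-orbit is exactly the right fix and costs only finitely many Euler factors.

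One minor inaccuracy: in part (b) you justify the uniform norm estimate on the Galois twists by saying $\Gal(K/k)$ acts isometrically on $K\subseteq\C$. That is not true in general — $\rho_b$ typically changes the place above $\infty$, so it does not preserve $\inorm{\,\cdot\,}$ on $K$. The correct justification is Theorem~\ref{T:AndHecke}(b) itself, which asserts specifically that $\inorm{\rho_b(\chi_{[a/f]}(\fp))}$ is independent of $b$ (a special property of these Hecke character values, analogous to the absolute value of a Gauss sum being independent of the embedding). Your conclusion is right; the stated reason should be replaced by a citation to that theorem.
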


\begin{proof}
Let $\fp$ be a prime of $B$, $\fp \notin \Sigma_{\ba}$, and set $\ell = [\FF_{\fp}:\FF_q]$. By Theorem~\ref{T:AndHecke}(b) and \S\ref{SSS:GenAndHecke}, we see that for any $b \in (A/fA)^{\times}$,
\[
\bigl\lvert \chi_{\ba}^{\rho_b}(\fp) \bigr\rvert_{\infty} = \inorm{\ttheta}^{\ell \deg \ba} \quad \Rightarrow \quad
\biggl\lvert \frac{\chi_{\ba}^{\rho_b}(\fp)}{\cN(\fp)^{s}}\biggr\rvert_{\infty} = \frac{\inorm{\ttheta}^{\ell \deg \ba}}{\inorm{\theta}^{s \ell}} = \inorm{\theta}^{(1/(q-1) \cdot \deg \ba - s)\ell}.
\]
Taking $b=1$, this implies the first part of~(a). Since $\psi_{\ba} = \chi_{-\ba}$, the second part also follows. The first equalities in both (c) and~(d) follow immediately from Proposition~\ref{P:charpolys}, and these equalities imply~(b). It remains to show the second equalities in~(c) and~(d). Fix $\wp \in A_+$ irreducible and relatively prime to the primes in~$\Sigma_{\ba}$. Using Proposition~\ref{P:charpolys}, consider the (reciprocals of the) Euler factors in~\eqref{E:LEaKSigmadef},
\begin{equation} \label{E:Eulerfactors}
\prod_{\fp \mid \wp} \prod_{b \in (A/fA)^{\times}} \bigl( 1 - \chi_{\ba}^{\rho_b}(\fp) \cN(\fp)^{-s}\bigr) =
\prod_{\fp \mid \wp} \prod_{b \in (A/fA)^{\times}} \Bigl( 1 - \chi_{\ba}(\rho_{b}(\fp)) \wp^{-s [\FF_{\wp}:\FF_q]}\Bigr),
\end{equation}
which follows from the definition of $\cN(\fp)$ in \S\ref{SSS:GossLtmod} and Lemma~\ref{L:chiaGalsymm}. The Galois group $\Gal(K/k)$ acts transitively on the primes above $\wp$ in $B$ with stabilizers of size $\delta = [\FF_{\fp}:\FF_{\wp}]$ for any $\fp \mid \wp$. Thus for each $\fp \mid \wp$, the multiset $\{ \rho_b(\fp) \mid b \in (A/fA)^{\times} \}$ consists of the $r/\delta$ primes above $\wp$, each with multiplicity $\delta$ (as $\wp$ is unramified in $B$). Therefore, the expression in~\eqref{E:Eulerfactors} becomes
\[
\prod_{\fp \mid \wp} \prod_{\fq \mid \wp} \Bigl( 1 - \chi_{\ba}(\fq) \wp^{-s [\FF_{\wp}:\FF_q]}\Bigr)^{\delta} = \prod_{\fq \mid \wp} \Bigl( 1 - \chi_{\ba}(\fq) \cN(\fq)^{-s} \Bigr)^r,
\]
where this equality follows from the observation that the inner product is independent of the outer product over $\fp \mid \wp$. This proves the second equality in~(c), and the second equality in~(d) follows immediately since $\psi_{\ba} = \chi_{-\ba}$.
\end{proof}

\begin{remark} \label{R:LnoSigmas}
(a) As mentioned above, by Corollary~\ref{C:Eagoodprimes} we can take $\Sigma_{\ba}$ to be exactly the primes dividing $f$, in which case identities in Proposition~\ref{P:Lidentities} simplify as
\[
L(E_{\ba}/K,s) = L(\chi_{\ba},s)^r, \quad L(E_{\ba}^{\vee}/K,s) = L(\psi_{\ba},s)^r.
\]
(b) The proof of Proposition~\ref{P:Lidentities}(a) does not depend on $\ba$ being effective, and the conclusion holds for any $L(\chi_{\ba},s)$.
\end{remark}

\subsection{Taelman class module formulas} \label{SS:Taelman}
Continuing with the notation of the previous section, in \S\ref{SS:specialLvalues} we will utilize a theorem of Fang~\cite{Fang15}*{Thm.~1.10} to derive identities for $L(E_{\ba}^{\vee},0)$ and $L(\psi_{\ba},0)$ when $\ba = [a/f]$ is basic. In this section we review the essentials of Taelman's results and Fang's subsequent extensions.

In breakthrough work, Taelman~\cites{Taelman09, Taelman10, Taelman12} proved formulas for $L(\phi^{\vee}/F,0)$, where $\phi$ is a Drinfeld module defined over a finite extension $F/k$. Taelman's identities were generalized to abelian $t$-modules by Fang~\cite{Fang15}, and later Angl\`es, Ngo Dac, Pellarin, and Tavares Ribeiro \cites{ANT17b, ANT20, ANT22, APT18, AT17} have introduced and developed the theory of Stark units to extend these results greatly to the wider class of admissible $\bA$-modules, for $\bA$ more general than $\FF_{q}[t]$.

The formalism of these class formulas utilize two fundamental objects introduced by Taelman~\cites{Taelman10,Taelman12}, namely the regulator lattice and class module of a Drinfeld module, which were later generalized to $t$-modules. We will follow the setup from Angl\`es, Ngo Dac, and Tavares Ribeiro~\cites{ANT17b, ANT20, ANT22}. A third important object for $L$-values is the module of Stark units~\cites{ANT17b, ANT20, ANT22, APT18, AT17}, though we will ultimately not need it for our identities.

\begin{subsubsec}{Fitting ideals}
For a finite $A$-module $M$, we let $[M]_A \in A_+$ denote the unique monic generator of the Fitting ideal of $M$. If $M \cong \oplus_{i=1}^{\ell} A/g_iA$ for $g_1, \dots, g_\ell \in A_+$, then $[M]_A = g_1 \dots g_\ell$, which is independent of the choice of $g_1, \dots, g_{\ell}$. If $M$ is instead a finite $\bA$-module, we will use the convention $[M]_{A} \assign [M]_{\bA}|_{t = \theta}$, which coerces the value into~$A$. See for example~\cite{ANT22}*{\S 1.2} for more general properties.
\end{subsubsec}

\begin{subsubsec}{Lattices and covolumes} \label{SSS:covol}
We follow the constructions in \citelist{\cite{ANT20}*{\S 2.3} \cite{ANT22}*{\S 1.3}}, but see also \cite{Taelman12}. Let $V$ be a finite dimensional $k_{\infty}$-vector space of dimension $n \geqslant 1$. An \emph{$A$-lattice} in~$V$ is an $A$-submodule that is discrete and cocompact. An $A$-lattice $M \subseteq V$ is necessarily free of rank~$n$ over $A$, and any $A$-basis of $M$ is also a $k_{\infty}$-basis of~$V$. If $M$, $N \subseteq V$ are $A$-lattices, then we can find a $k_{\infty}$-isomorphism $\alpha : V \to V$ such that $\alpha(M) \subseteq N$. The \emph{covolume} of $N$ in $M$ is
\[
[M:N]_A \assign \frac{\det_{k_{\infty}} \alpha}{\sgn(\det_{k_{\infty}} \alpha)} \cdot \biggl[ \frac{N}{\alpha(M)} \biggr]_A^{-1} \in k_{\infty}^{\times}.
\]
By~\cite{ANT17b}*{\S 2.3}, $[M:N]_A$ is independent of the choice of $\alpha$ and matches the definition in \cite{Taelman12}*{Prop.~4}. If $N \subseteq M$, then $[M:N]_A = [ M/N ]_A$. Generally, $[M:N]_A = [N:M]_A^{-1}$, and for any three $A$-lattices, $[M:N]_A = [M:P]_A \cdot [P:N]_A$. Furthermore, suppose $v_1, \dots, v_n$ form a $k_\infty$-basis of $V$ and $m_i = \sum_{j} \beta_{ij} v_j$, $\beta_{ij} \in k_{\infty}$, form an $A$-basis of $N$. Then
\begin{equation} \label{E:covoldet}
[\oplus_{i=1}^n A v_i:N]_A = \frac{\det (\beta_{ij})}{\sgn(\det(\beta_{ij}))}.
\end{equation}
If we let $\bk_{\infty} = \laurent{\FF_q}{t^{-1}}$, then for a finite dimensional $\bk_{\infty}$-vector space $V$, we can define \emph{$\bA$-lattices} and covolumes in the same way by converting $\theta$ to $t$ in the above definitions.
\end{subsubsec}

\begin{subsubsec}{Taelman regulator for $E_{\ba}$} \label{SSS:regulators}
For $\ba \in \cA_f$ effective with $\deg \ba > 0$, let $E_{\ba} : \bA \to \Mat_d(B[\tau])$ be a fixed model defined over~$B$ via Theorem~\ref{T:EaKdef}. The unit module $\rU(E_{\ba}/B)$ and regulator $\Reg(E_{\ba}/B) \in k_{\infty}^{\times}$ are defined as in \citelist{\cite{ANT22}*{\S 2.1} \cite{Taelman10}*{\S 5}}. Let
\begin{equation} \label{E:Kinfty}
K_{\infty} \assign K \otimes_k k_{\infty} \cong \bigoplus_{i=1}^n K_{\infty_i}.
\end{equation}
Letting $I_f \assign \{  b \in A_+ \mid \deg b < \deg f, (b,f)=1 \}$, we have $\{ \rho_b|_{K^+} : b \in I_f \} = \Gal(K^+/k)$ as in~\S\ref{SSS:infinity}. We use the convention that $\infty_1$-adic topology on $K$ coincides with the $\infty$-adic subspace topology from $\C$ (so that $K_{\infty_1} = k_{\infty}(\zeta) \subseteq \C$), and we find $\{ \infty_1, \dots, \infty_n \} = \{ \rho_b(\infty_1) \mid b \in I_f \}$. Thus $K_{\infty} \cong \oplus_{b \in I_f} K_{\rho_b(\infty_1)}$. In the usual way, $K \hookrightarrow K_{\infty} \to \oplus_{b \in I_f} K_{\rho_b(\infty_1)}$ via $\alpha \mapsto \alpha \otimes 1 \mapsto (\rho_b(\alpha) \mid b \in I_f)$. We let
\[
\Exp_{E_{\ba}} : \Lie(E_{\ba})(K_{\infty}) \to E_{\ba}(K_{\infty})
\]
be the $\bA$-module map induced by $\exp_{E_{\ba}}(\bz)$. For variables $\bz_b$ on $\Lie(E_{\ba})(K_{\infty})$,
\[
\Exp_{E_{\ba}} ( \bz_b \mid b \in I_f) = \bigl( \exp_{E_{\ba}}^{\rho_b}(\bz_b) \mid b \in I_f \bigr),
\]
where $\exp_{E_{\ba}}^{\rho_b}(\bz) \in \power{K}{z_1, \dots, z_d}^d$ is obtained from $\exp_{E_{\ba}}(\bz)$ by applying $\rho_b$ to its coefficients. As in~\cite{ANT22}*{\S 2.1}, the \emph{unit module} is
\[
\rU(E_{\ba}/B)  \assign \Exp_{E_{\ba}}^{-1} \bigl( E_{\ba}(B) \bigr),
\]
where on the right $E_{\ba}(B) \subseteq E_{\ba}(K_{\infty})$ via $B \hookrightarrow K_{\infty}$.

Now $\Lie(E_{\ba})(K_{\infty})$ has the structure of a $k_{\infty}$-vector space with
\begin{equation} \label{E:Liedim}
\dim_{k_{\infty}} \Lie(E_{\ba})(K_{\infty}) = [K:k] \cdot \dim(E_{\ba}) = rd.
\end{equation}
Also, $\Lie(E_{\ba})(K_{\infty})$ is an $\bA$-module via $\rd E_{\ba} : \bA \to \Mat_d(K)$. Letting $\bk_{\infty} = \laurent{\FF_q}{t^{-1}}$, this map extends to $\rd E_{\ba} : \bk_{\infty} \to \Mat_d(K_{\infty})$ via
\[
\rd E_{\ba} : \sum_{i=i_0}^{\infty} c_i t^{-i} \mapsto \sum_{i=i_0}^{\infty} c_i \bigl( \rd E_{\ba,t} \bigr)^{-i},
\]
which converges by~\cite{Fang15}*{Lem.~1.7}. This makes $\Lie(E_{\ba})(K_{\infty})$ into a $\bk_{\infty}$-vector space, also of dimension $rd$, in which $\Lie(E_{\ba})(B)$ is an $\bA$-lattice~\cite{Fang15}*{Thm.~1.10}. Thus, $\Lie(E_{\ba})(B)$ is simultaneously an $A$-lattice and an $\bA$-lattice. When $\rd E_{\ba,t} = \theta \rI$, these two structures are essentially the same, up to interchanging the roles of $t$ and $\theta$. 

By~\citelist{\cite{Taelman10}*{Thm.~1} \cite{Fang15}*{Thm.~1.10}}, one finds that $\rU(E_{\ba}/B)$ is an $\bA$-lattice in $\Lie(E_{\ba})(K_{\infty})$. The \emph{regulator} of $E_{\ba}$ is
\[
\Reg(E_{\ba}/B) \assign \bigl[ \Lie(E_{\ba})(B) : \rU(E_{\ba}/B)\bigr]_{\bA} \big|_{t=\theta} \in k_{\infty}^{\times}.
\]
\end{subsubsec}

\begin{subsubsec}{Class module of $E_{\ba}$}
As noted in~\cite{ANT22}*{\S 2.1}, the map $\Exp_{E_{\ba}}$ induces an exact sequence of $\bA$-modules
\[
0 \to \rU(E_{\ba}/B) \to \Lie(E_{\ba})(K_{\infty}) \to
\frac{E_{\ba}(K_{\infty})}{E_{\ba}(B)} \to
\frac{E_{\ba}(K_{\infty})}{E_{\ba}(B) + \Exp_{E_{\ba}}(\Lie(E_{\ba})(K_{\infty}))} \to 0.
\]
This last term
\[
\rH(E_{\ba}/B) \assign \frac{E_{\ba}(K_{\infty})}{E_{\ba}(B) + \Exp_{E_{\ba}}(\Lie(E_{\ba})(K_{\infty}))},
\]
is the \emph{class module} of $E_{\ba}/B$. Fang~\cite{Fang15}*{Thm.~1.10} and Taelman~\cite{Taelman10}*{Thm.~1} showed that it is a finite $\bA$-module. We let
\[
\rh(E_{\ba}/B) \assign [ \rH(E_{\ba}/B)]_A \quad (\in A_+).
\]
The fundamental result is the class formula, which we have specialized to our setting.
\end{subsubsec}

\begin{theorem}[{Fang~\cite{Fang15}*{Thm.~1.10}, Taelman~\cite{Taelman12}*{Thm.~1}}] \label{T:TaelmanFang}
Let $\ba \in \cA_f$ be effective with $\deg \ba > 0$, and let $E_{\ba} : \bA \to \Mat_d(B[\tau])$ be a model of $E_{\ba}$ defined over~$B$. Then
\[
\prod_{\fp} \frac{[\Lie(\oE_{\ba})(\FF_{\fp})]_A}{[\oE_{\ba}(\FF_{\fp})]_A} = \Reg(E_{\ba}/B) \cdot \rh(E_{\ba}/B),
\]
where the product is taken over all primes of $B$ and converges in $k_{\infty}^{\times}$.
\end{theorem}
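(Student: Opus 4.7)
The plan is to adopt the Fredholm determinant / trace-formula strategy of Taelman, extended to higher-dimensional $t$-modules by Fang. Both sides of the identity will be recovered as the value of a single regularized determinant on a carefully constructed adelic $\bA$-module attached to~$E_{\ba}$, interpreted locally to give the Euler product and interpreted globally via the exponential sequence to give $\Reg(E_{\ba}/B) \cdot \rh(E_{\ba}/B)$.

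First I would verify that the product on the left converges in $k_{\infty}^{\times}$. By Corollary~\ref{C:EaSigma} and Proposition~\ref{P:charpolys}, at all but finitely many primes~$\fp$ the reduced $t$-module $\oE_{\ba}/\FF_{\fp}$ is coabelian with integral characteristic polynomial $P_{\ba,\fp}(X) = \prod_b (X - \chi_{\ba}^{\rho_b}(\fp))$, and both finite $A$-modules $\Lie(\oE_{\ba})(\FF_{\fp})$ and $\oE_{\ba}(\FF_{\fp})$ can be read off from this polynomial together with the explicit upper-triangular block form for $\rd E_{\ba,t}$ provided by Lemma~\ref{L:dEab}. Comparing Fitting ideals block-by-block gives the required norm estimates, and the bound $\inorm{\chi_{\ba}^{\rho_b}(\fp)} = \inorm{\ttheta}^{\ell \deg \ba}$ from Theorem~\ref{T:AndHecke}(b) ensures the product converges.

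The heart of the argument is to build a topological $\bk_{\infty}$-vector space~$\cV$ — essentially the restricted product of $\Lie(E_{\ba})(K_{\infty})$ with the completions $\Lie(\oE_{\ba})(\FF_{\fp})$ at finite primes — carrying a nuclear operator whose regularized determinant may be factored in two ways. The local factorization at each finite $\fp$ yields, by Taelman's local determinant computation, precisely the ratio $[\Lie(\oE_{\ba})(\FF_{\fp})]_A / [\oE_{\ba}(\FF_{\fp})]_A$, giving the Euler product. The global factorization, obtained from the four-term exponential sequence
\[
0 \to \rU(E_{\ba}/B) \to \Lie(E_{\ba})(K_{\infty}) \to \frac{E_{\ba}(K_{\infty})}{E_{\ba}(B)} \to \rH(E_{\ba}/B) \to 0
\]
of~\S\ref{SSS:regulators} together with the covolume formalism of~\S\ref{SSS:covol}, produces $\Reg(E_{\ba}/B) \cdot \rh(E_{\ba}/B)$. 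Equating the two factorizations gives the theorem.

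The main obstacle will be rigorously setting up the single nuclear/Fredholm determinant and proving that both interpretations converge to the same element of $k_{\infty}^{\times}$. For Drinfeld modules ($d=1$) the construction is essentially one-dimensional and was handled cleanly in~\cite{Taelman12}; for general~$d$ the nilpotent part of $\rd E_{\ba,t} - \theta \rI$ introduces unipotent contributions that must be shown to cancel globally, which is the principal technical innovation of~\cite{Fang15}. In our setting Lemma~\ref{L:dEab} reduces this bookkeeping to explicit upper-triangular blocks of size $\ell_a + 1$ indexed by $a \in \cP_{\ba}$, which should make the nilpotent analysis tractable; nevertheless, proving continuity and compactness of all transition maps and carefully tracking the signs/units under $\sgn$ in~\S\ref{SSS:signs} is where the real work lies.
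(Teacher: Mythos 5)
The paper does not prove this statement. It is Fang's class module formula (\cite{Fang15}*{Thm.~1.10}), generalizing Taelman's Drinfeld-module case (\cite{Taelman12}*{Thm.~1}), quoted with a citation and simply specialized to the Sinha module $E_{\ba}$; there is no argument in the paper to compare against. Your sketch does recall the two guiding ideas behind the Taelman--Fang machine (a nuclear/Fredholm-determinant construction on an adelic space, and the four-term exponential sequence giving regulator and class module), so it is not an unreasonable summary of the literature proof, but two things deserve flagging. First, your plan misallocates the paper's Sinha-specific results: Corollary~\ref{C:EaSigma}, Proposition~\ref{P:charpolys}, and Lemma~\ref{L:dEab} are not inputs into this theorem; they are used \emph{downstream}, in Lemma~\ref{L:EaFittings} and Proposition~\ref{P:Lvalues}, to identify the left-hand side of the class formula with $L(E_{\ba}^{\vee}/K,\Sigma_{\ba},0)$. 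The class formula itself is a general statement about abelian $t$-modules over $B$, and Fang proves it at that level of generality, independently of the cyclotomic or Coleman structure of $E_{\ba}$; attempting to inject the Sinha-specific structure at the wrong stage conflates proving the formula with applying it. Second, even viewed as a sketch of Fang's proof, what you have written omits the substantive work: the precise construction of the nuclear operator from the difference between the two $\bA$-module structures on $\Lie(E)(K_\infty)/\Lie(E)(B)$ and $E(K_\infty)/E(B)$, the verification that its determinant is well defined and factors into the local Fitting-ideal ratios, the handling of the nilpotent part of $\rd E_{\ba,t} - \theta\rI$ in dimension $d>1$, and the convergence of the regularized determinant, none of which follows from the ingredients you list. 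So the proposal is conceptually in the right direction for a from-scratch proof of the cited theorem, but it is several essential steps short of an actual proof and is, in any case, proving something the paper treats as a black box.
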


\subsection{Special \texorpdfstring{$L$}{L}-value identities for \texorpdfstring{$E_{\ba}$}{Ea}} \label{SS:specialLvalues}
Let $\ba \in \cA_f$ be effective with $\deg \ba > 0$. We now analyze the terms in Theorem~\ref{T:TaelmanFang} in detail and use them to prove identities for $L(E_{\ba}^{\vee},0)$ and $L(\psi_{\ba},0)$. At first we allow $\ba$ to be arbitrary, but for our main results, Theorem~\ref{T:GossLMain} and Corollary~\ref{C:HeckeLMain}, we require that $\ba = [a/f]$ be basic.

\begin{subsubsec}{Model for $E_{\ba}$ over $B$} \label{SSS:EaBmodel}
We assume that $E_{\ba} : \bA \to \Mat_{d}(B[\tau])$ has been chosen in the following way. First we choose a model for $E_{\ba}$ using a normalized $\ok[\sigma]$-basis defined over $K$ as in Theorem~\ref{T:EaKdef}. Then similar to the beginning of \S\ref{SS:reduction}, we choose $\alpha \in A_+$ such that conjugating $E_{\ba}$ by $\alpha\rI$ yields a model defined over~$B$. We require that for $\fp\in \Sigma_{\ba}$ we have $\oE_{\ba} \cong \rd \oE_{\ba}$. Also, if $\fp \in \Sigma_{\ba}$ lies above above a prime $\wp$ of $A$, then all of the primes in $B$ above $\wp$ are also in $\Sigma_{\ba}$. These conditions can be achieved by building $\alpha$ up by powers of  primes in $A$ one prime at a time.
\end{subsubsec}

\begin{subsubsec}{Local factors}
Let $\fp$ be a prime of $B$, lying above $\wp \in A_+$. We investigate the terms in the product in Theorem~\ref{T:TaelmanFang}.
\end{subsubsec}

\begin{lemma} \label{L:EaFittings}
For a prime $\fp$ of $B$ above a prime $\wp \in A_+$, the following hold.
\begin{alphenumerate}
\item For all such $\fp$,
\[
\bigl[ \Lie(\oE_{\ba})(\FF_{\fp}) \bigr]_A = \wp^{d[\FF_{\fp}:\FF_{\wp}]}.
\]
\item For $\fp \notin \Sigma_{\ba}$,
\[
\bigl[ \oE_{\ba}(\FF_{\fp}) \bigr]_A = \gamma P_{\ba,\fp}(1) = \gamma \prod_{b \in (A/fA)^{\times}} \bigl( 1 - \chi_{\ba}^{\rho_{b}}(\fp) \bigr),
\]
where $\gamma \in \FF_q^{\times}$ is chosen to make the right-hand expressions monic.
\end{alphenumerate}
\end{lemma}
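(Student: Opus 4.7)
For part (a), the plan is to exploit the defining nilpotence of $\rd E_{\ba,t} - \theta\rI_d$ (see \S\ref{SS:tmoddualtmot}). Reducing modulo $\fp$ makes $\rd\oE_{\ba,t} - (\theta \bmod \fp)\rI_d$ nilpotent in $\Mat_d(\FF_{\fp})$, so the only $\oFF_q$-eigenvalue of $\rd\oE_{\ba,t}$ is $\theta \bmod \fp$, whose Galois orbit over $\FF_q$ consists of the $\deg \wp$ roots of $\wp(t)$. Each such root appears with equal total multiplicity in the $\FF_q$-linear characteristic polynomial of $\rd\oE_{\ba,t}$ on $\FF_{\fp}^d$, a polynomial of degree $d[\FF_{\fp}:\FF_q]$, forcing it to equal $\wp(t)^{d[\FF_{\fp}:\FF_{\wp}]}$. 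This generates the $\bA$-Fitting ideal of $\Lie(\oE_{\ba})(\FF_{\fp})$, and specializing $t \to \theta$ via the convention of~\S\ref{SS:Taelman} yields (a).

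For part (b), the plan is to combine the Lang short exact sequence with Proposition~\ref{P:charpolys}. Surjectivity of $\tau^\ell - 1$ on $\GG_a^d(\oFF_{\fp})$, with $\ell = [\FF_{\fp}:\FF_q]$, produces
\[
0 \to \oE_{\ba}(\FF_{\fp}) \to \oE_{\ba}(\oFF_{\fp}) \xrightarrow{\tau^\ell - 1} \oE_{\ba}(\oFF_{\fp}) \to 0.
\]
For each prime $\lambda$ of $\bA$ distinct from $\wp$, good reduction at~$\fp$ (see~\eqref{E:EatbarisEapt} and~\S\ref{SSS:redtmods}) gives $T_\lambda(\oE_{\ba}) \cong \bA_\lambda^r$, and applying the snake lemma to the $\lambda$-primary components produces
\[
\oE_{\ba}(\FF_{\fp})[\lambda^\infty] \cong T_\lambda(\oE_{\ba}) / (\tau^\ell - 1)T_\lambda(\oE_{\ba}),
\]
with $\bA_\lambda$-Fitting ideal generated by $\det(\tau^\ell - 1 \mid T_\lambda(\oE_{\ba}))$. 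Since $\oE_{\ba} \cong E_{\ba,\fp}$ over~$\FF_{\fp}$, Proposition~\ref{P:charpolys} shows that this determinant, after specializing $t \to \theta$, equals $(-1)^r P_{\ba,\fp}(1)$. Aggregating the $\lambda$-local data across all primes then yields $[\oE_{\ba}(\FF_{\fp})]_A = \gamma P_{\ba,\fp}(1)$ for a unique $\gamma \in \FF_q^\times$ fixed by monicity, and the second equality of~(b) is immediate from Proposition~\ref{P:charpolys}.

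The main obstacle will be handling the residue-characteristic prime $\lambda = \wp$, where $\oE_{\ba}[\wp^\infty](\oFF_{\fp})$ need not realize the generic Tate module structure due to inseparability in the reduction, invalidating the snake-lemma identification above at this one prime. My plan is to circumvent this either by appealing to the class-formula framework of Fang~\cite{Fang15}*{Thm.~1.10} (or Taelman~\cite{Taelman12}), in which the local Euler factors $[\oE_{\ba}(\FF_{\fp})]_A$ are packaged directly without requiring a separate $\wp$-primary argument, or by a direct analysis of $\oE_{\ba,\fp,t}$ using the nilpotence from part~(a) to match the $\wp$-adic valuation of $P_{\ba,\fp}(1)$ on both sides.
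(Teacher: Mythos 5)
Your proof of part (a) is correct and is essentially the paper's argument: both rest on the nilpotence of $\rd\oE_{\ba,t} - (\theta\bmod\fp)\rI_d$ over $\FF_{\fp}$; the paper phrases the count via elementary divisors $\bA/\wp(t)^{e_i}\bA$ and an $\FF_q$-dimension comparison, while you phrase it via the $\FF_q$-linear characteristic polynomial of $\rd\oE_{\ba,t}$, and the two are interchangeable (the Fitting ideal of a finite torsion $\bA$-module coincides with the characteristic polynomial of $t$-multiplication).

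For part (b) you take a genuinely different route. The paper simply cites~\cite{HuangP22}*{Cor.~3.7.8} as a black box; you unpack what is presumably behind such a result, running the Lang exact sequence and computing $\bA_\lambda$-Fitting ideals of $\oE_{\ba}(\FF_\fp)[\lambda^\infty]$ via $T_\lambda(\oE_{\ba})$ for $\lambda \neq \wp$. This is the natural higher-dimensional analogue of Takahashi's argument~\cite{Takahashi82} (cf.\ \cite{Goss}*{Ex.~8.6.6.2}), and it is correct as far as it goes: for $\lambda \neq \wp$ good reduction gives $T_\lambda(\oE_{\ba}) \cong \bA_\lambda^r$, the eigenvalues $\chi_{\ba}^{\rho_b}(\fp)$ of $\tau^\ell$ all satisfy $\inorm{\chi_{\ba}^{\rho_b}(\fp)} > 1 \neq 1$ by Theorem~\ref{T:AndHecke}(b), so $\tau^\ell - 1$ is injective on $T_\lambda$ and $\oE_{\ba}(\FF_\fp)[\lambda^\infty]$ has Fitting ideal generated by $\det(\tau^\ell - 1 \mid T_\lambda)$, whose specialization under $t \to \theta$ is $(-1)^r P_{\ba,\fp}(1)$ by Proposition~\ref{P:charpolys}.

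The gap is the prime $\lambda = \wp$, which you correctly flag, but neither of your proposed remedies closes it as stated. Appealing to the class formula of Fang or Taelman cannot work: Theorem~\ref{T:TaelmanFang} is a global identity equating the full Euler product to a regulator times a class number, and it presupposes, rather than supplies, the local interpretation of $[\oE_{\ba}(\FF_\fp)]_A$; this lemma is an ingredient into Proposition~\ref{P:Lvalues}, not a consequence of it, and invoking it here would be circular. Your second suggestion of ``direct analysis'' is left unspecified. The clean fix is a degree count that avoids $\wp$-local Tate modules entirely. Both sides of (b), once normalized to be monic, are polynomials of the same degree: $\deg[\oE_{\ba}(\FF_\fp)]_A = \dim_{\FF_q}\oE_{\ba}(\FF_\fp) = d[\FF_\fp:\FF_q]$, while by Theorem~\ref{T:AndHecke}(b) and \S\ref{SSS:GenAndHecke} we have $\inorm{1 - \chi_{\ba}^{\rho_b}(\fp)} = \inorm{\chi_{\ba}^{\rho_b}(\fp)} = \inorm{\ttheta}^{\ell\deg\ba}$ for every $b$, so
\[
\deg P_{\ba,\fp}(1) = \log_q \inorm{P_{\ba,\fp}(1)} = \frac{r\ell\deg\ba}{q-1} = d\ell = d[\FF_\fp:\FF_q].
\]
Since the two monic polynomials have the same $\lambda$-adic valuation for every $\lambda \neq \wp$ and the same total degree, their $\wp$-adic valuations also agree, and the identity follows. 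You should insert this degree count in place of your two tentative fallback options; with it, your alternative proof of (b) is complete.
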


\begin{proof}
Part (a) is a straightforward modification of the proof of~\cite{Demeslay22}*{Prop.~4.10}. As the eigenvalues of $\rd\oE_{\ba,\wp(t)}$ are all zero in $\FF_{\fp}$, it follows that
\[
\rd\oE_{\ba,\wp(t)^d} = (\rd\oE_{\ba,\wp(t)})^d = 0.
\]
Thus $\Lie(\oE_{\ba})(\FF_{\fp})$ is annihilated by $\wp(t)^d$. We then have an isomorphism of $\bA$-modules $\Lie(\oE_{\ba})(\FF_{\fp}) \cong \bA/\wp(t)^{e_1}\bA \oplus \cdots \oplus \bA/\wp(t)^{e_m}\bA$ with $e_1 \leqslant \cdots \leqslant e_m \leqslant d$. On the one hand,
\[
\dim_{\FF_q} \Lie(\oE_{\ba})(\FF_{\fp}) = d \cdot [\FF_{\fp}:\FF_q] = d \cdot [\FF_{\fp}:\FF_{\wp}] \cdot \deg \wp,
\]
and on the other, $\dim_{\FF_q} \bA/\wp(t)^{e_1}\bA \oplus \cdots \oplus \bA/\wp(t)^{e_m}\bA = (e_1 + \cdots + e_m)\deg \wp$.

In Proposition~\ref{P:charpolys}, we observed that $P_{\ba,\fp}(X) = \Char(\tau^\ell,T_{\lambda}(\oE_{\ba}),X)|_{t=\theta}$ when $\fp$ is a good prime. Part (b) follows from Proposition~\ref{P:charpolys} combined with~\cite{HuangP22}*{Cor.~3.7.8}.
\end{proof}

\begin{proposition} \label{P:Lvalues}
For $\ba \in \cA_f$ effective with $\deg \ba > 0$, fix a model $E_{\ba}: \bA \to \Mat_d(B[\tau])$ as in \S\ref{SSS:EaBmodel}. Then
\[
\prod_{\fp} \frac{[\Lie(\oE_{\ba})(\FF_{\fp})]_A}{[\oE_{\ba}(\FF_{\fp})]_A} = L(E_{\ba}^{\vee}/K,\Sigma_{\ba},0) = L(\psi_{\ba},\Sigma_{\ba},0)^r.
\]
\end{proposition}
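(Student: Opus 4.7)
The identity $L(E_{\ba}^{\vee}/K,\Sigma_{\ba},0) = L(\psi_{\ba},\Sigma_{\ba},0)^r$ is Proposition~\ref{P:Lidentities}(d) specialized at $s=0$, so the heart of the matter is to verify that the product of local Fitting ratios equals $L(E_{\ba}^{\vee}/K,\Sigma_{\ba},0)$. My approach is to split the product over primes of $B$ and match it Euler factor by Euler factor.

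For $\fp \in \Sigma_{\ba}$, the assumption $\oE_{\ba} \cong \rd \oE_{\ba}$ built into the model in \S\ref{SSS:EaBmodel} identifies the $\bA$-action on $\FF_{\fp}^d$ induced by $\oE_{\ba}$ with that induced by $\rd\oE_{\ba}$, giving $[\oE_{\ba}(\FF_{\fp})]_A = [\Lie(\oE_{\ba})(\FF_{\fp})]_A$. Hence these primes contribute trivially and the product reduces to one over $\fp \notin \Sigma_{\ba}$.

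For such $\fp$, lying over $\wp \in A_+$, Lemma~\ref{L:EaFittings} gives
\[
\frac{[\Lie(\oE_{\ba})(\FF_{\fp})]_A}{[\oE_{\ba}(\FF_{\fp})]_A} = \frac{\wp^{d[\FF_{\fp}:\FF_{\wp}]}}{\gamma \prod_{b} \bigl(1 - \chi_{\ba}^{\rho_b}(\fp)\bigr)},
\]
with $\gamma \in \FF_q^{\times}$ normalizing the denominator to be monic. From~\eqref{E:LEaKSigmadef} and Proposition~\ref{P:charpolys}, the Euler factor of $L(E_{\ba}^{\vee}/K,\Sigma_{\ba},s)$ at $\fp$, evaluated at $s=0$, equals $\prod_{b} \bigl(1 - \chi_{\ba}^{\rho_b}(\fp)^{-1}\bigr)^{-1}$. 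A short manipulation rewrites this as
\[
\prod_{b} \bigl(1 - \chi_{\ba}^{\rho_b}(\fp)^{-1}\bigr)^{-1} = \frac{N_{K/k}\bigl(\chi_{\ba}(\fp)\bigr)}{(-1)^r \prod_{b} \bigl(1 - \chi_{\ba}^{\rho_b}(\fp)\bigr)},
\]
so the required identity reduces to the $\FF_q^{\times}$-equality $\gamma \cdot N_{K/k}(\chi_{\ba}(\fp)) = (-1)^r \wp^{d[\FF_{\fp}:\FF_{\wp}]}$.

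The final step is a sign computation. First, Theorem~\ref{T:AndHecke}(a), combined with the fact that the sum of the coefficients of $\Theta_{\ba}$ equals $\deg \Xi_{\ba} = d$ (Lemma~\ref{L:Haprops}(b)), yields $N_{K/k}(\chi_{\ba}(\fp)) = \gamma_{\fp} \wp^{d[\FF_{\fp}:\FF_{\wp}]}$ for some $\gamma_{\fp} \in \FF_q^{\times}$. Second, Theorem~\ref{T:AndHecke}(b) gives $|\chi_{\ba}^{\rho_b}(\fp)|_{\infty} = |\ttheta|^{\ell \deg \ba} > 1$, so $\sgn(1 - \chi_{\ba}^{\rho_b}(\fp)) = -\sgn(\chi_{\ba}^{\rho_b}(\fp))$; multiplying over $b$ produces $\sgn\bigl(\prod_{b}(1 - \chi_{\ba}^{\rho_b}(\fp))\bigr) = (-1)^r \gamma_{\fp}$, which pins down $\gamma = (-1)^r \gamma_{\fp}^{-1}$, hence $\gamma \gamma_{\fp} = (-1)^r$, closing the match. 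The main obstacle throughout is this $\FF_q^{\times}$ sign bookkeeping: the monic-normalization convention for Fitting ideals has to be reconciled against the infinity behavior of the Hecke characters, and Theorem~\ref{T:AndHecke} supplies exactly the information needed.
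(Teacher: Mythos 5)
Your proof is correct and takes essentially the same route as the paper: both reduce to Lemma~\ref{L:EaFittings} and Theorem~\ref{T:AndHecke}, handle $\fp\in\Sigma_{\ba}$ via the $\oE_{\ba}\cong\rd\oE_{\ba}$ hypothesis, and resolve the $\FF_q^\times$ ambiguity using the fact that $\inorm{\chi_{\ba}^{\rho_b}(\fp)}>1$. The only cosmetic difference is that the paper folds the sign $\gamma$ directly into the product $\prod_b \frac{-\chi_{\ba}^{\rho_b}(\fp)}{1-\chi_{\ba}^{\rho_b}(\fp)}$ and reads off $Q_{\ba,\fp}^\vee(1)^{-1}$ at once, whereas you track $\gamma$ and $\gamma_{\fp}$ separately and close with the identity $\gamma\gamma_{\fp}=(-1)^r$; the underlying computation is the same.
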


\begin{proof}
For $\fp \in \Sigma_{\ba}$ the term in the left-hand product is~$1$ since $\oE_{\ba} \cong \rd\oE_{\ba}$. For $\fp \notin \Sigma_{\ba}$, we have $\prod_{b \in (A/fA)^{\times}} \chi_{\ba}^{\rho_b}(\fp) = N^K_k ( \chi_{\ba}(\fp) )$, which generates the ideal $N_k^K(\chi_{\ba}(\fp)B) \cap A$ of~$A$. By Theorem~\ref{T:AndHecke}(a), we see
\[
N_k^K \bigl( \chi_{\ba}(\fp)B \bigr) \cap A = N^K_k \bigl( \fp^{\Theta_{\ba}} \bigr) \cap A = \bigl( N^K_k(\fp)\bigr)^{\Theta_{\ba}} \cap A = \wp^{[\FF_{\fp}:\FF_\wp] d}A.
\]
Thus $N^K_k(\chi_{\ba}(\fp))$ equals $\wp^{[\FF_{\fp}:\FF_\wp]d}$ up to a constant from~$\FF_q^{\times}$. Lemma~\ref{L:EaFittings} then implies
\[
\frac{[\Lie(\oE_{\ba})(\FF_{\fp})]_A}{[\oE_{\ba}(\FF_{\fp})]_A}
= \prod_{b \in (A/fA)^{\times}} \frac{-\chi_{\ba}^{\rho_b}(\fp)}{1-\chi_{\ba}^{\rho_b}(\fp)} = \prod_{b \in (A/fA)^{\times}} \frac{1}{1 - \psi_{\ba}(\fp)} = Q_{\ba,\fp}^{\vee}(1)^{-1}.
\]
As for what happened to $\gamma$ from Lemma~\ref{L:EaFittings}(b), it has been accounted for with the minus sign in the middle term: $\sgn(-\chi_{\ba}^{\rho_b}(\fp)/(1-\chi_{\ba}^{\rho_b}(\fp))) = 1$ since $\inorm{\chi_{\ba}^{\rho_b}(\fp)} > 1$ by Theorem~\ref{T:AndHecke}(b). The result follows from~\eqref{E:LEaKSigmadef} and Proposition~\ref{P:Lidentities}(d).
\end{proof}

\begin{subsubsec}{Regulator of $E_{\ba}$} \label{SSS:RegEa}
Henceforth we will assume that $\ba = [a/f]$ is basic. In this case the dimension of $E_{\ba}$ is $d = r/(q-1) = n = [K^{+}:k]$ by Lemma~\ref{L:Haprops}(b). Also, Lemma~\ref{L:dEab} implies that the $\bA$-module structure on $\Lie(E_{\ba})(K_{\infty})$ coincides with the scalar $A$-module structure on $K_{\infty}^d$. Recalling that we have constructed $E_{\ba}$ as in \S\ref{SSS:EaBmodel}, the period lattice for this $E_{\ba}$ model is then $\alpha^{-1} \Lambda_{\ba}$ from Theorem~\ref{T:periods}.

Recall from \S\ref{SSS:GammaPi} that $\Pi(x) \in k_{\infty}^{\times}$ for $x \in k \setminus {A_+}$. Considering Theorem~\ref{T:periods}(b), if
\[
\rD_i \assign \begin{pmatrix}
\ddots & &  \\
& \rho_c(\zeta)^{i-1} & \\
 & & \ddots 
\end{pmatrix}_{c \in \cP_{\ba}} \in \Mat_{d}(B),
\quad \bspi \assign \begin{pmatrix}
\vdots \\ \Pi \bigl( [ca/f] \bigr) \\ \vdots
\end{pmatrix}_{c \in \cP_{\ba}} \in k_{\infty}^d,
\]
then for $1 \leqslant i \leqslant r$, we have $\pi_i = \rD_i \bspi$. Thus for $b \in I_f$, we can embed
\[
\varepsilon_b : \alpha^{-1} \Lambda_{\ba} \hookrightarrow K_{\rho_b(\infty_1)}^d
\]
via $a_1 \alpha^{-1}\pi_1 + \cdots + a_r \alpha^{-1} \pi_r \mapsto a_1 \alpha^{-1} \rho_b (\rD_1) \bspi + \cdots + a_r \alpha^{-1} \rho_b(\rD_r) \bspi$. As in \S\ref{SSS:regulators}, taking $\varepsilon \assign \oplus \varepsilon_b$ we can then embed
\begin{equation} \label{E:epsilonembed}
\varepsilon : \bigl( \alpha^{-1}\Lambda_{\ba} \bigr)^d \hookrightarrow K_{\infty}^d \cong \biggl( \bigoplus_{\substack{b \in A_+ \\ \deg b < \deg f \\ (b,f) = 1}} K_{\rho_b(\infty_1)}^d \biggr).
\end{equation}
Letting $\Lambda \assign \im \varepsilon$, we note that $\Lambda \subseteq \ker \Exp_{E_{\ba}}$, and thus
\[
\Lambda \subseteq \rU(E_{\ba}/B).
\]
Furthermore by \eqref{E:Liedim}, $\rank_{\bA} \Lambda = rd = \dim_{k_\infty} \Lie(E_{\ba})(K_{\infty}) = \rank_{\bA} \rU(E_{\ba}/B)$. Therefore, $\Lambda$ has finite index in $\rU(E_{\ba}/B)$ and is an $\bA$-lattice in $\Lie(E_{\ba})(K_{\infty})$.
\end{subsubsec}

\begin{remark} \label{R:basic}
It is here that $\ba = [a/f]$ being basic is significant. In~\eqref{E:epsilonembed}, the tuples in $(\alpha^{-1} \Lambda_{\ba})^d$ are indexed by $b \in I_f$, which has $r/(q-1)$ elements. However, the $d$ in $K_{\infty}^d$ on the right is coming from the dimension of $E_{\ba}$. When $\ba$ is basic, $d = r/(q-1)$, and $\Lambda$ is an $\bA$-lattice. When $\ba$ is more general, one can still construct $\Lambda \subseteq \rU(E_{\ba}/B)$, but its rank over $\bA$ will be $r^2/(q-1)$, which generally will be strictly smaller than $rd$.
\end{remark}

\begin{subsubsec}{The completion $k_{\infty}(\zeta)$} \label{E:kinftyzeta}
Elements of $\FF_q^{\times} \subseteq \Gal(K/k)$ fix powers $\zeta^{m}$ with $(q-1)\mid m$, and so in these cases $\zeta^m \in k_{\infty}$. Furthermore, $\{ 1, \zeta, \dots, \zeta^{q-2}\}$ is a $k_{\infty}$-basis of $k_{\infty}(\zeta)$, as $[k_{\infty}(\zeta):k_{\infty}] = q-1$ by \S\ref{SSS:infinity}, as well as a $k$-basis of $K^+=k(\zeta^{q-1})$.
\end{subsubsec}

\begin{proposition} \label{P:RegCalc}
Let $\ba = [a/f] \in \cA_f$ be basic. Fix a model $E_{\ba} : \bA \to \Mat_d(B[\tau])$ chosen as in \S\ref{SSS:EaBmodel}.
\begin{alphenumerate}
\item There is $C_{\rR} \in \ok^{\times}$ with $\sgn(C_{\rR})=1$ and
\[
C_{\rR}^{r} \in \begin{cases}
k^{\times} &\text{if $r$ is even,} \\ 
K^{\times}\ \text{and}\ C_{\rR}^{2r} \in k^{\times} &\text{if $r$ is odd,}
\end{cases}
\]
such that
\[
\Reg(E_{\ba}/B) = C_{\rR}^r \cdot \prod_{c \in \cP_{\ba}} \Pi \bigl( [ca/f] \bigr)^r.
\]
\item Furthermore, if $f/(a,f) = f' \in A_+$, then
\[
\Reg(E_{\ba}/B) = C_{\rR}^r \cdot \prod_{\substack{b' \in A_+ \\ \deg b' < \deg f' \\ (b',f')=1}} \Pi ( b'/f')^{[K_f:K_{f'}]r}
\]
\end{alphenumerate}
\end{proposition}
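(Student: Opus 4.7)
The plan is to unpack the definition $\Reg(E_\ba/B) = [\Lie(E_\ba)(B):\rU(E_\ba/B)]_\bA|_{t=\theta}$ from \S\ref{SSS:regulators} by passing through the $\bA$-lattice $\Lambda\subseteq \rU(E_\ba/B)$ built in \S\ref{SSS:RegEa} from the period lattice $\Lambda_\ba$. Since $\Lambda$ has finite index in $\rU(E_\ba/B)$, multiplicativity of covolumes yields
\[
\Reg(E_\ba/B) = [\Lie(E_\ba)(B):\Lambda]_\bA|_{t=\theta}\cdot[\rU(E_\ba/B):\Lambda]_\bA^{-1}|_{t=\theta},
\]
where the second factor is the inverse of a specialized monic element of $\bA$, to be absorbed into $C_\rR^r$. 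Recall that for $\ba=[a/f]$ basic, Lemma~\ref{L:dEab} gives $\rd E_{\ba,t}=\theta\rI$, so the $A$- and $\bA$-module structures on $\Lie(E_\ba)(K_\infty)$ agree under $t\leftrightarrow\theta$, which lets us work in the $A$-framework for the determinant computation.

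For the principal term, I would apply \eqref{E:covoldet} with the $A$-basis $\{\zeta^{j-1}e_{e'}:1\le j\le r,\ e'\in\cP_\ba\}$ of $\Lie(E_\ba)(B)=B^d$ (using $B\supseteq A[\zeta]$) and the $\bA$-basis $\{\varepsilon(\alpha^{-1}\pi_ie_c)\}_{i,c}$ of $\Lambda$ coming from Theorem~\ref{T:periods}(b). Each $\varepsilon(\alpha^{-1}\pi_ie_c)$ is supported only at the single place $c\in I_f$ in the decomposition $K_\infty\cong\bigoplus_{b\in I_f}K_{\rho_b(\infty_1)}$, with $c$-component $\rho_c(\alpha^{-1}\pi_i)$ whose $e'$-coordinate equals $\alpha^{-1}\rho_{ce'}(\zeta)^{i-1}\Pi([e'a/f])$. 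The scalar $\alpha^{-1}\Pi([e'a/f])$ depends only on $e'$, so it is common to the $r$ columns of the change-of-basis matrix indexed by fixed $e'$. Extracting these factors yields
\[
[\Lie(E_\ba)(B):\Lambda]_\bA|_{t=\theta} = \pm\,\alpha^{-rd}\prod_{e'\in\cP_\ba}\Pi([e'a/f])^{r}\cdot\det G,
\]
where $G\in\Mat_{rd}(k_\infty)$ is the residual matrix expressing the vectors $\rho_{e'}(\zeta)^{i-1}\hat{e}_c$ (with $\hat{e}_c$ the idempotent at place $c$ in $K_\infty$) in the $\{\zeta^{j-1}e_{e'}\}$-basis; crucially, $\det G$ depends only on the arithmetic of $K/k$, not on $a$.

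For (a) with $(a,f)=1$, the map $e'\mapsto e'a\bmod f:\cP_\ba\to I_f$ is a bijection (it is injective since $(a,f)=1$ and both sides have cardinality $d$), and gives $\prod_{e'\in\cP_\ba}\Pi([e'a/f]) = \prod_{c\in I_f}\Pi(c/f)$. For (b), writing $a=(a,f)a'$ and $f=(a,f)f'$ with $(a',f')=1$: the identity $\Pi([e'a/f])=\Pi([e'a'/f'])$ (equal fractional parts in $k/A$), combined with the reduction map $e'\mapsto e'a'\bmod f':\cP_\ba\to I_{f'}$, whose fibers have uniform size $|\cP_\ba|/|I_{f'}|=d/d'=[K_f:K_{f'}]$, produces the product over $I_{f'}$ with the claimed multiplicity. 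Setting
\[
C_\rR^r \assign \pm\,\alpha^{-rd}\cdot\det G\cdot[\rU(E_\ba/B):\Lambda]_\bA^{-1}|_{t=\theta},
\]
with the overall sign chosen so that $\sgn(C_\rR)=1$, completes the formula for $\Reg(E_\ba/B)$.

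The main obstacle lies in the integrality assertions on $C_\rR$. Since $\Reg(E_\ba/B)$ and each $\Pi([e'a/f])$ lie in $k_\infty^\times$, we immediately have $C_\rR^r\in k_\infty^\times$ and hence $C_\rR\in\ok^\times$ via an $r$-th root. The sharper descent, $C_\rR^r\in k^\times$ for $r$ even versus $C_\rR^r\in K^\times$ with $C_\rR^{2r}\in k^\times$ for $r$ odd, requires a Galois-equivariance analysis of $G$: the action of $\Gal(K/k)$ permutes the places $c\in I_f$ and acts on the entries $\rho_{e'}(\zeta)^{i-1}$ through its action on $\zeta$, forcing $\det G$ to be Galois-equivariant in a controlled way; combined with the finite-index factor from $\rU(E_\ba/B)/\Lambda$, the resulting quantity descends from $K_\infty^\times$ after accounting for the decomposition group $\FF_q^\times$ at $\infty$. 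The parity dichotomy arises because when $r$ is odd the descent can fail to push through a quadratic step from $K^\times$ down to $k^\times$, while for $r$ even this obstruction vanishes. Working out this descent precisely is the delicate part of the proof.
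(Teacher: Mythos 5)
Your overall strategy matches the paper's: decompose $\Reg(E_\ba/B)$ via multiplicativity of covolumes through the period-lattice $\Lambda$, then compute $[\Lie(E_\ba)(B):\Lambda]_A$ by expressing the period vectors in a suitable basis and extracting the $\Pi$-values as common factors. The extraction of $\alpha^{-rd}\prod_{c\in\cP_\ba}\Pi([ca/f])^r$ and the counting argument for part (b) via the $[K_f:K_{f'}]$-to-one reduction map $\cP_\ba\twoheadrightarrow I_{f'}$ are both correct and agree with what the paper does. A minor misreading: part (a) of the Proposition does \emph{not} require $(a,f)=1$ and its conclusion is stated over $\cP_\ba$, so the bijection $\cP_\ba\to I_f$ you invoke there is unnecessary (it belongs instead to Theorem~\ref{T:GossLMain}(a)); the factor extraction alone yields (a).

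The real gap is that you never establish the integrality claims about $C_\rR$, which is the content that makes the Proposition nontrivial. You define $C_\rR^r$ as $\pm\alpha^{-rd}\det G\cdot[\rU(E_\ba/B):\Lambda]_A^{-1}$, observe that $[\rU(E_\ba/B):\Lambda]_A\in A_+$, and then say that showing $C_\rR^r\in k^\times$ (resp.\ $K^\times$) ``requires a Galois-equivariance analysis'' that is ``the delicate part of the proof'' --- but you do not carry it out. Asserting that $\det G$ ``depends only on the arithmetic of $K/k$, not on $a$'' is also unjustified as stated: the index set $\cP_\ba$ does depend on $a$, and the independence (up to sign) is a consequence of~\eqref{E:rhoc=rhob}, which you do not invoke. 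The paper closes this gap concretely: using the $k_\infty$-basis $\{1,\rho_b(\zeta),\dots,\rho_b(\zeta)^{q-2}\}$ of $K_{\rho_b(\infty_1)}$ place by place, the $b$-th block $M_b$ factors as a Kronecker product of a $d\times d$ Vandermonde matrix $V_b$ (in the variables $\rho_c(\rho_b(\zeta))^{q-1}$, $c\in\cP_\ba$) with $\rI_{q-1}$, whence $\det M_b=\alpha^{-r}(\det V_b)^{q-1}\prod_{c\in\cP_\ba}\Pi([ca/f])^{q-1}$. By~\eqref{E:rhoc=rhob} each $V_b$ is a Vandermonde matrix in the full Galois orbit of $\zeta^{q-1}$, so $\det V_b=\pm\det V_1$ and $(\det V_1)^2\in A$ is (up to $k^\times$) the discriminant of $K^+/k$. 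Collecting the $r/(q-1)$ blocks gives $\det(V_1)^r$, so $C_\rR^r$ is a $k^\times$-multiple of $\det(V_1)^r$: an element of $k^\times$ when $r$ is even and of $K^\times$ with square in $k^\times$ when $r$ is odd. Your global basis $\{\zeta^{j-1}e_{e'}\}$ of $B^d$ obscures exactly this block/Vandermonde structure, which is why the descent looks opaque from your vantage point; switching to the paper's per-place basis makes it mechanical. Without that step the Proposition is not proved.
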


\begin{proof}
We continue with the notation of \S\ref{SSS:RegEa}. From~\S\ref{SSS:covol},
\begin{equation} \label{E:RegEatoLambda}
\Reg(E_{\ba}/B) = \frac{[\Lie(E_{\ba})(B) : \Lambda]_A}{[\rU(E_{\ba}/B) : \Lambda]_A},
\end{equation}
with $[\rU(E_{\ba}/B):\Lambda]_A \in A_+$. We calculate $[\Lie(E_{\ba})(B) : \Lambda]_A$. An $\bA$-basis for $\Lambda$ consists of vectors of the form
\begin{equation} \label{E:Lambdabasis}
\bigl(0, \dots, 0, \alpha^{-1} \rho_b(\rD_i)\bspi,0 \dots, 0  \bigr)^{\tr} \in \bigoplus_{b \in I_f} K_{\rho_b(\infty_1)}^d,
\end{equation}
where the non-zero entry is in the $b$-th component. We consider first the component corresponding to $b=1$, which is simply $k_{\infty}(\zeta)^d$. By Theorem~\ref{T:periods}(b) a short calculation yields that the block matrix in $\Mat_r(k_{\infty})$ (with row blocks indexed by $c \in \cP_{\ba})$,
\[
M_1 = \alpha^{-1} \Bigl(
\Pi\bigl( [ca/f] \bigr) \rI_{q-1}, \rho_c(\zeta)^{q-1}\Pi\bigl( [ca/f] \bigr) \rI_{q-1} ,\ldots, \rho_c(\zeta)^{(q-1)(d-1)} \Pi\bigl( [ca/f] \bigr) \rI_{q-1}
\Bigr)_{c \in \cP_{\ba}},
\]
represents the elements $\alpha^{-1} \rD_1 \bspi, \dots, \alpha^{-1} \rD_r \bspi$ of our $\bA$-basis for $\Lambda$ in $k_{\infty}(\zeta)^d$ with respect to $\{ 1, \zeta, \dots, \zeta^{q-2} \}$. Properties of determinants yield
\[
\det M_1 = \alpha^{-r} \prod_{c \in \cP_{\ba}} \Pi\bigl([ca/f] \bigr)^{q-1}
\cdot \det \Bigl( \rI_{q-1}, \rho_c(\zeta)^{q-1} \rI_{q-1}, \dots, \rho_c(\zeta)^{(q-1)(d-1)} \rI_{q-1} \Bigr)_{c \in \cP_{\ba}}.
\]
The determinant on the right is the determinant of the Kronecker product of the $d \times d$ Vandermonde matrix $V_1$ for $\{ \rho_c(\zeta)^{q-1} \mid c \in \cP_{\ba} \}$ and $\rI_{q-1}$. Thus,
\[
\det M_1 = \alpha^{-r} \det(V_1)^{q-1} \prod_{c \in \cP_{\ba}} \Pi\bigl([ca/f] \bigr)^{q-1}.
\]
An observation from~\eqref{E:XiWaf} and~\eqref{E:XiWafpullback} is that 
\begin{equation} \label{E:rhoc=rhob}
\{ \rho_c|_{K^+} : c \in \cP_{\ba} \} = \{ \rho_b|_{K^+} : b \in I_f\} = \Gal(K^+/k)
\end{equation}
(cf.~\cite{BP02}*{Thm.~4.5.4}), and so $\det V_1$ is a square-root of the discriminant of the irreducible polynomial for $\zeta^{q-1}$ over~$k$. In particular $(\det V_1)^2 \in A$.

Now $\det M_1$ is the determinant of the $b=1$ component of the $\bA$-basis in~\eqref{E:Lambdabasis} in terms of the $k_{\infty}$-basis of $k_{\infty}(\zeta)^d = K_{\rho_1(\infty_1)}^d$ built from $\{ 1, \zeta, \dots, \zeta^{q-2} \}$. To consider the arbitrary $b$-th component in~\eqref{E:Lambdabasis}, the calculations are exactly the same, but with $\zeta$ replaced by $\rho_b(\zeta)$. This yields a matrix $M_b \in \Mat_r(k_{\infty})$ as above with Vandermonde matrix $V_b$ for $\{ \rho_c (\rho_b(\zeta))^{q-1} \mid c \in \cP_{\ba} \}$ such that
\[
\det M_b = \alpha^{-r} \det(V_b)^{q-1} \prod_{c \in \cP_{\ba}} \Pi\bigl([ca/f] \bigr)^{q-1}.
\]
By~\eqref{E:rhoc=rhob}, we see that $\det V_b = \det V_1$ up to $\pm 1$. Pulling everything together and using~\eqref{E:covoldet}, we see that
\begin{equation} \label{E:Lambdacovol}
\bigl[ \Lie(E_{\ba})(B) : \Lambda \bigr]_A = \prod_{b \in I_f} \det M_b
= \alpha^{-rd} \det(V_1)^{r} \prod_{c \in \cP_{\ba}} \Pi \bigl( [ca/f] \bigr)^{r}.
\end{equation}
Take $C_{\rR} = \alpha^{-d} \det(V_1)/[\rU(E_{\ba}/B):\Lambda]_A^{1/r}$ chosen with sign~$1$. We note $C_{\rR}^r\in k^{\times}$ when~$r$ is even, as $\det(V_1)^2 \in A$. Otherwise, $C_{\rR}^r \in K^{\times}$ and $C_{\rR}^{2r} \in k^{\times}$, and (a) is done by~\eqref{E:RegEatoLambda}.

For (b), if we write $a = \gamma a'$, then by reduction modulo $f'$,
\begin{multline*}
\cP_{\ba} = \{ c \in A \mid \deg c < \deg f,\ (c,f)=1,\ \langle ca/f \rangle = 1 \} \\
\twoheadrightarrow \{ c' \in A \mid \deg c' < \deg f',\ (c',f')=1,\ \langle c'a'/f' \rangle = 1\},
\end{multline*}
and this map is $[K_f:K_{f'}]$-to-one. Also, since $(a',f')=1$, if $b' \in A_+$, $\deg b' < \deg f'$, $(b',f')=1$, then there a unique $c'$ in the target set such that $c'a' \bmod f' = b'$. In this case, if $c \mapsto c'$, then $\Pi([ca/f]) = \Pi([c'a'/f']) = \Pi(b'/f')$, and the result follows.
\end{proof}

\begin{remark} \label{R:squareroot}
Taelman~\cite{Taelman10}*{Rem.~4} observed that in the analogy with unit groups of number fields, his regulator also accounts for the square root of the discriminant in the usual residue formula for the Dedekind zeta function. We see this here as the constant~$C_{\rR}$ contains terms coming from $\det V_1$, which is a $k^{\times}$-multiple of a square root of the discriminant of $K^+/k$ (cf.~Anderson's $\Omega_k(n,\ba)$ in \cite{And86Taniyama}*{Thm.~2.4}).
\end{remark}

\begin{theorem} \label{T:GossLMain}
Let $\ba = [a/f] \in \cA_f$ be basic, and fix $E_{\ba}: \bA \to \Mat_d(B[\tau])$ as in \S\ref{SSS:EaBmodel}. Let $C_{\rR}$ be chosen as in Proposition~\ref{P:RegCalc}, and pick $C_{\rH} \in \ok^{\times}$ with $\sgn(C_{\rH}) = 1$ so that $\rh(E_{\ba}/B) = C_{\rH}^r$. Then there exists $C_{\rE} \in K^{\times}$ with $C_{\rE}^{r} \in k^{\times}$ and $\sgn(C_{\rE}) = 1$ so that the following hold.
\begin{alphenumerate}
\item If $(a,f) = 1$, then
\[
L(E_{\ba}^{\vee}/K,0) = C_{\rR}^r \cdot C_{\rH}^r \cdot C_{\rE}^r  \cdot \prod_{\substack{b \in A_+ \\ \deg b < \deg f \\ (b,f)=1}} \Pi(b/f)^r.
\]
\item More generally, if $f/(a,f) = f' \in A_+$, then
\[
L(E_{\ba}^{\vee}/K,0) = C_{\rR}^r \cdot C_{\rH}^r \cdot C_{\rE}^r \cdot \prod_{\substack{b' \in A_+ \\ \deg b' < \deg f' \\ (b',f')=1}} \Pi(b'/f')^{[K_f:K_{f'}]r}.
\]
\end{alphenumerate}
\end{theorem}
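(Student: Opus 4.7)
The plan is to combine Fang's class module formula with the regulator computation of Proposition~\ref{P:RegCalc} and then account for the finitely many Euler factors that distinguish $L(E_{\ba}^{\vee}/K,0)$ from the partial $L$-series $L(E_{\ba}^{\vee}/K,\Sigma_{\ba},0)$. First I would apply Theorem~\ref{T:TaelmanFang} to the model $E_{\ba} : \bA \to \Mat_d(B[\tau])$ of \S\ref{SSS:EaBmodel} to get
\[
\prod_{\fp}\frac{[\Lie(\oE_{\ba})(\FF_{\fp})]_A}{[\oE_{\ba}(\FF_{\fp})]_A} = \Reg(E_{\ba}/B) \cdot \rh(E_{\ba}/B),
\]
and then invoke Proposition~\ref{P:Lvalues} to identify the left-hand side with $L(E_{\ba}^{\vee}/K,\Sigma_{\ba},0)$.

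Next I would pass from the partial $L$-series to $L(E_{\ba}^{\vee}/K,0)$ of~\eqref{E:LEaKdef} via the identity
\[
L(E_{\ba}^{\vee}/K,0) = L(E_{\ba}^{\vee}/K,\Sigma_{\ba},0) \cdot \prod_{\substack{\fp \in \Sigma_{\ba} \\ \fp \nmid f}} Q_{\ba,\fp}^{\vee}(1)^{-1}.
\]
Here it is essential that Corollary~\ref{C:Eagoodprimes} guarantees good reduction at every $\fp \nmid f$, so that $Q_{\ba,\fp}^{\vee}(X)$ makes sense for every such $\fp$ and Proposition~\ref{P:charpolys} applies to it, yielding $Q_{\ba,\fp}^{\vee}(1) = \prod_{b \in (A/fA)^{\times}}(1-\psi_{\ba}^{\rho_b}(\fp))$.

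The next step, which I expect to be the most delicate point, is to rewrite these extra factors as a single $r$-th power with the stated rationality. Since the model in \S\ref{SSS:EaBmodel} is chosen so that $\Sigma_{\ba}$ is a union of Galois orbits, applying the argument from the proof of Proposition~\ref{P:Lidentities}(d) rational-prime by rational-prime gives
\[
\prod_{\substack{\fp \in \Sigma_{\ba} \\ \fp \nmid f}} Q_{\ba,\fp}^{\vee}(1)^{-1}
= \Biggl( \prod_{\substack{\fp \in \Sigma_{\ba} \\ \fp \nmid f}} \bigl(1 - \psi_{\ba}(\fp)\bigr)^{-1} \Biggr)^{r}.
\]
The inner product lies in $K^{\times}$, and Lemma~\ref{L:chiaGalsymm} combined with the Galois-stability of $\Sigma_{\ba}$ shows it is $\Gal(K/k)$-invariant, hence its $r$-th power is in $k^{\times}$. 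After a sign adjustment we let $C_{\rE}$ denote this element; then $C_{\rE} \in K^{\times}$, $C_{\rE}^{r} \in k^{\times}$, and $\sgn(C_{\rE})=1$, as required.

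Finally, I would combine these pieces. For part~(a) with $(a,f)=1$, the bijection $b \mapsto (ba \bmod f)$ on $(A/fA)^{\times}$ identifies $\cP_{\ba}$ with $\{b \in A_+ : \deg b < \deg f,\ (b,f)=1\}$ and turns the product in Proposition~\ref{P:RegCalc}(a) into the asserted product of $\Pi(b/f)^r$. Substituting the regulator identity of Proposition~\ref{P:RegCalc}(a) together with $\rh(E_{\ba}/B) = C_{\rH}^r$ and the $C_{\rE}^r$-factor derived above yields the formula in~(a). For~(b), the same substitution is carried out using Proposition~\ref{P:RegCalc}(b), which already incorporates the reduction to $f'$ and the $[K_f:K_{f'}]$ multiplicity. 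All sign conditions match because each of $C_{\rR}$, $C_{\rH}$, $C_{\rE}$, and the $\Pi$-values are positive (the latter by construction of $\Pi$ on $k \setminus (-A_+) \subseteq k_{\infty}$).
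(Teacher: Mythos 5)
Your proposal is correct and follows the same route as the paper's proof: combine Theorem~\ref{T:TaelmanFang} with Propositions~\ref{P:Lvalues} and~\ref{P:RegCalc} to obtain the identity for the truncated $L(E_{\ba}^{\vee}/K,\Sigma_{\ba},0)$, and absorb the finitely many extra Euler factors at $\fp \in \Sigma_{\ba}$, $\fp \nmid f$ into $C_{\rE}^r$. Your unpacking of the $r$-th power structure and the Galois symmetry of the extra factors is more detailed than the paper's one-line justification via Remark~\ref{R:LnoSigmas} and Lemma~\ref{L:chiaGalsymm}, but it is the same idea (in fact, the Galois-invariance argument shows the inner product already lies in $k^{\times}$, so the conclusion $C_{\rE}^r \in k^{\times}$ is if anything weaker than what you actually prove); your sign observation is also consistent since $\inorm{\psi_{\ba}(\fp)} < 1$ forces $\sgn(1-\psi_{\ba}(\fp))=1$.
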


\begin{proof}
Combining Theorem~\ref{T:TaelmanFang} with Propositions~\ref{P:Lvalues} and~\ref{P:RegCalc}, we obtain these same identities for $L(E_{\ba}^{\vee}/K,\Sigma_{\ba},0)$, but without the factor of $C_{\rE}^r$, which comes from the extra Euler factors for $L(E_{\ba}^{\vee}/K,0)$ in~\eqref{E:LEaKdef}. Then $C_{\rE} \in K^{\times}$ by~\eqref{E:AndHeckeLdef} and Remark~\ref{R:LnoSigmas}.
\end{proof}

\begin{remark}
When $r$ is even, we see that $L(E_{\ba}^{\vee}/K,0)$ is a $k^{\times}$-multiple of special $\Pi$-values. When $r$ is odd, this multiple is a square root of an element of $k^{\times}$.

Our main corollary follows immediately from Remark~\ref{R:LnoSigmas} and Theorem~\ref{T:GossLMain}.
\end{remark}

\begin{corollary} \label{C:HeckeLMain}
Under the same conditions as Theorem~\ref{T:GossLMain} the following hold.
\begin{alphenumerate}
\item If $(a,f) = 1$, then
\[
L(\psi_{\ba},0) = C_{\rR} \cdot C_{\rH} \cdot C_{\rE} \cdot \prod_{\substack{b \in A_+ \\ \deg b < \deg f \\ (b,f)=1}} \Pi(b/f).
\]
\item More generally, if $f/(a,f) = f' \in A_+$, then
\[
L(\psi_{\ba},0) = C_{\rR} \cdot C_{\rH} \cdot C_{\rE} \cdot \prod_{\substack{b' \in A_+ \\ \deg b' < \deg f' \\ (b',f')=1}} \Pi(b'/f')^{[K_f:K_{f'}]}.
\]
\end{alphenumerate}
\end{corollary}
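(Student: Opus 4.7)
The plan is to deduce both statements from Theorem~\ref{T:GossLMain} by extracting an $r$-th root, using the sign function of \S\ref{SSS:signs} to pin down the correct root.

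First, combining Theorem~\ref{T:GossLMain} with Remark~\ref{R:LnoSigmas}, which says $L(E_{\ba}^{\vee}/K,0) = L(\psi_{\ba},0)^r$ (this is where Corollary~\ref{C:Eagoodprimes} is needed, to drop the auxiliary set $\Sigma_{\ba}$ in favor of excluding only primes above $f$), I obtain in both cases
\[
L(\psi_{\ba},0)^r = \bigl( C_{\rR} \cdot C_{\rH} \cdot C_{\rE} \cdot P_{\ba} \bigr)^r,
\]
where $P_{\ba}$ is the appropriate product of $\Pi$-values on the right-hand side of Theorem~\ref{T:GossLMain}(a) or (b). Writing $r = p^e r'$ with $\gcd(r',p) = 1$, the injectivity of Frobenius on $\C$ shows that any $r$-th root of unity in $\C$ is in fact an $r'$-th root of unity, hence lies in $\oFF_q^{\times}$. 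So
\[
L(\psi_{\ba},0) = \omega \cdot C_{\rR} \cdot C_{\rH} \cdot C_{\rE} \cdot P_{\ba}
\]
for some $\omega \in \oFF_q^{\times}$ satisfying $\omega^{r} = 1$.

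Second, I will show $\omega = 1$ via a sign comparison. The sign function of \S\ref{SSS:signs} restricted to $\oFF_q^{\times}$ is the identity, so $\sgn(\omega) = \omega$, and it suffices to verify that both sides of the displayed identity have sign $1$. On the right, $C_{\rR}$, $C_{\rH}$, $C_{\rE}$ are all normalized to have sign $1$ in Theorem~\ref{T:GossLMain}. Each $\Pi(b/f)$ (resp.\ $\Pi(b'/f')$) appearing in $P_{\ba}$ satisfies $|b/f|_{\infty} < 1$, so each factor in the product
\[
\Pi(b/f) = \prod_{a \in A_+} \bigl( 1 + b/(fa) \bigr)^{-1}
\]
has the form $(1 + O(1/\theta))^{-1}$, giving $\sgn(\Pi(b/f)) = 1$, and thus $\sgn(P_{\ba}) = 1$.

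Third, for the left-hand side, the Euler product
\[
L(\psi_{\ba},0) = \prod_{\fp \nmid f} \bigl( 1 - \psi_{\ba}(\fp) \bigr)^{-1}
\]
converges by Proposition~\ref{P:Lidentities}(a), and Theorem~\ref{T:AndHecke}(b) (applied to $\psi_{\ba} = \chi_{-\ba}$) gives $|\psi_{\ba}(\fp)|_{\infty} = |\ttheta|^{-\ell \deg \ba} < 1$ for every $\fp \nmid f$. Each Euler factor therefore has the form $(1 + O(1/\theta))^{-1}$ with sign $1$, and multiplicativity of $\sgn$ yields $\sgn(L(\psi_{\ba},0)) = 1$. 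This forces $\omega = 1$ and completes the proof of both (a) and (b).

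The proof is essentially a formal consequence of Theorem~\ref{T:GossLMain} and the factorization of the Goss $L$-function into Hecke $L$-series. The only step requiring any care is the selection of the correct $r$-th root; this is not really an obstacle, but it is worth spelling out because taking roots in a field of positive characteristic can in general introduce ambiguity, and the sign normalizations of $C_{\rR}$, $C_{\rH}$, $C_{\rE}$ in the statement of Theorem~\ref{T:GossLMain} were imposed precisely to make the present argument go through.
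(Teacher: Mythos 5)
Your proof is correct and matches the paper's intent: the paper declares that the corollary ``follows immediately'' from Remark~\ref{R:LnoSigmas} and Theorem~\ref{T:GossLMain}, and the implicit step---selecting the correct $r$-th root of $L(E_{\ba}^{\vee}/K,0)$ in $\C$---is exactly what your sign comparison supplies, using that $r$-th roots of unity in $\C$ lie in $\oFF_q^{\times}$ where $\sgn$ is the identity, and that both $L(\psi_{\ba},0)$ and $C_{\rR}C_{\rH}C_{\rE}\prod\Pi(b/f)$ have sign $1$. The sign normalizations on $C_{\rR}$, $C_{\rH}$, $C_{\rE}$ were imposed in Proposition~\ref{P:RegCalc} and Theorem~\ref{T:GossLMain} precisely so that this root-taking step is unambiguous, as you observe.
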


\section{Examples} \label{S:examples}

We provide examples of Anderson Hecke characters, Sinha modules, and special $L$-values. One should compare with previous work in~\citelist{\cite{ABP04}*{\S 6.6} \cite{BP02}*{\S 6} \cite{Sinha97b}*{\S 3.3}}, and additional details can be found in~\cite{DavisPhD}*{\S 6}.

\subsection{The case \texorpdfstring{$\ba = [1/\theta]$}{a=[1/theta]}}
Taking $f = \theta$ and $\ba = [1/\theta]$, we have $\zeta = \zeta_{\theta} = (-\theta)^{1/(q-1)}$. We find that $\cD_{\theta}(t,z) = t + z^{q-1}$. The Coleman function is
\[
g_{[1/\theta]} = 1 - \frac{z}{\zeta}
\]
with $W_{[1/\theta]} = 0$ and $\Xi_{[1/\theta]} = \xi$. In this case $B = \FF_q[\zeta]$ is a principal ideal domain, and after some calculations involving the techniques from \S\ref{SS:AndHecke}, we find for $\fp = p(\zeta)B$ a prime ideal of $B$ with $p(\zeta) \in B_+$ prime to $\theta$ that
\begin{equation} \label{E:psitheta}
\psi_{[1/\theta]}(\fp) = \frac{p(0)}{p}.
\end{equation}
The $t$-comotive $H_{[1/\theta]}(\ok)$ is simply $\ok[t,z]$ with the usual $\sigma$-action, and it has rank~$q-1$, dimension~$1$, and $\ok[\sigma]$-basis $\{1 \}$ defined over $K$. We find (cf.~\cite{BP02}*{\S 6.1}) that
\begin{equation}
E_{[1/\theta],z} = \zeta - \zeta^q \tau, \quad
E_{[1/\theta],t} = \theta(1+ \theta \tau)^{q-1}.
\end{equation}
Thus $E_{[1/\theta]}$ is essentially a twist of the Carlitz module for $B$. One finds that
\begin{equation} \label{E:Ltheta}
L(E_{[1/\theta]}^{\vee}/K,0) = \Pi(1/\theta)^{q-1}, \quad
L(\psi_{[1/\theta]},0) = \Pi(1/\theta),
\end{equation}
and so the constants in Theorem~\ref{T:GossLMain} and Corollary~\ref{C:HeckeLMain} are all~$1$. Although we omit the details, one way to prove these identities is to use specializations of Pellarin's $L$-series for $\bB = \FF_q[z]$ at $z=0$, since that is essentially how $\psi_{[1/\theta]}$ is given in~\eqref{E:psitheta}. Pellarin's main identity~\cite{Pellarin12}*{Thm.~1} then leads to~\eqref{E:Ltheta}. We wonder if there is a more general connection between $L(\psi_{\ba},0)$ and deformation $L$-series as studied in~\cites{ANT17b, ANT20, ANT22, APT18, AT17, Demeslay22, HuangP22}.

\subsection{The case \texorpdfstring{$\ba = [1/(\theta^2-\theta)]$}{a=[1/(theta2-theta)]}, \texorpdfstring{$q=3$}{q=3}}
Consider $f=\theta(\theta-1)$ and $\ba = [1/f]$ when $q=3$. Then $K = k(\zeta_f)$ is biquadratic over $k$, containing intermediate quadratic fields $k(\zeta_{\theta}) = k(\sqrt{-\theta})$, $k(\zeta_{\theta-1}) = k(\sqrt{1-\theta})$, and $k(\sqrt{f}) = K^+$. Moreover,
\[
\cD_f(t,z) = z^4 + (t+1)z^2 + 1.
\]
Since $t$ is a rational expression of $z$ in $\bK=\FF_3(t,z)$, we see that $\bK = \FF_3(z)$ and that $X_f = \PP^1$. Then $X_f$ has two points at infinity, $\infty_1$, $\infty_2$, and $\divf(z) = \infty_1 - \infty_2$, so $z \in \bB^{\times}$. Moreover, since $t = -1 -z^2 - z^{-2}$, it follows that $\bB = \FF_3[z,z^{-1}]$. We find $W_{[1/f]} = \xi_1$,  $\Xi_{[1/f]} = \xi_1 + \xi_{\theta+1}$, and
\[
g_{[1/f]} = (1-\zeta^4)^{-1} (z - \zeta^3)(z^{-1} + \zeta).
\]
Using~\eqref{E:chiafdef}, if $\fp$ is a prime ideal of $B = \FF_3[\zeta,\zeta^{-1}]$ generated by $p(\zeta) \in \FF_3[\zeta]_+$, then
\begin{equation} \label{E:psiex2}
\psi_{[1/f]}(\fp) = \frac{p(1)p(-1)p(\sqrt{-1})p(-\sqrt{-1})}{p(\zeta)p(-\zeta^{-1})}.
\end{equation}
The $t$-comotive $H_{\ba}(\ok)$ is the ideal $H^0(\bU,\cO_{\bX}(-\xi_1^{(1)})) = \ok[z,z^{-1}](z-\zeta^3)$, and it has rank~$4$ and dimension~$2$. Using the techniques in \S\ref{SS:Fodef}, we obtain a $\ok[\sigma]$-basis
\begin{align}
h_1 &= (1-\zeta^4)^{-1} (z - \zeta^3)(z^{-1} + \zeta) = g_{[1/f]},
&&\divf(h_1) = \xi_1^{(1)} + \xi_{\theta+1} - \infty_1 - \infty_2, \\
h_2 &= (1+\zeta^2)^{-1}(z - \zeta^3)(z^{-1}-\zeta^{-1}),
&&\divf(h_2) = \xi_1^{(1)} + \xi_1 - \infty_1 - \infty_2. \notag
\end{align}
We note that $h_1(\xi_1) = 1$, $h_1(\xi_{\theta+1}) = 0$, $h_2(\xi_1)=0$, and $h_2(\xi_{\theta+1}) = -(\theta+1)$. The last calculation implies that $\{ h_1, h_2\}$ is almost but not exactly normalized in the sense of~\S\ref{SSS:normalized}. However, this change simplified some of our calculations. We find that
\begin{align}
E_{[1/f],z} &= \begin{pmatrix}
\zeta & 0 \\ 0 & -\zeta^{-1}
\end{pmatrix}
+ \begin{pmatrix}
\zeta-\zeta^3 & -\zeta^{-5} (1-\zeta^2)^4 \\
-\zeta & \zeta^{-5}(1-\zeta^2)^3
\end{pmatrix} \tau, 
\\
E_{[1/f],z^{-1}} &= \begin{pmatrix}
\zeta^{-1} & 0 \\ 0 & -\zeta
\end{pmatrix}
+ \begin{pmatrix}
\zeta^{-3}(1-\zeta^2) & \zeta^{-3} (1-\zeta^2)^4 \\
\zeta^{-1} & \zeta^{-1}(1-\zeta^2)^3
\end{pmatrix} \tau, \notag
\end{align}
and
\begin{multline} \label{E:Eex2t}
E_{[1/f],t} = \begin{pmatrix}
\theta & 0 \\ 0 & \theta
\end{pmatrix}
+ \begin{pmatrix}
-\theta(\theta^2-1) & -\theta^3(\theta-1) \\
\theta(\theta-1) & \theta^2(\theta^2-1)
\end{pmatrix} \tau \\
{}+ \begin{pmatrix}
\theta^2(\theta-1)^2(\theta^2+1) & -\theta^6(\theta^3-\theta)(\theta-1) \\
-\theta (\theta^3-\theta)(\theta -1) & \theta^6 (\theta-1)^2 (\theta^2 + 1)
\end{pmatrix} \tau^2.
\end{multline}
The determinant of the $\tau^2$ coefficient of $E_{[1/f],t}$ simplifies to $f^8$, and thus $E_{[1/f]}$ is defined over $B$ (in fact $E_{[1/f],t}$ is defined over $A$ itself) and is strictly pure in the sense of~\cite{NamoijamP24}*{Ex.~3.38}. Moreover, it follows that the reduction of $E_{[1/f]}$ is good for every prime not dividing~$f=\theta(\theta-1)$. We note that~\cite{BP02}*{\S 6.2} indicates that $E_{[1/f]}$ is isogenous to $E_{[1/\theta]}^{\oplus 2}$, which can be verified from~\eqref{E:Eex2t} after a change of coordinates. For $L$-values we find after some calculation that
\begin{equation}
L(E_{[1/f]}^{\vee},0) \mayeq \frac{\theta^2(\theta-1)^2}{(\theta+1)^4} \cdot \Pi \biggl( \frac{1}{\theta(\theta-1)} \biggr)^4 \Pi \biggl( \frac{\theta+1}{\theta(\theta-1)} \biggr)^4,
\end{equation}
and
\begin{equation}
L(\psi_{[1/f]},0) \mayeq \frac{\sqrt{\theta(\theta-1)}}{\theta+1} \cdot \Pi \biggl( \frac{1}{\theta(\theta-1)} \biggr) \Pi \biggl( \frac{\theta+1}{\theta(\theta-1)} \biggr),
\end{equation}
where ``$\mayeq$'' represents that we have checked these identities numerically (with precision approximately $50$ digits). We note that $\sqrt{\theta(\theta-1)}$ is the square root of the discriminant of $K^+/k$, as we saw in Remark~\ref{R:squareroot}.

\appendix
\section{Primes of good reduction} \label{App:GoodRed}

We prove here that the primes $\fp$ of $B$ not dividing $f$ are primes of good reduction for $E_{\ba}$, as mentioned in \S\ref{SS:reduction}. The key arguments rely on analyses of the relationships between Sinha $t$-comotives over $K$ and over $\FF_{\fp}$.

\subsection{Good reduction of \texorpdfstring{$H_{\ba}$}{Ha} and \texorpdfstring{$E_{\ba}$}{Ea}} \label{SS:appdualtmot}
We review some generalities for reductions of function spaces modulo $\fp$. For a divisor $D \in \Div_K(\bX)$ such that $\cL(D) \subseteq \ok[t,z]$, we let
\[
\cL(D,B_{\fp}) \assign \cL(D) \cap B_{\fp}[t,z].
\]
Because $D$ is defined over $K$, as in \S\ref{SS:AndHecke} it can be interpreted as a sum $\cD$ of horizontal divisors on the curve $\cX = B_{\fp} \times_{\FF_q} X$ over $\Spec B_{\fp}$. As such we can identify $\cL(D,B_{\fp}) = H^0(\cX/B_{\fp},\cO_{\cX/B_{\fp}}(\cD))$, and moreover,
\[
K \otimes_{B_{\fp}} \cL(D,B_{\fp}) = \cL(D) \cap K[t,z] = H^0(X/K,\cO_{X/K}(D)) \rassign \cL(D,K).
\]
The Riemann-Roch theorem~\cite{Rosen}*{Thm.~5.4} implies that $\cL(D)$ has a $\ok$-basis consisting of elements in $K[t,z]$, so in particular
\[
\cL(D) = \ok \otimes_{K} \cL(D,K) = \ok \otimes_{B_{\fp}} \cL(D,B_{\fp}).
\]
Now let
\[
\cL(\oD,\FF_{\fp}) \assign H^0(X,\cO_X(\oD)) \cap \FF_{\fp}[t,z] = H^0(X/\FF_{\fp},\cO_{X/\FF_{\fp}}(\oD)),
\]
and consider the reduction map
\begin{equation} \label{E:pidef}
\pi : \cL(D,B_{\fp}) \to \cL(\oD,\FF_{\fp}).
\end{equation}
Clearly, $\ker \pi = \cL(D,B_{\fp}) \cap \fp B_{\fp}[t,z] = \fp \cL(D,B_{\fp})$. If we let $P = \im \pi$, then $P$ is a finite dimensional $\FF_{\fp}$-vector space, and
\begin{equation} \label{E:LDBp}
P \cong \cL(D,B_{\fp})/\fp\cL(D,B_{\fp}).
\end{equation}
By Nakayama's lemma \cite{LangAlg}*{\S X.4}, if we pick $\alpha_1, \dots, \alpha_n \in \cL(D,B_{\fp})$ such that $\oalpha_1, \dots, \oalpha_n$ is an $\FF_{\fp}$-basis of~$P$, then $\alpha_1, \dots, \alpha_n$ are a $B_{\fp}$-basis of $\cL(D,B_{\fp})$. Hence they also form a $K$-basis of $\cL(D,K)$, and therefore,
\begin{equation} \label{E:reddim}
\dim_{K} \cL(D,K) \leqslant \dim_{\FF_{\fp}} \cL(\oD,\FF_{\fp}).
\end{equation}
Now since $\bX$ is simply the extension of scalars to~$\ok$ of the smooth projective irreducible curve $X/\FF_q$, both $\bX$ and $X$ have the same genus, say~$\gamma$. In the case that $\deg D = \deg \oD > 2\gamma -2$, the Riemann-Roch theorem assures us that~\eqref{E:reddim} is an equality.

We apply these considerations to the situation of \S\ref{SS:Fodef}.
As in~\eqref{E:Midef}, for $i \in \ZZ$ we let $M_i = \cL(-W_{\ba}^{(1)} + i I_{\ba})$ and $\oM_i = \cL( -\oW_{\ba}^{(1)} + i I_{\ba})$. We write
\[
M_i(B_{\fp}) = M_i \cap B_{\fp}[t,z] = \cL(-W_{\ba}^{(1)} + i I_{\ba},B_{\fp}), \quad M_i(K) = M_i \cap K[t,z],
\]
and
\[
\oM_i(\FF_{\fp}) = \oM_i \cap \FF_{\fp}[t,z] = \cL(-\oW_{\ba}^{(1)} + i I_{\ba},\FF_{\fp}).
\]
We have reduction maps for each $i \in \ZZ$,
\begin{equation} \label{E:piidef}
\pi_i : M_i(B_{\fp}) \to \oM_i(\FF_{\fp}).
\end{equation}
For $N \geqslant 0$ taken from Lemma~\ref{L:Miprops}(g) and its proof, we obtain that for $i \geqslant N-1$,
\[
\dim_K M_i(K) = \dim_{\FF_{\fp}} \oM_i(\FF_{\fp}) \quad \Rightarrow \quad
\pi_i(M_i(B_{\fp})) = \oM_i(\FF_{\fp}).
\]
In particular,
\[
\pi_{N-1}(M_{N-1}(B_{\fp})) =\oM_{N-1}(\FF_{\fp}), \quad
\pi_N(M_N(B_{\fp})) = \oM_{N}(\FF_{\fp}).
\]
The maps $\{ \pi_i \}$ form maps of directed systems and induce
\[
\pi : \bigcup_{i=0}^{\infty} M_i(B_{\fp}) \to \bigcup_{i=0}^{\infty} \oM_i(\FF_{\fp}), \quad \bigl(= H_{\ba,\fp}(\FF_{\fp}) \bigr).
\]
We now prove a straightforward lemma on extending bases on two-dimensional arrays of vector spaces. Throughout we let `$\sqcup$' denote a disjoint union.

\begin{lemma} \label{L:triangle-v2}
Let $N \geqslant0$, and let $\{ V_{i,j} \mid 0 \leqslant i \leqslant j \leqslant N \}$ be (finite dimensional) vector spaces over a field~$F$. Suppose we have injective $F$-linear maps
\begin{align*}
\rho_{i,j} : V_{i,j-1} &\hookrightarrow V_{i,j}, \quad &&(0 \leqslant i < j \leqslant N), \\
\delta_{i,j} : V_{i-1,j} &\hookrightarrow V_{i,j}, \quad &&(0 < i \leqslant j \leqslant N),
\end{align*}
such that for $0 < i < j \leqslant N$,
\[
\delta_{i,j} \circ \rho_{i-1,j} = \rho_{i,j} \circ \delta_{i,j-1} : V_{i-1,j-1} \to V_{i,j},
\]
and the induced map
\[
V_{i-1,j-1} \iso V_{i,j-1} \times_{V_{i,j}} V_{i-1,j}
\]
is an isomorphism. Then for $0 \leqslant i \leqslant j \leqslant N$, we can pick (finite) subsets $D_{i,j}$, $R_{i,j}$, $Z_{i,j} \subseteq V_{i,j}$ such that the following hold.
\begin{alphenumerate}
\item $Z_{i,j}$ is a basis of $V_{i,j}$ for $0 \leqslant i \leqslant j \leqslant N$.
\item $Z_{i,j} = \delta_{i,j}(Z_{i-1,j}) \sqcup D_{i,j}$ for $0 < i \leqslant j \leqslant N$.
\item $Z_{i,j} = \rho_{i,j}(Z_{i,j-1}) \sqcup R_{i,j}$ for $0 \leqslant i < j \leqslant N$.
\item For $0 < i < j \leqslant N$, 
\[
Z_{i,j} = (\delta_{i,j} \circ \rho_{i-1,j})(Z_{i-1,j-1}) \sqcup \rho_{i,j}(D_{i,j-1}) \sqcup \delta_{i,j}(R_{i-1,j}) \sqcup (D_{i,j} \cap R_{i,j}).
\]
\end{alphenumerate}
\end{lemma}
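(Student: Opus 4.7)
The plan is to construct the $Z_{i,j}$, $D_{i,j}$, $R_{i,j}$ by double induction: first on $j$, then within each column from $i=0$ up to $i=j$. The pullback hypothesis supplies the key compatibility that lets a basis of $V_{i,j}$ be assembled out of the bases already chosen in $V_{i-1,j}$ and $V_{i,j-1}$.

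I would begin by picking any basis $Z_{0,0}$ of $V_{0,0}$. For the inductive step on $j$, assume all $Z_{i',j'}$ have been constructed for $j' < j$, satisfying (a)--(d). Within column $j$, proceed upward in $i$ as follows:
\begin{itemize}
\item For $i=0$, extend $\rho_{0,j}(Z_{0,j-1})$ to a basis $Z_{0,j}$ of $V_{0,j}$ and let $R_{0,j}$ denote the added vectors.
\item For $0 < i < j$, set $S_{i,j} \assign \delta_{i,j}(Z_{i-1,j}) \cup \rho_{i,j}(Z_{i,j-1}) \subseteq V_{i,j}$, extend $S_{i,j}$ to a basis $Z_{i,j}$ by adjoining a set $E_{i,j}$, and take $D_{i,j} \assign \rho_{i,j}(D_{i,j-1}) \sqcup E_{i,j}$ and $R_{i,j} \assign \delta_{i,j}(R_{i-1,j}) \sqcup E_{i,j}$.
\item For $i=j$, extend $\delta_{j,j}(Z_{j-1,j})$ to a basis $Z_{j,j}$, letting $D_{j,j}$ denote the added vectors.
\end{itemize}
A routine unwinding confirms that (a)--(d) then follow from the construction; the only nontrivial point is the middle case above, where one must verify that $S_{i,j}$ is a disjoint union admitting the three-part decomposition appearing in (d) and is linearly independent.

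For the disjointness, suppose $\delta_{i,j}(z) = \rho_{i,j}(z')$ with $z \in Z_{i-1,j}$ and $z' \in Z_{i,j-1}$: the pullback hypothesis yields a unique $w \in V_{i-1,j-1}$ with $z = \rho_{i-1,j}(w)$ and $z' = \delta_{i,j-1}(w)$. By the inductive hypothesis, $Z_{i-1,j} = \rho_{i-1,j}(Z_{i-1,j-1}) \sqcup R_{i-1,j}$ with $R_{i-1,j}$ extending a basis of $\rho_{i-1,j}(V_{i-1,j-1})$ to a basis of $V_{i-1,j}$; hence no element of $R_{i-1,j}$ lies in $\rho_{i-1,j}(V_{i-1,j-1})$, forcing $z \in \rho_{i-1,j}(Z_{i-1,j-1})$, and then injectivity of $\rho_{i-1,j}$ gives $w \in Z_{i-1,j-1}$ and $z' \in \delta_{i,j-1}(Z_{i-1,j-1})$. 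For linear independence, any relation among the elements of $S_{i,j}$ can be rearranged as $\delta_{i,j}(u) + \rho_{i,j}(v) = 0$ for explicit $u \in V_{i-1,j}$, $v \in V_{i,j-1}$; the pullback forces $v \in \delta_{i,j-1}(V_{i-1,j-1})$, and since the elements of $D_{i,j-1}$ are linearly independent modulo $\delta_{i,j-1}(V_{i-1,j-1})$, their coefficients must all vanish. A symmetric argument kills the $R_{i-1,j}$ coefficients, and what remains reduces to the linear independence of $Z_{i-1,j-1}$.

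The main obstacle is bookkeeping: one must verify that the inductively defined $D_{i,j}$ and $R_{i,j}$ retain the property used above, namely that their elements remain linearly independent modulo $\delta_{i,j}(V_{i-1,j})$ and $\rho_{i,j}(V_{i,j-1})$ respectively, so that the pullback arguments continue to apply at the next stage. This falls out automatically from the basis-extension construction, since any set completing a basis of a subspace to a basis of the ambient space is, by definition, linearly independent modulo that subspace.
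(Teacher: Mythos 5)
Your construction mirrors the paper's proof almost exactly: you set up the same recursion, extend $\delta_{i,j}(Z_{i-1,j}) \cup \rho_{i,j}(Z_{i,j-1})$ to a basis of $V_{i,j}$, and use the fibered-product hypothesis precisely as the paper does to show $\rho_{i,j}(V_{i,j-1}) \cap \delta_{i,j}(V_{i-1,j}) = (\delta_{i,j}\circ\rho_{i-1,j})(V_{i-1,j-1})$, which gives both the disjointness and the linear independence of the three pieces. The only cosmetic differences are that you argue the disjointness and independence at the level of individual basis elements rather than invoking compatibility of bases of subspaces directly, and that you omit the conventions $D_{0,j} = R_{0,j}$ and $R_{i,i} = D_{i,i}$, neither of which affects correctness.
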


\begin{proof}
The diagram to keep in mind is the following,
\begin{equation} \label{E:triangle-v2}
\begin{tikzcd}[row sep=scriptsize, column sep=scriptsize]
V_{0,0} \arrow[r, hook, "\rho_{0,1}"]
&V_{0,1} \arrow[r, hook, "\rho_{0,2}"] \arrow[d, hook, "\delta_{1,1}"]
&V_{0,2} \arrow[r, hook] \arrow[d, hook, "\delta_{1,2}"]
&\cdots \arrow[r,hook]
&V_{0,N} \arrow[d, hook, "\delta_{1,N}"]
\\
& V_{1,1} \arrow[r, hook, "\rho_{1,2}"]
& V_{1,2} \arrow[r, hook] \arrow[d, hook, "\delta_{2,2}"]
& \cdots \arrow[r, hook]
& V_{1,N} \arrow[d, hook, "\delta_{2,N}"]
\\
& & V_{2,2} \arrow[r, hook]
& \cdots \arrow[r, hook]
& V_{2,N} \arrow[d, hook]
\\
& & & \ddots & \vdots \arrow[d, hook]
\\
& & & & V_{N,N}.
\end{tikzcd}
\end{equation}
For $0 \leqslant i \leqslant j \leqslant N$, we define $D_{i,j}$, $R_{i,j}$, and $Z_{i,j}$ recursively. We let $Z_{0,0} = D_{0,0} = R_{0,0}$ be any basis of $V_{0,0}$. For $0 < j \leqslant N$, if $Z_{0,j-1}$ has been defined, we choose $R_{0,j}$ so that $Z_{0,j} \assign \rho_{0,j}(Z_{0,j-1}) \sqcup R_{0,j}$ is a basis for $V_{0,j}$. For convenience we set $D_{0,j} \assign R_{0,j}$. The sets $R_{0,j}$, $Z_{0,j}$ for $0 \leqslant j \leqslant N$ then satisfy parts (a) and~(c) as desired.
For $0 < i \leqslant N$, suppose that $Z_{i-1,i}$ has been defined. We pick $D_{i,i} \subseteq V_{i,i}$ so that $Z_{i,i} \assign \delta_{i,i}(Z_{i-1,i}) \sqcup D_{i,i}$ is a basis of $V_{i,i}$. For convenience we set $R_{i,i} \assign D_{i,i}$.

For $0 < i < j \leqslant N$ suppose that $Z_{i-1,j-1}$, $Z_{i,j-1}$, $Z_{i-1,j}$, $D_{i,j-1}$, and $R_{i-1,j}$ have been defined. As such, by hypothesis
\[
Z_{i,j-1} = \delta_{i,j-1}(Z_{i-1,j-1}) \sqcup D_{i,j-1}, \quad Z_{i-1,j} = \rho_{i-1,j}(Z_{i-1,j-1}) \sqcup R_{i-1,j}.
\]
Consider the augmented square from~\eqref{E:triangle-v2},
\begin{center}
\begin{tikzcd}
V_{i-1,j-1} \arrow[r, hook, "\rho_{i-1,j}"] \arrow[d, hook, "\delta{i,j-1}"]
& V_{i-1,j} \arrow[d, hook]  \arrow[dr, bend left=20, "\delta_{i,j}"]& \\
V_{i,j-1} \arrow[r,hook] \arrow[rr, bend right=20, "\rho_{i,j}"']
& \rho_{i,j}(V_{i,j-1}) + \delta_{i,j}(V_{i-1,j}) \arrow[r, hook] & V_{i,j}.
\end{tikzcd}
\end{center}
Because $V_{i-1,j-1} \cong V_{i,j-1} \times_{V_{i,j}} V_{i-1,j}$, we conclude that
\[
\rho_{i,j}(V_{i,j-1}) \cap \delta_{i,j}(V_{i-1,j}) = (\delta_{i,j} \circ \rho_{i-1,j})(V_{i-1,j-1}).
\]
Thus $(\delta_{i,j} \circ \rho_{i-1,j})(Z_{i-1,j-1}) \sqcup \rho_{i,j}(D_{i,j-1}) \sqcup \delta_{i,j}(R_{i-1,j})$ is a basis of $\rho_{i,j}(V_{i,j-1}) + \delta_{i,j}(V_{i-1,j})$, and so we can pick $Y \subseteq V_{i,j}$ so that
\[
Z_{i,j} \assign (\delta_{i,j} \circ \rho_{i-1,j})(Z_{i-1,j-1}) \sqcup \rho_{i,j}(D_{i,j-1}) \sqcup \delta_{i,j}(R_{i-1,j}) \sqcup Y
\]
is a basis for $V_{i,j}$. We then set
\[
D_{i,j} \assign \rho_{i,j}(D_{i,j-1}) \sqcup Y, \quad
R_{i,j} \assign \delta_{i,j}(R_{i-1,j}) \sqcup Y.
\]
In particular, $D_{i,j} = \rho_{i,j}(D_{i,j-1}) \sqcup (D_{i,j} \cap R_{i,j})$ and $R_{i,j} = \delta_{i,j}(R_{i-1,j}) \sqcup (D_{i,j} \cap R_{i,j})$.
Properties (b)--(d) then follow.
\end{proof}

\begin{proposition} \label{P:Hagoodred}
Let $\ba \in \cA_f$ be effective with $\deg \ba > 0$, and let $\fp$ be a maximal ideal of~$B$ relatively prime to $f$. Then $H_{\ba}(\ok)$ has a $\ok[\sigma]$-basis $\{ h_1, \dots, h_d \} \subseteq B_{\fp}[t,z]$ such that $\{ \oh_1, \dots, \oh_d \} \subseteq \FF_{\fp}[t,z]$ is an $\FF_{\fp}[\sigma]$-basis of $H_{\ba,\fp}(\FF_{\fp})$.
\end{proposition}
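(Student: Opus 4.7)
The plan is to run the algorithm of Section~\ref{SS:Fodef} over the reduction $\FF_{\fp}$ first, producing an $\FF_{\fp}[\sigma]$-basis of $H_{\ba,\fp}(\FF_{\fp})$, and then lift that basis to $B_{\fp}[t,z]$ by working at a sufficiently high level $N$ in the filtration $\{M_i\}$ so that the reduction map $\pi_N$ is surjective. The $\ok[\sigma]$-basis property will then be verified by comparing evaluation matrices via hyperderivatives.

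First I would fix $N \geqslant 0$ large enough that $\deg(-W_{\ba}^{(1)} + N I_{\ba}) > 2\gamma - 2$, where $\gamma$ is the common genus of $\bX$ and $X$. Since $\fp \nmid f$, the supports of $W_{\ba}^{(1)}$ and $\Xi_{\ba}$ reduce to divisors of the same degree on $X/\FF_{\fp}$ (as the $f$-torsion points of the Carlitz module cut out ideals dividing $(f)$, so distinct $\xi_b^{(j)}$ reduce to distinct points of $X(\oFF_{\fp})$). Riemann-Roch on $\bX/\ok$ and $X/\FF_{\fp}$ then yields $\dim_{\ok} M_i = \dim_{\FF_{\fp}} \oM_i$ for all $i \geqslant N-1$. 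After enlarging $N$ so that Lemma~\ref{L:Miprops}(g) additionally holds on both sides, $\oM_N(\FF_{\fp})$ generates $H_{\ba,\fp}(\FF_{\fp})$ as an $\FF_{\fp}[\sigma]$-module, and $\pi_N : M_N(B_{\fp}) \to \oM_N(\FF_{\fp})$ is the reduction of a free $B_{\fp}$-module of matching rank, hence surjective. I would then apply the algorithm of \S\ref{SS:UVTSdefs} to the family $\{\oM_i(\FF_{\fp})\}$ (whose Criteria~\ref{Cr:Mi} follow as in Lemma~\ref{L:Miprops} applied on $X/\FF_{\fp}$) to produce an $\FF_{\fp}[\sigma]$-basis $\{\oh_1, \dots, \oh_d\} \subseteq \oM_N(\FF_{\fp})$ of $H_{\ba,\fp}(\FF_{\fp})$, and lift each $\oh_i$ through $\pi_N$ to an element $h_i \in M_N(B_{\fp}) \subseteq B_{\fp}[t,z]$.

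To verify that $\{h_1, \dots, h_d\}$ is a $\ok[\sigma]$-basis of $H_{\ba}(\ok)$, I would fix a normalized $\ok[\sigma]$-basis (Proposition~\ref{P:HaKdef}(b)) and use Lemma~\ref{L:normdecomp} to obtain the isomorphism $\teps_0 : H_{\ba}(\ok)/\sigma H_{\ba}(\ok) \iso \ok^d$ sending $h$ to $(\pd_t^{\ell_a-j}(h)|_{\xi_a})_{(a,j) \in \cS_{\ba}}$; the analogous $\overline{\teps}_0$ on $H_{\ba,\fp}(\FF_{\fp})$ is built from a normalized $\FF_{\fp}[\sigma]$-basis. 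Iteratively differentiating the relation $\cD_f(t,z) = 0$ and invoking the $p$-th power rule~\cite{NamoijamP24}*{Prop.~2.14(b)} to kill the $\pd_t(z^{q^i})$ contributions for $i \geqslant 1$ expresses each $\pd_t^j(z) \in \bk \otimes_{\bA} \BB$ with denominator a power of $f(t)$; since $f(\theta) = f \in B_{\fp}^{\times}$ when $\fp \nmid f$, the evaluations $\pd_t^k(h_i)|_{\xi_a}$ all lie in $B_{\fp}$ and reduce to $\pd_t^k(\oh_i)|_{\oxi_a}$. Consequently the matrix $[\teps_0(h_i)]_i \in \Mat_d(B_{\fp})$ reduces to $[\overline{\teps}_0(\oh_i)]_i \in \GL_d(\FF_{\fp})$ (invertible since $\{\oh_i\}$ is an $\FF_{\fp}[\sigma]$-basis), so $[\teps_0(h_i)]_i \in \GL_d(B_{\fp}) \subseteq \GL_d(\ok)$, and $\{\teps_0(h_i)\}$ is a $\ok$-basis of $\ok^d$. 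The Nakayama iteration used in the proof of Proposition~\ref{P:goodbasis} then establishes that $\{h_1, \dots, h_d\}$ is a $\ok[\sigma]$-basis of $H_{\ba}(\ok)$.

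The main obstacle will be securing the equality $\dim_{\ok} M_N = \dim_{\FF_{\fp}} \oM_N$ needed for $\pi_N$ to be surjective; taking $N$ above the Riemann-Roch threshold handles this cleanly, at the cost of lifting the whole basis into $M_N(B_{\fp})$ rather than preserving the level-$i$ origin of each $h_i$ in the filtration.
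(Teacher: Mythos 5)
Your first two paragraphs (the Riemann--Roch bound, running the algorithm over $\FF_{\fp}$, lifting through $\pi_N$) point in the right direction, but the verification step has a genuine gap. Showing that the matrix $[\teps_0(h_i)]_i$ is invertible over $B_{\fp}$ only establishes that $\{h_1, \dots, h_d\}$ induces a $\ok$-basis of $H_{\ba}(\ok)/\sigma H_{\ba}(\ok)$, and this is \emph{not} sufficient to conclude that $\{h_1, \dots, h_d\}$ is a $\ok[\sigma]$-basis of $H_{\ba}(\ok)$. For an abstract counterexample, in $H = \ok[\sigma]^2$ with standard basis $e_1$, $e_2$, take $h_1 = e_1 + \sigma e_2$ and $h_2 = e_2 + \sigma e_1$; these induce a $\ok$-basis of $H/\sigma H$, but $e_1 \notin \Span_{\ok[\sigma]}(h_1, h_2)$: solving $\alpha h_1 + \beta h_2 = e_1$ forces $\alpha(1-\sigma^2)=1$, which has no solution in $\ok[\sigma]$. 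Your appeal to ``the Nakayama iteration used in the proof of Proposition~\ref{P:goodbasis}'' is also circular, since that proposition already hypothesizes that $\{h_1, \dots, h_d\}$ is a $\ok[\sigma]$-basis of $H_{\ba}(\ok)$, which is the very thing you are trying to prove.

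The underlying difficulty, and the reason the paper's argument is so involved, is exactly the filtration-compatibility problem your lift sidesteps. Each $\oh_i$ produced by the algorithm over $\FF_{\fp}$ lives at a specific level $\oM_{\ell_i}(\FF_{\fp})$, yet $\pi_{\ell_i} : M_{\ell_i}(B_{\fp}) \to \oM_{\ell_i}(\FF_{\fp})$ need not be surjective for small $\ell_i$ (precisely when $\deg(-W_{\ba}^{(1)} + \ell_i I_{\ba}) \leqslant 2\gamma - 2$), so lifting only through $\pi_N$ at the top level lands $h_i$ in $M_N(B_{\fp})$ rather than $M_{\ell_i}(B_{\fp})$. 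The lifted set then cannot serve as a $T_{\ell_i}$-choice in the \S\ref{SSS:UVTSdefs} algorithm over $\ok$, and Lemma~\ref{L:Silinind} --- which is what actually certifies the $\ok[\sigma]$-basis property --- does not apply. The paper repairs this with Lemma~\ref{L:triangle-v2}: it constructs compatible $\FF_{\fp}$-bases $\oZ_{i,j}$ of the two-dimensional array $\oM_i(\FF_{\fp}) \cap \oP_j$ with $\oP_j = \pi_j(M_j(B_{\fp}))$, chooses $\oT_{k,i} \subseteq \oD_{k,i} \cap \oR_{k,i}$, and lifts level by level so that the resulting $S_N$ is literally an output of the \S\ref{SSS:UVTSdefs} algorithm over $\ok$; the final count $d = |S_N| \geqslant |\oS_N| = d$ then forces the lift to match exactly.
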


\begin{proof}
We construct our $\ok[\sigma]$-basis of $H_{\ba}(\ok)$ through Proposition~\ref{P:HaKdef}, namely using the recursion in \S\ref{SSS:UVTSdefs} and Remark~\ref{R:algorithm}.
We first define a sequence $\{ \oP_i \}_{i \in \ZZ}$ of $\FF_{\fp}$-subspaces of $H_{\ba,\fp}(\FF_{\fp})$. For $i < 0$, we set $\oP_i = \{ 0 \}$, and as in \eqref{E:piidef}, for $i \geqslant 0$ we take
\begin{equation}
\oP_i \assign \pi(M_i(B_{\fp})) = \pi_i(M_i(B_{\fp})).
\end{equation}
We consider the following diagram of finite dimensional $\FF_{\fp}$-vector spaces, where for $0 \leqslant i \leqslant j \leqslant N$, the $i$-th row is filtration of $\oM_{i}(\FF_{\fp})$ and the $j$-th column is a filtration of $\oP_j$,
\begin{equation} \label{E:triangleMP}
\begin{tikzcd}[row sep=scriptsize, column sep=small]
\oP_0 \arrow[r, hook]
&\oM_0(\FF_{\fp}) \cap \oP_1 \arrow[r, hook] \arrow[d, hook]
&\oM_0(\FF_{\fp}) \cap \oP_2 \arrow[r, hook] \arrow[d, hook]
&\cdots \arrow[r,hook]
&\oM_0(\FF_{\fp}) \cap \oP_N = \oM_0(\FF_{\fp}) \arrow[d, hook]
\\
& \oP_1 \arrow[r, hook]
& \oM_1(\FF_{\fp}) \cap \oP_2 \arrow[r, hook] \arrow[d, hook]
& \cdots \arrow[r, hook]
& \oM_1(\FF_{\fp}) \cap \oP_N = \oM_1(\FF_{\fp}) \arrow[d, hook]
\\
& & \oP_2 \arrow[r, hook]
& \cdots \arrow[r, hook]
& \oM_2(\FF_{\fp}) \cap \oP_N = \oM_2(\FF_{\fp}) \arrow[d, hook]
\\
& & & \ddots & \vdots \arrow[d, hook]
\\
& & & & \oM_N(\FF_{\fp}) \cap \oP_N = \oP_N.
\end{tikzcd}
\end{equation}
We are in the situation of Lemma~\ref{L:triangle-v2} with $V_{i,j} = \oM_i(\FF_\fp) \cap \oP_j$. The maps $\delta_{i,j}$, $\rho_{i,j}$, in Lemma~\ref{L:triangle-v2} are inclusions and can be suppressed. Thus, for $0 \leqslant i \leqslant j \leqslant N$, we select
\[
\oD_{i,j},\ \oR_{i,j},\ \oZ_{i,j} \subseteq \oM_i(\FF_{\fp}) \cap \oP_j,
\]
such that $\oZ_{i,j}$ is an $\FF_{\fp}$-basis of $\oM_i(\FF_{\fp}) \cap \oP_j$ and $\oD_{i,j}$, $\oR_{i,j}$ satisfy the conclusions in Lemma \ref{L:triangle-v2}. Accordingly, $\oZ_{0,0} = \oR_{0,0}$, and after repeated applications of Lemma~\ref{L:triangle-v2}(b)--(c),
\[
\oZ_{i,j} = \biggl( \bigsqcup_{0 < \ell \leqslant j} \oR_{0,\ell} \biggr) \sqcup \biggl( \bigsqcup_{0 < k \leqslant i} \oD_{k,j} \biggr), \quad (0 \leqslant i \leqslant j \leqslant N).
\]
In particular for $0 \leqslant i \leqslant N$,
\begin{gather*}
\oP_i = \biggl( \bigoplus_{0 \leqslant \ell \leqslant i} \Span_{\FF_{\fp}} (\oR_{0,\ell}) \biggr) \oplus \biggl( \bigoplus_{0 < k \leqslant i} \Span_{\FF_{\fp}} (\oD_{k,i}) \biggr), \\
\oM_i(\FF_{\fp})  = \biggl( \bigoplus_{0 \leqslant \ell \leqslant N} \Span_{\FF_{\fp}} (\oR_{0,\ell}) \biggr) \oplus \biggl( \bigoplus_{0 < k \leqslant i} \Span_{\FF_{\fp}} (\oD_{k,N}) \biggr) = \oP_i \oplus \biggl( \bigoplus_{i < \ell \leqslant N} \Span_{\FF_{\fp}} (\oR_{i,\ell}) \biggr). \notag
\end{gather*}
Now as $\{ \oM_i(\FF_{\fp}) \}$ satisfies the criteria of \S\ref{Cr:Mi}, we can construct an $\FF_{\fp}[\sigma]$-basis of $H_{\ba,\fp}(\FF_{\fp})$ using Proposition~\ref{P:HaKdef}, namely the construction in~\S\ref{SSS:UVTSdefs} together with Lemma \ref{L:Silinind}. Following along with~\S\ref{SSS:UVTSdefs}, we construct $\FF_{\fp}$-bases $\{ \oS_i \}$ and $\{ \oT_i \}$. We can take
\[
\oT_0 \assign \oR_{0,0} \sqcup \dots \sqcup \oR_{0,N},
\]
and we set $\oT_{0,j} \assign \oR_{0,j}$ for $0 \leqslant j \leqslant N$.
By the construction in~\S\ref{SSS:UVTSdefs}, for $i > 0$, $\oT_i$ is selected by choosing an $\FF_{\fp}$-basis of $\oM_i(\FF_{\fp})/(\oM_{i-1}(\FF_{\fp}) + \sigma \oM_{i-1}(\FF_{\fp}))$.
We have inclusions
\begin{multline*}
\frac{\oP_i + (\oM_{i-1}(\FF_{\fp}) + \sigma \oM_{i-1}(\FF_{\fp}))}{\oM_{i-1}(\FF_{\fp}) + \sigma \oM_{i-1}(\FF_{\fp})} \subseteq \frac{(\oM_i(\FF_{\fp}) \cap \oP_{i+1}) + (\oM_{i-1}(\FF_{\fp}) + \sigma \oM_{i-1}(\FF_{\fp}))}{\oM_{i-1}(\FF_{\fp}) + \sigma \oM_{i-1}(\FF_{\fp})} \\
{}\subseteq \dots \subseteq
\frac{(\oM_i(\FF_{\fp}) \cap \oP_{N}) + (\oM_{i-1}(\FF_{\fp}) + \sigma \oM_{i-1}(\FF_{\fp}))}{\oM_{i-1}(\FF_{\fp}) + \sigma \oM_{i-1}(\FF_{\fp})} = \frac{\oM_i(\FF_{\fp})}{\oM_{i-1}(\FF_{\fp}) + \sigma \oM_{i-1}(\FF_{\fp})}.
\end{multline*}
From this we see that we can select $\oT_i$ to be a union of subsets of bases of
\[
\oP_i \subseteq (\oM_i(\FF_{\fp}) \cap \oP_{i+1}) \subseteq \cdots \subseteq (\oM_i(\FF_{\fp}) \cap \oP_{N-1}) \subseteq \oM_i(\FF_{\fp}).
\]
However, since $\oM_{i-1}(\FF_{\fp}) + \sigma \oM_{i-1}(\FF_{\fp}) \supseteq \oM_{i-1}(\FF_{\fp})$, these subsets do not come from the rows $< i$ in~\eqref{E:triangleMP}. Therefore, for $i \leqslant j \leqslant N$, using Lemma~\ref{L:triangle-v2}(d), we can go back and choose $\oD_{i,j}$, $\oR_{i,j}$, with
\begin{equation} \label{E:oTijdef}
\oT_{i,j} \subseteq \oD_{i,j} \cap \oR_{i,j} \quad \textup{so that} \quad
\oT_i \assign \oT_{i,i} \sqcup \cdots \sqcup \oT_{i,N}
\end{equation}
induces an $\FF_{\fp}$-basis of $\oM_i(\FF_{\fp})/(\oM_{i-1}(\FF_{\fp}) + \sigma \oM_{i-1}(\FF_{\fp}))$ as desired.  By \S\ref{SSS:UVTSdefs} and Remark~\ref{R:algorithm}, $\oS_{0} = \oT_0 = \oT_{0,0} \sqcup \cdots \sqcup \oT_{0,N}$, and for $1 \leqslant i \leqslant N$,
\[
\oS_i = \bigsqcup_{0 \leqslant k \leqslant i} \oT_k =  \bigsqcup_{\substack{0 \leqslant k \leqslant i \\ k \leqslant \ell \leqslant N}} \oT_{k,\ell} \quad \Rightarrow \quad \oS_N = \bigsqcup_{0 \leqslant k \leqslant \ell \leqslant N} \oT_{k,\ell}.
\]
In particular $\oS_N$ is an $\FF_{\fp}[\sigma]$-basis of $H_{\ba,\fp}(\FF_{\fp})$.

Now we consider $M_i(B_{\fp})$ for $0 \leqslant i \leqslant N$, and the $B_{\fp}$-submodules, for $0 \leqslant i \leqslant j \leqslant N$,
\begin{equation}
\cM_{i,j} \assign \pi_j^{-1} \bigl( \oM_{i}(\FF_{\fp}) \bigr) = \pi_j^{-1} \bigl( \oM_i(\FF_{\fp}) \cap \oP_j \bigr) \subseteq M_j(B_{\fp}).
\end{equation}
We can fashion a similar diagram to~\eqref{E:triangleMP} by placing $\cM_{i,j}$ in each entry, and we note from the definition of $\oP_i$ that
\[
\cM_{i,i} = \pi_i^{-1} \bigl( \oM_{i}(\FF_{\fp}) \cap \oP_i \bigr) = M_i(B_{\fp}), \quad 0 \leqslant i \leqslant N.
\]
Furthermore, for $0 \leqslant i \leqslant j \leqslant N$,
\[
\pi_j (\cM_{i,j}) = \oM_i(\FF_{\fp}) \cap \oP_j,
\]
as $\pi_j$ surjects onto $\oP_j$ by construction. For $0 \leqslant j \leqslant N$, by the discussion in~\eqref{E:LDBp}, Nakayama's lemma implies that $M_j(B_{\fp})$ is a free $B_{\fp}$-module of finite rank, and
\[
\rank_{B_{\fp}} M_j(B_{\fp}) = \dim_{\FF_{\fp}} \oP_j, \quad (0 \leqslant j \leqslant N).
\]
More generally, for $0 \leqslant i \leqslant j \leqslant N$, we have the exact sequence of $B_{\fp}$-modules,
\begin{equation} \label{E:Mijseq}
0 \to \cM_{i,j} \cap \fp M_j(\FF_{\fp}) \to \cM_{i,j} \xrightarrow{\pi_j} \oM_{i}(\FF_{\fp}) \cap \oP_j \to 0,
\end{equation}
and
\begin{equation} \label{E:2ndiso}
\frac{\cM_{i,j}}{\cM_{i,j} \cap \fp M_j(B_{\fp})} \cong
\frac{\cM_{i,j} + \fp M_j(B_{\fp})}{\fp M_j(B_{\fp})} \subseteq
\frac{M_j(B_{\fp})}{\fp M_j(B_{\fp})}.
\end{equation}
Thus,
\[
\dim_{\FF_{\fp}} \biggl( \frac{\cM_{i,j} + \fp M_j(B_{\fp})}{\fp M_j(B_{\fp})} \biggr) = \dim_{\FF_{\fp}} \bigl( \oM_i(\FF_{\fp}) \cap \oP_j \bigr)
\leqslant \rank_{B_{\fp}} M_j(B_{\fp}).
\]
For $0 \leqslant i \leqslant j \leqslant N$, we set
\[
\cV_{i,j} \assign \frac{\cM_{i,j} + \fp M_j(B_{\fp})}{\fp M_j(B_{\fp})} = \frac{\pi_j^{-1}(\oM_i(\FF_{\fp})) + \fp M_j(B_{\fp})}{\fp M_j(B_{\fp})},
\]
and we note
\[
\cV_{i,i} = \frac{M_i(B_{\fp}) + \fp M_i(B_{\fp})}{\fp M_i(B_{\fp})} = \frac{M_i(B_{\fp})}{\fp M_i(B_{\fp})}, \quad (0 \leqslant i \leqslant N).
\]
We define $\FF_{\fp}$-linear maps
\[
\rho_{i,j} : \cV_{i,j-1} \to \cV_{i,j},\ (0 \leqslant i < j \leqslant N), \quad
\delta_{i,j} : \cV_{i-1,j} \to \cV_{i,j}, \ (0 < i \leqslant j \leqslant N),
\]
by the natural maps
\begin{align*}
\rho_{i,j} &: \frac{\pi_{j-1}^{-1}(\oM_i(\FF_{\fp})) + \fp M_{j-1}(B_{\fp})}{\fp M_{j-1}(B_{\fp})} \to \frac{\pi_j^{-1}(\oM_i(\FF_{\fp})) + \fp M_j(B_{\fp})}{\fp M_j(B_{\fp})}, \\
\delta_{i,j} &: \frac{\pi_{j}^{-1}(\oM_{i-1}(\FF_{\fp})) + \fp M_{j}(B_{\fp})}{\fp M_{j}(B_{\fp})} \to \frac{\pi_j^{-1}(\oM_i(\FF_{\fp})) + \fp M_j(B_{\fp})}{\fp M_j(B_{\fp})}.
\end{align*}
It is clear that $\delta_{i,j}$ is injective for $0 < i \leqslant j \leqslant N$. Furthermore, for $0 \leqslant i < j \leqslant N$,
\[
\bigl( \pi_{j-1}^{-1} \bigl( \oM_i(\FF_{\fp}) \bigr) + \fp M_{j-1}(B_{\fp}) \bigr) \cap \fp M_j(B_{\fp}) \subseteq M_{j-1}(B_{\fp}) \cap \fp M_j(B_{\fp}) = \fp M_{j-1} (B_{\fp}),
\]
so $\rho_{i,j}$ is injective. Now for $0 < i < j \leqslant N$, we have $\delta_{i,j} \circ \rho_{i-1,j} = \rho_{i,j} \circ \delta_{i,j-1} : \cV_{i-1,j-1} \to \cV_{i,j}$, and we consider the induced map, $\phi : \cV_{i-1,j-1} \to \cV_{i,j-1} \times_{\cV_{i,j}} \cV_{i-1,j}$. Moreover, we have
\begin{multline*}
\phi : \frac{\pi_{j-1}^{-1}(\oM_{i-1}(\FF_{\fp})) + \fp M_{j-1}(B_{\fp})}{\fp M_{j-1}(B_{\fp})} \\
\to
\frac{\pi_{j-1}^{-1}(\oM_{i}(\FF_{\fp})) + \fp M_{j-1}(B_{\fp})}{\fp M_{j-1}(B_{\fp})} \times_{\cV_{i,j}}
\frac{\pi_{j}^{-1}(\oM_{i-1}(\FF_{\fp})) + \fp M_{j}(B_{\fp})}{\fp M_{j}(B_{\fp})}, 
\end{multline*}
where the right-hand term consists of pairs $(\alpha,\beta)$ for which $\rho_{i,j}(\alpha) = \delta_{i,j}(\beta)$, and for $x \in \pi_{j-1}^{-1}(\oM_{i-1}(\FF_{\fp}))$,
\[
\phi \bigl( x + \fp M_{j-1}(B_{\fp}) \bigr) =
\bigl( x + \fp M_{j-1}(B_{\fp}), x + \fp M_j(B_{\fp}) \bigr).
\]
We see easily that $\phi$ is injective. To see that $\phi$ is surjective, suppose that $y \in \pi_{j-1}^{-1}(\oM_{i}(\FF_{\fp}))$ and $z \in \pi_{j}^{-1}(\oM_{i-1}(\FF_{\fp}))$ satisfy $\rho_{i,j} ( y + \fp M_{j-1}(B_{\fp})) = \delta_{i,j} ( z + \fp M_{j}(B_{\fp}))$.
That is,
\[
y + \fp M_j(B_{\fp}) = z + \fp M_j(B_{\fp}).
\]
But then $\pi_j(z) = \pi_j(y) = \pi_{j-1}(y)$. Since $\pi_j(z) \in \oM_{i-1}(\FF_{\fp})$ and $\pi_{j-1}(y) \in \oM_i(\FF_{\fp})$, we see that $\pi_{j-1}(y) \in \oM_{i-1}(\FF_{\fp})$, and so $y \in \pi_{j-1}^{-1}(\oM_{i-1}(\FF_{\fp}))$. Furthermore,
\[
\phi \bigl( y + \fp M_{j-1}(B_{\fp}) \bigr) = \bigl( y + \fp M_{j-1}(B_{\fp}), y + \fp M_j(B_{\fp}) \bigr) = \bigl( y + \fp M_{j-1}(B_{\fp}), z + \fp M_j(B_{\fp}) \bigr),
\]
and thus $\phi$ is surjective.

Therefore, the $\FF_{\fp}$-vector spaces $\{ \cV_{i,j} \mid 0 \leqslant i \leqslant j \leqslant N \}$ together with corresponding maps $\delta_{i,j}$, $\rho_{i,j}$, satisfy the hypotheses of Lemma~\ref{L:triangle-v2}. Furthermore, the isomorphisms
\[
\cV_{i,j} \iso \oM_i(\FF_{\fp}) \cap \oP_{j},
\]
induced by \eqref{E:Mijseq}--\eqref{E:2ndiso}, are compatible so that
\begin{equation} \label{E:alignment}
\bigl\{ \cV_{i,j} \mid 0 \leqslant i \leqslant j \leqslant N \bigr\} \quad \stackrel{\sim}{\longleftrightarrow} \quad \bigl\{ \oM_{i} \cap \oP_j \mid 0 \leqslant i \leqslant j \leqslant N \bigr \}
\end{equation}
are isomorphic systems satisfying the hypotheses of Lemma~\ref{L:triangle-v2}. In this way, the $\FF_{\fp}$-bases $\oZ_{i,j}$ obtained from Lemma~\ref{L:triangle-v2} for $\{ \oM_i(\FF_{\fp}) \cap \oP_j \}$ can be transferred to $\{ \cV_{i,j} \}$ with the same properties (and vice versa). Furthermore, for $0 \leqslant i \leqslant N$, the inclusions $\cV_{0,i} \subseteq \cV_{1,i} \subseteq \cdots \subseteq \cV_{i,i}$ are
\[
\frac{\pi_i^{-1}(\oM_0(\FF_{\fp})) + \fp M_i(B_{\fp})}{\fp M_i(B_{\fp})} \subseteq \frac{\pi_i^{-1}(\oM_1(\FF_{\fp})) + \fp M_i(B_{\fp})}{\fp M_i(B_{\fp})} \subseteq \cdots \subseteq  \frac{M_i(B_{\fp})}{\fp M_i(B_{\fp})}.
\]
Applying Nakayama's lemma we find subsets $D_{k,i} \subseteq \pi_i^{-1}(\oM_{k}(\FF_{\fp}))$ for $0 \leqslant k \leqslant i$ so that
\begin{equation} \label{E:ZiMidef}
Z_{i} \assign D_{0,i} \sqcup \cdots \sqcup D_{i,i}
\end{equation}
is a $B_{\fp}$-basis of $M_i(B_{\fp})$ with $\pi(Z_i) = \oZ_{i,i}$. Since $\pi_i(\pi_i^{-1}(\oM_k(\FF_{\fp}))) = \oM_k(\FF_{\fp}) \cap \oP_i$, these can be arranged so that
\[
\pi_i(D_{k,i}) = \oD_{k,i}, \quad (0 \leqslant k \leqslant i \leqslant N).
\]

Before the final analysis, we need to investigate compatibility with the bases $\oT_i$ coming from $\oM_i(\FF_{\fp}) / (\oM_{i-1}(\FF_{\fp}) + \sigma \oM_{i-1}(\FF_{\fp}))$. To that end, we recall from Lemma~\ref{L:Miprops}(c) and Remark~\ref{R:Kdef} that, for $i > 0$, $M_{i-1} + \sigma M_{i-1}$ is defined over $K$, and moreover,
\[
M_{i-1} + \sigma M_{i-1} = \cL(-W_{\ba}^{(1)} + (i-1)I_{\ba}) + \cL(-W_{\ba}^{(1)} - \Xi_{\ba} + i I_{\ba}).
\]
We set
\begin{align*}
(M_{i-1} + \sigma M_{i-1})(B_{\fp}) &\assign (M_{i-1} + \sigma M_{i-1}) \cap B_{\fp}[t,z], \\
(M_{i-1} + \sigma M_{i-1})(K) &\assign (M_{i-1} + \sigma M_{i-1}) \cap K[t,z],
\end{align*}
and note that since $M_{i-1} + \sigma M_{i-1}$ is defined over $K$,
\[
(M_{i-1} + \sigma M_{i-1})(K) = K \otimes_{B_{\fp}} (M_{i-1} + \sigma M_{i-1})(B_{\fp}).
\]
We claim that the $B_{\fp}$-modules
\[
\frac{M_i(B_{\fp})}{M_{i-1}(B_{\fp})}, \quad
\frac{M_i(B_{\fp})}{(M_{i-1} + \sigma M_{i-1})(B_{\fp})},
\]
are torsion free. Let us consider the second module. Suppose that $h \in M_i(B_{\fp})$ and $u \in B_{\fp}$ with $uh \in (M_{i-1} + \sigma M_{i-1})(B_{\fp})$. Then $h \in (M_{i-1} + \sigma M_{i-1})(K)$, and so
\begin{multline*}
h \in (M_{i-1} + \sigma M_{i-1})(K) \cap M_i(B_{\fp}) \\
= (M_{i-1} + \sigma M_{i-1})(K) \cap M_i(K) \cap B_{\fp}[t,z] = (M_{i-1} + \sigma M_{i-1})(B_{\fp}).
\end{multline*}
The argument for the first module is similar. In particular, as $M_{i-1}(B_{\fp}) \subseteq (M_{i-1} + \sigma M_{i-1})(B_{\fp}) \subseteq M_i(B_{\fp})$ are all free $B_{\fp}$-modules of finite rank, we find that a $B_{\fp}$-basis for $M_{i-1}(B_{\fp})$ can be extended to bases of both $(M_{i-1} + \sigma M_{i-1})(B_{\fp})$ and $M_i(B_{\fp})$.

We can construct the sets $\{ S_i \}$, $\{ T_i \}$, in \S\ref{SSS:UVTSdefs} by using $B_{\fp}$-bases of $M_i(B_{\fp})/(M_{i-1} + \sigma M_{i-1})(B_{\fp})$. Moreover, we have a filtration,
\[
\frac{\pi_i^{-1}(\oM_k(\FF_{\fp})) + (M_{i-1} + \sigma M_{i-1})(B_{\fp})}{(M_{i-1} + \sigma M_{i-1})(B_{\fp})} \subseteq \frac{M_i(B_{\fp})}{(M_{i-1} + \sigma M_{i-1})(B_{\fp})}, \quad (0 \leqslant k \leqslant i),
\]
of free $B_{\fp}$-modules of finite rank. For $0 \leqslant k \leqslant i$, since $\pi_i(\pi_i^{-1}(\oM_k(\FF_{\fp}))) = \oM_k(\FF_{\fp}) \cap \oP_i$, it follows from Nakayama's lemma that
\begin{multline*}
\rank_{B_{\fp}} \biggl( \frac{\pi_i^{-1}(\oM_k(\FF_{\fp})) + (M_{i-1} + \sigma M_{i-1})(B_{\fp})}{(M_{i-1} + \sigma M_{i-1})(B_{\fp})} \biggr) \\
\geqslant \dim_{\FF_{\fp}} \biggl( \frac{(\oM_k(\FF_{\fp}) \cap \oP_{i}) + (\oM_{i-1}(\FF_{\fp}) + \sigma \oM_{i-1}(\FF_{\fp}))}{\oM_{i-1}(\FF_{\fp}) + \sigma \oM_{i-1}(\FF_{\fp})} \biggr).
\end{multline*}
Therefore, after perhaps rearranging our $B_{\fp}$-basis $Z_{i}$ of $M_i(B_{\fp})$ from~\eqref{E:ZiMidef}, we can pick a subset $T_{k,i} \subseteq Z_{i}$ which induces a $B_{\fp}$-basis of $(\pi_i^{-1}(\oM_k(\FF_{\fp})) + (M_{i-1} + \sigma M_{i-1})(B_{\fp})) / (M_{i-1}$ $+ \sigma M_{i-1})(B_{\fp}))$, such that
\begin{equation} \label{E:Tki}
\pi_i(T_{k,i}) \supseteq \oT_{k,i}.
\end{equation}
Moreover, we can take $T_i = T_{0,i} \sqcup \dots \sqcup T_{i,i}$ to be our choice of subset of $M_i$ that induces a $\ok$-basis of $M_{i}/(M_{i-1} + \sigma M_{i-1})$ as in~\S\ref{SSS:UVTSdefs}.

As noted, $\oS_N$ is an $\FF_{\fp}[\sigma]$-basis of $H_{\ba,\fp}(\FF_{\fp})$. Proposition~\ref{P:HaKdef} and Remark~\ref{R:algorithm} imply
\[
S_N \assign \bigsqcup_{0 \leqslant i \leqslant N} T_i = \bigsqcup_{\substack{0 \leqslant i \leqslant N \\ 0 \leqslant k \leqslant i}} T_{k,i} = \bigsqcup_{0 \leqslant k \leqslant i  \leqslant N} T_{k,i}
\]
is a $\ok[\sigma]$-basis of $H_{\ba}(\ok)$. But then by~\eqref{E:Haprankdim},
\[
d = |S_N| = \sum_{i=0}^{N} \sum_{k=0}^i |T_{k,i}| \geqslant \sum_{i=0}^{N} \sum_{k=0}^i |\oT_{k,i}| = |\oS_N| = d.
\]
Therefore, this inequality is an equality, and the containments in~\eqref{E:Tki} are also equalities. Furthermore, $S_N$ satisfies $\pi(S_N) = \oS_N$, and we are done.
\end{proof}

The following corollary is then a consequence of Propositions~\ref{P:Eagoodred} and~\ref{P:Hagoodred}.

\begin{corollary} \label{C:Eagoodprimes}
Let $\ba \in \cA_f$ be effective with $\deg \ba > 0$, and let $\fp$ be a maximal ideal of $B$ relatively prime to $f$. Then $E_{\ba}$ has good reduction at~$\fp$.
\end{corollary}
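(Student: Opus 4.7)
The plan is to observe that once Propositions~\ref{P:Eagoodred} and~\ref{P:Hagoodred} are in hand, the corollary is essentially a matter of packaging: Proposition~\ref{P:Hagoodred} produces a basis with exactly the integrality and reduction properties that Proposition~\ref{P:Eagoodred} requires, so chaining them together yields the claim.

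More precisely, I would proceed as follows. First, invoke Proposition~\ref{P:Hagoodred} to obtain a $\ok[\sigma]$-basis $\{h_1,\dots,h_d\}$ of $H_{\ba}(\ok)$ with $\{h_1,\dots,h_d\} \subseteq B_{\fp}[t,z]$ and such that the reductions $\{\oh_1,\dots,\oh_d\}$ form an $\FF_{\fp}[\sigma]$-basis of $H_{\ba,\fp}(\FF_{\fp})$. Since $B_{\fp}[t,z] \subseteq K[t,z]$, this basis is in particular defined over $K$, which places us in the setup of Theorem~\ref{T:EaKdef}: with respect to this basis we obtain a model $E_{\ba} : \bA \to \Mat_d(K[\tau])$.

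Next, I would verify that the hypotheses of Proposition~\ref{P:goodbasis} hold for this prime $\fp$ and this basis. Condition (a) is immediate because $B_{\fp}[t,z] \subseteq B_{\fp}^{\perf}[t,z]$. Condition (b) holds because $\{\oh_1,\dots,\oh_d\}$ is in fact an $\FF_{\fp}[\sigma]$-basis, and thus a fortiori an $\FF_{\fp}[\sigma]$-generating set, of $H_{\ba,\fp}(\FF_{\fp})$. With both conditions in place, Proposition~\ref{P:Eagoodred} applies and yields that $E_{\ba}$ has good reduction at~$\fp$.

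There is no real obstacle at this stage of the argument: all the genuine work was absorbed into Proposition~\ref{P:Hagoodred}, whose proof required the careful Nakayama-type lifting of bases through the nested Riemann--Roch filtrations $\{\oM_i(\FF_{\fp}) \cap \oP_j\}$ via Lemma~\ref{L:triangle-v2}. Here one simply observes that this lifted basis is tailored to fit exactly into the machinery of~\S\ref{SS:reduction}, completing the proof.
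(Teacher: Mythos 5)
Your proof is correct and takes essentially the same route the paper does, which is simply to combine Proposition~\ref{P:Hagoodred} with Proposition~\ref{P:Eagoodred}; you have also spelled out the (easy but worth noting) verification that the $B_{\fp}[t,z]$-basis produced by Proposition~\ref{P:Hagoodred} is in particular $K$-rational and satisfies the hypotheses of Proposition~\ref{P:goodbasis}.
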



\begin{bibdiv}
\begin{biblist}

\bib{And86Taniyama}{article}{
   author={Anderson, G. W.},
   title={Cyclotomy and an extension of the Taniyama group},
   journal={Compositio Math.},
   volume={57},
   date={1986},
   number={2},
   pages={153--217},
}

\bib{And86}{article}{
   author={Anderson, G. W.},
   title={$t$-motives},
   journal={Duke Math. J.},
   volume={53},
   date={1986},
   number={2},
   pages={457--502},
}

\bib{And92}{article}{
   author={Anderson, G. W.},
   title={A two-dimensional analogue of Stickelberger's theorem},
   conference={
      title={The Arithmetic of Function Fields},
      address={Columbus, OH},
      date={1991},
   },
   book={
      publisher={de Gruyter, Berlin},
   },
   date={1992},
   pages={51--73},
}

\bib{ABP04}{article}{
  author={Anderson, G. W.},
  author={Brownawell, W. D.},
  author={Papanikolas, M. A.},
  title={Determination of the algebraic relations among special $\Gamma$-values in positive characteristic},
  journal={Ann. of Math. (2)},
  volume={160},
  date={2004},
  number={1},
  pages={237--313},
}

\bib{AndThak90}{article}{
  author={Anderson, G. W.},
  author={Thakur, D. S.},
  title={Tensor powers of the Carlitz module and zeta values},
  journal={Ann. of Math. (2)},
  volume={132},
  date={1990},
  number={1},
  pages={159--191},
}

\bib{ANT17b}{article}{
   author={Angl\`es, B.},
   author={Ngo Dac, T.},
   author={Tavares Ribeiro, F.},
   title={Stark units in positive characteristic},
   journal={Proc. Lond. Math. Soc. (3)},
   volume={115},
   date={2017},
   number={4},
   pages={763--812},
}

\bib{ANT20}{article}{
  author={Angl\`es, B.},
  author={Ngo Dac, T.},
  author={Tavares Ribeiro, F.},
  title={On special $L$-values of $t$-modules},
  journal={Adv. Math.},
  volume={372},
  date={2020},
  pages={art. 107313, 33 pp.},
}

\bib{ANT22}{article}{
  author={Angl\`es, B.},
  author={Ngo Dac, T.},
  author={Tavares Ribeiro, F.},
  title={A class formula for admissible Anderson modules},
  journal={Invent. Math.},
  volume={229},
  date={2022},
  number={2},
  pages={563--606},
}

\bib{APT18}{article}{
  author={Angl\`es, B.},
  author={Pellarin, F.},
  author={Tavares Ribeiro, F.},
  title={Anderson-Stark units for $\mathbb{F}_q[\theta]$},
  journal={Trans. Amer. Math. Soc.},
  volume={370},
  date={2018},
  number={3},
  pages={1603--1627},
}

\bib{AT17}{article}{
   author={Angl\`es, B.},
   author={Tavares Ribeiro, F.},
   title={Arithmetic of function field units},
   journal={Math. Ann.},
   volume={367},
   date={2017},
   number={1-2},
   pages={501--579},
}

\bib{BCPW22}{article}{
  author={Brownawell, W. D.},
  author={Chang, C.-Y.},
  author={Papanikolas, M. A.},
  author={Wei, F.-T.},
  title={Function field analogue of Shimura's conjecture on period symbols},
  date={2022},
  eprint={arXiv:2203.09131},
}

\bib{BP02}{article}{
   author={Brownawell, W. D.},
   author={Papanikolas, M. A.},
   title={Linear independence of Gamma values in positive characteristic},
   journal={J. Reine Angew. Math.},
   volume={549},
   date={2002},
   pages={91--148},
}

\bib{BP20}{article}{
   author={Brownawell, W. D.},
   author={Papanikolas, M. A.},
   title={A rapid introduction to Drinfeld modules, $t$-modules, and $t$-motives},
   conference={
      title={$t$-Motives: Hodge Structures, Transcendence and other Motivic Aspects},
   },
   book={
      publisher={EMS Publ. House, Berlin},
   },
   date={2020},
   pages={3--30},
}

\bib{Carlitz35}{article}{
   author={Carlitz, L.},
   title={On certain functions connected with polynomials in a Galois field},
   journal={Duke Math. J.},
   volume={1},
   date={1935},
   number={2},
   pages={137--168},
}

\bib{Carlitz38}{article}{
   author={Carlitz, L.},
   title={A class of polynomials},
   journal={Trans. Amer. Math. Soc.},
   volume={43},
   date={1938},
   number={2},
   pages={167--182},
}

\bib{CPY10}{article}{
   author={Chang, C.-Y.},
   author={Papanikolas, M. A.},
   author={Yu, J.},
   title={Geometric gamma values and zeta values in positive characteristic},
   journal={Int. Math. Res. Not. IMRN},
   date={2010},
   number={8},
   pages={1432--1455},
}

\bib{ChangTW25}{article}{
  author={Chang, T.-W.},
  title={Geometric Gauss sums and Gross-Koblitz formulas over function fields},
  date={2025},
  eprint={arXiv:2502.01109}
}

\bib{Coleman88}{article}{
   author={Coleman, R. F.},
   title={On the Frobenius endomorphisms of Fermat and Artin-Schreier curves},
   journal={Proc. Amer. Math. Soc.},
   date={1988},
   volume={102},
   pages={463--466},
}

\bib{Conrad00}{article}{
   author={Conrad, K.},
   title={The digit principle},
   journal={J. Number Theory},
   volume={84},
   date={2000},
   number={2},
   pages={230--257},
}

\bib{DavisPhD}{thesis}{
   author={Davis, E.},
   title={Goss $L$-functions of Sinha modules},
   date={2025},
   type={Ph.D. thesis},
   organization={Texas A\&M University},
}

\bib{DebryPhD}{thesis}{
   author={Debry, C.},
   title={Towards a class number formula for Drinfeld modules},
   date={2016},
   type={Ph.D. thesis},
   organization={KU Leuven},
}

\bib{Demeslay22}{article}{
   author={Demeslay, F.},
   title={A class formula for $L$-series in positive characteristic},
   journal={Ann. Inst. Fourier (Grenoble)},
   volume={72},
   date={2022},
   number={3},
   pages={1149--1183},
}

\bib{Fang15}{article}{
   author={Fang, J.},
   title={Special $L$-values of abelian $t$-modules},
   journal={J. Number Theory},
   volume={147},
   date={2015},
   pages={300--325},
}

\bib{FresnelvdPut}{book}{
   author={Fresnel, J.},
   author={van der Put, M.},
   title={Rigid Analytic Geometry and its Applications},
   series={Progr. Math.},
   volume={218},
   publisher={Birkh\"{a}user Boston, Inc., Boston, MA},
   date={2004},
}

\bib{Gardeyn02}{article}{
   author={Gardeyn, F.},
   title={A Galois criterion for good reduction of $\tau$-sheaves},
   journal={J. Number Theory},
   volume={97},
   date={2002},
   number={2},
   pages={447--471},
}

\bib{GazdaMaurischat25}{article}{
  author={Gazda, Q.},
  author={Maurischat, A.},
  title={Pairing Anderson motives via formal residues in the Frobenius endomorphism},
  date={2025},
  eprint={arXiv:2504.01926}
}

\bib{Goss92a}{article}{
   author={Goss, D.},
   title={$L$-series of Gr\"ossencharakters of type $A_0$ for function fields},
   conference={
      title={$p$-Adic Methods in Number Theory and Algebraic Geometry},
   },
   book={
      series={Contemp. Math.},
      volume={133},
      publisher={Amer. Math. Soc., Providence, RI},
   },
   date={1992},
   pages={119--139},
}

\bib{Goss95}{article}{
   author={Goss, D.},
   title={The adjoint of the Carlitz module and Fermat's last theorem},
   journal={Finite Fields Appl.},
   volume={1},
   date={1995},
   number={2},
   pages={165--188},
}

\bib{Goss}{book}{
   author={Goss, D.},
   title={Basic Structures of Function Field Arithmetic},
   publisher={Springer-Verlag, Berlin},
   date={1996},
}

\bib{HartlJuschka20}{article}{
   author={Hartl, U.},
   author={Juschka, A.-K.},
   title={Pink's theory of Hodge structures and the Hodge conjecture over
   function fields},
   conference={
      title={$t$-Motives: Hodge Structures, Transcendence and other Motivic Aspects},
   },
   book={
      publisher={EMS Publ. House, Berlin},
   },
   date={2020},
   pages={31--182},
}

\bib{Hayes74}{article}{
   author={Hayes, D. R.},
   title={Explicit class field theory for rational function fields},
   journal={Trans. Amer. Math. Soc.},
   volume={189},
   date={1974},
   pages={77--91},
}

\bib{Hayes93}{article}{
   author={Hayes, D. R.},
   title={Hecke characters and Eisenstein reciprocity in function fields},
   journal={J. Number Theory},
   volume={43},
   date={1993},
   number={3},
   pages={251--292},
}

\bib{HuangP22}{article}{
  author={Huang, W.-C.},
  author={Papanikolas, M. A.},
  title={Convolutions of Goss and Pellarin $L$-series},
  date={2022},
  eprint={arXiv:2206.14931}
}

\bib{Jeong11}{article}{
   author={Jeong, S.},
   title={Calculus in positive characteristic $p$},
   journal={J. Number Theory},
   volume={131},
   date={2011},
   number={6},
   pages={1089--1104},
}

\bib{LangAlg}{book}{
   author={Lang, S.},
   title={Algebra},
   series={Graduate Texts in Mathematics},
   volume={211},
   edition={3},
   publisher={Springer-Verlag, New York},
   date={2002},
}

\bib{Lichtenbaum82}{article}{
   author={Lichtenbaum, S.},
   title={Values of $L$-functions of Jacobi-sum Hecke characters of abelian fields},
   conference={
      title={Number theory related to Fermat's last theorem},
      address={Cambridge, MA},
      date={1981},
   },
   book={
      series={Progr. Math.},
      volume={26},
      publisher={Birkh\"auser Boston, Boston, MA},
   },
   date={1982},
   pages={207--218},
}

\bib{Maurischat21}{article}{
    author={Maurischat, A.},
    title={Abelian equals $A$-finite for Anderson $A$-modules},
    journal={Ann. Inst. Fourier},
    date={to appear},
    eprint={arXiv:2110.11114, 2021}
}

\bib{Maurischat22}{article}{
   author={Maurischat, A.},
   title={Periods of $t$-modules as special values},
   journal={J. Number Theory},
   volume={232},
   date={2022},
   pages={177--203},
}

\bib{NamoijamP24}{article}{
   author={Namoijam, C.},
   author={Papanikolas, M. A.},
   title={Hyperderivatives of periods and quasi-periods for Anderson $t$-modules},
   journal={Mem. Amer. Math. Soc.},
   volume={302},
   date={2024},
   number={1517},
   pages={v+121},
}

\bib{Papikian}{book}{
   author={Papikian, M.},
   title={Drinfeld modules},
   series={Graduate Texts in Mathematics},
   volume={296},
   publisher={Springer, Cham},
   date={2023},
}

\bib{Pellarin12}{article}{
   author={Pellarin, F.},
   title={Values of certain $L$-series in positive characteristic},
   journal={Ann. of Math. (2)},
   volume={176},
   date={2012},
   number={3},
   pages={2055--2093},
}

\bib{Poonen96}{article}{
   author={Poonen, B.},
   title={Fractional power series and pairings on Drinfeld modules},
   journal={J. Amer. Math. Soc.},
   volume={9},
   date={1996},
   number={3},
   pages={783--812},
}

\bib{Rosen}{book}{
   author={Rosen, M.},
   title={Number Theory in Function Fields},
   series={Graduate Texts in Mathematics},
   volume={210},
   publisher={Springer-Verlag, New York},
   date={2002},
}

\bib{Schappacher}{book}{
   author={Schappacher, N.},
   title={Periods of Hecke characters},
   series={Lecture Notes in Mathematics},
   volume={1301},
   publisher={Springer-Verlag, Berlin},
   date={1988},
}

\bib{SilvermanATAEC}{book}{
   author={Silverman, J. H.},
   title={Advanced Topics in the Arithmetic of Elliptic Curves},
   series={Graduate Texts in Mathematics},
   volume={151},
   publisher={Springer-Verlag, New York},
   date={1994},
}

\bib{SinhaPhD}{thesis}{
   author={Sinha, S. K.},
   title={Periods of $t$-motives and special functions in characteristic~$p$},
   date={1995},
   type={Ph.D. thesis},
   organization={University of Minnesota},
}

\bib{Sinha97a}{article}{
   author={Sinha, S. K.},
   title={Deligne's reciprocity for function fields},
   journal={J. Number Theory},
   volume={63},
   date={1997},
   number={1},
   pages={65--88},
}

\bib{Sinha97b}{article}{
   author={Sinha, S. K.},
   title={Periods of $t$-motives and transcendence},
   journal={Duke Math. J.},
   volume={88},
   date={1997},
   number={3},
   pages={465--535},
}

\bib{Taelman09}{article}{
   author={Taelman, L.},
   title={Special $L$-values of $t$-motives: a conjecture},
   journal={Int. Math. Res. Not. IMRN},
   date={2009},
   number={16},
   pages={2957--2977},
}

\bib{Taelman10}{article}{
   author={Taelman, L.},
   title={A Dirichlet unit theorem for Drinfeld modules},
   journal={Math. Ann.},
   volume={348},
   date={2010},
   number={4},
   pages={899--907},
}

\bib{Taelman12}{article}{
  author={Taelman, L.},
  title={Special $L$-values of Drinfeld modules},
  journal={Ann. of Math. (2)},
  volume={175},
  date={2012},
  number={1},
  pages={369--391},
}

\bib{Takahashi82}{article}{
   author={Takahashi, T.},
   title={Good reduction of elliptic modules},
   journal={J. Math. Soc. Japan},
   volume={34},
   date={1982},
   number={3},
   pages={475--487},
}

\bib{Thakur91}{article}{
   author={Thakur, D. S.},
   title={Gamma functions for function fields and Drinfeld modules},
   journal={Ann. of Math. (2)},
   volume={134},
   date={1991},
   number={1},
   pages={25--64},
}

\bib{Thakur93}{article}{
   author={Thakur, D. S.},
   title={Shtukas and Jacobi sums},
   journal={Invent. Math.},
   volume={111},
   date={1993},
   number={3},
   pages={557--570},
}

\bib{Thakur99}{article}{
   author={Thakur, D. S.},
   title={An alternate approach to solitons for $\mathbf{F}_q[t]$},
   journal={J. Number Theory},
   volume={76},
   date={1999},
   number={2},
   pages={301--319},
}

\bib{Thakur}{book}{
   author={Thakur, D. S.},
   title={Function Field Arithmetic},
   publisher={World Scientific Publishing Co., Inc., River Edge, NJ},
   date={2004},
}

\bib{Wei22}{article}{
    author={Wei, F.-T.},
    title={Algebraic relations among special gamma values and the Chowla-Selberg phenomenon over function fields},
    date={2022},
    eprint={arXiv:2207.01165}
}

\bib{Yu97}{article}{
   author={Yu, J.},
   title={Analytic homomorphisms into Drinfeld modules},
   journal={Ann. of Math. (2)},
   volume={145},
   date={1997},
   number={2},
   pages={215--233},
}

\end{biblist}
\end{bibdiv}

\end{document}